\newtheorem{thm}{Theorem}[section]
\newtheorem{cor}[thm]{Corollary}
\newtheorem{lem}[thm]{Lemma}
\newtheorem{prop}[thm]{Proposition}
\theoremstyle{definition}
\newtheorem{defin}[thm]{Definition}
\theoremstyle{remark}
\newtheorem{rem}[thm]{Remark}
\numberwithin{equation}{section}
\renewcommand\emptyset{\varnothing}
\newcommand{\sineb}{$\text{Sine}_\beta$}
\newcommand\NN{{\mathbb N}}    
\newcommand\ZZ{{\mathbb Z}}
\newcommand\ZZd{\mathbb{Z}^d}
\newcommand{\ZZdo}{\mathbb{Z}^d\setminus\{0\}}
\newcommand\RR{{\mathbb R}}
\newcommand\RRd{\mathbb{R}^d}
\renewcommand{\P}{\mathbb{P}}
\newcommand{\Pb}{\P^\beta}
\newcommand{\Pbn}{\P^\beta_n}
\newcommand{\Pbrhon}{\P^{\beta,\rho}_n}
\newcommand{\Pbnj}{\P^\beta_{n_j}}
\newcommand{\Pbnk}{\P^\beta_{n_k}}
\newcommand{\Pbstar}{\P^\beta_\star}
\newcommand{\Pbtstar}{\widetilde{\P}^\beta_\star}
\newcommand{\Q}{\mathbb{Q}}
\newcommand{\Qbstar}{\Q^\beta_\star}
\newcommand{\Bin}[2]{\mathbb{B}_{#1,#2}}
\newcommand{\Psibstar}{\Psi^\beta_\star}
\newcommand{\Conf}{\mathcal{C}}
\newcommand{\B}{\mathcal{B}}
\newcommand{\F}{\mathcal{F}}
\newcommand{\I}{\mathcal{I}}
\newcommand{\Hn}{H_n}
\newcommand{\Hback}[1]{\widetilde{H}_{#1}}
\newcommand{\hbstar}{h^\beta_\star}
\newcommand{\HDbstar}{H^\beta_{\star,\Delta}}
\newcommand{\Cbstarp}{C^\beta_{\star, \Lambda_p}}
\newcommand{\Cbstar}{C^\beta_{\star}}
\newcommand{\E}[1]{E^{}_{#1}}
\newcommand{\Zbn}{Z^\beta_n}
\newcommand{\normf}{\Vert f \Vert^{}_{\infty}}
\newcommand{\per}{\text{per}}
\newcommand{\sta}{\text{sta}}
\DeclareMathOperator{\err}{Err}
\newcommand{\diag}{\text{Diag}}
\newcommand{\toren}{\tau^{n}}
\DeclareMathOperator{\Camp}{C}
\newcommand{\leb}{\lambda^d}
\newcommand{\lebn}[1]{(\lambda^d)^{\otimes #1}}
\newcommand{\set}[2]{\{{#1}_1, \dots {#1}_{#2}\}}
\newcommand{\g}[1]{g^{(#1)}}
\newcommand{\abs}[1]{\lvert #1 \rvert}
\newcommand{\bigabs}[1]{\big\lvert #1 \big\rvert}
\newcommand{\norm}[1]{\lVert #1 \rVert}
\title{Number-Rigidity  and $\beta$-Circular Riesz gas}
\author{David Dereudre $^{1}$ and Thibaut Vasseur $^{2}$\\{\normalsize{\em}}}
\begin{document}

\maketitle

\footnotetext[1]{\; david.dereudre@univ-lille.fr, \; Univ. Lille, CNRS, UMR 8524, Laboratoire Paul Painlev\'e, F-59000  lille, France.}
 \footnotetext[2]{\; thibaut.vasseur@univ-lille.fr,\;Univ. Lille, CNRS, UMR 8524, Laboratoire Paul Painlev\'e, F-59000  lille, France. }

\begin{abstract}

For an inverse temperature $\beta>0$, we define the $\beta$-circular Riesz gas on $\RRd$ as any microscopic thermodynamic limit of Gibbs particle systems on the torus interacting via the Riesz potential  $g(x) = \Vert x \Vert^{-s}$. We focus on the non integrable case $d-1<s<d$. Our main result ensures, for any dimension $d\ge 1$ and inverse temperature $\beta>0$, the existence of a  $\beta$-circular Riesz gas which is not number-rigid. Recall that  a point process is said number rigid if the number of points in a bounded Borel set $\Delta$ is a function of the point configuration outside $\Delta$. It is the first time that the non number-rigidity is proved for a Gibbs point process interacting via a non integrable potential. We follow a statistical physics approach based on the canonical DLR equations. It is inspired by the recent paper \cite{DLRsinebeta} where the authors prove the number-rigidity of the \sineb{} process.

\end{abstract}

{\it key words:} Gibbs point process, DLR equations, equivalence of ensembles.

\section*{Introduction}

The pairwise  Riesz potential $g(x) = \Vert x \Vert^{-s}$ for $x \in \RRd$ is abundantly studied in several domains of mathematics as statistical mechanics, potential theory, optimization, etc. The particular case $s=d-2$ corresponds to the the Coulomb pair potential coming from the electrostatic theory. The general case  $s\le d$ is particularly interesting and challenging since the potential is not integrable at infinity.

The thermodynamic limits of associated canonical Gibbs measures at inverse temperature $\beta> 0$ are the natural microscopic equilibrium states appearing in the bulk of systems with a large number of particles. Their studies are old topics in physics and mathematical physics literature \cite{gruber1979equilibrium, lieb1975thermodynamic}. Recently a general large deviation principle at the microscopic level has been established \cite{leble2017large} with the rate function equals to the entropy plus the renormalized energy times  $\beta$. The general description of  thermodynamic limits or minimizers of the rate function is mainly open and only few results have been proved. The study of the  special case "$s=0$" ($ g(x) = - \log \Vert x \Vert$ by convention), corresponding to the so-called $\log$-gas potential, is more advanced. For instance in the case $d=1$, the thermodynamic limit or the minimizer is unique and called \sineb  \cite{valko2009continuum, killip2009eigenvalue, erbar2018one}. It is associated to the eigenvalues of random matrices \cite{dumitriu2002matrix,forrester2010log} and it is determinantal for $\beta=2$. In the present paper we study some stochastic properties of thermodynamic limits with periodic boundary conditions in the case $d-1<s<d$.
 
The definition of the Riesz interaction on the torus is not obvious. An useful variant of the Coulomb energy is the periodic Jellium which can also be defined for the Riesz potential, as explained in \cite{petrache2020crystallization}. The construction of the periodic energy for general long range potential  and periodic discrete configurations has been also given in \cite{hardin2014periodic}. The equivalence between the ground state energy of such periodic versions   has been recently achieved in \cite{lewin2019floating}. We follow these ideas to define the periodic energy $\Hn$ of a configuration of $n$ points lying on a torus $\mathbb{T}_n$ of volume $n$. It is the limit of the mean energy (with neutralizing background) of copies of the configuration, when the number of copies tends to infinity. It corresponds to the natural energy of an infinite periodic crystal. Then, for an inverse temperature parameter $\beta > 0$, the finite volume canonical Gibbs measures $\Pbn$ of energy $\Hn$ is defined on $\mathbb{T}_n$ and the microscopic thermodynamic limit is obtained by letting $n$ going to infinity. Based on the compactness of entropy level-set, the sequence $(\Pbn)_{n\geq 1}$ admits accumulation points which are called $\beta$-circular Riesz gases. Unfortunately the uniqueness of accumulation points is not guaranteed in general and probably a phase transition (non uniqueness) could occur for special values of $\beta$.   

We are mainly interested in the number-rigidity of such $\beta$-circular Riesz gases. A point process is said number rigid if the number of points in a bounded Borel set $\Delta$ is a function of the point configuration outside $\Delta$. This property has been introduced in \cite{holroyd2013}  and studied for a large variety of point processes as for instance the Gaussian zeros  and Ginibre process \cite{ghosh_rigidity_2017}, perturbed lattices \cite{peres2014rigidity}, stable matchings \cite{stableMatchingLast} or Pfaffian point processes \cite{bufetov2018number}. The number rigidity of the \sineb{} process for any inverse temperature $\beta > 0$ has been proved independently in \cite{DLRsinebeta} and \cite{reda2018rigidity} with two drastically different approaches. In \cite{reda2018rigidity} the authors follow the strategy of \cite{ghosh_rigidity_2017} which consists of controlling the variance of linear statistics whereas in \cite{DLRsinebeta} the authors lean on a statistical physics approach which has inspired the present work.
Our main result claims that for  $d-1<s<d$ and any inverse temperature $\beta>0$, there exists  a $\beta$-circular Riesz gas $\Pbstar$ which is not number rigid. More precisely, in Theorem~\ref{T:point.deletion}, we show that, conditionally on the configuration outside a compact set $\Delta$, the number of points in $\Delta$ can take any value with positive probability under $\Pbstar$. Since the \sineb{} process corresponds to the Riesz interaction with $d = 1$ and $s = 0$, our result and \cite{DLRsinebeta} ensures that in dimension $d=1$ the number-rigidity appears for $s\le d-1$. We believe that this statement occurs in any dimension $d\ge 1$ but presently it is a conjecture.


An important tool used in the present paper are the so-called  canonical Dobrushin-Lanford-Ruelle (DLR) equations. They give a local conditional description of the $\beta$-circular Riesz gases as stated in Theorem \ref{T:DLR.equations}.  This formalism has a long history since the sixties where rigorous results in statistical mechanics have been developed \cite{Ruelle70}. Recently they have been used successfully for the \sineb{} process since they are the main ingredient of the proof of its number rigidity \cite{DLRsinebeta} and, coupled with optimal transport theory, to obtain a central limit theorem for the fluctuations of linear statistics \cite{leble2018clt}. Our main contribution  is to use the canonical DLR equations to prove the non number-rigidity of $\beta$-circular Riesz gases in the opposite way of \cite{DLRsinebeta} where they proved number-rigidity of the \sineb{} process. In particular, we formalise the idea to move points toward infinity (and so to remove them) and therefore we show that the number of points in a compact set conditionally to the outside is arbitrary. Moreover this allows to go further in the Gibbsian description of $\Pbstar$. We show in Theorem \ref{T:grand.canonique} that the distribution of points in a compact $\Delta$ conditionally to the outside admits  a density with respect to the Poisson point process in $\Delta$ with a standard expression $e^{-\beta H_\Delta}$ where $H_\Delta$ is the energy of points in $\Delta$ given the outside of $\Delta$. In particular we give a sense to the energy of a point inside an infinite configuration. More technically, we pass from a canonical description of $\Pbstar$ to a grand canonical one, and our theorem is then a  reminiscent of an equivalence of ensembles result, as described in \cite{georgii2006canonical}. The full DLR description, obtained in the case $d-1 < s < d$, is also expected for $s < d-1$ (in particular for the Coulomb case $s = d-2$ as mentioned in  \cite{armstrong2019local}) but so far nothing has been proved for the moment.

The plan of the paper is the following. In a first section we present the model of $\beta$-circular Riesz gas and give the results. The second part is devoted to the proofs.
\tableofcontents

\section{Model and Results}

\setcounter{subsection}{-1}

\subsection{Notations}
For $x \in \RRd$, $\Vert x \Vert$ is the usual Euclidean norm of $x$ and $\Vert x \Vert_{\infty}$ its sup-norm. We denote by $\B(\RRd)$ the space of the Borel subsets of $\RRd$ and by $\B_b(\RRd)$ the bounded Borel subsets. The Lebesgue measure on $\RRd$ is denoted $\leb{}$. For an integer $n\geq 1$, we denote by $\Lambda_n = [-\sqrt[d]{n} /2, \sqrt[d]{n} /2]$ the box of volume $n$.

The space of \emph{point configurations} $\Conf$ is the set of all locally finite subsets of $\RRd$
\[
	\Conf := \{ \gamma \subset \RRd : \forall \Delta \in \B_b(\RRd),\, \vert \gamma \cap \Delta \vert < +\infty\},
\] where we use the notation $\vert \cdot \vert$ for the cardinality of a set. We denote by $\Conf_f$ the set of all finite point configurations, and, for $\Lambda$ a Borel subset of $\RRd$, by $\Conf_\Lambda$ the subset of $\Conf$ consisting of point configurations in $\Lambda$. Configurations can also be seen as locally finite counting measures, in particular we use the convenient notation 
$\int f(x) \gamma(dx) = \sum_{x\in\gamma} f(x)$.

For $\Delta$ a bounded Borel subset of $\RRd$, we introduce the counting function $N_\Delta$ defined for $\gamma \in \Conf$ as $N_\Delta(\gamma) = \vert \gamma \cap \Delta\vert$. We equip the set $\Conf$ with the $\sigma$-algebra generated by these counting functions $\F := \sigma(N_\Delta, \Delta \in \B_b(\RRd))$. For $\Lambda$ a Borel subset of $\RRd$, we equip $\Conf_\Lambda$ with the $\sigma$-agebra $\F_\Lambda := \sigma(N_\Delta, \Delta \in \B_b(\Lambda))$. 

We study probability measures on $(\Conf, \F)$ and refer to them as \emph{point processes} in place of their distribution.
If $\P$ is a point process, its projection to the space $(\Conf_\Lambda, \F_\Lambda)$ is denoted $\P_{|\Lambda}$ or $\P_{\Lambda}$, depending on possible ambiguities. 

For $u\in\RR$ and $n\geq 1$ an integer, $\tau_u$ denotes the translation of vector $u$, and $\tau_u^n$ the translation of vector $u$ in $\Lambda_n$ viewed as a torus.  A point process $\P$ is said \emph{stationary} if for every $u \in \RRd$ the identity $\P \circ \tau_u^{-1} = \P$ holds, and we denote by $\mathcal{P}_{\sta}(\Conf)$ the set of all stationary point processes. 

If $\P$ is a stationary point process, there exists a constant $i(\P) \in \RR^+ \cup \{+\infty\}$, called intensity of $\P$, such that for every bounded Borel subset $\Delta\subset\RRd$, $E_\P(N_\Delta) = i(\P) \leb{}(\Delta)$. If $i(\P)$ is finite, we say that $\P$ has \emph{finite intensity}. More generally, if we only have the existence of a constant $\kappa>0$ such that for any Borel subset $\Delta \subset \RRd$, $	\E \P (N_\Delta) \leq \kappa \leb{}(\Delta)$, we said that $\P$ has \emph{intensity bounded} by $\kappa$.


We denote by $\Pi$ the stationary Poisson point process of intensity equal to $1$. Recall briefly that $\Pi$ is the unique point process of intensity $1$ such that two disjoint regions of space are independent under $\Pi$.

A function $f : \Conf \rightarrow \RR$ is said \emph{local} if there exists a bounded Borel set $\Delta \subset \RRd$ such that $f$ is $\F_\Delta$-mesurable (i.e. for every $\gamma \in \Conf$, $f(\gamma) = f(\gamma_\Delta)$). The function $f$ is called \emph{tame} if for every configuration $\gamma \in \Conf$, $\abs{f(\gamma)} \leq b(1 + N_\Delta(\gamma))$, for $b$ a positive constant. The \emph{local topology} is the smallest topology on $\Conf$ such that the mappings $\P \mapsto \int f d\P$ are continuous for every local tame function $f$. 

\subsection{Circular-Riesz interaction}
We are interested in continuum particle system interacting via the Riesz potential $g$ defined for $x\in \RRd$ by $g(x) = \Vert x \Vert^{-s}$, with $s \in (d-1, d)$ (and the convention $1/0 = +\infty$). The natural pairwise energy for a finite point configuration $\gamma$ is given by 
\[
	H(\gamma) = \sum_{\{x,y\}\subset \gamma} g(x-y).
\]
Problems coming from the non-integrability at infinity of the potential can be solved by introducing a natural compensation inspired from physics (Jellium model). Charged particles interact together and with an uniform background of opposite charge. It corresponds to 
\[
	\Hback n (\gamma) = \iint_{\Lambda_n^2 \setminus \diag} g(x-y) (\gamma(dx) - dx).(\gamma(dy) - dy),  
\]
for $\gamma \in \Conf_{\Lambda_n}$. The diagonal of $(\RRd)^2$ defined by $\diag = \{ (x,y) \in (\RRd)^2 : x \neq y \}$ has been removed to avoid the singularity. This new energy is no longer pairwise, depends on the size of the box where the configuration is lying and does not have translation invariance property. We correct these difficulties by building the energy on the torus $\Lambda_n$ where the particles interact with infinitely many copies of themselves. For $\gamma$ a point configuration in $\Lambda_n$, we define  $[k] = \{r \in \ZZd : \Vert r \Vert_{\infty} \leq k\}$ and 
\[
	\gamma + n^{1/d} [k] = \{x + n^{1/d}u : x \in 	\gamma, u \in [k]\},
\]
consisting of $(2k+1)^d$ copies of $\gamma$ in order to form a point configuration in $\Lambda_{(2k+1)^d n}$. The next proposition gives a sense to the mean energy per copy when $k$ goes to infinity.

\begin{prop}\label{P:def.Hn}
For $\gamma$ a point configuration in $\Lambda_n$ such that $N_{\Lambda_n}(\gamma) = n$, we have
\[
	\lim_{k\rightarrow +\infty} \frac{\Hback {(2k+1)^dn}(\gamma + n^{1/d} [k])}{(2k+1)^d} = \sum_{\{x,y\} \subset \gamma} g_n(x-y) + n \varepsilon_n,
\]
where $\varepsilon_n$ goes to zero as $n$ goes to infinity and $g_n$ is the periodized version of the Riesz interaction defined as
\[
	g_n(x) = \sum_{k\in \ZZd} \left(
		g(x + kn^{1/d}) - \frac 1n \int_{\Lambda_n} 		g(y + kn^{1/d}) dy	
	\right).
\]
\end{prop}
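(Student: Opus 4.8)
\emph{Proof strategy.} The plan is to expand the background‑renormalised energy over the $(2k+1)^d$ copies, reduce the averaged energy to an absolutely convergent lattice sum, and then identify that sum with the periodised pair energy of $\gamma$ plus a self‑energy term that one shows vanishes as $n\to\infty$. Write $T:=n^{1/d}$, so $\Lambda_n=[-T/2,T/2]^d$ and, for $m:=(2k+1)^dn$, the box $\Lambda_m$ is the essentially disjoint union $\bigcup_{u\in[k]}(\Lambda_n+Tu)$; put $\eta:=\gamma+n^{1/d}[k]$ (its $(2k+1)^d$ translates of $\gamma$ may be taken pairwise disjoint). The hypothesis $N_{\Lambda_n}(\gamma)=n$ enters at once: the signed measure $\mu:=\eta-\leb_{\Lambda_m}$ splits as $\mu=\sum_{u\in[k]}\mu_u$ with $\mu_u:=\tau_{Tu}(\gamma-\leb_{\Lambda_n})$, and each block is \emph{charge neutral}, $\mu_u(\RRd)=0$. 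Expanding $\iint g\,d\mu\,d\mu$ and using translation invariance of $g$ gives $\Hback{m}(\eta)=\sum_{u,v\in[k]}J(v-u)$, where for $r\in\ZZd$
\[
J(r):=\iint_{X\neq Y}g(X-Y)\,\mu_0(dX)\,(\tau_{Tr}\mu_0)(dY),
\]
the restriction $X\neq Y$ being automatic for $r\neq0$ (disjoint supports) and, for $r=0$, merely deleting the diagonal atoms, so $J(0)$ is finite since $g\in L^1_{\mathrm{loc}}$ ($s<d$). Counting the $(u,v)\in[k]^2$ with $v-u=r$ yields $c_k(r)\in\{0,\dots,(2k+1)^d\}$, with $c_k(r)=0$ for $\norm r_\infty>2k$ and $c_k(r)/(2k+1)^d\to1$ at fixed $r$, whence $\Hback{m}(\eta)/(2k+1)^d=\sum_{r\in\ZZd}\big(c_k(r)/(2k+1)^d\big)J(r)$.

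Next I would prove $\sum_{r\in\ZZd}\abs{J(r)}<\infty$. For $\norm r_\infty$ large, $\mu_0$ and $\tau_{Tr}\mu_0$ are neutral measures with supports at distance $\asymp T\norm r$, so neutrality of both allows one to replace $g(X-Y)$ in $J(r)$ by the mixed second difference $g(X-Y)-g(X-Tr)-g(-Y)+g(-Tr)$ formed from the box centres $0$ and $Tr$, which on $\Lambda_n\times(\Lambda_n+Tr)$ is $O\!\big((\operatorname{diam}\Lambda_n)^2\,\sup_{\norm\xi\gtrsim T\norm r}\norm{D^2g(\xi)}\big)=O\!\big(T^2(T\norm r)^{-s-2}\big)$; hence $\abs{J(r)}\lesssim n^2T^{-s}\norm r^{-s-2}$, summable because $s>d-1>d-2$. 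The finitely many remaining terms (including $r=0$) are finite by local integrability of $g$. Dominated convergence for series then gives $\lim_{k\to\infty}\Hback{m}(\eta)/(2k+1)^d=\sum_{r\in\ZZd}J(r)$.

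The remaining, and most delicate, step is to identify this sum. The series $\sum_r J(r)$ is the mutual energy of $\mu_0$ with the full neutral periodic array $\Gamma-\leb$, $\Gamma:=\bigcup_{r\in\ZZd}\tau_{Tr}\gamma$. Pulling $\sum_r$ inside the integral is licit because, using neutrality of $\tau_{Tr}\mu_0$ to subtract from $g$ the constant $c_r:=\tfrac1n\int_{\Lambda_n}g(z+Tr)\,dz$ (which does not change $J(r)$), the $r$‑th summand acquires an $O(\norm r^{-s-1})$ kernel — this is exactly where $s>d-1$ is used — and one finds $\int g(X-Y)(\Gamma-\leb)(dY)=\sum_{y\in\gamma}g_n(X-y)$, which is precisely what the formula defining $g_n$ encodes (the total background being split equally among the $n$ points). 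Thus $\sum_r J(r)=\iint\big(\sum_{y\in\gamma}g_n(X-y)\big)\mu_0(dX)$, with the diagonal caveat that when $X=x\in\gamma$ the divergent self‑term $g_n(0)$ is replaced by its regularisation $g_n^{\mathrm{reg}}(0):=\lim_{w\to0}\big(g_n(w)-g(w)\big)$. Since $g_n$ is $\Lambda_n$‑periodic with $\int_{\Lambda_n}g_n=0$, the $-\leb$ part of $\mu_0$ contributes $-\sum_{y}\int_{\Lambda_n}g_n(z-y)\,dz=0$, and a careful accounting of the combinatorial multiplicities leaves
\[
\sum_{r\in\ZZd}J(r)=\sum_{\{x,y\}\subset\gamma}g_n(x-y)+n\,\varepsilon_n,\qquad\varepsilon_n:=g_n^{\mathrm{reg}}(0).
\]
It then remains to see $\varepsilon_n\to0$. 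One has $\varepsilon_n=-c_0+\sum_{r\neq0}\big(g(Tr)-c_r\big)$ with $c_0=\tfrac1n\int_{\Lambda_n}\norm z^{-s}\,dz$; since $d-1<s<d$, the nearest images $Tr$ ($\norm r_\infty=1$) sit at distance $\geq T/2$ from the singularity, giving $\abs{g(Tr)-c_r}\leq CT^{-s}$ for those, while for $\norm r_\infty\geq2$ a second‑order Taylor expansion of $g$ about $Tr$ (the first‑order term killed by the symmetry of $\Lambda_n$) gives $\abs{g(Tr)-c_r}\lesssim T^2\norm{Tr}^{-s-2}$, of sum $O(T^{-s})$, and $c_0=O(T^{-s})$ because $\int_{\Lambda_n}\norm z^{-s}\,dz=O(T^{d-s})$. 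Hence $\abs{\varepsilon_n}=O(n^{-s/d})\to0$.

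The hard part will be the third step: for $s<d$ the series $\sum_r g(\cdot+Tr)$, $\int_{\RRd}g$ and $\sum_r c_r$ all diverge individually, and only charge‑neutral combinations converge, so one must keep track at every stage of which regularised object ($\mu_u$, the centred second difference, or $g_n$ itself) is being integrated, and thread the diagonal removal of the $r=0$ block through the whole computation (recall $g_n(0)=+\infty$). The restriction $d-1<s<d$ is used precisely to make the tails $\norm r^{-s-1}$ (for the series defining $g_n$) and $\norm r^{-s-2}$ (for $J(r)$) summable while keeping $g$ locally integrable.
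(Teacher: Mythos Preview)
Your approach is correct and genuinely different from the paper's. The paper expands $\Hback{(2k+1)^dn}$ into point--point, point--background and background--background pieces, introduces translation errors $\err_k^{(n)}(x)$ (controlled by a separate lemma), and then handles the passage $k\to\infty$ by a shell decomposition $[k]=\bigcup_j[j\setminus j-1]$ together with a Ces\`aro argument on the tails $R_j$. You instead package everything into the neutral blocks $\mu_u$ and their mutual energies $J(r)$, prove $\sum_r|J(r)|<\infty$ by the mixed second-difference bound $|J(r)|\lesssim\|r\|^{-s-2}$, and invoke dominated convergence. This is cleaner: the two-sided neutrality gives you one extra order of decay over what the paper uses, so you bypass both the $\err$-lemma and the Ces\`aro step entirely. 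On the other hand, the paper's route is more robust in that it never needs the second-order cancellation---only the first-order one $\|r\|^{-s-1}$---and hence makes more transparent exactly where the hypothesis $s>d-1$ is indispensable.

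One point you should spell out more carefully is the identification $\sum_r\int(\tau_{Tr}\mu_0)(dY)\,[g(X-Y)-c_r]=\sum_{y\in\gamma}g_n(X-y)$. The point part is immediate from the definition of $g_n$, but the background part $-\int_{\Lambda_n}\sum_r[g(X-z-Tr)-c_r]\,dz$ must be shown to vanish; this is $-\int_{X-\Lambda_n}g_n=0$, which holds because $g_n$ is periodic with zero mean over any fundamental domain---precisely the content of ``the $-\leb$ part contributes $0$''. This is the step that replaces the paper's $\err$-lemma, and it deserves a line. Finally, watch the factor of $2$: with the paper's convention $\Hback{m}=\iint_{\Lambda_m^2\setminus\diag}g\,d\mu\,d\mu$ (no $\tfrac12$), your $\sum_rJ(r)$ equals $2\sum_{\{x,y\}}g_n(x-y)+n\,g_n^{\mathrm{reg}}(0)$, so $\varepsilon_n=\tfrac12 g_n^{\mathrm{reg}}(0)$; this differs from the paper's $\tfrac12 g_n^*(0)$ by $-\tfrac12 c_0$, but both are $O(n^{-s/d})$ and the statement only claims $\varepsilon_n\to0$.
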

We recover a pairwise interaction up to a constant which will disappear in the partition function since it does not depend on the configuration. Finally the definition of the  circular-Riesz energy on the torus $\Lambda_n$ is 
\[
	H_n(\gamma) = \sum_{\{x,y\} \subset \gamma} g_n(x-y),
\]
for any $\gamma \in \Conf_{\Lambda_n}$.

\begin{prop}\label{P:properties.Hn}The energy function $H_n$ and the periodized potential $g_n$ satisfy the following properties:
\begin{enumerate}[label=(\roman*)]
\item \label{P:properties.Hn.stable}
There exists a constant $A \leq 0$ such that for every integer $n \geq 1$ and every point configuration $\gamma\in \Conf_{\Lambda_n}$ such that $\vert \gamma \vert = n$, we have $H_n(\gamma) \geq An$.
\item \label{P:properties.Hn.inv}
For every $u \in \Lambda_n$ and every configuration $\gamma$ in $\Lambda_n$ we have $H_n(\tau_u^n(\gamma)) = H_n(\gamma)$.  
\item \label{P:properties.Hn.control}There exists a constant $c > 0$ such that for every integer $n\geq 1$ and every point $x \in \Lambda_n$ we have 
\[
	 |g_n(x) - g(x)| \leq c n^{-s/d}.
\]
\end{enumerate}
\end{prop}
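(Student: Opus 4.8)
The plan is to prove the three items separately; (ii) and (iii) are essentially bookkeeping on the explicit formula for $g_n$ from Proposition~\ref{P:def.Hn}, while (i) rests on the positive-definiteness of the Riesz kernel.

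\smallskip\noindent\textbf{(ii).} I would first check that $g_n$ is $n^{1/d}\ZZd$-periodic. Writing $g_n=\sum_{k\in\ZZd}h_k$ with $h_k(x)=g(x+kn^{1/d})-\tfrac1n\int_{\Lambda_n}g(y+kn^{1/d})\,dy$, one has $h_k(x+n^{1/d}r)=h_{k+r}(x)+\delta_k$ with $\delta_k=\tfrac1n\int_{\Lambda_n}\big(g(y+(k+r)n^{1/d})-g(y+kn^{1/d})\big)\,dy$; summing over $k$, the term $\sum_k h_{k+r}(x)$ reproduces $g_n(x)$ and $\sum_k\delta_k=0$ (a lattice telescoping whose boundary layer is $O(K^{d-1-s}n^{-s/d})\to0$, using $s>d-1$ for absolute convergence). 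Since $\tau^n_u(x)-\tau^n_u(y)\in(x-y)+n^{1/d}\ZZd$ for $x,y\in\Lambda_n$, periodicity gives $g_n(\tau^n_u(x)-\tau^n_u(y))=g_n(x-y)$, and summing over unordered pairs yields $\Hn(\tau^n_u(\gamma))=\Hn(\gamma)$.

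\smallskip\noindent\textbf{(iii).} Isolating the $k=0$ term in the definition of $g_n$ gives
\[
g_n(x)-g(x)=-\frac1n\int_{\Lambda_n}g(y)\,dy+\sum_{k\neq0}\frac1n\int_{\Lambda_n}\big(g(x+kn^{1/d})-g(y+kn^{1/d})\big)\,dy .
\]
The scaling $y=n^{1/d}z$ turns the first term into $-n^{-s/d}\int_{\Lambda_1}\norm{z}^{-s}\,dz$, a finite multiple of $n^{-s/d}$ since $s<d$. For $k\neq0$ the segment joining $x+kn^{1/d}$ and $y+kn^{1/d}$ (with $x,y\in\Lambda_n$) stays in the cube $kn^{1/d}+\Lambda_n$, on which $\norm{\cdot}\ge(\norm{k}_\infty-\tfrac12)n^{1/d}$; by the mean value theorem and $\norm{\nabla g(z)}=s\norm{z}^{-s-1}$, the $k$-th summand is $O\big(n^{-s/d}(\norm{k}_\infty-\tfrac12)^{-s-1}\big)$, and summing over $k\neq0$ (with $O(m^{d-1})$ lattice points of sup-norm $m$) the series $\sum_{m\ge1}m^{d-1}(m-\tfrac12)^{-s-1}$ converges \emph{exactly} because $s>d-1$. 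This gives $\abs{g_n(x)-g(x)}\le c\,n^{-s/d}$.

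\smallskip\noindent\textbf{(i).} This is the substantial point. The Riesz kernel is positive definite, $\widehat{\norm{\cdot}^{-s}}(\xi)=c_{d,s}\norm{\xi}^{s-d}>0$; after periodization, and because $g_n$ has zero average on $\Lambda_n$ (so its zero Fourier mode vanishes, by (ii)), the quadratic form $\sigma\mapsto\iint_{\Lambda_n^2}g_n(x-y)\,\sigma(dx)\sigma(dy)$ is nonnegative on every finite signed measure on the torus $\Lambda_n$. Given $\gamma=\{x_1,\dots,x_n\}$ with $\abs\gamma=n$, I would replace each $x_i$ by the uniform probability measure $\rho_i$ on the ball $B(x_i,r_i)$, with $r_i$ a fixed small fraction of the distance from $x_i$ to $\gamma\setminus\{x_i\}$ (capped by a constant), so that the $B(x_i,r_i)$ are pairwise disjoint, and apply the positivity to the neutralised smeared configuration $\sum_i\rho_i-(\leb$ restricted to $\Lambda_n)$. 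The background and cross terms vanish by the zero-average property, leaving $\sum_{i\neq j}\iint g_n\,d\rho_i\,d\rho_j\ge-\sum_i\iint g_n\,d\rho_i\,d\rho_i$. Writing $g_n=g+(g_n-g)$, subharmonicity of $\norm{\cdot}^{-s}$ (valid since $s>d-2$) gives $\iint g\,d\rho_i\,d\rho_j\ge g(x_i-x_j)$ via the sub-mean-value inequality in each variable, while the smooth correction $g_n-g$ (with derivatives $O(n^{-(s+k)/d})$, by the argument of (iii)) contributes only a second-order error summing to $o(n)$; combined with the upper bound $\iint g\,d\rho_i\,d\rho_j\le(\tfrac12\norm{x_i-x_j})^{-s}$ this lets one compare $\sum_{i\neq j}\iint g_n\,d\rho_i\,d\rho_j$ with $\sum_{i\neq j}g_n(x_i-x_j)=2\Hn(\gamma)$ up to a controlled error, while the diagonal self-energy satisfies $\iint g_n\,d\rho_i\,d\rho_i=O(r_i^{-s})$; the crux is then to absorb $\sum_i r_i^{-s}$ into $\Hn(\gamma)$ itself (a pair at distance $\asymp r$ contributes a matching $\asymp r^{-s}$ \emph{positively}), which a short bootstrap should deliver, yielding $\Hn(\gamma)\ge An$ with $A=A(d,s)\le0$.

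The main obstacle is precisely this last bookkeeping: the self-energy $\sum_i r_i^{-s}$ is not a priori $O(n)$ when points cluster, and the smeared/point comparison is only small \emph{relative to} the pair interactions, so the argument must be organised so that these quantities are dominated by the positive singular part of $\Hn$ --- i.e.\ one is effectively re-deriving a coercivity (Onsager-type) estimate of the kind underlying the renormalised-energy lower bounds for Riesz systems. An alternative route is to use Proposition~\ref{P:def.Hn} to reduce the statement to the lower bound $\Hback N\ge-CN$ for periodic point configurations with $N$ points in $\Lambda_N$, which is available in the literature on periodic long-range energies.
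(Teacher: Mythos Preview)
Your arguments for (ii) and (iii) are correct and essentially match the paper's: (iii) is proved as part (i) of Lemma~\ref{L:majoration.n} by the same isolation of the $k=0$ term and direct estimation of the tail, and (ii) is not spelled out in the paper but follows, as you say, from the $n^{1/d}\ZZd$-periodicity of $g_n$.

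For (i), your direct smearing argument does not close, and the obstacle you flag is real rather than mere bookkeeping. In the regime $d-1<s<d$ the Riesz kernel is \emph{sub}harmonic, so the mean-value comparison gives $\iint g\,d\rho_i\,d\rho_j\ge g(x_i-x_j)$ --- the smeared interaction dominates the point interaction. This is the wrong direction: from the positivity bound $\sum_{i\ne j}\iint g_n\,d\rho_i\,d\rho_j\ge -\sum_i\iint g_n\,d\rho_i\,d\rho_i$ you cannot pass downward to $\sum_{i\ne j}g_n(x_i-x_j)$ without subtracting a nonnegative error that, by your own two-sided estimate, is of size $(2^s-1)\cdot 2H(\gamma)$. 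Together with $\sum_i r_i^{-s}\lesssim H(\gamma)$ you obtain only $H_n(\gamma)\ge -C\,H(\gamma)-o(n)$, and since $H(\gamma)\ge 0$ has no a priori upper bound this is vacuous; trying to replace $H$ by $H_n$ via (iii) introduces an $O(n^{2-s/d})$ term which is superlinear. No bootstrap rescues this: the coercivity you are trying to extract is precisely the statement you want to prove.

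The paper's route --- which is exactly your ``alternative route'' --- sidesteps the issue by splitting $g=g^{(1)}+g^{(2)}$ with $g^{(1)}(x)=(1+\norm{x}^2)^{-s/2}$ bounded and of positive type and $g^{(2)}=g-g^{(1)}$ nonnegative and integrable. Boundedness of $g^{(1)}$ means positive-definiteness can be applied directly to the signed \emph{point} measure $\gamma-\leb_{\Lambda_n}$ (no smearing), and the diagonal self-energy is exactly $\tfrac12 g^{(1)}(0)\,|\gamma|=\tfrac12|\gamma|$; the $g^{(2)}$ part is handled by dropping its nonnegative pair terms and bounding the integrable cross terms. This yields $\Hback{N}(\cdot)\ge -C N$ uniformly, and then Proposition~\ref{P:def.Hn} transfers the bound to $\Hn$. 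The missing idea in your main attempt is thus this decomposition, which separates the singularity (handled by positivity of $g^{(2)}$) from the long range (handled by positive-definiteness of the bounded $g^{(1)}$).
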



Property \ref{P:properties.Hn.stable} corresponds to the so-called \emph{stability property} and is inherited from $\Hback n$. It is important to notice that it occurs only under the \emph{charge balancing condition} $\vert\gamma\vert = \leb{}(\Lambda_n)$. Property (ii) is the \emph{invariance under torus translations} of $\Hn$, which also holds for $g_n$. Finally, Property (iii) expresses that for large $n$, the periodized potential $g_n$ looks like $g$.


\subsection{From finite volume Gibbs measures to $\beta$-Circular Riesz gases}

For $\Lambda$ a bounded Borel subset of $\RRd$ such that $\leb{}(\Lambda)>0$ and $N$ an integer, let $\Bin {\Lambda}N$ be the \emph{binomial point process} on $\Conf$ which consists of throwing $N$ independent and uniformly distributed points in $\Lambda$. For an integer $n \geq 1$ and $\beta > 0$ a parameter called the \emph{inverse temperature}, we define the canonical partition function as  
\[
	Z^\beta_n = \int e^{-\beta H_n(\gamma)} \Bin {\Lambda_n}n(d\gamma).
\]

\begin{lem}\label{L:fonction.part}
For all $\beta > 0$, there exists two constants $0 < a_\beta < b_\beta$ such that for all integer $n\geq 1$
\[
	a_\beta^n \leq \Zbn \leq b_\beta^n.
\]
\end{lem}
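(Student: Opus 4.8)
The plan is to prove the two bounds independently. The upper bound is immediate from stability: under $\Bin{\Lambda_n}{n}$ the configuration $\gamma$ consists of exactly $n$ points almost surely, hence satisfies the charge balancing condition $\abs{\gamma}=\leb(\Lambda_n)=n$, so Proposition~\ref{P:properties.Hn}\ref{P:properties.Hn.stable} gives $\Hn(\gamma)\ge An$ with $A\le 0$. Therefore $e^{-\beta \Hn(\gamma)}\le e^{-\beta An}$ almost surely, and since $\Bin{\Lambda_n}{n}$ is a probability measure we get $\Zbn\le e^{-\beta An}=(e^{-\beta A})^n$. We set $b_\beta:=\max\{2,\,e^{-\beta A}\}$, which is finite and $\ge 2$ since $-\beta A\ge 0$.

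For the lower bound, the key point is that the Jellium renormalization built into $g_n$ forces the $\Bin{\Lambda_n}{n}$-average of $\Hn$ to vanish, after which Jensen's inequality finishes the job. Writing $\gamma=\{X_1,\dots,X_n\}$ with $X_1,\dots,X_n$ i.i.d.\ uniform on $\Lambda_n$, exchangeability together with the $n^{1/d}\ZZd$-periodicity of $g_n$ (for which $\Lambda_n$ is a fundamental domain) gives
\[
	\E{\Bin{\Lambda_n}{n}}(\Hn)=\binom n2\,E\big(g_n(X_1-X_2)\big)=\binom n2\,\frac1{n^2}\iint_{\Lambda_n^2}g_n(x-y)\,dx\,dy=\frac1n\binom n2\int_{\Lambda_n}g_n .
\]
The crucial claim is that $\int_{\Lambda_n}g_n=0$; granting it, $\Hn\in L^1(\Bin{\Lambda_n}{n})$ with zero mean, and Jensen's inequality applied to the convex map $t\mapsto e^{-\beta t}$ yields $\Zbn=\E{\Bin{\Lambda_n}{n}}\big(e^{-\beta \Hn}\big)\ge e^{-\beta\,\E{\Bin{\Lambda_n}{n}}(\Hn)}=1$. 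We then take $a_\beta:=\tfrac12$, so that $0<a_\beta<b_\beta$ and $a_\beta^n\le 1\le\Zbn\le b_\beta^n$.

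It remains to justify the equality $\int_{\Lambda_n}g_n=0$ together with the absolute convergence used above; this is the only delicate step, and it is where the obstacle lies. Writing $g_n=\sum_{k\in\ZZd}a_k$ with $a_k(x)=g(x+kn^{1/d})-\frac1n\int_{\Lambda_n}g(y+kn^{1/d})\,dy$, the term $a_0$ lies in $L^1(\Lambda_n)$ because $s<d$ makes $g$ integrable near the origin. For $k\neq 0$, every point $x+kn^{1/d}$ with $x\in\Lambda_n$ has sup-norm at least $n^{1/d}/2$, so $g$ is $C^1$ on the segment joining $x+kn^{1/d}$ and $y+kn^{1/d}$; a mean-value estimate using $\norm{\nabla g(z)}=s\norm z^{-s-1}$ and the lower bound $\norm z\ge\tfrac12\norm k\,n^{1/d}$ along that segment gives $\abs{a_k(x)}\le C_{d,s}\,n^{-s/d}\norm k^{-s-1}$ uniformly in $x\in\Lambda_n$, hence $\int_{\Lambda_n}\abs{a_k}\le C_{d,s}\,n^{1-s/d}\norm k^{-s-1}$. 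Since $\sum_{k\neq 0}\norm k^{-s-1}<\infty$ precisely because $s>d-1$, we obtain $\sum_k\int_{\Lambda_n}\abs{a_k}<\infty$; Fubini--Tonelli then permits term-by-term integration, and $\int_{\Lambda_n}a_k=\int_{\Lambda_n}g(x+kn^{1/d})\,dx-n\cdot\frac1n\int_{\Lambda_n}g(y+kn^{1/d})\,dy=0$ for every $k$, so $\int_{\Lambda_n}g_n=0$. Thus the main (and essentially only) difficulty is this absolute-convergence bookkeeping, in which the two-sided hypothesis $d-1<s<d$ is used in full: $s<d$ for local integrability of $g$ near its singularity, and $s>d-1$ for summability of the periodized tail, exactly as in the convergence stated in Proposition~\ref{P:def.Hn}.
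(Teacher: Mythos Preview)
Your proof is correct. The upper bound via stability is identical to the paper's. For the lower bound, however, you take a genuinely different and considerably shorter route.

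The paper obtains the lower bound by restricting the integral to a set $\Conf_{\delta,n}$ of \emph{perturbed lattice} configurations, proving a two--sided energy estimate $|H_n(\gamma)|\le cn$ on that set (Proposition~\ref{P:energy.perturbed.lattice}, about a page of work), and then estimating its $\Bin{\Lambda_n}{n}$-probability from below by $n!(\delta/n)^n$ (Lemma~\ref{L:perturbed.lattice.bin}). This yields $\Zbn\ge(\delta e^{-(\beta c+1)})^n$.

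Your argument instead exploits the built--in renormalization of $g_n$: you show $\int_{\Lambda_n} g_n=0$ by termwise integration (the Fubini justification using $s>d-1$ for the tail and $s<d$ for the singularity is exactly right), and then the torus periodicity of $g_n$ --- which is equivalent to Proposition~\ref{P:properties.Hn}\ref{P:properties.Hn.inv} applied to two--point configurations --- gives $\E{\Bin{\Lambda_n}{n}}(H_n)=0$. Jensen finishes with the sharper bound $\Zbn\ge 1$, uniformly in $n$.

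What each approach buys: the paper's method produces an explicit family of configurations with controlled energy, a tool that can in principle be reused for other estimates (though in this paper it is not). Your method is much more economical and, incidentally, supplies a proof of the identity $\int_{\Lambda_n} g_n=0$ that the paper later invokes without justification in Section~2.4.1 when computing $\E{\Pbn}(h_n(0,\cdot))=0$. The only point you might make more explicit is the $L^1$ bound $\E{\Bin{\Lambda_n}{n}}|H_n|<\infty$ needed for Jensen; this follows from the same periodicity together with $|g_n(x)-g(x)|\le cn^{-s/d}$ on $\Lambda_n$ (Proposition~\ref{P:properties.Hn}\ref{P:properties.Hn.control}) and the local integrability of $g$.
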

The lemma claims that the logarithm of the partition function behaves as the volume of the system which is usual and expected for stable system in statical physics. We are now able to define the finite volume canonical Gibbs measures.

\begin{defin} For an integer $n \geq 1$ and a real number $\beta >0$, the canonical Gibbs measure in the finite volume $\Lambda_n$ with energy $\Hn$ and inverse temperature $\beta$ is the probability measure on $\Conf_{\Lambda_n}$ defined by
\[
	\Pbn = \frac 1\Zbn e^{-\beta \Hn} \Bin {\Lambda_n} n.
\]
\end{defin}
The term canonical, coming from statistical physics, refers to the fact that for a finite system, the number of particles is fixed. The next step involves the thermodynamic limit when $n$ goes to infinity. 
\begin{prop}\label{P:accumulation.points}The set $\{\Pbn\}_{n\geq 1}$ is sequentially compact for the local topology. Any accumulation point is a stationary probability measure on $\Conf$ with intensity equal to 1, called \emph{$\beta$-Circular Riesz gas}.  
\end{prop}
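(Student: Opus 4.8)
The plan is to derive both assertions from the general entropy machinery for point processes, the only model‑specific input being an upper bound of order $n$ on the relative entropy of $\Pbn$ with respect to the Poisson process. Here $\Pbn$ is viewed as a point process on $\Conf$ through the identification of the torus with its fundamental domain $\Lambda_n$; equivalently, one may pass to its stationarization $\overline{\P}^\beta_n$, the law of $\tau_U(\gamma + n^{1/d}\ZZd)$ where $\gamma$ is distributed according to $\Pbn$ and $U$ is independent and uniform on $\Lambda_n$. Using the invariance of $\Hn$ under torus translations (Proposition~\ref{P:properties.Hn}\ref{P:properties.Hn.inv}) and the fact that $\gamma$ carries exactly $n$ points in the volume‑$n$ cell $\Lambda_n$, the process $\overline{\P}^\beta_n$ is exactly stationary with intensity $1$, and for $n$ large its restriction to a fixed bounded Borel set coincides with that of $\Pbn$; hence the two sequences have the same accumulation points in the local topology, and it suffices to study the $\overline{\P}^\beta_n$.

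For the entropy bound, write $\pi_n$ for the Poisson process of intensity $1$ on $\Lambda_n$. Since $\Bin{\Lambda_n}n$ is $\pi_n$ conditioned on $\{N_{\Lambda_n}=n\}$,
\[
	\frac{d\Pbn}{d\pi_n} \;=\; \frac{1}{\Zbn}\,\frac{n!}{e^{-n}n^n}\,e^{-\beta\Hn}\,\mathbf{1}_{\{N_{\Lambda_n}=n\}},
\]
so that
\[
	\mathcal{I}(\Pbn \mid \pi_n) \;=\; -\log\Zbn \;-\; \beta\int \Hn\, d\Pbn \;+\; \log\frac{n!}{e^{-n}n^n}.
\]
By Lemma~\ref{L:fonction.part} the first term is at most $n\log(1/a_\beta)$; by the stability estimate of Proposition~\ref{P:properties.Hn}\ref{P:properties.Hn.stable}, valid $\Pbn$‑almost surely because the charge‑balancing condition $N_{\Lambda_n}=n$ then holds, the second term is at most $\beta\abs{A}\,n$; and by Stirling the last term is $O(\log n)$. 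Hence $\mathcal{I}(\Pbn\mid\pi_n)\le Cn$ for a constant $C=C(\beta)$. By super‑additivity of relative entropy with respect to the Poisson process, together with the torus‑translation invariance of $\Pbn$ used to average over the shift $U$, this linear bound becomes a local bound: there is $c>0$ such that, for every bounded Borel set $\Lambda$ and all $n$ large enough, the restriction of $\overline{\P}^\beta_n$ to $\Lambda$ has relative entropy at most $c\,\leb(\Lambda)$ with respect to the Poisson process on $\Lambda$.

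Now the level sets of the specific entropy are sequentially compact for the local topology, and the specific entropy is lower semicontinuous, by the classical thermodynamic‑limit construction for point processes (see \cite{georgii2006canonical, leble2017large}). Consequently one can extract a subsequence $\overline{\P}^\beta_{n_k}$ — equivalently $\Pbnk$ — converging in the local topology to some probability measure $\Pbstar$ on $\Conf$, which then has finite specific entropy. The limit is stationary: each $\overline{\P}^\beta_n$ is stationary and, for a local tame function $f$, the function $f\circ\tau_u$ is again local and tame, so $\int f\circ\tau_u\,d\Pbstar = \lim_k\int f\circ\tau_u\,d\overline{\P}^\beta_{n_k} = \lim_k\int f\,d\overline{\P}^\beta_{n_k} = \int f\,d\Pbstar$ for every $u\in\RRd$. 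Finally $N_{[0,1]^d}$ is a local tame function with $\E{\overline{\P}^\beta_n}(N_{[0,1]^d})=1$ for every $n$, whence $\E{\Pbstar}(N_{[0,1]^d})=1$ and $\Pbstar$ has intensity $1$.

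The main obstacle is the third paragraph's reliance on the point‑process thermodynamic‑limit machinery: converting the linear bound $\mathcal{I}(\Pbn\mid\pi_n)\le Cn$ into a local entropy bound uniform in $n$, and then invoking the compactness and lower semicontinuity of specific‑entropy level sets for the local topology — the analogue of the Georgii–Zessin construction, which one must check is compatible with the periodic boundary conditions defining $\Hn$. The remaining ingredients — the explicit form of $d\Pbn/d\pi_n$, the partition‑function bounds, the stability estimate, the Stirling estimate, the stationarity of the limit and the identification of its intensity — are routine.
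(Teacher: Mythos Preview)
Your approach is essentially the same as the paper's: stationarize $\Pbn$ by periodizing and averaging over a uniform shift, bound the relative entropy of $\Pbn$ with respect to Poisson by $Cn$ via the explicit density together with Lemma~\ref{L:fonction.part} and stability, invoke the Georgii--Zessin compactness of specific-entropy level sets, and finally identify the limit as stationary with intensity~$1$ using that $N_{[0,1]^d}$ is local tame. The only cosmetic difference is that the paper writes the periodized process as independent copies in the cells $\Lambda_n + n^{1/d}u$ and passes directly through the inequality $I(\mathbb{P}^{\beta,\sta}_n)\le n^{-1}I(\Pbn\mid\Pi_{\Lambda_n})$, whereas you phrase this as ``super-additivity giving a local entropy bound''; both are the standard Georgii machinery you cite, and your self-identified ``main obstacle'' is exactly the step the paper handles by quoting~\cite{georgii.zessin}.
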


We do not know if the convergence occurs for the whole sequence $(\Pbn)_{n\geq 1}$. The existence of different accumulation points would correspond to a phase transition phenomenon ensuring the coexistence of different equilibrium states at the microscopic level. This phenomenon is expected for $d\ge2$ and some special values of $\beta$.  In particular, the marginal laws of $\beta$-Circular Riesz gases could be different and they are mainly intractable. Nevertheless the conditional marginal laws are explicit via the DLR equations.
\begin{rem}[$\beta$-Circular Riesz gas with arbitrary intensity]  To construct  point processes with a given intensity $\rho>0$, the finite volume Gibbs point process $\Pbrhon$ has to be built from the reference measure $\Bin {\rho^{-1/d} \Lambda_n}{n}$. According to the scaling property of $g$, the energy becomes  $\Hn^{\rho}(\gamma) = \rho^{s/d} H_n(\rho^{1/d} \gamma)$  
for $\gamma \subset \rho^{-1} \Lambda_n$ with $\vert \gamma \vert = n$.
Therefore, a $\beta$-Circular Riesz gas of intensity $\rho$ is obtained as a dilatation of factor $\rho^{-1/d}$ of a $(\beta\rho^{s/d})$-Circular Riesz gas of intensity 1. 
\end{rem}
\subsection{Canonical DLR equations}

In this section, we give a Gibbsian description of $\beta$-Circular Riesz gases via the so-called canonical DLR equations. For an integer $p \geq 1$, a bounded Borel set $\Delta\subset\RRd$ and two configurations $\eta$ and $\gamma$, we introduce the cost of moving the points of $\eta$ in $\Delta$ from the origin into the field created by $\gamma_{\Lambda_p}$  
\[
	M^{(p)}_\Delta(\eta, \gamma) = \sum_{x\in\eta_\Delta} \sum_{y\in\gamma_{\Lambda_p\setminus\Delta}} \left[ g(x-y) - g(y) \right].
\]

\begin{lem}\label{L:move.existence}
Let $\mathbb{P}$ be a point process with bounded intensity and $\Delta$ be a bounded Borel subset of $\RRd$. Then for $\mathbb{P}$-almost every point configuration $\gamma$ and for every point configuration $\eta$ the following limit exists and is finite 
\[
	M^{\vphantom{(p)}}_\Delta(\eta, \gamma) := \lim_{p\rightarrow +\infty} M^{(p)}_\Delta(\eta, \gamma). 
\]
\end{lem}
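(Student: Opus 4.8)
The plan is to exploit the first-order cancellation encoded in the summand $g(x-y)-g(y)$, which decays like $\norm{y}^{-s-1}$ rather than $\norm{y}^{-s}$, and to pair it with the bounded-intensity hypothesis. Since $\eta\in\Conf$ and $\Delta$ is bounded, $\eta_\Delta$ is a finite set, so it is enough to prove that for each fixed $x\in\Delta$ the quantity $S^{(p)}_x(\gamma):=\sum_{y\in\gamma_{\Lambda_p\setminus\Delta}}\bigl[g(x-y)-g(y)\bigr]$ converges to a finite limit as $p\to+\infty$, and to do so on a single $\mathbb{P}$-full event that depends neither on $x$ nor on $\eta$; the conclusion for $M^{(p)}_\Delta(\eta,\gamma)$ then follows by summing over the finitely many $x\in\eta_\Delta$.

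Two elementary estimates are needed. First, writing $h(z)=\norm{z}^{-s}$, so that $\nabla h(z)=-s\norm{z}^{-s-2}z$ and $g(x-y)=h(y-x)$, the mean value inequality gives for every $x\in\Delta$ and every $y$ with $\norm{y}\ge R:=2D$, where $D:=\sup_{x\in\Delta}\norm{x}$,
\[
\abs{g(x-y)-g(y)}\;\le\;\norm{x}\,\sup_{t\in[0,1]}\abs{\nabla h(y-tx)}\;\le\;s\,2^{s+1}\,D\,\norm{y}^{-s-1}\;=:\;C\,\norm{y}^{-s-1},
\]
using that $\norm{y-tx}\ge\norm{y}/2$ for $t\in[0,1]$ when $\norm{y}\ge 2D$. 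Second, since $s>d-1$ we have $s+1>d$, hence $\int_{\{\norm{y}\ge R\}}\norm{y}^{-s-1}\,\leb(dy)<+\infty$; as $\mathbb{P}$ has intensity bounded by some $\kappa>0$, the first-moment formula (extended to nonnegative measurable integrands by monotone approximation) yields
\[
\E\P\Bigl[\textstyle\sum_{y\in\gamma,\ \norm{y}\ge R}\norm{y}^{-s-1}\Bigr]\;\le\;\kappa\int_{\{\norm{y}\ge R\}}\norm{y}^{-s-1}\,\leb(dy)\;<\;+\infty .
\]
Consequently, on a $\mathbb{P}$-full event $\mathcal{G}$ the number $T(\gamma):=\sum_{y\in\gamma,\ \norm{y}\ge R}\norm{y}^{-s-1}$ is finite; intersecting $\mathcal{G}$ with $\{0\notin\gamma\}$, which is also $\mathbb{P}$-full since $\E\P(N_{\{0\}})\le\kappa\,\leb(\{0\})=0$, we keep the same name $\mathcal{G}$.

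Finally, fix $\gamma\in\mathcal{G}$, an arbitrary $\eta$, and $x\in\eta_\Delta$. By local finiteness of $\gamma$ the set $\{y\in\gamma:\ \norm{y}\le R,\ y\notin\Delta\}$ is finite, and on it $g$ is finite because $0\notin\gamma$; moreover it is contained in $\Lambda_p\setminus\Delta$ as soon as $p\ge(2R)^d$. Hence, for such $p$, $S^{(p)}_x(\gamma)$ equals a fixed real number plus $\sum_{y\in\gamma_{\Lambda_p\setminus\Delta},\ \norm{y}>R}\bigl[g(x-y)-g(y)\bigr]$, a sum whose terms are dominated by $C\,\norm{y}^{-s-1}$ with total mass at most $C\,T(\gamma)<+\infty$, uniformly in $p$; it therefore converges absolutely as $p\to+\infty$. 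Thus $S^{(p)}_x(\gamma)$ has a finite limit, and summing over the finitely many $x\in\eta_\Delta$ shows that $M^{(p)}_\Delta(\eta,\gamma)$ converges to a finite limit $M_\Delta(\eta,\gamma)$. The only substantive point is the decay estimate: without subtracting $g(y)$ the summand would only be $O(\norm{y}^{-s})$, which is \emph{not} summable against a bounded-intensity point process since $s<d$; it is exactly this cancellation, together with $s+1>d$, that forces convergence, and no genuine obstacle is expected beyond these bounds.
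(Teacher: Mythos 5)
Your proof is correct and follows essentially the same route as the paper: the mean-value/gradient estimate turning $g(x-y)-g(y)$ into an $O(\norm{y}^{-(s+1)})$ term, combined with the bounded-intensity hypothesis and $s+1>d$ to get almost sure summability of the tail (and the remark that $0\notin\gamma$ almost surely to keep the near terms finite). The only difference is presentational: you conclude by absolute convergence of the tail via the first-moment formula, whereas the paper bounds the expectation of the tail decomposed over unit-volume shells $\Lambda_{k+1}\setminus\Lambda_k$ and deduces that the partial sums form a Cauchy sequence; the underlying estimates are the same.
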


The quantity $M^{\vphantom{(p)}}_\Delta(\eta, \gamma)$ represents the cost to the points of $\eta$ in $\Delta$ from the origin into the field created by the full configuration $\gamma$.

\begin{thm}\label{T:DLR.equations} Let $\Pb$ be a $\beta$-Circular Riesz gas,  $\Delta$ be a bounded Borel subset of $\RRd$ and $f$ be a bounded measurable function. Then 
\[
	\int f(\gamma) \Pb(d\gamma) = \iint \frac 1{Z^\beta_\Delta(\gamma)}f(\eta \cup \gamma_{\Delta^c}) e^{-\beta(H(\eta) + M_\Delta(\eta, \gamma))} \Bin \Delta {N_\Delta(\gamma)} (d\eta) \Pb(d\gamma),
\]
with the normalisation constant 
\[
	Z^\beta_\Delta(\gamma) = \int e^{-\beta(H(\eta) + M_\Delta(\eta, \gamma))} \Bin \Delta {N_\Delta(\gamma)} (d\eta).
\] 
\end{thm}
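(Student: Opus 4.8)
\emph{Proof proposal.} The plan is the classical one: prove the identity first in finite volume on the torus $\Lambda_n$, where it is elementary, and then pass to the thermodynamic limit along a subsequence realising $\Pbstar$; the substance is in the limit, and in particular in replacing the periodised potential $g_n$ by $g$ inside the conditional energy. For Step 1, fix $\Delta\in\B_b(\RRd)$ and $n$ large enough that $\Delta\subset\Lambda_n$. Since $\Pbn=\Zbn^{-1}e^{-\beta\Hn}\Bin{\Lambda_n}{n}$ and, conditionally on $N_\Delta=k$ and on $\gamma_{\Lambda_n\setminus\Delta}$, the restriction of $\Bin{\Lambda_n}{n}$ to $\Delta$ equals $\Bin{\Delta}{k}$ and is independent of the exterior, the conditional law of $\gamma_\Delta$ given $\gamma_{\Lambda_n\setminus\Delta}$ has $\Bin{\Delta}{N_\Delta(\gamma)}$-density proportional to $\exp(-\beta\Hn(\eta\cup\gamma_{\Lambda_n\setminus\Delta}))$. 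Splitting $\Hn(\eta\cup\zeta)=\sum_{\{x,y\}\subset\eta}g_n(x-y)+\sum_{x\in\eta}\sum_{y\in\zeta}g_n(x-y)+\Hn(\zeta)$ with $\zeta=\gamma_{\Lambda_n\setminus\Delta}$, the $\eta$-free term $\Hn(\zeta)$ cancels against the normalisation; writing $g_n(x-y)=[g_n(x-y)-g_n(y)]+g_n(y)$, the remaining $\eta$-constant contribution $N_\Delta(\gamma)\sum_{y\in\zeta}g_n(y)$ cancels too. This gives the torus version of the claimed equation, with $g$ replaced by $g_n$, $H$ by $\eta\mapsto\sum_{\{x,y\}\subset\eta}g_n(x-y)$, and $M_\Delta(\eta,\gamma)$ by $M^{(n)}_\Delta(\eta,\gamma):=\sum_{x\in\eta_\Delta}\sum_{y\in\gamma_{\Lambda_n\setminus\Delta}}[g_n(x-y)-g_n(y)]$.

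For Step 2, let $\Pbnj\to\Pbstar$ in the local topology. It suffices to treat $f$ local and tame, say $\F_{\Lambda_R}$-measurable, and, after a truncation to $\{N_\Delta\le L\}$ (which costs $\le\normf\,\mathbb P(N_\Delta>L)\to0$, uniformly in $n$ by uniform intensity bounds and at the limit because $\Pbstar$ has finite intensity), we may assume $\abs f\le\normf\,\mathbb 1_{\{N_\Delta\le L\}}$. For $p\ge R$, let $\widehat G^{(p)}$ be the conditional average of $f$ defined as in Step 1 but with the true potential $g$ and the interaction restricted to exterior points in $\Lambda_p$, i.e.\ with conditional energy $H(\eta)+M^{(p)}_\Delta(\eta,\gamma)$; it is $\F_{\Lambda_p}$-measurable, independent of $n$, bounded by $\normf$, hence local and tame. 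Step 1 then gives, for $n>p$,
\[
	\int f\,d\Pbn=\int \widehat G^{(p)}\,d\Pbn+r_{n,p}+s_{n,p},
\]
where $r_{n,p}$ accounts for replacing $g_n$ by $g$ in the conditional energy over the whole torus, and $s_{n,p}$ for truncating the exterior interaction from $\Lambda_n$ to $\Lambda_p$.

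I would then bound $r_{n,p}$ and $s_{n,p}$. Both only perturb the conditional energy, and on $\{N_\Delta\le L\}$ this perturbation is uniformly small, so standard estimates for ratios of Gibbsian densities reduce the task to controlling it. For $r_{n,p}$, the internal part contributes at most $\binom L2\,c\,n^{-s/d}$ by Proposition~\ref{P:properties.Hn}~\ref{P:properties.Hn.control}, while the interaction part is controlled by the sharpened estimate
\[
	\Bigl|\sum_{x\in\eta_\Delta}\sum_{y\in\gamma_{\Lambda_n\setminus\Delta}}\bigl[(g_n(x-y)-g_n(y))-(g(x-y)-g(y))\bigr]\Bigr|\le C(\Delta)\,N_\Delta(\eta)\,n^{(d-1-s)/d},
\]
valid whenever $N_{\Lambda_n}(\gamma)=n$: expanding the periodisation, the reference subtraction removes the $O(n^{-s/d})$ constant, the odd-order lattice-derivative sums vanish by symmetry (killing the would-be dominant, $\gamma$-linear contribution), and the surviving even-order terms are summable uniformly in the order precisely because $s+1>d$. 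Hence $\sup_n\abs{r_{n,p}}\to0$ as $n\to\infty$ for fixed $p$. For $s_{n,p}$, the exterior points in $\Lambda_n\setminus\Lambda_p$ change the conditional energy by at most $\sum_{x\in\eta_\Delta}\sum_{y\in\gamma,\;\norm{y}_\infty>p^{1/d}/2}\abs{g(x-y)-g(y)}\lesssim N_\Delta(\eta)\sum_{y\in\gamma,\;\norm{y}_\infty>p^{1/d}/2}\norm{y}^{-s-1}$, whose $\Pbn$-expectation is $\lesssim p^{(d-1-s)/d}$ uniformly in $n$ — again because $s>d-1$, exactly as in Lemma~\ref{L:move.existence} — so $\sup_n\abs{s_{n,p}}\to0$ as $p\to\infty$. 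Since $\widehat G^{(p)}$ is local and tame, $\int\widehat G^{(p)}\,d\Pbnj\to\int\widehat G^{(p)}\,d\Pbstar$, whence $\bigl|\int f\,d\Pbstar-\int\widehat G^{(p)}\,d\Pbstar\bigr|\le\sup_n\abs{s_{n,p}}$; letting $p\to\infty$, Lemma~\ref{L:move.existence} gives $M^{(p)}_\Delta(\eta,\gamma)\to M_\Delta(\eta,\gamma)$ for $\Pbstar$-a.e.\ $\gamma$ and every $\eta$, while $H(\eta)\ge0$, $M_\Delta(\eta,\gamma)$ finite $\Pbstar$-a.s.\ and $Z^\beta_\Delta(\gamma)>0$ a.s.\ justify dominated convergence (integrand bounded by $\normf$, all measures finite), so $\int\widehat G^{(p)}\,d\Pbstar$ tends to the right-hand side of the theorem; finally $L\to\infty$.

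The main obstacle is the sharpened estimate entering $r_{n,p}$: the crude bound $\abs{g_n-g}=O(n^{-s/d})$ of Proposition~\ref{P:properties.Hn}~\ref{P:properties.Hn.control} is useless here, since summed against the $\Theta(n)$ exterior points of the torus it produces an error of order $n^{1-s/d}\to\infty$; one must exhibit the cancellation coming from the reference subtraction together with lattice symmetry, and it is exactly at this point — as in Lemma~\ref{L:move.existence}, and as in the whole restriction to $d-1<s<d$ — that the hypothesis $s>d-1$ is used. (The routine pieces are the moment/tail control of $N_\Delta$ under $\Pbn$ needed for the $L$-truncation, and the elementary density-ratio inequalities; these I would take from the uniform estimates underlying Proposition~\ref{P:accumulation.points}.)
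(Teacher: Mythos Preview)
Your overall architecture is exactly the paper's: finite-volume DLR on the torus (their Proposition~\ref{P:DLR.finite.vol}), then a six-term splitting that matches your $r_{n,p}$, $s_{n,p}$ and the two local-convergence terms (their (a)--(f) in Section~2.3.3), controlled respectively by Propositions~\ref{P:DLRn.control} and~\ref{P:DLRp.control}; the $L$-truncation on $N_\Delta$ and the passage from local bounded $f$ to general bounded $f$ are handled the same way.

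The one point where your sketch diverges is the justification of what you call the ``sharpened estimate'' for $r_{n,p}$. The bound you state,
\[
\bigl|(g_n(y-x)-g_n(y))-(g(y-x)-g(y))\bigr|\le c_\Delta\, n^{-(s+1)/d}
\]
uniformly for $x\in\Delta$, $y\in\Lambda_n$, is correct and is exactly the paper's Lemma~\ref{L:majoration.n}(ii). But the reason is not a symmetry cancellation of ``odd-order lattice-derivative sums'': for a generic $y\in\Lambda_n$ the first-order term $\sum_{u\ne 0}\langle x,\nabla g(y+un^{1/d})\rangle$ does \emph{not} vanish by $u\mapsto -u$. The paper's argument is simpler and does not need any cancellation: the left-hand side equals $\sum_{u\ne 0}\bigl[\|y-x+un^{1/d}\|^{-s}-\|y+un^{1/d}\|^{-s}\bigr]$, and by the mean value theorem each summand is bounded by $s\|x\|\int_0^1\|y-tx+un^{1/d}\|^{-(s+1)}\,dt\le C_\Delta (n^{1/d}\|u\|)^{-(s+1)}$; summing over $u\ne 0$ converges precisely because $s+1>d$. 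Summed over the $\le n$ exterior points this gives your $N_\Delta(\eta)\,n^{(d-1-s)/d}$, which tends to $0$ since $s>d-1$. So your identification of ``the main obstacle'' and of where $s>d-1$ enters is right, but the mechanism you describe should be replaced by this direct gradient bound.
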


The DLR equations claim that the behaviour of the points in a bounded set given an exterior configuration is gibbsian, with an energy coming from the interaction between the points and the cost of moving them from the origin to their positions in the field created by the exterior. 

Given a bounded measurable function $f$ and a bounded Borel set $\Delta$, it is useful to use the notation 
\[
	f_\Delta(\gamma) = \int \frac 1{Z^\beta_\Delta(\gamma)}f(\eta \cup \gamma_{\Delta^c}) e^{-\beta(H(\eta) + M_\Delta(\eta, \gamma))} \Bin \Delta {N_\Delta(\gamma)} (d\eta).
\]
The DLR equations can be reformulated as follow
\[
	\E \Pb (f) = \E \Pb (f_\Delta).
\]

\begin{rem}[canonical DLR equations as conditional expectations]\label{r.esperance.cond} Introducing the {$\sigma$-algebra} $\mathcal{E}_\Delta = \sigma(N_\Delta, \F_{\Delta^c})$, canonical DLR equations can be reformulated in terms of conditional expectations. Let $f$ be a bounded measurable function and $g$ be a bounded $\mathcal{E}_\Delta$-mesurable function. We have $(f\cdot g)_\Delta = f_\Delta \cdot g$ and Theorem~\ref{T:DLR.equations} gives  $\E \Pb(f\cdot g) = \E \Pb(f_\Delta \cdot g)$, which means that $f_\Delta = \E \Pb(f \mid \mathcal{E}_\Delta)$.
\end{rem}
At this point, it seems natural to ask if we can have a conditional description of the marginal law based only on $\F_{\Delta^c}$, or in other words, to ask how the number of points in $\Delta$ depends on the exterior. The number rigidity corresponds to an extreme case. 

\subsection{Non Number-Rigidity}

\begin{defin}\label{D:number.rigidity} A point process $\mathbb{P}$ is said \emph{number-rigid} if for any bounded Borel set $\Delta$, there exists a measurable function $F_\Delta$ such that for $\mathbb{P}$-almost every point configuration $\gamma$ we have $N_\Delta(\gamma) = F_\Delta(\gamma_{\Delta^c})$.
\end{defin}
In particular, for a number-rigid point process, the canonical DLR equation on the set $\Delta$ would express the conditional law with respect to $\F_{\Delta}$ in place of $\mathcal{E}_\Delta$ (see remark \ref{r.esperance.cond}). 
In the range $s\in(d-1,d)$ we exhibit a $\beta$-Circular Riesz gas which is not number-rigid. 

\begin{thm}\label{T:point.deletion}
There exists a $\beta$-Circular Riesz gas $\Pb_\star$ such that for any compact subset $\Delta\subset \RRd$  with $\leb{}(\Delta) > 0$, then for $\Pb_\star$-a.e. $\gamma$ and every integer $k$ 
\[
	\Pb_{\star}(N_\Delta = k \mid \F_{\Delta^c})(\gamma) > 0.
\]
In particular, $\Pb_\star$ is not Number-Rigid.
\end{thm}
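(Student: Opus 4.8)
I would build $\Pb_\star$ as a careful accumulation point of the finite-volume measures $\Pbn$ and then use the canonical DLR equations (Theorem~\ref{T:DLR.equations}) to get a \emph{quantitative lower bound} on the conditional probability $\Pb_\star(N_\Delta=k\mid \mathcal E_\Delta)$, after which I would also have to upgrade conditioning from $\mathcal E_\Delta$ to $\F_{\Delta^c}$. The heart of the argument is the following observation: conditionally on the outside configuration $\gamma_{\Delta^c}$ \emph{and} on $N_\Delta(\gamma)=m$, the law of the points in $\Delta$ has the explicit Gibbsian form $\propto e^{-\beta(H(\eta)+M_\Delta(\eta,\gamma))}\Bin\Delta m(d\eta)$. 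One then wants to compare, for two values $m$ and $m'=m\pm1$, the total masses $Z^\beta_\Delta(\gamma)$ of these two conditional laws and show the ratio is bounded away from $0$ and $\infty$ by a constant depending only on $\Delta$ and on the local behaviour of $\gamma_{\Delta^c}$ near $\Delta$. Since $s<d$, adding or removing one point costs only a \emph{finite} amount of energy even in the presence of the infinite exterior configuration — this is exactly what $M_\Delta$ being finite ($\Pb_\star$-a.s., by Lemma~\ref{L:move.existence}) encodes, and it is why the non-integrability at infinity is not an obstruction here: the long-range tail contributes to $M_\Delta$ a convergent sum, not a divergent one.

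More concretely, I would proceed in these steps. First, fix a reference compact box $\Delta_0$ and, for each $n$, note that under $\Pbn$ the conditional law of $N_{\Delta_0}$ given $\mathcal E_{\Delta_0}$ charges a whole range of integers; I would extract a subsequence along which $\Pbn\to\Pb_\star$ in the local topology \emph{and} along which one keeps enough control (e.g. uniform integrability of $N_{\Delta_0}$, or of the energy) to pass the DLR-type identities to the limit. Second, using Theorem~\ref{T:DLR.equations} for $\Pb_\star$ on $\Delta$, write
\[
\Pb_\star(N_\Delta=k\mid\mathcal E_\Delta)(\gamma)=\mathbf 1_{N_\Delta(\gamma)=k},
\]
which is useless as stated — so instead I would apply the DLR equation on a \emph{larger} set $\Delta'\supset\Delta$ and integrate out: conditionally on $\gamma_{(\Delta')^c}$ and on $N_{\Delta'}(\gamma)=M$, the number of points falling in $\Delta$ is governed by the explicit density on $\Conf_{\Delta'}$, and I would lower-bound the probability of the event $\{N_\Delta=k\}$ (for any $0\le k\le M$) by exhibiting an explicit configuration with $k$ points in $\Delta$ and the rest in $\Delta'\setminus\Delta$ and estimating its energy plus moving cost from above. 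The key inequality is that $H(\eta)+M_{\Delta'}(\eta,\gamma)$ is bounded above on a set of $\eta$ of positive $\Bin{\Delta'}{M}$-measure, uniformly in the relevant $\gamma$'s, which gives a strictly positive lower bound for $\Pb_\star(N_\Delta=k\mid \mathcal E_{\Delta'})$, hence a.s.\ positivity. Third, to reach \emph{any} integer $k$ (including $k>M$), I would instead move points \emph{in} from outside: this is where ``moving points toward/from infinity'' enters. One uses that for $\Pb_\star$-a.e.\ $\gamma$, the exterior has points arbitrarily far out, and relocating finitely many of them into $\Delta$ changes the total energy by a finite amount (again controlled by the convergence in Lemma~\ref{L:move.existence}); formally one runs the DLR equation on an increasing sequence of boxes and takes a limit. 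Fourth, upgrade from $\mathcal E_\Delta$ to $\F_{\Delta^c}$: since $\mathcal E_\Delta=\sigma(N_\Delta,\F_{\Delta^c})$, the event $\{\Pb_\star(N_\Delta=k\mid\mathcal E_{\Delta'})>0\}$ for every $\Delta'\supset\Delta$ forces, by a martingale/tower argument letting $\Delta'\uparrow\RRd$, the bound $\Pb_\star(N_\Delta=k\mid\F_{\Delta^c})>0$ a.s.; the ``in particular'' then follows because a number-rigid process would have $\Pb_\star(N_\Delta=k\mid\F_{\Delta^c})\in\{0,1\}$.

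\textbf{Main obstacle.} The genuinely delicate point is controlling the moving cost $M_{\Delta'}(\eta,\gamma)$ \emph{uniformly} over a positive-measure set of interior configurations $\eta$ and for $\Pb_\star$-a.e.\ exterior $\gamma$, while also passing these bounds through the thermodynamic limit $n\to\infty$. Lemma~\ref{L:move.existence} gives $M_\Delta(\eta,\gamma)$ finite pointwise, but one needs an a.s.\ \emph{local} bound of the form $|M_{\Delta'}(\eta,\gamma)|\le C(\gamma)(1+N_{\Delta'}(\eta))$ with $C(\gamma)$ finite a.s., and one must ensure the construction of $\Pb_\star$ (choice of subsequence, possibly a diagonal argument over a sequence of boxes $\Delta'_j\uparrow\RRd$) is compatible with all these estimates simultaneously — this is the step that really exploits $s>d-1$ (so that the periodized/compensated energy is well-behaved and the relevant sums converge at the right rate, cf.\ Proposition~\ref{P:properties.Hn}\ref{P:properties.Hn.control}) and $s<d$ (so that single-point modifications have finite cost). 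I would expect the bookkeeping around the two-parameter limit ($n\to\infty$ for the Gibbs limit, then $\Delta'\uparrow\RRd$ for the conditioning) to be the most technical part of the write-up.
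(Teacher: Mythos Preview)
Your proposal has a genuine gap at the ``upgrading'' step, and it is not a technicality: the approach as written cannot distinguish a number-rigid process from a non-rigid one.

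Steps 2--3 of your plan essentially reprove what is already available from the canonical DLR equations: for $\Pbstar$-a.e.\ $\gamma$ and every $k\le N_{\Delta'}(\gamma)$ one has $\Pbstar(N_\Delta=k\mid\mathcal E_{\Delta'})(\gamma)>0$. This holds for \emph{any} process satisfying canonical DLR equations, in particular for the \sineb{} process, which \emph{is} number-rigid. So this information alone cannot yield the conclusion. Your step 4 is where the argument breaks: as $\Delta'\uparrow\RRd$ the $\sigma$-algebras $\mathcal E_{\Delta'}=\sigma(N_{\Delta'},\F_{(\Delta')^c})$ form a \emph{decreasing} filtration with intersection $\mathcal E_\infty=\bigcap_m\mathcal E_{\Lambda_m}$, and reverse martingale convergence gives $\Pbstar(N_\Delta=k\mid\mathcal E_{\Delta'})\to\Pbstar(N_\Delta=k\mid\mathcal E_\infty)$, not $\Pbstar(N_\Delta=k\mid\F_{\Delta^c})$. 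There is no tower relation between $\mathcal E_{\Delta'}$ and $\F_{\Delta^c}$ that would produce the latter. For a rigid process the limit $\Pbstar(N_\Delta=k\mid\mathcal E_\infty)$ is strictly positive while $\Pbstar(N_\Delta=k\mid\F_{\Delta^c})\in\{0,1\}$; your scheme gives no leverage on this discrepancy.

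The paper's mechanism is different and uses the range $d-1<s<d$ in an essential way. First, $\Pbstar$ is built along a subsequence $(n_j)$ for which $E_{\Pbnj}(|h_{n_j}(0,\cdot)|)\le K$ uniformly (a non-trivial partition-function estimate). This is much more specific than ``uniform integrability of $N_{\Delta_0}$''. Second, the key inequality is obtained not by enlarging $\Delta$ but by applying finite-volume DLR on $V=\Delta\cup(\Delta+u)$ for a \emph{far} translate and \emph{swapping} the configurations between $\Delta$ and $\Delta+u$: the local-energy bound guarantees the energy cost of the swap is controlled by a constant $C^{k,l}_{\Delta,\varepsilon}$ \emph{independent of $u$}. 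Third, one passes to the local limit, replaces $1_{N_\Delta=k}$ by $\Pbstar(N_\Delta=k\mid\F_{\Delta^c})$ via conditioning, and then averages over $u=mu_0$, $m=1,\dots,M$; the ergodic theorem turns $\frac1M\sum 1_{N_{\Delta+mu_0}=l}$ into $\Pbstar(N_\Delta=l\mid\mathcal I)$. Applying the resulting inequality to $f=1_{A_{k,l}}$ with $A_{k,l}=\{\Pbstar(N_\Delta=k\mid\F_{\Delta^c})>0,\ \Pbstar(N_\Delta=l\mid\F_{\Delta^c})=0\}$, together with $\Pbstar(N_\Delta=l\mid\mathcal I)>0$ a.s., forces $\Pbstar(A_{k,l})=0$. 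The swap-with-a-far-translate is the correct formalisation of ``moving points to infinity''; enlarging the DLR window is not.
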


\begin{proof}
The main result is proved Section 3.3, we only provide here the non-number rigidity. If we assume that $\Pb_\star$ is number-rigid then $\Pbstar$-almost surely, the function $N_\Delta$ is equal to a  $\F_{\Delta^c}$-mesurable function $F_\Delta$. Hence for $\Pb_\star$-almost every $\gamma$ and for every $k\geq 1$ we have
\[
	\Pb_{\star}(N_\Delta = k \mid \F_{\Delta^c})(\gamma) = E_{\Pb_\star}\left[1_{F_\Delta = k} \mid \F_{\Delta^c}\right](\gamma) = 1_{F_\Delta = k}(\gamma) =1_{N_\Delta = k}(\gamma) > 0,
\]
which is impossible.
\end{proof}

\begin{rem}[Deletion tolerance of $\Pbstar$] The previous result is related to the notion of \emph{deletion tolerance} for point processes, especially investigated in \cite{holroyd2013}. A point process $\Gamma$ (here viewed as a random variable) is said deletion tolerant if for every random variable $X$ such that $X\in\Gamma$ almost surely, the law of the new point process $\Gamma \setminus X$ is absolutely continuous with respect to the law of the point process $\Gamma$. Applying  Theorem~1 of \cite{holroyd2013} and Theorem~\ref{T:point.deletion} shows that the point process  $\Pb_\star$ is deletion tolerant. 
\end{rem}

In \cite{DLRsinebeta} they give a sense to the canonical DLR equations in the case $d=1$ and $s = d-1 = 0$ (which correspond to the logarithmic potential) and show that a point processes which satisfies the canonical DLR equations has to be number-rigid. Our result seems to suggest that all point processes (not only $\Pbstar$) satisfying the DLR equations for $d-1 < s < d$ are not number rigid. We conjecture that the number-rigidity should occur for $s \le d-1$ at any dimension $d$ (in particular for the Coulomb case $s=d-2$). But define DLR equations and exhibit a point process that satisfies them for $s \leq d-1$ and $d \geq 2$ looks challenging and has not been achieved yet. 
\subsection{Grand canonical DLR equations}\label{S:grand.canonical.DLR}
We investigate more precisely the point process $\Pbstar$ introduced in  Theorem \ref{T:point.deletion}. The next proposition provides a way to define the local energy of a point in a configuration. 
\begin{prop}\label{P:local.energy}
There exists a sequence $(\Cbstarp)_{p\geq 1}$  of measurable functions from $\NN \times \Conf$ to $\RR$ such that for $\Pbstar$-almost every configuration $\gamma$ the following limit exists and is finite  
\[
	\hbstar(x,\gamma) := \lim_{p\rightarrow +\infty} \sum_{y \in \gamma_{\Lambda_p}} g(y-x) + \Cbstarp(N_{\Lambda_p}(\gamma), \gamma_{\Lambda_p^c}). 
\]
\end{prop}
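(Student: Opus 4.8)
The plan is to realise $\hbstar(x,\gamma)$ as the renormalised cost of inserting a point at $x$ into $\gamma$, with the non-integrable divergence absorbed by a neutralising background whose interaction with $x$ depends on $\gamma_{\Lambda_p}$ only through its cardinality. First I would reduce to the case $x=0$ by means of the moving-to-the-origin machinery. Fix $q$ large enough that $x\in\Lambda_q$. Since $\Pbstar$ has intensity $1$, hence bounded intensity, Lemma~\ref{L:move.existence} applied with $\Delta=\Lambda_q$ and $\eta=\{x\}$ gives that, for $\Pbstar$-a.e.\ $\gamma$,
\[
	M_{\Lambda_q}(\{x\},\gamma)=\lim_{p\to\infty}\sum_{y\in\gamma_{\Lambda_p\setminus\Lambda_q}}\bigl(g(y-x)-g(y)\bigr)
\]
exists and is finite. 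Writing
\[
	\sum_{y\in\gamma_{\Lambda_p}}g(y-x)=\sum_{y\in\gamma_{\Lambda_q}}g(y-x)+\sum_{y\in\gamma_{\Lambda_p\setminus\Lambda_q}}\bigl(g(y-x)-g(y)\bigr)+\sum_{y\in\gamma_{\Lambda_p\setminus\Lambda_q}}g(y),
\]
the first term is constant in $p$ and finite for $x\notin\gamma$ (the only relevant case) and the second converges; hence it suffices to produce a measurable $\Cbstarp$, \emph{independent of $q$ and of $x$}, such that $\sum_{y\in\gamma_{\Lambda_p\setminus\Lambda_q}}g(y)+\Cbstarp(N_{\Lambda_p}(\gamma),\gamma_{\Lambda_p^c})$ converges $\Pbstar$-a.s. — changing $q$ perturbs this only by the $p$-independent finite quantity $\sum_{y\in\gamma_{\Lambda_{q'}\setminus\Lambda_q}}g(y)$.

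For the counterterm I would take the Jellium constant $\Cbstarp(N,\zeta):=-N\,c_p$, where $c_p:=\leb(\Lambda_p)^{-1}\int_{\Lambda_p}g(z)\,dz=p^{-s/d}\int_{[-1/2,1/2]^d}\norm{w}^{-s}\,dw$; this uses only the integer slot, the $\Conf$-slot being left available for the refinement $\Cbstarp(N,\zeta)=-\,E_{\Pbstar}\bigl[\sum_{y\in\gamma'_{\Lambda_p}}g(y)\,\big|\,N_{\Lambda_p}=N,\ \gamma'_{\Lambda_p^c}=\zeta_{\Lambda_p^c}\bigr]$ should an exact cancellation be needed. Heuristically $-N_{\Lambda_p}(\gamma)c_p$ is the interaction of $x$ with a uniform charge of density $N_{\Lambda_p}(\gamma)/\leb(\Lambda_p)$ spread over the translated box $x+\Lambda_p$, which is why it does not depend on $x$. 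Then
\[
	\sum_{y\in\gamma_{\Lambda_p}}g(y)-N_{\Lambda_p}(\gamma)c_p=\int_{\Lambda_p}\bigl(g(z)-c_p\bigr)\,dz+\int_{\Lambda_p}\bigl(g(z)-c_p\bigr)\bigl(\gamma(dz)-dz\bigr).
\]
The deterministic term vanishes by the very choice of $c_p$ (with the $q$-truncation it converges to $-\int_{\Lambda_q}g$), and the sole reappearance of $x$ occurs through $\int_{\Lambda_p}\bigl(g(z-x)-g(z)\bigr)\,dz$, which is supported on a boundary layer of $\Lambda_p$ of width $\norm{x}$ and is $O\bigl(\norm{x}\,p^{(d-1-s)/d}\bigr)\to0$ precisely because $s>d-1$; this is exactly where the regime $s\in(d-1,d)$ is needed.

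The remaining point, and the genuine obstacle, is to show that the weighted fluctuation $F_p(\gamma):=\int_{\Lambda_p}(g(z)-c_p)\bigl(\gamma(dz)-dz\bigr)$ converges $\Pbstar$-a.s., and this is where the Gibbsian structure must enter. I would decompose $\Lambda_p$ into dyadic shells $A_j$ (radius $R_j$, volume $\asymp R_j^{\,d}$), split the weight on each $A_j$ into its $A_j$-average plus a mean-zero oscillation of size $\lesssim R_j^{-s}$, and bound the second moment of the contribution of $A_j$ by $R_j^{-2s}$ times the number variance of $\Pbstar$ at scale $R_j$. A bound $\mathrm{Var}_{\Pbstar}(N_B)\lesssim\leb(B)^{\theta}$ with $d\theta<2s$ then makes $\sum_j R_j^{-2s}\leb(A_j)^{\theta}$ summable, so Kolmogorov's two-series theorem gives a.s.\ convergence of $F_p$ along $p=R_j^{\,d}$, and a maximal inequality based on the same estimate upgrades this to the full sequence; a merely linear bound $\theta=1$ suffices when $d\ge2$ (since then $s>d-1\ge d/2$), while for $d=1$ one uses the sharper hyperuniform exponent $\theta=s/d$. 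The variance input for $\Pbstar$ I would obtain from the canonical DLR equations (Theorem~\ref{T:DLR.equations}), or, equivalently, from finite-volume moment bounds under $\Pbn$ furnished by the stability of $\Hn$ and Lemma~\ref{L:fonction.part} and then passed to the limit. Everything else is routine, in particular the joint measurability of $(x,\gamma)\mapsto\hbstar(x,\gamma)$, which follows since each prelimit $h^{(p)}(x,\gamma)=\sum_{y\in\gamma_{\Lambda_p}}g(y-x)+\Cbstarp(N_{\Lambda_p}(\gamma),\gamma_{\Lambda_p^c})$ is jointly measurable and an a.s.\ limit of measurable maps is measurable.
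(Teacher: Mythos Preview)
Your approach has a genuine gap, and in fact it is precisely the gap the authors themselves flag as open. You take the integral compensator $\Cbstarp(N,\zeta)=-Nc_p$ with $c_p=\leb(\Lambda_p)^{-1}\int_{\Lambda_p}g$, and then everything reduces to the almost-sure convergence of the weighted fluctuation $F_p(\gamma)=\int_{\Lambda_p}(g(z)-c_p)\bigl(\gamma(dz)-dz\bigr)$. You propose to get this from a variance bound $\mathrm{Var}_{\Pbstar}(N_B)\lesssim\leb(B)^\theta$ with $d\theta<2s$, asserting that for $d\ge2$ the Poisson-type exponent $\theta=1$ is obtainable from DLR or stability, and that for $d=1$ one can invoke the hyperuniform exponent $\theta=s/d$. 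Neither input is available. The canonical DLR equations of Theorem~\ref{T:DLR.equations} and the stability bound of Proposition~\ref{P:properties.Hn} give no control on $\mathrm{Var}_{\Pbstar}(N_B)$; the only fluctuation estimate the paper cites is the discrepancy bound $E_{\Pbstar}(|N_{\Lambda_k}-k|)\lesssim k^{1/2+s/d}$ from~\cite{leble2017large}, which is too weak, and hyperuniformity of $\Pbstar$ is nowhere established. The paper says this explicitly in the remark following Theorem~\ref{T:grand.canonique}: the integral compensator is the natural candidate, but the required fluctuation control is an open problem. There is also a secondary issue: Kolmogorov's two-series theorem needs independence, which the shell contributions do not have; you would need an orthogonality or mixing input that is likewise unavailable.

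The paper's route is entirely different and bypasses fluctuations. It first proves Theorem~\ref{T:campbell.equiv}, that the Palm-type measure $\Qbstar$ is \emph{equivalent} to $\Pbstar$ --- this is where the non-number-rigidity of Theorem~\ref{T:point.deletion} is used in an essential way --- and sets $\Psibstar=-\beta^{-1}\log(d\Qbstar/d\Pbstar)$. From the Campbell-measure identities (Proposition~\ref{P:n.point.Campbell} and its corollaries) it then shows that for $\Pbstar$-a.e.\ $\gamma$ and any bounded $\Lambda$ one has
\[
	\Psibstar(\gamma)=\sum_{y\in\gamma_\Lambda}g(y)+C^\beta_\star\bigl(N_\Lambda(\gamma),\gamma_{\Lambda^c}\bigr),
\]
where $C^\beta_\star$ is defined implicitly through $\Psibstar$ on a set of admissible configurations. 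Taking $\Lambda=\Lambda_p$ gives the compensator (in fact a $p$-independent one), and the local energy is $\hbstar(x,\gamma)=V(x,\gamma)+\Psibstar(\gamma)$, with $V$ the Move function of Lemma~\ref{L:move.existence}. Thus the compensator is not the background average but a quantity defined \emph{a posteriori} from the Radon--Nikodym structure of the Campbell measure; its existence is a consequence of non-rigidity rather than of any variance estimate.
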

The natural candidate to define the energy of the point $x$ inside the configuration $\gamma$ is $\sum_{y\in\gamma} g(x-y)$, but it is always equal to infinity as soon as the point process is stationary with finite intensity. The sequence of functions $(\Cbstarp)_{p\geq 1}$ solve this problem by compensating the partial sums $\sum_{y\in\gamma_{\Lambda_p}} g(x-y)$ in order to obtain a limit. We call such a sequence a \emph{compensator}. 

Proposition~\ref{P:local.energy} allows to define the energy of a point in $\Pbstar$-almost every configuration. Given a bounded Borel set $\Delta$ and $\eta =\{x_1, \dots, x_n\}$ a configuration in $\Delta$, we can define the local energy of $\eta$ in $\gamma_{\Delta^c}$ as  
\[
	H^\beta_{\star,\Delta}(\eta, \gamma) = \hbstar(x_1, \gamma_{\Delta^c}) + \hbstar(x_2, \gamma_{\Delta^c}\cup\{x_1\}) + \dots + \hbstar(x_n,\gamma_{\Delta^c}\cup\{x_2, \dots x_{n-1}\}).
\]
The fact that the previous formula is well-defined is discussed in Section~\ref{S:compensator.story}. The next theorem states the grand canonical DLR equations satisfied by $\Pbstar$. 
\begin{thm}\label{T:grand.canonique}
Let $\Delta$ be a bounded Borel subset of $\RRd$ and $f$ be a bounded measurable function, we have 
\[
	\int f(\gamma) \Pbstar(d\gamma) = \iint \frac{1}{Z^\beta_{\star,\Delta}(\gamma)} f(\eta \cup \gamma_{\Delta^c}) e^{-\beta H_{\star,\Delta}^\beta(\eta, \gamma)} \Pi_\Delta(d\eta) \Pbstar(d\gamma)
\]
with the finite and nonzero normalisation constant 
\[
	Z^\beta_{\star,\Delta}(\gamma) = \int e^{-\beta H_{\star,\Delta}^\beta(\eta, \gamma)} \Pi_\Delta(d\eta).
\]
\end{thm}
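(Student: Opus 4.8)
The plan is to deduce the grand canonical DLR equations (Theorem~\ref{T:grand.canonique}) from the canonical DLR equations (Theorem~\ref{T:DLR.equations}) by summing over the possible values $k = N_\Delta(\gamma)$ of the number of points in $\Delta$. The starting point is the observation that the canonical equations describe the law of $\gamma_\Delta$ given $\mathcal{E}_\Delta = \sigma(N_\Delta, \F_{\Delta^c})$, while the grand canonical ones must describe it given only $\F_{\Delta^c}$; the passage between the two is exactly a Bayesian marginalization over $k$ weighted by the conditional probabilities $\Pbstar(N_\Delta = k \mid \F_{\Delta^c})$, which Theorem~\ref{T:point.deletion} guarantees are strictly positive for every $k$. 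So the first step is to write, for a bounded $\F_{\Delta^c}$-measurable test function $h$ and the indicator $1_{N_\Delta = k}$,
\[
	\E\Pbstar\big(h \cdot 1_{N_\Delta = k}\big) = \E\Pbstar\Big( h \cdot \pi_k(\gamma_{\Delta^c})\Big),
\]
where $\pi_k(\gamma_{\Delta^c}) = \Pbstar(N_\Delta = k \mid \F_{\Delta^c})$, and to combine this with the canonical equation on the event $\{N_\Delta = k\}$, where $\Bin \Delta {N_\Delta(\gamma)} = \Bin \Delta k$.

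**Second**, I would rewrite the canonical weight on $\{N_\Delta = k\}$ in terms of the local energy $\HDbstar$. The energy appearing in Theorem~\ref{T:DLR.equations} is $H(\eta) + M_\Delta(\eta, \gamma)$; the content of Proposition~\ref{P:local.energy} and the definition of $\HDbstar$ is precisely that this quantity differs from $H^\beta_{\star,\Delta}(\eta, \gamma)$ by a term $\Cbstarp(k, \gamma_{\Delta^c})$-type constant that depends on $(\eta,\gamma)$ only through $k = N_\Delta(\eta)$ and $\gamma_{\Delta^c}$ — call it $c_k(\gamma_{\Delta^c})$. (Here one uses that $g$ is summed in the same chain order: the telescoping sum defining $M_\Delta$ matches the telescoping sum defining $\HDbstar$, up to the compensating constants $\Cbstarp$.) Because this constant factors out of both the numerator and the denominator $Z^\beta_\Delta(\gamma)$, the canonical kernel on $\{N_\Delta=k\}$ can be re-expressed with $e^{-\beta H^\beta_{\star,\Delta}(\eta,\gamma)}$ against $\Bin \Delta k (d\eta)$ and the normalization $\int e^{-\beta H^\beta_{\star,\Delta}(\eta,\gamma)} \Bin \Delta k(d\eta)$ — the constant $c_k$ cancels exactly.

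**Third**, I would assemble the sum over $k$. Using that the Poisson process $\Pi_\Delta$ decomposes as $\Pi_\Delta = \sum_{k\ge 0} e^{-\leb(\Delta)} \frac{\leb(\Delta)^k}{k!} \Bin \Delta k$, the integral $\int F(\eta)\,\Pi_\Delta(d\eta)$ splits according to $N_\Delta(\eta) = k$. The claim $Z^\beta_{\star,\Delta}(\gamma) = \int e^{-\beta H^\beta_{\star,\Delta}(\eta,\gamma)}\Pi_\Delta(d\eta)$ is then finite and nonzero provided each term $\int e^{-\beta H^\beta_{\star,\Delta}(\eta,\gamma)}\Bin \Delta k(d\eta)$ is finite (it is, by the stability-type bounds underlying $H$ and the integrability of $g$ on $\Delta$, since $d-1<s<d$ makes $g$ locally integrable in $\RRd$) and the Poissonian tail in $k$ is summable against a $k$-dependent lower bound on the energy. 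The coefficients $e^{-\leb(\Delta)}\leb(\Delta)^k/k!$ must match the ratio between the canonical normalization $Z^\beta_\Delta$ restricted to $\{N_\Delta=k\}$ and $Z^\beta_{\star,\Delta}$; the positivity $\pi_k > 0$ from Theorem~\ref{T:point.deletion} is what lets us identify these weights termwise. Summing the canonical identities $\E\Pbstar(h\cdot 1_{N_\Delta=k}\cdot f_\Delta^{(k)}) = \E\Pbstar(h \cdot 1_{N_\Delta=k}\cdot f)$ over $k$ and recognizing the $k$-sum of the right-hand kernels as the single Poissonian kernel yields the stated equation; taking $h\equiv 1$ and general bounded measurable $f$ (first $\F_{\Delta^c}$-local by a monotone class argument, then arbitrary since $f(\eta\cup\gamma_{\Delta^c})$ only probes $\Delta$ and $\Delta^c$) completes the proof.

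**The main obstacle** I anticipate is the bookkeeping of the compensating constants: showing rigorously that $H(\eta) + M_\Delta(\eta,\gamma) - H^\beta_{\star,\Delta}(\eta,\gamma)$ depends on $(\eta,\gamma)$ only through $(N_\Delta(\eta), \gamma_{\Delta^c})$, $\Pbstar$-almost surely, and that the $p\to\infty$ limits defining $M_\Delta$ and $\hbstar$ are compatible (same exhausting sequence $\Lambda_p$, same chain ordering of the points of $\eta$). This is where the specific construction of $\Pbstar$ and of the compensator sequence $(\Cbstarp)$ in Section~\ref{S:compensator.story} must be invoked, together with a uniform-integrability argument to justify exchanging the limit in $p$ with the integrals against $\Bin \Delta k$ and against $\Pbstar$. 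The summability in $k$ is a secondary but real point: one needs a lower bound $H^\beta_{\star,\Delta}(\eta,\gamma) \ge -A' k - B'(\gamma_{\Delta^c})$ with $A'$ uniform, so that $\sum_k \frac{\leb(\Delta)^k}{k!} e^{\beta A' k}\int(\cdots)\Bin\Delta k < \infty$ and $Z^\beta_{\star,\Delta}(\gamma) < \infty$ for $\Pbstar$-a.e.\ $\gamma$.
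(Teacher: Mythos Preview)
Your decomposition by $k=N_\Delta(\gamma)$ and the compensator cancellation are correct: the identity
\[
H^\beta_{\star,\Delta}(\eta,\gamma) \;=\; H(\eta)+M_\Delta(\eta,\gamma)+C_k(\gamma_{\Delta^c}),\qquad k=\vert\eta\vert,
\]
does hold $\Pbstar$-almost surely, and follows exactly from Corollary~\ref{C:eq.Psi} as you indicate. After the cancellation of $C_k$, the canonical DLR equation on $\{N_\Delta=k\}$ reads
\[
\int 1_{N_\Delta=k}\,f\,d\Pbstar \;=\; \int \pi_k(\gamma_{\Delta^c})\,\frac{1}{\tilde Z_k(\gamma)}\int f(\eta\cup\gamma_{\Delta^c})\,e^{-\beta H^\beta_{\star,\Delta}(\eta,\gamma)}\,\Bin{\Delta}{k}(d\eta)\,\Pbstar(d\gamma),
\]
with $\tilde Z_k(\gamma)=\int e^{-\beta H^\beta_{\star,\Delta}}\,d\Bin{\Delta}{k}$ and $\pi_k=\Pbstar(N_\Delta=k\mid\F_{\Delta^c})$.

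The gap is in your third step. To recover the Poisson kernel you need
\[
\frac{\pi_k(\gamma_{\Delta^c})}{\tilde Z_k(\gamma)} \;=\; \frac{e^{-\leb(\Delta)}\leb(\Delta)^k/k!}{Z^\beta_{\star,\Delta}(\gamma)}\qquad\text{for every }k,
\]
and this identity is \emph{equivalent} to the theorem you are trying to prove. The canonical DLR equations only determine the conditional law given $\mathcal E_\Delta=\sigma(N_\Delta,\F_{\Delta^c})$; they carry no information about $\pi_k$ itself. Theorem~\ref{T:point.deletion} tells you each $\pi_k$ is positive, but positivity alone cannot pin down the values, so the phrase ``the positivity $\pi_k>0$ is what lets us identify these weights termwise'' is where the argument breaks. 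Your proposed stability bound $H^\beta_{\star,\Delta}\ge -A'k-B'(\gamma_{\Delta^c})$ would give $Z^\beta_{\star,\Delta}<\infty$, but still would not produce the required identity.

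The paper closes this gap by a different route that bypasses $\pi_k$ entirely. From the structure of the one-point Campbell measure (Theorem~\ref{T:campbell.equiv}, which is where Theorem~\ref{T:point.deletion} is actually used) one obtains the GNZ equation
\[
\Camp^{(1)}_{\Pbstar}(f)=\iint f(x,\gamma)\,e^{-\beta\hbstar(x,\gamma)}\,dx\,\Pbstar(d\gamma).
\]
Conditioning on $\F_{\Delta^c}$ and tilting by $e^{\beta H^\beta_{\star,\Delta}}$ shows that the tilted conditional law satisfies the Slivnyak--Mecke formula on $\Delta$, hence is a constant multiple of $\Pi_\Delta$. The finiteness and positivity of $Z^\beta_{\star,\Delta}$ then follow from $\Pbstar(N_\Delta=0\mid\F_{\Delta^c})>0$, rather than from an a priori energy bound. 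In short: the missing ingredient in your plan is precisely the Campbell/GNZ machinery of Section~2.5, which encodes how the law changes when a point is \emph{added} and therefore contains the information about $\pi_k$ that the canonical equations lack.
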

As in Remark \ref{r.esperance.cond}, we can see that the previous Theorem gives a description of conditional expectations with respect to the $\sigma$-algebra $\F_{\Delta^c}$ 
\[
	E_{\Pbstar}(f\mid\F_{\Delta^c})(\gamma) =   \int \frac{1}{Z^\beta_{\star,\Delta}(\gamma)} f(\eta \cup \gamma_{\Delta^c}) e^{-\beta H_{\star,\Delta}^\beta(\eta, \gamma)} \Pi_\Delta(d\eta).
\]
More precisely, the grand canonical DLR equations claim that that the law of the point process $\Pbstar$ in any bounded Borel set $\Delta$ conditionally to the exterior has a density with respect to the Poisson point process in $\Delta$. This is the natural continuation of Theorem~\ref{T:point.deletion}, which claims that the number of points in $\Delta$ conditionally to the outside can be any integer. 
\begin{rem}[Integral compensator and fluctuations]
At this point a remaining question is the nature of the compensator introduced in Proposition~\ref{P:local.energy}. A natural candidate would be the integral compensator given by $\Cbstarp = -\int_{\Lambda_p} g(y)dy$ which is just the expectation of the partial sum. But obtain a convergence as in the proposition is difficult. It could be achieved with a good control of the  fluctuation of the number of points in bounded sets. For instance, if we can find $C >0$ and $\varepsilon >0$ such that for every integer $k\geq 1$ we have
\begin{equation}\label{E:fluctuations}
	E_{\Pbstar}(\vert N_{\Lambda_k}- k \vert) \leq C k^{s/d - \varepsilon},
\end{equation} then it is easy to show that the partial sum with the integral compensator converges. The discrepancy estimate obtained using the electrical energy in \cite{leble2017large} gives for~\eqref{E:fluctuations} an upper bound of order $k^{1/2 + s/d}$, which is not enough here. 
Such property recall the notion of \emph{hyperuniform} point process, for which $\text{Var}(N_\Lambda) = o(\leb(\Lambda))$. In particular, in dimension~1, if $\Pbstar$ is hyperuniform, then \eqref{E:fluctuations} is true as soon as $s > 1/2$. 

Unfortunately we were unable to prove such properties on the fluctuations. The nature of the compensator remains an open question although the integral compensator is the main natural candidate.  
\end{rem} 
\section{Proofs}

\subsection{Circular-Riesz interaction}
In this section we provide the proofs of properties for the periodized Riesz potential~$g_n$ and the  circular energy~$\Hn$. The property~\ref{P:properties.Hn.control} of Proposition~\ref{P:properties.Hn} can be found in Section~3.1.1. Section~3.1.2 is devoted to the proof of the Proposition~\ref{P:def.Hn} which is the main tool to prove the stability of $\Hn$ (property~\ref{P:properties.Hn.stable} of Proposition~\ref{P:properties.Hn}).

For $k$ and $j$ two integers, we introduce
\[ 
[k] = \{ r\in\ZZd: \vert r \vert \leq k\}, \quad [k\setminus j] = [k]\setminus[j] \quad \text{ and }\quad  V_k = (2k + 1)^d 
\]
with the conventions $ [0\setminus -1] = [0] $ and $V_{-1} = 0$.

\subsubsection{Properties of the periodized potential}
The following lemma contains two useful upper-bounds for $g_n$. The first property  corresponds to the third one in Proposition~\ref{P:properties.Hn}. 
\begin{lem}\label{L:majoration.n}The periodized Riesz potential $g_n$  statisfies both inequalities: 
\begin{enumerate}[label=(\roman*)]
\item 
there exists a constant $c > 0$ such that for every point $x \in \Lambda_n$
\[
	 \abs{g_n(x) - g(x)} \leq c n^{-s/d},
\]
\item 
for a given bounded Borel set $\Delta \subset \RRd$, there exists a constant $c_\Delta > 0$ such that for $n\geq 1$  large enough and for all $x\in\Delta$ and $y\in \Lambda_n$ 
\[ 
	\abs{(g_n(y-x) - g_n(y)) - (g(y-x) - g(y))} \leq c^{}_\Delta n^{-(s+1)/d}.
\]
\end{enumerate}
\end{lem}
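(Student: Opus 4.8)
The plan is to work directly from the series defining $g_n$ and to reduce both estimates to the elementary bound $\norm{\nabla g(z)}=s\norm{z}^{-s-1}$ for the Riesz kernel, together with the observation that on the regions where the relevant summands are evaluated one has $\norm{z}\gtrsim\norm{k}_\infty\,n^{1/d}$.

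For (i), first isolate the $k=0$ term: for $x\in\Lambda_n$ with $x\neq0$ (the singular point is a null set, irrelevant in all later uses),
\[
	g_n(x)-g(x)=-\frac1n\int_{\Lambda_n}g(w)\,dw+\sum_{k\in\ZZdo}\Big(g(x+kn^{1/d})-\frac1n\int_{\Lambda_n}g(w+kn^{1/d})\,dw\Big).
\]
The first term is computed by the scaling $w=n^{1/d}u$: it equals $-n^{-s/d}\int_{\Lambda_1}\norm{u}^{-s}\,du$, a finite multiple of $n^{-s/d}$ since $s<d$ makes the integral converge. Each term of the sum I rewrite as $\frac1n\int_{\Lambda_n}\big(g(x+kn^{1/d})-g(w+kn^{1/d})\big)\,dw$; for $x,w\in\Lambda_n$ and $k\neq0$ the segment joining $x+kn^{1/d}$ to $w+kn^{1/d}$ stays inside $\{\norm{z}_\infty\geq(\norm{k}_\infty-\tfrac12)n^{1/d}\}\subset\{\norm{z}\geq\tfrac12\norm{k}_\infty n^{1/d}\}$, so the mean value inequality bounds the integrand by $s\,2^{s+1}\norm{k}_\infty^{-s-1}n^{-(s+1)/d}\cdot\sqrt{d}\,n^{1/d}=O(\norm{k}_\infty^{-s-1}n^{-s/d})$, hence the same bound for the $k$-th term. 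Summing, $\abs{g_n(x)-g(x)}\leq c\,n^{-s/d}\big(1+\sum_{k\in\ZZdo}\norm{k}_\infty^{-s-1}\big)$; the lattice sum is finite because there are $O(m^{d-1})$ points with $\norm{k}_\infty=m$, so $\sum_m m^{d-1}m^{-s-1}<\infty$ precisely when $s>d-1$. Thus both halves of the assumption $s\in(d-1,d)$ enter, and the rate $n^{-s/d}$ is sharp (the $k=0$ term already has that size). This is (i), namely property~\ref{P:properties.Hn.control} of Proposition~\ref{P:properties.Hn}.

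For (ii) the key point is that the constant $-\frac1n\int_{\Lambda_n}g$ does not depend on the evaluation point and so drops out of the second difference, as do the averaging integrals inside each $k$-summand. Reading $g_n$ off its defining series — valid at every point of $\RRd$, in particular at $y-x$ even when it protrudes slightly from $\Lambda_n$ — and grouping the apparently singular contributions as $(g_n-g)(y-x)-(g_n-g)(y)$, one gets the exact identity
\[
	\big(g_n(y-x)-g_n(y)\big)-\big(g(y-x)-g(y)\big)=\sum_{k\in\ZZdo}\big(g(y-x+kn^{1/d})-g(y+kn^{1/d})\big),
\]
the $k=0$ summand being exactly $g(y-x)-g(y)$. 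Now for $x\in\Delta$, $y\in\Lambda_n$ and $n$ large enough that $\sup_{\Delta}\norm{\cdot}_\infty\leq n^{1/d}/4$, the segment from $y-x+kn^{1/d}$ to $y+kn^{1/d}$ stays in $\{\norm{z}\geq\tfrac14\norm{k}_\infty n^{1/d}\}$ (in particular it avoids the origin), so the mean value inequality gives $\abs{g(y-x+kn^{1/d})-g(y+kn^{1/d})}\leq s\,4^{s+1}\norm{k}_\infty^{-s-1}n^{-(s+1)/d}\norm{x}$. Bounding $\norm{x}$ by a constant depending on $\Delta$ and summing over $\ZZdo$ — the sum converging since $s>d-1$ — yields the claimed bound $c_\Delta\,n^{-(s+1)/d}$.

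The only genuine work is the uniform bookkeeping: for every $k\neq0$ one must check that the relevant segments stay quantitatively away from the origin, so that $g$ and $\nabla g$ are defined there and decay like $\norm{k}_\infty^{-s}$ and $\norm{k}_\infty^{-s-1}$; and one should notice that the gain from $n^{-s/d}$ in (i) to $n^{-(s+1)/d}$ in (ii) is precisely the replacement of the $O(n^{1/d})$ displacement $\norm{x-w}$ by the $O(1)$ displacement $\norm{x}$. The convergence of $\sum_{k\in\ZZdo}\norm{k}_\infty^{-s-1}$ is the one place where $s>d-1$ is essential, and it is the crux.
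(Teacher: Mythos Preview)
Your proof is correct and, for part (ii), essentially identical to the paper's argument: both write the second difference as $\sum_{k\in\ZZdo}\big(g(y-x+kn^{1/d})-g(y+kn^{1/d})\big)$, bound each summand by the gradient estimate, and sum using $s>d-1$.

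For part (i) your route is a mild streamlining of the paper's. The paper splits the lattice into a near shell $[k]\setminus\{0\}$ and a tail $\ZZd\setminus[k]$ and bounds $g(x+n^{1/d}u)$ above and below separately on each piece, without invoking the gradient. You instead handle every $k\neq0$ in one stroke by writing the $k$-th summand as an average of first differences $g(x+kn^{1/d})-g(w+kn^{1/d})$ and applying the mean value inequality, exactly as in (ii). This is cleaner---the same mechanism serves both parts, and the appearance of $n^{-s/d}$ in (i) versus $n^{-(s+1)/d}$ in (ii) is then transparently the difference between an $O(n^{1/d})$ displacement $x-w$ and an $O(1)$ displacement $x$, as you note. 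The paper's argument, on the other hand, avoids the gradient for (i) and would still go through for potentials that are merely monotone in $\norm{\cdot}$; but for the Riesz kernel both approaches are equally elementary and both hinge on the convergence of $\sum_{k\in\ZZdo}\norm{k}_\infty^{-s-1}$, i.e.\ on $s>d-1$.
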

\begin{proof}
\textit{(i)} Let $k$ be an integer such that $k \geq \sqrt{d}/2$. For $x \in \Lambda_n$ and $u\in\ZZd$, the inequality 
\(
\norm{n^{1/d}u + x}  \geq \norm{n^{1/d}u}  - n^{1/d}\sqrt{d}/2
\)
implies
\begin{align*}
	& \sum_{u\in\ZZ\setminus [k]} \left[\norm{x + n^{1/d}u}^{-s} - \frac{1}{n}\int_{\Lambda_n} \norm{y + n^{1/d}u}^{-s} dy\right] \\
	&\quad \quad \leq \sum_{u\in\ZZ\setminus[k]} \left[(\norm{n^{1/d}u} - n^{1/d}\sqrt{d} /2)^{-s} - \frac{1}{n}\int_{\Lambda_n} \norm{y + n^{1/d}u}^{-s} dy\right] \\
	&\quad \quad \leq n^{-s/d} \sum_{u\in\ZZ\setminus[k]} \left[(\norm{u} -\sqrt{d}/2)^{-s} - \int_{\Lambda_1} \norm{y + u}^{-s} dy\right]. 
\end{align*}
For $x \in \Lambda_n$ and $u\in\ZZd$, $\Vert x + n^{1/d} u \Vert \geq \Vert n^{1/d} u /2 \Vert$ and so
\begin{align*}
	\sum_{u\in[k]\setminus\{0\}} \left[\norm{x + n^{1/d}u}^{-s} - \frac{1}{n}\int_{\Lambda_n} \norm{y + n^{1/d}u}^{-s} dy\right] \leq n^{-s/d} \sum_{u\in[k]\setminus\{0\}} (\Vert u \Vert/2)^{-s}.
\end{align*}
The upper-bound follows from both previous inequalities 
\begin{align*}
	g_n(x) -g(x)  & = - \frac{1}{n}\int_{\Lambda_n} \Vert y \Vert^{-s} dy + \sum_{u\in\ZZdo} \left[\norm{x + n^{1/d}u}^{-s} - \frac{1}{n}\int_{\Lambda_n} \norm{y + n^{1/d}u}^{-s} dy\right] \\
	& \leq n^{-s/d} \Big(\sum_{u\in[k]\setminus\{0\}} (\Vert u \Vert/2)^{-s} + 
\sum_{u\in\ZZ\setminus[k]} \Big[(\norm{u} -1/2)^{-s} - \int_{\Lambda_1} \norm{y + u}^{-s} dy\Big]\Big).
\end{align*}
For the lower bound we use the inequality 
\(
\norm{x + n^{1/d} u} \leq \norm{n^{1/d}u} + n^{1/d}\sqrt{d}/2
\)
to obtain for $x\in \Lambda_n$ 
\[
g_n(x) - g(x) \geq n^{-s/d}\Big(-\int_{\Lambda_1} \Vert y \Vert^{-s} dy +  \sum_{u\in\ZZdo} \Big[(\norm{u}+  \sqrt{d} /2)^{-s} - \frac{1}{n}\int_{\Lambda_n} \norm{y + u}^{-s} dy\Big]\Big).
\]

\textit{(ii)} By definition of $g_n$
\[
	(g_n(y-x) - g_n(y)) - (g(y-x) - g(y)) = \sum_{u\in\ZZdo}\left[ \norm{y-x + un^{1/d}}^{-s} - \norm{y + un^{1/d}}^{-s}\right].
\]
Using the gradient of the potential $g$, 
\[
	\bigabs{\norm{y-x + un^{1/d}}^{-s} - \norm{y + un^{1/d}}^{-s}} \leq s \norm{x} \int_0^1 \norm{y -tx + un^{1/d}}^{-(s+1)} dt.
\]
We introduce the quantity $\rho_\Delta := \sup_{x\in\Delta} \Vert x \Vert$. For $x \in \Delta$, $y \in \Lambda_n$ and $u\in\ZZdo$, $\Vert x \Vert \leq \rho_\Delta$ and $\Vert y + n^{1/d} u \Vert \geq \Vert u n^{1/d} \Vert /2$, and so for $n\ge 1$ such that $n^{1/d} /4 \geq \rho_\Delta$
\[
	\norm{y - tx +un^{1/d}} \geq n^{1/d} \norm{u} / 4.
\]
Therefore we obtain the upper-bound 
\[
	\bigabs{[g_n(y-x) - g_n(y)] - [g(y-x) - g(y)]} \leq s4^{s+1} \rho_\Delta \sum_{u\in\ZZdo} \norm{u}^{-(s+1)}  n^{-(s+1)/d}.
\]
\end{proof}
The next lemma is useful for the proof of Proposition~\ref{P:def.Hn} in the next section.
\begin{lem}\label{L:evil.upper.bound}
There exists a constant $C >0$ such that for every integers $0 \leq j \leq k$ and every $u \in [j \setminus j-1]$ 
\[
	\int_{\Lambda_{(2k+2)^d}\setminus\Lambda_{(2k)^d}} g(y + u) dy \leq 
	\begin{cases} 
		C((k-j)^{d+1-s} + k^{d+1-s}) & \text{ if } 0 \leq j < k, \\
		C(1 + k^{d+1-s}) & \text{ if } j = k. 
	\end{cases}
\]
\end{lem}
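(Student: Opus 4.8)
The plan is to estimate the integral $\int_{\Lambda_{(2k+2)^d}\setminus\Lambda_{(2k)^d}} g(y+u)\,dy$ by splitting the shell of integration according to the distance from $-u$, the singularity of $y\mapsto g(y+u)=\|y+u\|^{-s}$. First I would record the geometry: for $u\in[j\setminus j-1]$ we have $\|u\|_\infty=j$, so $-u$ lies at sup-distance $j$ from the origin; the inner box $\Lambda_{(2k)^d}=[-k,k]^d$ has half-side $k$ and the outer box $\Lambda_{(2k+2)^d}=[-(k+1),k+1]^d$ has half-side $k+1$, so the domain of integration is the ``frame'' $\{y : k\le\|y\|_\infty\le k+1\}$, a set of Lebesgue measure $(2k+2)^d-(2k)^d = O(k^{d-1})$.

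The two cases then correspond to whether the singularity $-u$ is inside the inner box or on its boundary frame. \textbf{Case $0\le j<k$:} here $\|u\|_\infty=j<k$, so $-u$ is strictly inside $\Lambda_{(2k)^d}$ and in particular is at distance at least $k-j$ (in sup-norm, hence at least $(k-j)$ up to a dimensional constant in Euclidean norm) from every point $y$ of the frame. Thus $g(y+u)\le \|y+u\|^{-s}\le c\,(k-j)^{-s}$ uniformly on the domain, and multiplying by the volume $O(k^{d-1})$ gives a bound $C k^{d-1}(k-j)^{-s}$. This is not yet of the claimed form, so I would instead integrate more carefully: decompose the frame into dyadic-type annuli $A_m = \{y : \|y+u\|\asymp 2^m\}$ for $2^m$ ranging from roughly $k-j$ up to roughly $k$ (the maximal distance from $-u$ to a frame point is $O(k)$), note $\leb(A_m\cap\text{frame}) = O(2^{m(d-1)})$ since the frame is $(d-1)$-dimensional, and sum $\sum_m 2^{-ms}\,2^{m(d-1)} = \sum_m 2^{m(d-1-s)}$. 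Since $d-1-s<0$ is false in general — wait, here $d-1<s<d$ gives $d-1-s<0$, so this geometric sum is dominated by its \emph{smallest} scale $2^m\asymp k-j$, yielding $O((k-j)^{d-1-s})$; but one must also account for the contribution of scales up to $k$ where the frame genuinely extends, and a cleaner route is to bound the integral over the frame by $\int_{k-j\le\|z\|\le Ck,\ z\in\text{slab}} \|z\|^{-s}\,dz$ where the slab reflects the $(d-1)$-dimensional nature; carrying out this radial-type integral in the frame gives precisely a bound of order $(k-j)^{d+1-s}+k^{d+1-s}$ once the frame thickness $1$ and its $(d-1)$-dimensional extent of size $O(k)$ are correctly combined. (The extra ``$+1$'' in the exponent compared to my crude estimate comes from integrating through the thickness-$1$ direction against the singularity when $k-j$ is small.) \textbf{Case $j=k$:} now $-u$ sits on the frame itself (on $\partial\Lambda_{(2k)^d}$), so the integral genuinely sees the singularity. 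I would isolate a unit ball $B(-u,1)$ around the singularity, on which $\int_{B(-u,1)}\|y+u\|^{-s}\,dy = \int_{B(0,1)}\|z\|^{-s}\,dz<\infty$ is a finite constant (here $s<d$ is exactly what guarantees local integrability), contributing the ``$1$'' term; on the complement of this unit ball within the frame, $\|y+u\|\ge 1$ and the same dyadic/radial estimate as before with smallest scale $\asymp 1$ and largest scale $\asymp k$ gives $\sum_{0\le m\lesssim\log k} 2^{m(d-1-s)}\cdot(\text{thickness correction}) = O(k^{d+1-s})$, producing the $k^{d+1-s}$ term.

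The main obstacle is bookkeeping the two competing length scales — the frame has thickness $1$ but longitudinal extent of order $k$ — and extracting the correct exponent $d+1-s$ rather than an off-by-one exponent; the safe way to do this is to parametrize the frame by a longitudinal coordinate $w$ ranging over a box of side $\asymp k$ in $\RR^{d-1}$ and a transverse coordinate $t\in[0,1]$, write $\|y+u\|^2\asymp \|w-w_0\|^2 + (t-t_0)^2$ for the appropriate base point, and perform the $t$-integral first (which is bounded by $\min(\|w-w_0\|^{-s},\,\text{const})$ when $s>d-1\ge 1$... more precisely $\int_0^1 (\|w-w_0\|^2+t^2)^{-s/2}dt \lesssim \|w-w_0\|^{1-s}$ when $\|w-w_0\|\le 1$ and $\lesssim \|w-w_0\|^{-s}$ when $\|w-w_0\|\ge 1$), then the $w$-integral over the $(d-1)$-dimensional box, which converges near $w_0$ precisely because $s-1<d-1$, and contributes the polynomial-in-$k$ growth from its outer radius. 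Everything else — translating sup-norm bounds into Euclidean ones, absorbing dimensional constants into $C$ — is routine.
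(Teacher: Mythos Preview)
Your plan is workable in outline but differs from the paper's argument, and you have been thrown off by what is almost certainly a typo in the statement: the paper's own proof establishes the bound with exponent $d-1-s$, not $d+1-s$, and it is the exponent $d-1-s$ that is used downstream in the proof of Lemma~\ref{L:evil.lemma}. Since $s>d-1$ forces $d-1-s<0$, the quantity $(k-j)^{d-1-s}+k^{d-1-s}$ is bounded by $2$ for $k-j\ge1$, which is strictly stronger than the stated $(k-j)^{d+1-s}+k^{d+1-s}$. Your dyadic computation already produced the correct $(k-j)^{d-1-s}$ before you talked yourself out of it; the subsequent attempt to recover an extra factor via ``integrating through the thickness-$1$ direction'' is unnecessary and the reasoning there does not hold up.

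The paper's argument is shorter and more combinatorial. First, convexity of $g$ reduces the problem to the single corner point $u_j=(j,\dots,j)$: the map $u\mapsto \int g(y+u)\,dy$ is convex, hence maximized over the shell $[j\setminus j-1]$ at an extreme point, and symmetry selects $u_j$. Second, the frame $\Lambda_{(2k+2)^d}\setminus\Lambda_{(2k)^d}$ is partitioned into unit cubes, grouped by their sup-norm distance to $u_j$: there are at most $A(k-j)^{d-1}$ cubes at the minimal distance $k-j$, at most $A(k-i)^{d-2}$ cubes at each intermediate distance $k-i$ for $0<i<j$, and at most $Ak^{d-1}$ cubes at distance $\ge k$. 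Bounding the integral over each cube by (distance)$^{-s}$ and summing, the intermediate sum $\sum_{i}(k-i)^{d-2-s}$ converges (since $d-2-s<-1$) and is controlled by $(k-j)^{d-1-s}$, while the far cubes give $k^{d-1-s}$. For $j=k$ the only change is that the nearest cubes contain the singularity and contribute a bounded constant (local integrability of $\|\cdot\|^{-s}$, using $s<d$) in place of $(k-j)^{d-1-s}$.

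Compared with your route, the convexity reduction removes the need to treat a general $u$ or parametrize the frame, and the discrete cube count replaces your dyadic shells. Note in particular that for intermediate distances the correct count is $O(r^{d-2})$, not the $O(r^{d-1})$ you wrote: from the corner $u_j$ the frame is seen edge-on rather than face-on. With your looser $O(r^{d-1})$ the final bound would nonetheless be the same, since the geometric series $\sum 2^{m(d-1-s)}$ is dominated by its smallest term regardless.
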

\begin{proof}
We introduce $u_j = (j, ..., j)$, one of the $2^d$ extremal points of $[j\setminus j-1]$. By convexity of $g$, for every $u \in [j\setminus j-1]$ we have
\[
	\int_{\Lambda_{(2k+2)^d}\setminus\Lambda_{(2k)^d}} g(y + u) dy \leq \int_{\Lambda_{(2k+2)^d}\setminus\Lambda_{(2k)^d}} g(y + u_j) dy.
\]
The domain $\Lambda_{(2k+2)^d}\setminus\Lambda_{(2k)^d}$ can be partitioned into $V_{2k+2} - V_{2k}$ disjoint cubes (which are translations of $\Lambda_1$). The distance between the point $u_j$ and the domain $\Lambda_{(2k+2)^d}\setminus\Lambda_{(2k)^d}$ is equal to $k-j$. To provide an upper-bound of the integral, we count the number of cubes which are at distance $k-i$ from $u_j$ (for the supremum norm $\Vert \cdot \Vert_\infty$) with $0 \leq i \leq j$. there exists a constant $A>0$ such that
\begin{itemize}
	\item the number of cubes at distance $k-j$ is less than $A(k-j)^{d-1}$ (or $A$ if $j = k$),
	\item the number of cubes at distance $k-i$ with $ 0 < i < j$   is less than $A(k-i)^{d-2}$,
	\item the number of cubes at distance greater than $k$ is less than $Ak^{d-1}$.
\end{itemize} 
If a cube is at distance $k -i$ from $u_j$, then its interaction with $u_j$ is less than $(k-i)^{-s}$ if $i\neq k$, and less than a constant $b > 0$ if $i =k$. Then there exists a constant $C >0$ such that for $j\neq k$
\begin{align*}
	\int_{\Lambda_{(2k+2)^d}\setminus\Lambda_{(2k)^d}} g (y + u_j) dy & \leq A k^{d-1} k^{-s} + A\sum_{i=1}^j (k-i)^{d-2} (k-i)^{-s} 
	\\
	& \leq  C((k-j)^{d-1-s} + k^{d-1-s}),
\end{align*}
and for $j = k$ 
\begin{align*}
	\int_{\Lambda_{(2k+2)^d}\setminus\Lambda_{(2k)^d}} g (y + u_j) dy & \leq A k^{d-1} k^{-s}+ A\sum_{i=1}^{k-1} (k-i)^{d-2} (k-i)^{-s} + Ab 
	\\
	& \leq C(1 + k^{d-1-s}).
\end{align*}
\end{proof}

\subsubsection{Proof of Proposition \ref{P:def.Hn}}

\begin{proof}[Proof of Proposition \ref{P:def.Hn}]
We recall the useful notations, for $k$ and $j$ two integers,
\[ 
	[k] = \{ r\in\ZZd: \vert r \vert \leq k\}, \quad [k\setminus j] = [k]\setminus[j] \quad \text{ and }\quad  V_k = (2k + 1)^d, 
\]
with the conventions $ [0\setminus -1] = [0] $ and $V_{-1} = 0$. The domain $\Lambda_{(2k+1)^d n}$ can be decomposed into cubes indexed by the set $[k]$ as  
\[
	\Lambda_{(2k+1)^d n} = \bigcup_{u\in[k]} (\Lambda_n + n^{1/d}u).
\]
The energy $\Hback {(2k+1)^d n}$ can be written as a sum of interactions between the different cubes
\begin{multline*}
	2\Hback {(2k+1)^d n}(\gamma +  n^{1/d} [k]) 
	\\ 
	= 
	\sum_{u,v\in[k]} \iint_{\Lambda_n^2 \setminus \diag} g(x-y+n^{1/d}(u-v)) \big(\gamma - \lambda^d_{\Lambda_n}\big)(dx)\big(\gamma - \lambda^d_{\Lambda_n}\big)(dy).
\end{multline*}
If we expand the terms in the integral we obtain 
\begin{multline*}
	2\Hback {(2k+1)^d n}(\gamma + n^{1/d}[k]) = 2\sum_{u\in[k]} \sum_{\{x,y\}\subset\gamma} \sum_{v\in[k]} g(x-y+n^{1/d}(u-v)) + \vert \gamma \vert \sum_{\substack{u,v\in[k]\\ u\neq v}}g(n^{1/d}(u-v))
	\\
 	- 2\sum_{x\in\gamma} \sum_{u,v \in[k]} \int_{\Lambda_n} g(x-y+n^{1/d}(u-v))dy+\sum_{u,v\in[k]}\iint_{\Lambda_n^2} g(x-y+n^{1/d}(u-v))dxdy.
\end{multline*}
Taking $k = +\infty$ in the previous equality and using a  change of variable in the integrals, the two last terms do not depend on $x$ and it leads to simplifications as soon as $\vert \gamma \vert = n$. Unfortunately, $k$ is finite, and in order to obtain the simplifications, we introduce for $x\in\Lambda_n$ the error due to translate $x$ to the center of $\Lambda_n$ 
\[
\err_k^{(n)}(x) = \sum_{u,v\in[k]} \int_{\Lambda_n} g(x-y+n^{1/d}(u-v))dy -  \sum_{u,v\in[k]} \int_{\Lambda_n} g(y+n^{1/d}(u-v))dy. 
\]
Using these errors terms and the hypothesis $\vert \gamma\vert = n$ (the charge balance, crucial here), we obtain
\begin{align*}
	& 2\Hback {(2k+1)^d n}(\gamma + n^{1/d}[k]) \\
	& \quad = 2\sum_{u\in[k]} \sum_{\{x,y\}\subset\gamma} \sum_{v\in[k]}\left( g(x-y+n^{1/d}(u-v)) -\frac{1}{n}\int_{\Lambda_n} g(z + n^{1/d}(u-v))dz\right)\\
 	& \quad \quad + n \sum_{\substack{u,v\in[k] \\ u\neq v}}\left(g(n^{1/d} (u-v))- \frac{1}{n}\int_{\Lambda_n} g(z+n^{1/d}(u-v))dz\right) + n\int_{\Lambda_n} g(z) dz \\
	&  \quad \quad - 2\sum_{x\in\gamma} \err_k^{(n)}(x) + \int_{\Lambda_n}\err_k^{(n)}(x) dx.
\end{align*}
We postpone the proof of the following lemma which gives the control of errors.
\begin{lem}\label{L:evil.lemma}
If $n$ is fixed and $k$ goes to infinity  
\[
	\sup_{x \in \Lambda_n} \bigabs{\err^{(n)}_k(x)} = o(V_k).
\]
\end{lem}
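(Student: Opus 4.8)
\textbf{Proof strategy for Lemma~\ref{L:evil.lemma}.} The plan is to isolate the contribution of the far-away copies, where the bulk of the cancellation in $\err^{(n)}_k(x)$ happens, and to control it using Lemma~\ref{L:evil.upper.bound}. First I would fix $n$ and rewrite $\err^{(n)}_k(x) = \sum_{u,v\in[k]} \int_{\Lambda_n}\bigl[g(x-y+n^{1/d}(u-v)) - g(y+n^{1/d}(u-v))\bigr]dy$. For the diagonal terms $u=v$ (and more generally the terms where $u-v$ lies in a fixed finite set $[m]$) the integrand is a fixed function of $x$ and $y$ that is bounded uniformly for $x,y\in\Lambda_n$ except near the singularity; since $s<d$ the singular integral $\int_{\Lambda_n}g(y)\,dy$ is finite, so this block contributes at most $C_n\,V_k$ — no good on its own. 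The point is that the number of such ``close'' pairs $(u,v)$ with $u-v\in[m]$ is only $O(m^d V_k)$, so after dividing by $V_k$ one gets $O_n(m^d)$; this still does not vanish, so the genuine cancellation must come from the structure of the sum over $x$, not from a crude bound on $\err^{(n)}_k(x)$ alone.

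Re-examining the statement, what is actually needed is $\sup_{x\in\Lambda_n}|\err^{(n)}_k(x)| = o(V_k)$, i.e.\ the error per copy goes to $0$. The right way is to collect, for each fixed difference $w = u-v\in[2k]$, the number $N_k(w)$ of pairs $(u,v)\in[k]^2$ with $u-v=w$, which satisfies $N_k(w)\le V_k$ and, crucially, $N_k(w) = \prod_{i}(2k+1-|w_i|)$, so $V_k - N_k(w) = O(k^{d-1}\|w\|_\infty)$. Then
\[
	\err^{(n)}_k(x) = \sum_{w\in[2k]} N_k(w)\int_{\Lambda_n}\bigl[g(x-y+n^{1/d}w) - g(y+n^{1/d}w)\bigr]dy.
\]
For $\|w\|_\infty$ large, $g(x-y+n^{1/d}w)$ and $g(y+n^{1/d}w)$ are both of order $(n^{1/d}\|w\|)^{-s}$ and their difference is $O(\|w\|^{-(s+1)})$ by the gradient bound (exactly as in Lemma~\ref{L:majoration.n}(ii)); since $s+1>d$ is \emph{not} guaranteed here ($d-1<s<d$ gives $s+1>d$ only when... indeed $s>d-1$ forces $s+1>d$), the series $\sum_w \|w\|^{-(s+1)}$ converges, so the total over the ``bulk'' $w$ is $O_n(1)$, hence $o(V_k)$.

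\textbf{Main obstacle.} The delicate part is the regime where $\|w\|_\infty$ is comparable to $k$, because there the integration domain $\Lambda_n$ can be at distance zero from the shifted singularity: for $w$ near the boundary of $[2k]$ and $y\in\Lambda_n$, the point $x-y+n^{1/d}w$ can be small, so the naive gradient estimate blows up. Here one must instead bound $\int_{\Lambda_n}g(x-y+n^{1/d}w)\,dy$ and $\int_{\Lambda_n}g(y+n^{1/d}w)\,dy$ separately by the singular-integral estimate of Lemma~\ref{L:evil.upper.bound} (applied with the roles of $j,k$ adapted), getting each term $O_n(k^{d-1-s})$ or $O_n(1)$ for the worst corner term, and then use that there are only $O(k^{d-1})$ such corner values of $w$; multiplying by $N_k(w)\le V_k \sim k^d$ gives $O_n(k^{2d-1-s})$, which divided by $V_k\sim k^d$ is $O_n(k^{d-1-s})= o(1)$ since $s>d-1$. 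Assembling the three regimes (small $w$, bulk $w$, corner $w$) and checking that in each case the bound is $o(V_k)$ uniformly in $x\in\Lambda_n$ completes the proof; the only place the hypothesis $s>d-1$ is really used is to make both the tail series converge and the corner contribution negligible.
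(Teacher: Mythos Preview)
Your proposal has a genuine gap in the ``bulk'' estimate. You write
\[
	\err^{(n)}_k(x) = \sum_{w\in[2k]} N_k(w)\, I(w), \qquad I(w) := \int_{\Lambda_n}\bigl[g(x-y+n^{1/d}w) - g(y+n^{1/d}w)\bigr]\,dy,
\]
and then claim that since $|I(w)| \le C_n\|w\|^{-(s+1)}$ and $\sum_w \|w\|^{-(s+1)} < \infty$, the bulk contribution is $O_n(1)$. But you have dropped the weight $N_k(w) = \prod_i(2k+1-|w_i|)$, which is of order $V_k$ for small $w$. In particular $N_k(0) = V_k$ and $I(0) = \int_{x+\Lambda_n} g - \int_{\Lambda_n} g$ is generically nonzero for $x\neq 0$, so the single term $w=0$ already contributes $V_k\cdot I(0)$, which is exactly of order $V_k$ --- not $o(V_k)$. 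Bounding $\sum_w N_k(w)|I(w)|$ term by term can never do better than $O(V_k)$.

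Your ``corner'' discussion is also off: when $\|w\|_\infty$ is near $2k$, the argument $x-y+n^{1/d}w$ has norm of order $kn^{1/d}$, hence is \emph{large}, not small; there is no singularity to worry about there.

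Your decomposition can in fact be rescued, but only by exploiting a cancellation you did not identify: one checks that $\sum_{w\in\ZZd} I(w) = \int_{\Lambda_n}[g_n(x-y)-g_n(y)]\,dy = 0$ by periodicity of $g_n$, whence
\[
	\sum_{w\in[2k]} N_k(w)\,I(w) = -V_k\!\!\sum_{\|w\|_\infty>2k}\!\! I(w) \; - \sum_{w\in[2k]}(V_k-N_k(w))\,I(w).
\]
The first piece is $O(V_k k^{d-1-s}) = o(V_k)$; for the second one uses $V_k - N_k(w) \le d(2k+1)^{d-1}\|w\|_\infty$ together with $\|w\|_\infty |I(w)| = O_n(\|w\|^{-s})$ to get $O(k^{d-1}\cdot k^{d-s}) = O(k^{2d-1-s}) = o(k^d)$, precisely because $s>d-1$. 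This works, but is not what you wrote.

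The paper proceeds differently and avoids this hidden cancellation altogether. Rather than grouping by $w=u-v$, it sums over $v\in[k]$ first for each fixed $u$: the change of variables $z = y + n^{1/d}v$ collapses $\sum_{v\in[k]}\int_{\Lambda_n}$ into a single integral over the big box $\Lambda_{(2k+1)^d n}$, and the $x$-shift becomes a pure boundary effect,
\[
	\err^{(n)}_k(x) = \sum_{u\in[k]}\Bigl[\int_{(x+\Lambda_{(2k+1)^d n})\setminus \Lambda_{(2k+1)^d n}} - \int_{\Lambda_{(2k+1)^d n}\setminus (x+\Lambda_{(2k+1)^d n})}\Bigr] g(y+n^{1/d}u)\,dy.
\]
Both symmetric differences sit in the annulus $\Lambda_{(2k+2)^d n}\setminus\Lambda_{(2k)^d n}$, and Lemma~\ref{L:evil.upper.bound} then bounds the resulting shell integrals directly, giving $o(V_k)$ after a Riemann-sum argument. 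This route makes the telescoping visible from the start rather than recovering it through the identity $\sum_w I(w)=0$.
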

\noindent
At this point, using Lemma~\ref{L:evil.lemma},
\begin{align*}
& 2\Hback {(2k+1)^d n}(\gamma + n^{1/d}[k]) \\
& \quad = 2\sum_{u\in[k]} \sum_{\{x,y\}\subset\gamma} \sum_{v\in[k]}\left( g(x-y+n^{1/d}(u-v)) -\frac{1}{n}\int_{\Lambda_n} g(z + n^{1/d}(u-v))dz\right) \\
  & \quad \quad+ n \sum_{\substack{u,v\in[k] \\ u\neq v}}\left(g(n^{d}(u-v))- \frac{1}{n}\int_{\Lambda_n} g(z+n^{1/d}(u-v))dz\right) + o(V_k).
\end{align*}
The expected limit obtained after dividing by $V_k$ and letting $k$ goes to infinity is equal to $2\Hn(\gamma) + ng_n^*(0)$ where 
\[
	g_n^*(0) = \sum_{u\in\ZZdo} \left(g(n^{1/d}u) - \frac 1n \int_{\Lambda_n} g(z+n^{1/d}u)dz \right).
\] 
So subtracting it, that leads to
\begin{align*}
& V_k \left(2\Hn(\gamma) + n g^*_n(0)\right)  - 2\Hback{(2k+1)^d n}(\gamma + n^{1/d}[k]) 
\\
& \quad =
2\sum_{u\in[k]} \sum_{\{x,y\}\subset\gamma} \sum_{v\in\ZZd\setminus[k]}\left( g(x-y+n^{1/d}(u-v)) -\frac{1}{n}\int_{\Lambda_n} g(z + n^{1/d}(u-v))dz\right)
\\
& \quad\quad  + n \sum_{u\in[k]} \sum_{v\in\ZZd\setminus[k]}\left(g(n^{1/d}(u-v))- \frac{1}{n}\int_{\Lambda_n} g(z+n^{1/d}(u-v))dz\right) + o(V_k).
\end{align*}
The set $[k]$ can be decomposed into shells $[j\setminus j-1]$ with $j= 0, ...,  k$ and so
\begin{align*}
& V_k \left(2\Hn(\gamma) + n g^*_n(0)\right)  - 2\Hback {(2k+1)^d n}(\gamma + n^{1/d}[k]) 
\\
& \quad =
2\sum_{j=0}^k \sum_{u\in[j\setminus j-1]} \sum_{\{x,y\}\subset\gamma} \sum_{v\in\ZZd\setminus[k]}\left( g(x-y+n^{1/d}(u-v)) -\frac{1}{n}\int_{\Lambda_n} g(z + n^{1/d}(u-v))dz\right)
\\
& \quad \quad + n \sum_{j=0}^k \sum_{u\in[j\setminus j-1]} \sum_{v\in\ZZd\setminus[k]}\left(g(n^{1/d}(u-v))- \frac{1}{n}\int_{\Lambda_n} g(z+n^{1/d}(u-v))dz\right) +o(V_k).
\end{align*}
Since for $u \in [j\setminus j-1]$, the set $\{u -v, v\in\ZZd\setminus [k]\}$ is included into $\ZZd\setminus [k-j]$, we have the upper bound
\begin{align}\label{E:inequality.cesaro}
\begin{split}
	& \left|2 \Hback {(2k+1)^dn}(\gamma + n^{1/d}[k]) - V_k \left(2\Hn(\gamma) + n g^*_n(0)\right)\right| 
	\\
	& \quad \leq 
2\sum_{j=0}^k \sum_{u\in[j\setminus j-1]} \sum_{\{x,y\}\subset\gamma} \sum_{v\in\ZZd\setminus[k-j]}\left| g(x-y+n^{1/d}v)) -\frac{1}{n^d}\int_{\Lambda_n} g(z + n^{1/d}v)dz\right|
	\\
 	& \quad \quad + n\sum_{j=0}^k \sum_{u\in[j\setminus j-1]} \sum_{v\in\ZZd\setminus[k-j]}\left|g(n^{1/d}v)- \frac{1}{n}\int_{\Lambda_n} g(z+n^{1/d}v)dz\right|.
\end{split}
\end{align}
By introducing the truncation errors
\begin{multline*}
	R_j = \sum_{\{x,y\}\subset\gamma} \sum_{v\in\ZZd\setminus[j]}\left| g(x-y+n^{1/d}v)) -\frac{1}{n}\int_{\Lambda_n} g(z + n^{1/d}v)dz\right| 
	\\
	 + n \sum_{v\in\ZZd\setminus[j]} \left|g(n^{1/d}v)- \frac{1}{n}\int_{\Lambda_n} g(z+n^{1/d}v)dz\right|,
\end{multline*}
the inequality (\ref{E:inequality.cesaro}) can be rewritten as 
\[
	\left|2\Hback {(2k+1)^dn}(\gamma +  n^{1/d}[k]) - V_k \left(2\Hn(\gamma) + n g^*_n(0)\right)\right| \leq \sum_{j=0}^k (V_j - V_{j-1}) R_{k-j} + o(V_k).
\]
To conclude, it remains to see that $(1/V_k)\sum_{j=0}^k (V_{k-j} - V_{k-j-1})R_j$ goes to $0$ as $k$ goes to infinity, which is a consequence of Cesar\'o's Lemma. Then the result of Proposition~\ref{P:def.Hn} is proved with $\varepsilon_n = g_n^*(0)/2 = n^{-s/d}g_1^*(0)/2$, which goes to $0$ when $n$ goes to infinity.
\end{proof}
\begin{proof}[Proof of Lemma \ref{L:evil.lemma}]
For $x \in \Lambda_n$, we need to control the error term 
\begin{align*}
 \err^{(n)}_k(x) & = \sum_{u,v\in[k]} \int_{\Lambda_n} g(x-y+n^{1/d}(u-v))dy -  \sum_{u,v\in[k]} \int_{\Lambda_n} g(y+n^{1/d}(u-v))dy. 
\end{align*}
After changes of variables,
\begin{multline*}
\err^{(n)}_k(x) =
\sum_{u\in[k]} \int_{(x +\Lambda_{(2k+1)^dn})\setminus\Lambda_{(2k+1)^dn}} g(y + n^{1/d}u) dy \\ - \sum_{u\in[k]}  \int_{\Lambda_{(2k+1)^dn}\setminus(x + \Lambda_{(2k+1)^dn})} g(y + n^{1/d}u))dy.
\end{multline*}
From the inclusions
\[
(x +\Lambda_{(2k+1)^dn})\setminus\Lambda_{(2k+1)^dn}  \;\cup\; \Lambda_{(2k+1)^dn}\setminus(x + \Lambda_{(2k+1)^dn})\subset \Lambda_{(2k+2)^dn} \setminus \Lambda_{(2k)^dn},
\]
we obtain an upper-bounded independent of $x$  
\begin{equation*}
\bigabs{E^{(n)}_k(x)}
\leq
n^{(d-s)/d} \sum_{u\in[k]} \int_{\Lambda_{(2k+2)^d}\setminus\Lambda_{(2k)^d}} g(y + u) dy.
\end{equation*}
Decomposing $[k]$ into shells $[j\setminus j-1]$, with $j = 0, ..., k$ and applying Lemma \ref{L:evil.upper.bound} that leads to 
\[
\sup_{x\in\Lambda_n} \bigabs{E^{(n)}_k(x)} 
\leq
C n^{(d-s)/d} \sum_{j=0}^k (V_j - V_{j-1}) ((k-j)^{d-1-s} + k^{d-1-s}).
\end{equation*}
It remains to show that the right hand side is dominated by $V_k$ asymptotically. There exists a constant $c_d$ such that $V_j - V_{j-1} \leq c_d j^{d-1}$  and so
\[
 \frac{1}{V_k} \sum_{j=0}^k (V_j - V_{j-1}) (k-j)^{d-1-s} \leq c_d 2^{-d} k^{d-1-s} \frac{1}{k}\sum_{j=0}^{k-1} (j/k)^{d-1} (1 - j/k)^{d-s-1}.
\]
The right hand side of the previous inequality is a Riemann sum and therefore 
\[
	\lim_{k\to+\infty} \frac{1}{k}\sum_{j=0}^{k-1} (j/k)^{d-1} (1 - j/k)^{d-s-1} = \int_0^1 t^{d-1}(1-t)^{d-s-1}dt. 
\]
Since $s > d-1$, we conclude that 
\[
	\lim_{k\to+\infty} \frac{1}{V_k} \sum_{j=0}^k (V_j - V_{j-1}) ((k-j)^{d-1-s} + k^{d-1-s}) = 0.
\]
\end{proof}

\subsubsection{Stability of $\Hn$.}\label{S:stability}

We start by proving the stability of $\Hback n$ (a similar proof is given in Appendix~A of \cite{lewin2019floating}). The Riesz potential can be decomposed in two parts $g = \g1 + \g2$ with 
\[
	\g1(x) = \frac{1}{(1+\Vert x \Vert^2)^{s/2}} \quad \text{ and } \quad \g2(x) = \frac{1}{\Vert x \Vert^s} - \frac{1}{(1+\Vert x \Vert^2)^{s/2}}.
\]
The potential $\g2$ is non-negative and integrable, and $\g1$ is of \emph{positive type}, which means that there exists a probability measure $\nu$ on $\RRd$ such that 
\[
	\g1(x) = \int_{\RRd} e^{it\cdot x} \nu(dt).
\]
Indeed $\g1$ is the Fourier transform of the Bessel kernel given by 
\[
	G_s(y) =\frac{1}{(4\pi)^{2d} \Gamma(s/2)} \int_0^{+\infty} \frac{e^{-u-\Vert y \Vert^2}/(4u)}{u^{1+(d-s)/2}}du.
\]
So the energy $\Hback n$ is decomposed in two terms $\Hback n = \Hback n ^{(1)} + \Hback n ^{(2)}$ studied separately. The contribution of $g^{(1)}$ is lower bounded using the positive type property  
\begin{align*}
\Hback n ^{(1)}(\gamma) & = \frac{1}{2} \iint_{\Lambda_n^2 \setminus\diag} \g1(x-y)\big(\gamma - \leb{}\big)(dx)\big(\gamma - \leb{}\big)(dy) \\
& = -\frac 12 \g1(0) \vert \gamma \vert + \frac 12 \iint_{\Lambda_n^2} \g1(x-y)\big(\gamma - \leb{}\big)(dx)\big(\gamma - \leb{}\big)(dy) \\
& = -\frac 12 \vert \gamma \vert + \frac 12 \iiint_{\Lambda_n^2\times \RRd} e^{it\cdot(x-y)}\big(\gamma - \leb{}\big)(dx)\big(\gamma - \leb{}\big)(dy) \nu(dt)\\
& = -\frac 12 \vert \gamma \vert + \frac 12 \int_{\RRd} \left| \int_{\Lambda_n} e^{it\cdot x}\big(\gamma - \leb{}\big)(dx)\right|^2\nu(dt)\\
& \geq -\frac 12 \vert \gamma \vert.
\end{align*}
The contribution of $\g2$ is lower-bounded using successively its non-negativity and  integrability 
\begin{align*}
\Hback n ^{(2)}(\gamma) & = \frac 12 \iint_{\Lambda_n \setminus \diag} \g2(x-y)\big(\gamma - \leb{}\big)(dx)\big(\gamma - \leb{}\big)(dy) \\
&\geq - \sum_{x\in\gamma} \int_{\Lambda_n} \g2(x -y)  dy   \\
& \geq - \sum_{x\in\gamma} \int_{\RRd} \g2(x-y) dy \\
& \geq - \vert \gamma \vert \int_{\RRd} \g2(y) dy.
\end{align*}
In particular, for a point configuration $\gamma \in \Conf_{\Lambda_n}$,
\[
\Hback {(2k+1)^d n}(\gamma + n^{1/d}[k]) \geq -\left( \frac{1}{2} + \int_{\RRd} \g2(y) dy \right)(2k+1)^d \vert \gamma \vert,
\]
and if $\vert \gamma \vert = n$, Proposition~\ref{P:def.Hn} implies that 
\[ 
	\Hn(\gamma) +  n\varepsilon_n \geq -\left( \frac{1}{2} + \int_{\RRd} \g2(y) dy \right) n.
\] 
Since $\varepsilon_n$ goes to $0$ as $n$ goes to infinity, we can conclude that $\Hn$ satisfies the property~\ref{P:properties.Hn.stable} of Proposition~\ref{P:properties.Hn}.

\subsection{$\beta$-Circular Riesz gases}
In this section we provide the proofs related to the existence of the $\beta$-Circular Riesz gases. First we introduce a family of configurations, the perturbed lattices, which are used to obtain the lower bound of the partition function. This control is crucial to verify that the sequence $(\Pbn)_{n\geq1}$ satisfies our compactness tool described in the last section.

\subsubsection{The lattice perturbed configurations}
\newcommand{\WLambda}{\widetilde{\Lambda}}
For $n\geq 1$ we use the notation $r_n = \lfloor n^{1/d} \rfloor$ and introduce the translated boxes \(\WLambda_{r_n} = \Lambda_{r_n} - (n^{1/d} -r_n)u\) and $\WLambda_{r_n+1} = \Lambda_{r_n+1} - (n^{1/d} -r_n)u$ where $u = (1,\dots, 1)$. They satisfy $\WLambda_{r_n} \subset \Lambda_n \subset \WLambda_{r_n +1}$ and there exist disjoint cubes $(C_j)_{1 \leq j \leq (r_n+1)^d}$ (i.e. translations of $\Lambda_1$) such that 
\begin{equation}\label{E:decomposition.Lambda}
	\WLambda_{r_n} = \bigcup_{1 \leq j \leq r_n^d} C_j \quad \text{ and }\quad  \WLambda_{r_n + 1} = \bigcup_{1 \leq j \leq (r_n+1)^d} C_j.
\end{equation}
Then we define $\widetilde{C}_j = C_j \cap \Lambda_n$, which is a box whose center is denoted $u_j$, and for $\delta \in ]0,1[$, $\Delta_j = \{x \in \widetilde{C}_j : \Vert x - u_j \Vert_\infty \leq \delta/2 \}$ which is a neighbourhood of the center $u_j$. A point configuration $\gamma = \{ x_1, ..., x_n\} \subset \Lambda_n$ is a \emph{perturbed lattice} if it satisfies
\begin{itemize}
\item[-] for every $j \in \{1, \dots, r_n^d\}$, $\vert \gamma\cap \Delta_j \vert = 1$,
\item[-] for every $j \in \{1, \dots, n\}$, $\vert \gamma\cap \Delta_j \vert \in \{0,1\}$. 
\end{itemize} 
In other words, $\gamma$ has one point in the neighbourhood of each center of the cubes in $\WLambda_{r_n}$ and its other points are in the neighbourhood of centers of the boxes in $\Lambda_n \setminus \WLambda_{r_n}$. We denote $\Conf_{\delta, n}$ the set of perturbed lattices defined above. 

\begin{prop}\label{P:energy.perturbed.lattice}
There exists a constant $c > 0$ such that for every $\delta \in ]0, 1/2[$ and $n\geq 1$, if $\gamma \in \Conf_{\delta, n}$ then $|\Hn(\gamma)| \leq c n$.
\end{prop}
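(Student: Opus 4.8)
The plan is to reduce the estimate on $\Hn(\gamma)$ to a comparison with the energy $\Hback n$ of the neutralized system, for which we already have both upper and lower control, and then to compare $\gamma$ with a genuine lattice. Recall from Proposition~\ref{P:def.Hn} that if $\vert\gamma\vert=n$ then $\Hn(\gamma)$ differs from $\Hback{(2k+1)^dn}(\gamma+n^{1/d}[k])/(2k+1)^d$ only by the vanishing term $n\varepsilon_n$, so it suffices to bound the mean energy per copy of the neutralized functional uniformly in $k$; the stability argument of Section~\ref{S:stability} already gives the lower bound $\Hn(\gamma)\geq An$, so the real task is the \emph{upper} bound $\Hn(\gamma)\leq cn$. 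First I would write $\Hback n = \Hback n^{(1)}+\Hback n^{(2)}$ using the splitting $g=\g1+\g2$ introduced in Section~\ref{S:stability}. The $\g2$ part is non-negative and integrable, so $\Hback n^{(2)}(\gamma)\leq \frac12\sum_{x\neq y}\g2(x-y)+\text{(easy background terms)}$; here the point-point sum is controlled because in a perturbed lattice the points are $\delta$-separated (one per box $\Delta_j$, with $\delta<1/2<1$), hence $\sum_{y\in\gamma,y\neq x}\g2(x-y)$ is bounded by a convergent lattice sum $\sum_{v\in\ZZd\setminus\{0\}}\g2(\tfrac12 v - \text{error})$, uniformly in $x$ and $n$, giving an $O(n)$ contribution.

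For the $\g1$ part, the positive-type representation $\g1(x)=\int e^{it\cdot x}\nu(dt)$ gives
\[
	\Hback n^{(1)}(\gamma) = -\tfrac12\vert\gamma\vert + \tfrac12\int_{\RRd}\Bigl\vert \int_{\Lambda_n} e^{it\cdot x}(\gamma-\leb)(dx)\Bigr\vert^2\nu(dt),
\]
so I only need an \emph{upper} bound on the structure-factor integral $\int_{\RRd}\vert\widehat{(\gamma-\leb)}(t)\vert^2\nu(dt)$. The idea is to compare $\gamma$ to the reference lattice configuration $\gamma_0=\{u_j\}_j$ (one point at each box center): writing $\gamma-\leb = (\gamma-\gamma_0)+(\gamma_0-\leb)$ and using $\vert a+b\vert^2\leq 2\vert a\vert^2+2\vert b\vert^2$, it splits into the "perturbation" term $\int\vert\widehat{\gamma-\gamma_0}(t)\vert^2\nu(dt)$ and the "lattice" term $\int\vert\widehat{\gamma_0-\leb}(t)\vert^2\nu(dt)$. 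For the lattice term one applies the same reasoning as the stability proof but run in the other direction: $\int_{\RRd}\vert\widehat{(\gamma_0-\leb)}(t)\vert^2\nu(dt) = \sum_{x,y\in\gamma_0}\g1(x-y) - \g1(0)n + (\text{integral terms})$, i.e. it equals $2\Hback n^{(1)}(\gamma_0)+n$, and one checks $\Hback n^{(1)}(\gamma_0)=O(n)$ directly from the periodized-potential estimates of Lemma~\ref{L:majoration.n} — this is exactly the kind of computation already carried out in Section~\ref{S:stability} and it is $O(n)$ because $g_n$ behaves like $g$ up to $O(n^{-s/d})$ and the lattice sum $\sum_{u\neq 0}\abs{g_n(u)}$ is of order $n\cdot n^{-s/d}\cdot(\text{number of relevant terms})$, which one bounds as in Lemma~\ref{L:evil.upper.bound}.

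For the perturbation term, note $\gamma-\gamma_0 = \sum_j(\delta_{x_j}-\delta_{u_j})$ with each displacement $\vert x_j-u_j\vert_\infty\leq\delta/2$, and for the missing boxes in $\Lambda_n\setminus\WLambda_{r_n}$ one simply drops a $\delta_{u_j}$; since there are only $O(n^{(d-1)/d})$ such boundary boxes their total contribution is lower order. The main work is bounding $\int\vert\sum_j(\delta_{x_j}-\delta_{u_j})\widehat{\ }(t)\vert^2\nu(dt) = \sum_{i,j}(\g1(x_i-x_j)-\g1(x_i-u_j)-\g1(u_i-x_j)+\g1(u_i-u_j))$; using $\vert \g1(a)-\g1(b)\vert\leq \norm{\nabla\g1}_\infty\vert a-b\vert$ for the near-diagonal terms ($\g1$ has bounded gradient since it is a smooth bounded function) and the mean-value estimate $\vert \g1(x_i-x_j)-\g1(x_i-u_j)-\g1(u_i-x_j)+\g1(u_i-u_j)\vert\leq \norm{D^2\g1}\,\delta^2$ times the decay $\norm{u_i-u_j}^{-(s+2)}$ for the far terms (second-order Taylor, $\g1$ being $C^2$ with $\vert D^2\g1(x)\vert=O(\norm{x}^{-s-2})$), the double sum telescopes to $O(\delta^2)\sum_{v\neq 0}\norm{v}^{-(s+2)}\cdot n = O(n)$ uniformly in $\delta<1/2$, since $s+2>d$ makes the lattice sum converge. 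Collecting the $\g1$ bound, the $\g2$ bound, the background integral terms (which are $O(n)$ by Lemma~\ref{L:majoration.n}\,(i)), and finally invoking Proposition~\ref{P:def.Hn} to pass from $\Hback n$ to $\Hn$, we obtain $\vert\Hn(\gamma)\vert\leq cn$ with $c$ independent of $\delta\in(0,1/2)$ and $n$.

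The main obstacle I anticipate is the perturbation term: getting a bound \emph{uniform in} $n$ requires that the second-difference estimate on $\g1$ decay fast enough to be lattice-summable in dimension $d$, which is exactly why the second-order Taylor expansion (not just the first-order Lipschitz bound) is needed — the gradient bound alone would give $\sum_v\norm{v}^{-(s+1)}$ which diverges when $s+1\leq d$, i.e. in the relevant range $s<d$. One must also be careful that the near-diagonal terms (where $u_i=u_j$ or neighboring boxes) are handled by the separation $\delta<1/2$ rather than by decay, and that the boundary boxes of $\Lambda_n\setminus\WLambda_{r_n}$ genuinely contribute only $o(n)$; both are routine but need to be stated. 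Everything else is a repackaging of the estimates already established in Lemmas~\ref{L:majoration.n}, \ref{L:evil.upper.bound} and the stability computation of Section~\ref{S:stability}.
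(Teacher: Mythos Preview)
Your overall architecture (reduce to the periodized $\Hback{(2k+1)^dn}(\gamma_k)$ via Proposition~\ref{P:def.Hn}, split $g=\g1+\g2$, handle $\g2$ by integrability and separation) matches the paper. For the $\g1$ part, however, the paper does \emph{not} go through the structure factor. Instead it estimates, for each fixed $y$,
\[
\int g^{(1)}(x-y)\,(\gamma_k-\leb)(dx)
=\sum_j \int_{C_j}\bigl(g^{(1)}(x_j-y)-g^{(1)}(x-y)\bigr)\,dx,
\]
using that each unit cell $C_j$ contains exactly one point $x_j$; a first-order Taylor bound together with $\|\nabla g^{(1)}(z)\|\le C(1+\|z\|)^{-(s+1)}$ makes this sum over cells converge uniformly in $y$ (the peripheral cells in $\Lambda_n\setminus\WLambda_{r_n}$ are treated separately with the same idea). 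This ``one-body potential'' estimate then gives $|\Hback{(2k+1)^dn}^{(1)}(\gamma_k)|\le C(2k+1)^dn$ directly.

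Your route via $\gamma-\leb=(\gamma-\gamma_0)+(\gamma_0-\leb)$ is a legitimate alternative, but there is a real gap in your treatment of the lattice term. You claim $\Hback n^{(1)}(\gamma_0)=O(n)$ ``because $g_n$ behaves like $g$ up to $O(n^{-s/d})$''. That estimate is far too crude: summing $|g_n-g|\le cn^{-s/d}$ over $\binom n2$ pairs gives an error of order $n^{2-s/d}$, which is \emph{superlinear} since $s<d$; and the raw sum $\sum_{\{x,y\}\subset\gamma_0}g(x-y)$ is itself of order $n^{2-s/d}$. The cancellation that brings $\Hback n^{(1)}(\gamma_0)$ down to $O(n)$ is not visible through Lemma~\ref{L:majoration.n}. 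The correct way to bound the lattice term is precisely the per-cell gradient computation above --- which is the paper's method applied to $\gamma_0$. So your decomposition does not buy a simplification: the lattice half still needs the paper's argument, and the perturbation half is extra work.

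A smaller point: your stated obstacle, that the first-order bound yields $\sum_v\|v\|^{-(s+1)}$ ``which diverges in the relevant range $s<d$'', is mistaken. The relevant range is $d-1<s<d$, hence $s+1>d$ and the lattice sum converges. First order suffices throughout; your second-order refinement is correct but unnecessary, and the paper indeed uses only the gradient of $g^{(1)}$.
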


\begin{proof} According to Proposition~\ref{P:def.Hn}, we only have to prove that for every perturbed lattice $\gamma \in \Conf_{\delta,n}$ and  every integer $k$
\[
\Hback {(2k+1)^dn}(\gamma_k) \leq c(2k+1)^dn.
\] 
where $\gamma_k = \gamma + n^{1/d}[k]$. As in the proof of the stability property in Section \ref{S:stability}, we use the decompositions $g = g^{(1)} + g^{(2)}$ and $\Hback n = \Hback n ^{(1)} + \Hback n ^{(2)} $. The two contributions are studied separately.

\textit{Contribution of $\Hback n ^{(1)}$ :} we need to control 
\[
	\Hback {(2k+1)^dn}^{(1)}(\gamma_k) = \frac 12\iint_{\Lambda_{(2k+1)^d n} \times \Lambda_{(2k+1)^d n} } g^{(1)}(x- y) (\gamma_k(dx) -dx)(\gamma_k(dy) - dy).
\]
First we look at the interaction of a point $y \in \Lambda_{(2k+1)^dn}$ with the lattice part $\gamma_{\Lambda_{r_n}} + n^{1/d}[k]$ and his background
\[
	\int_{\Lambda_{r_n} + n^{1/d} [k]} g^{(1)}(x-y)(\gamma_k(dx) - dx) = \sum_{u\in[k]} \int_{\WLambda_{r_n}} g^{(1)}(x+n^{1/d}u-y)(\gamma(dx) - dx).
\]
According to the definition of a perturbed lattice configuration and the decomposition given in \eqref{E:decomposition.Lambda}
\[
\int_{\WLambda_{r_n}} g^{(1)}(x+n^{1/d}u-y)(\gamma(dx) - dx) = \sum_{j=1}^{r_n^d} \int_{C_j} (g^{(1)}(x_j + n^{1/d}u - y) - g^{(1)}(x+n^{1/d} u - y))dx,
\]
where $\gamma_{\WLambda_n} = \{x_1, \dots, x_{r_n^d}\}$. Again using the gradient of $g^{(1)}$, 
\[
	g^{(1)}(x_j+n^{1/d} u-y) - g^{(1)}(x+n^{1/d} u-y) = \int_0^1 \langle x_j - x, \nabla g^{(1)}\left(x+n^{1/d} u-y + t(x_j - x)\right) \rangle dt.
\]
From the upper-bound  $\Vert \nabla g^{(1)}(x) \vert \leq (1 + \Vert x \Vert^2)^{(s+1)/2}$ and the inequality
\[
\Vert x+n^{1/d} u-y + t(x_j - x) \Vert \leq \Vert u_j + n^{1/d} u - y \Vert + \sqrt{d}/2 + \delta, 
\]
we obtain that for $v_y \in \{u_j + u : j\in\{1,\dots,r_n^d\}, u\in[k]   \}$ such that $\Vert v_y - y \Vert \leq \sqrt{d}$
\begin{multline*}
	\left| \int_{\Lambda_{r_n} + n^{1/d} [k]} g^{(1)}(x-y)(\gamma_k(dx) - dx)\right|  \\ \leq \frac 12  \sqrt{d} \sum_{u\in[k]} \sum_{j=1}^{r_n^d} (1 + (\Vert u_j + n^{1/d} u - v_y \Vert + 2\sqrt{d})^2)^{(s+1)/2}.
\end{multline*}
At the end we find an upper bound independent of $y$ 
\[
	\left| \int_{\Lambda_{r_n} + n^{1/d} [k]} g^{(1)}(x-y)(\gamma_k(dx) - dx) \right| \leq  \frac 12 \sqrt{d} \leq \sum_{u\in \ZZd} (1 + (\Vert u \Vert + 2\sqrt{d})^2)^{(s+1)/2} =: C_1.
\]
It leads to
\[
	\left| \Hback {(2k+1)^dn}^{(1)}(\gamma_k)  \right| \leq 3 (2k+1)^d n C_1 + \frac 12 \iint_{B_{k,n} \times B_{k,n}} g^{(1)}(x-y) (\gamma_k(dx) -dx)(\gamma_k(dy) - dy),
\]
where $B_{k,n} = \bigcup_{u\in[k]}(\Lambda_n\setminus\WLambda_{r_n} + n^{1/d} u)$, which corresponds to the peripheral points. For $y \in \Lambda_{n}$ we look at 
\begin{equation}\label{E:second.contrib}
	\int_{B_{k,n}} g^{(1)}(x-y) (\gamma_k(dx) -dx) = \sum_{u\in[k]} \int_{\Lambda_n \setminus \WLambda_{r_n}} g^{(1)}(x + n^{1/d}u -y) (\gamma(dx) -dx).
\end{equation}
There exist Borel sets $(D_j)_{1\leq j \leq n-r_n^d}$ of volume equal to one forming a partition of $\Lambda_n \setminus \WLambda_{r_n}$ and so
\[
	 \int_{\Lambda_n \setminus \WLambda_{r_n}} g^{(1)}(x + n^{1/d}u -y) (\gamma(dx) -dx) = \sum_{j=1}^{n-r_n^d} \int_{D_j} (g^{(1)}(x_j + n^{1/d}u - y) - g^{(1)}(x + n^{1/d}u - y)) dx.
\] 
Again using the gradient of $g^{(1)}$, 
\[
	g^{(1)}(x_j+n^{1/d} u-y) - g^{(1)}(x+n^{1/d} u-y) = \int_0^1 \langle x_j - x, \nabla g^{(1)}\left(x+n^{1/d} u-y + t(x_j - x)\right) \rangle dt.
\]
Since  $\Vert x_j - x\Vert \leq n^{1/d}\sqrt{d}$ and  $y \in \Lambda_n$ we have the lower bound 
 \[
 \Vert x+n^{1/d} u-y + t(x_j - x)\Vert \geq n^{1/d}(\Vert u \Vert - 3/2), 
 \]
 and using it for each $u\in[k]\setminus[1]$ in \eqref{E:second.contrib},
\begin{multline*}
	\left| \int_{B_{k,n}} g^{(1)}(x-y) (\gamma_k(dx) -dx) \right| \leq \left|\int_{\Lambda_n \setminus \WLambda_{r_n} + n^{1/d}[1]} g^{(1)}(x + n^{1/d}u -y) (\gamma(dx) -dx)\right| \\
	+ (n - r_n^d) n^{1/d} \sqrt{d} \sum_{u\in\ZZd\setminus[1]} (1 + n^{2/d} (\Vert u \Vert - 3/2)^2)^{-(s+1)/2}.
\end{multline*}
Using that $n - r_n \leq d n^{(d-1)/d}$, the second term of the right hand side is upper-bounded 
\begin{multline*}
(n - r_n^d) n^{1/d} \sqrt{d} \sum_{u\in\ZZd\setminus[1]} (1 + n^{2/d} (\Vert u \Vert - 3/2)^2)^{-(s+1)/2} \\ \leq n^{(d-1-s)/d} d^{3/2}  \sum_{u\in\ZZd\setminus[1]} (\Vert u \Vert - 3/2)^{-(s+1)} \leq C_2,
\end{multline*}
where $C_2$ is a constant independent of $n$. Moreover, for $C_2>0$ chosen large enough, with a argument similar to the proof of Lemma \ref{L:evil.upper.bound}, we can show that for all $n\geq 1$  and $y \in \Lambda_n$ 
\[
	\left|\int_{\Lambda_n \setminus \WLambda_{r_n} + n^{1/d}[1]} g^{(1)}(x + n^{1/d}u -y) (\gamma(dx) -dx)\right| \leq C_2(1 + n^{(d-1-s)/d}) \leq 2C_2.
\]
To conclude, we have obtained 
\begin{align*}
	\left| \Hback {(2k+1)^dn}^{(1)}(\gamma_k)  \right| 	& \leq 3 (2k+1)^d n C_1 + 2(2k+1)^d(n-r_n^d) C_2 \\
	& \leq (2k+1)^d n (3C_1 + 2C_2). 
\end{align*}

\textit{Contribution of $\Hback n ^{(2)}$ : }it remains to control 

\[
	\Hback {(2k+1)^dn}^{(2)}(\gamma_k) = \frac 12\iint_{\Lambda_{(2k+1)^d n} \times \Lambda_{(2k+1)^d n} } g^{(2)}(x- y) (\gamma_k(dx) -dx)(\gamma_k(dy) - dy),
\]
which is simpler, thanks to the integrability of $g^{(2)}$. For instance, for $y \in \RRd$
\[
	\left|\int_{(2k+1)^dn} g^{(2)}(x-y) dx \right| \leq \int_{\RRd} |g^{(2)}|(x -y) dx = \int_{\RRd} |g^{(2)}|(x) dx. 
\]
Thus we obtain 
\[
	\left| \Hback {(2k+1)^dn}^{(2)}(\gamma_k)  \right| \leq \sum_{\{x,y\} \subset \gamma_k} |g^{(2)}|(x-y) + \frac 32 (2k+1)^dn \int_{\RRd} |g^{(2)}|(x) dx.
\]
The points of a perturbed lattice lie in different boxes and the same goes for the points of~$\gamma_k$. For $\delta < 1/2$, the distance between two adjacent boxes is greater than $1/4$. Using the fact that that $g^{(2)}$ is decreasing and positive, it implies that
\[
	\sum_{\{x,y\} \subset \gamma_k} |g^{(2)}|(x-y)  \leq \frac{1}{2}(2k+1)^dn  \sum_{u\in\ZZd} 4^s |g^{(2)}|(u), 
\]
which concludes the expected control for $\Hback {(2k+1)^dn}^{(2)}(\gamma_k)$ and the proof of the proposition.
\end{proof}

\subsubsection{Control of the partition function}

\begin{proof}[Proof of Lemma \ref{L:fonction.part}]
For the upper-bound, the stability property of the energy gives
\[
Z^\beta_n  \leq \int e^{-\beta A n} \Bin {\Lambda_n} n(d\gamma)  =  (e^{-\beta A})^{n}.
\]
For the lower-bound, introducing the subset of perturbed lattice configurations defined in the previous section, that leads to 
\[
	Z^\beta_n \geq \int_{\mathcal{C}_{\delta,n}} e^{-\beta \Hn(\gamma)} \Bin {\Lambda_n} n (d\gamma). 
\]
According to Proposition~\ref{P:energy.perturbed.lattice}, we have $Z^\beta_n \geq e^{-\beta cn} \Bin {\Lambda_n} n (\Conf_{\delta,n})$. The next lemma, whose proof is postponed below, gives a lower-bound of the volume of the perturbed lattice configurations under the law of the binomial point process. 
\begin{lem}\label{L:perturbed.lattice.bin} We can choose $\delta >0 $ small enough such that for every $n\geq 1$
\[
	\Bin {\Lambda_n} n(\mathcal{C}_{n,\delta}) \geq n! \left( \frac{\delta}{n}\right)^n.
\]
\end{lem}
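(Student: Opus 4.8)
The plan is to bound $\Bin{\Lambda_n}{n}(\mathcal{C}_{\delta,n})$ from below by the probability of one completely explicit sub-event. Recall that under $\Bin{\Lambda_n}{n}$ the configuration is $\{x_1,\dots,x_n\}$ with $x_1,\dots,x_n$ i.i.d.\ uniform on $\Lambda_n$, which has volume $n$. The geometric facts I would use are: the first $r_n^d$ of the boxes $\Delta_j$ sit inside the \emph{full} unit cubes $\widetilde{C}_j=C_j$, so each of them is a $\delta$-cube centred at $u_j$ with $\leb(\Delta_j)=\delta^d$; all the $\Delta_j$ are pairwise disjoint (they lie in the pairwise disjoint cubes $C_j$); there are at least $n$ of them, since $r_n+1>n^{1/d}$ gives $(r_n+1)^d\ge n$; and $\leb\big(\bigcup_{j\le n}\Delta_j\big)\le n\delta^d$, so $R:=\Lambda_n\setminus\bigcup_{j\le n}\Delta_j$ satisfies $\leb(R)\ge n(1-\delta^d)$.

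For each injection $\iota\colon\{1,\dots,r_n^d\}\hookrightarrow\{1,\dots,n\}$ consider the event $B_\iota$ on which $x_{\iota(j)}\in\Delta_j$ for every $j\le r_n^d$ and $x_i\in R$ for every index $i$ outside the image of $\iota$. On $B_\iota$ each inner box $\Delta_j$ ($j\le r_n^d$) contains exactly one point — the other inner boxes are disjoint from it and the surplus points lie in $R$ — and each box $\Delta_j$ with $r_n^d<j\le n$ contains none; hence $B_\iota\subset\mathcal{C}_{\delta,n}$. The events $B_\iota$, for $\iota$ ranging over the $n!/(n-r_n^d)!$ injections, are pairwise disjoint up to a null set, since the $\Delta_j$ are disjoint and disjoint from $R$. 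By independence and uniformity,
\[
	\Bin{\Lambda_n}{n}(B_\iota)=\prod_{j=1}^{r_n^d}\frac{\leb(\Delta_j)}{n}\cdot\left(\frac{\leb(R)}{n}\right)^{n-r_n^d}=\left(\frac{\delta^d}{n}\right)^{r_n^d}\left(\frac{\leb(R)}{n}\right)^{n-r_n^d},
\]
so summing over $\iota$ gives $\Bin{\Lambda_n}{n}(\mathcal{C}_{\delta,n})\ge\frac{n!}{(n-r_n^d)!}\big(\tfrac{\delta^d}{n}\big)^{r_n^d}\big(\tfrac{\leb(R)}{n}\big)^{n-r_n^d}$.

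It remains to simplify. Write $m:=n-r_n^d$; since $r_n\ge1$ one has $m<n$. The ratio of the last lower bound to $n!(\delta^d/n)^n$ is exactly $\frac1{m!}\big(\leb(R)/\delta^d\big)^m$, and choosing $\delta$ small enough that $\delta^d\le\tfrac12$ gives $\leb(R)/\delta^d\ge n(1-\delta^d)/\delta^d\ge n$, whence this ratio is at least $n^m/m!\ge1$ because $m!\le m^m\le n^m$. Therefore $\Bin{\Lambda_n}{n}(\mathcal{C}_{\delta,n})\ge n!(\delta^d/n)^n$, which is the announced estimate — for $d=1$ there are no surplus points, $\mathcal{C}_{\delta,n}$ is exactly the disjoint union of the $B_\iota$, and one recovers the equality $n!(\delta/n)^n$. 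The only genuinely delicate point is the $d\ge2$ bookkeeping: one has to be sure that the $m$ surplus points can be sent anywhere into $R$ without ever creating a second point in an inner box (immediate from the disjointness of the $\Delta_j$) and that $R$ still covers almost all of $\Lambda_n$ (the bound $\leb(\bigcup_{j\le n}\Delta_j)\le n\delta^d$); everything else is routine.
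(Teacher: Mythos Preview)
Your argument has a genuine gap: the events $B_\iota$ are not contained in $\mathcal{C}_{\delta,n}$. A perturbed lattice configuration must have \emph{all} of its $n$ points lying in the boxes $\Delta_j$; the paper says this explicitly in the sentence following the two bullet conditions (``its other points are in the neighbourhood of centers of the boxes in $\Lambda_n\setminus\widetilde\Lambda_{r_n}$''). Your $m=n-r_n^d$ surplus points are sent to $R=\Lambda_n\setminus\bigcup_{j\le n}\Delta_j$, hence outside every $\Delta_j$, so the resulting configuration is not a perturbed lattice. It is true that the two displayed bullet conditions, read literally and in isolation, do not forbid this, but the accompanying sentence does --- and, more to the point, the energy bound of Proposition~\ref{P:energy.perturbed.lattice} (the only reason perturbed lattices appear at all) needs every point to sit in its own box so that the configuration is uniformly separated. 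Under your reading two surplus points in $R$ could be arbitrarily close, and $H_n$ would blow up; the lemma would then be useless for Lemma~\ref{L:fonction.part}.

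The paper's proof therefore takes a genuinely different route: the $n-r_n^d$ surplus points are placed into \emph{distinct} boxes $\Delta_j$ with $j>r_n^d$, i.e.\ into the partial cubes sitting in $\Lambda_n\setminus\widetilde\Lambda_{r_n}$. This is where the actual work lies. Those boundary boxes can be thin slivers of volume $\delta^{d-1}(n^{1/d}-r_n)$ rather than $\delta^d$, and one must check both that there are enough of them to absorb all the surplus points and that their volume is not too small. The paper handles this by splitting into the cases $n^{1/d}-r_n\ge\delta$ (all boundary boxes are full) and $n^{1/d}-r_n<\delta$; in the second case a short counting argument shows that, once $\delta\le d2^{-d}$, the $d\,r_n^{d-1}$ ``good'' face boxes already outnumber the surplus points, and an elementary binomial inequality recovers the required $(\delta/n)^{n-r_n^d}$ factor.
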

Applying the lemma for $\delta$ chosen small enough, and using the inequality $n! / n^n \geq e^{-n}$, that gives 
\[
	Z_n^\beta \geq \left(\delta e^{-(\beta c + 1)}\right)^n.
\]
\end{proof} 
\begin{proof}[Proof of Lemma \ref{L:perturbed.lattice.bin}]

To construct a perturbed lattice configuration, we start by spread $r_n^d$ points in the boxes $\Delta_j$ with $j = 1, ..., r_n^d$. Since they have a volume of $\delta^d$, it corresponds to an event of probability

\[
	p_ 1 = (r_n^d)! \left(\frac{\delta}{n}\right)^{r_n^d}.
\]

Then we have to spread the $n-r_n^d$ remaining points between the boxes $\Delta_j$ where ${j=r_n^d+1, ..., (r_n+2)^d}$, which is an event of probability $p_2$. We consider two cases. 

\textit{1.} $n^{1/d} - r_n \geq \delta$,  all the boxes have a volume equal to $\delta^d$ and we have the lower-bound  
\[
	p_2 \geq (n - r_n^d)! \left(\frac{\delta}{n}\right)^{n - r_n^d}.
\]

\textit{2.} $n^{1/d} - r_n \leq \delta$, the volume of the boxes is then smaller than $\delta^d$, we try to put the points in the "good boxes", those with a volume equal to  $\delta^{d-1}(n^{1/d} - r_n)$. There are $dr_n^{d-1}$ such boxes, and if $\delta \leq d2^{-d}$
\[
n - r_n^d \leq \sum_{k=1}^d \delta^k r_n^{d-k} \binom{d}{k} \leq \delta r_n^{d-1} 2^d \leq dr_n^{d-1},
\]
hence we can spread the remaining points among the good boxes. This gives a lower-bound of the probability $p_2$ 
\[
p_2 \geq(n - r_n^d)! \binom{dr_n^{d-1}}{n-r_n^d} \left(\frac{(n^{1/d} - r_n)\delta^{d-1}}{n}\right)^{n-r_n^d}.
\]
Using successively the inequalities
 $\binom{N}{p} \geq (N/p)^p$ and  $b^d - a^d \leq d(b-a)b^{d-1}$ (for $a < b$), that leads to  
\[
p_2 \geq (n - r_n^d)!  \left(\left(\frac{r_n}{n^{1/d}}\right)^{d-1} \frac{\delta^{d-1}}{n}\right)^{n-r_n^d}
\]
Since $r_n / n^{1/d} \geq 1/2$, for $\delta \leq (1/2)^{d-1}$ we have 
\[
	p_2 \geq (n - r_n^d)! \left(\frac{\delta}{n}\right)^{n - r_n^d}.
\]
At the end, if $\delta$ is chosen small enough
\[
\Bin {\Lambda_n} n(\mathcal{C}_{n,\delta}) = \binom{n}{r_n^d} p_1 p_2 \geq n! \left( \frac{\delta}{n}\right)^n.
\]
\end{proof}

\subsubsection{Existence of an accumulation point}
Before starting the proof of Proposition~ \ref{P:accumulation.points}, we introduce the specific entropy criterion which is the compactness tool we use. We recall that for two probability measures $\mu, \nu$ on the same measurable space, the relative entropy of $\mu$ with respect to $\nu$ is defined as 
\[
    I(\mu \mid \nu) 
    = 
    \left\{
    \begin{array}{ll}
        \displaystyle \int \log\left( \frac{d\mu}{d\nu}\right) d\mu & \text{if } \mu  \ll \nu,  \\ \\
         +\infty & \mbox{otherwise.}
    \end{array}
    \right.
\]

\begin{defin}\label{D:specific.entropy}
 Let $\P$ be a stationary probability measure on $\Conf$, the specific entropy of~$\P$ (with respect to $\Pi$) is defined by
\[
    I(\P) 
    =  \lim_{n\rightarrow +\infty} \frac 1{\leb(\Lambda_n)} I(\P_{\Lambda_n} \mid \Pi_{\Lambda_n}).
\]
\end{defin}
The proof of the existence of the limit in the previous definition and other properties of the specific entropy can be found in the Chapter 15 of \cite{georgii.book}. The following result, stated as Proposition~2.6 in \cite{georgii.zessin}, is the compactness criterion. 
\begin{prop}\label{P:tension}
For any $c > 0$, the set of probability measures 
$\{ \P \in \mathcal{P}_{\sta}(\Conf) : I(\P) \leq c\}
$
is compact and sequentially compact for the local topology. 
\end{prop}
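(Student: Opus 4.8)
The statement is the entropy-compactness lemma of Georgii and Zessin (Proposition~2.6 of \cite{georgii.zessin}), and I would prove it along the classical lines: first turn the bound on the specific entropy into uniform control of the one-window projections of $\P$; then extract a candidate limit by a diagonal argument over an exhausting sequence of cubes; then check that the limit is again stationary and still lies in the level set, using lower semicontinuity of $I$; and finally upgrade sequential compactness to compactness via metrizability of the trace of the local topology on the level set. Throughout, the properties of the specific entropy I lean on (existence of the limit in Definition~\ref{D:specific.entropy}, superadditivity, etc.) are those collected in Chapter~15 of \cite{georgii.book}.

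Finite-window control: since $\Pi$ factorises over disjoint regions, relative entropy is superadditive, so $\Lambda\mapsto I(\P_\Lambda\mid\Pi_\Lambda)$ is superadditive on disjoint unions of cubes; together with stationarity of $\P$, a Fekete-type argument identifies $I(\P)$ with $\sup_L L^{-d} I(\P_{[-L/2,L/2]^d}\mid\Pi_{[-L/2,L/2]^d})$ over integers $L$. In particular $I(\P_\Lambda\mid\Pi_\Lambda)\le c\,L^d$ whenever $\Lambda$ lies in an integer cube of side $L$, uniformly over the level set, by the data-processing inequality. Plugging this into the entropy inequality $\P_\Lambda(A)\,\log\!\bigl(1+1/\Pi_\Lambda(A)\bigr)\le\log 2+I(\P_\Lambda\mid\Pi_\Lambda)$ with $A=\{N_\Lambda\ge k\}$, and using the super-exponential Poisson tail $\Pi_\Lambda(N_\Lambda\ge k)\le\bigl(e\,\leb(\Lambda)/k\bigr)^{k}$, gives $\sup_{I(\P)\le c}\P(N_\Lambda\ge k)\to 0$ as $k\to\infty$; a variant of the same estimate — applying Young's inequality $xy\le x\log x-x+e^{y}$ to $\tfrac{d\P_\Lambda}{d\Pi_\Lambda}$ and $\lambda_k N_\Lambda$ with $\lambda_k\to\infty$ slowly — shows moreover that $N_\Lambda$ is uniformly integrable over the level set. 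Since $\{\gamma\in\Conf_\Lambda:N_\Lambda(\gamma)\le k\}$ is vaguely compact for bounded $\Lambda$, Prokhorov's theorem makes the projections $\{\P_\Lambda:I(\P)\le c\}$ relatively compact for every bounded $\Lambda$.

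Limit and closedness: given $(\P^{(m)})_m$ in the level set and an exhausting sequence of cubes $Q_1\subset Q_2\subset\cdots$, a diagonal extraction yields a subsequence along which $\P^{(m)}_{Q_j}$ converges weakly to some $\mu_j$ for every $j$; the family $(\mu_j)_j$ is Kolmogorov-consistent and defines a point process $\P^\infty$ with $\P^\infty_{Q_j}=\mu_j$. Convergence against bounded continuous local functions is then immediate, and the uniform integrability from the previous step upgrades it to convergence against all local \emph{tame} functions, so $\P^{(m)}\to\P^\infty$ in the local topology along the subsequence; stationarity passes to $\P^\infty$ since $\int f\,d\P^\infty=\int f\circ\tau_u\,d\P^\infty$ may be tested on local tame $f$. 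Finally, with $I(\P)=\sup_L L^{-d}I(\P_{[-L/2,L/2]^d}\mid\Pi_{[-L/2,L/2]^d})$ as above, each summand is a lower semicontinuous functional of $\P$ (it depends only on a finite-window projection, and $\mu\mapsto I(\mu\mid\nu)$ is lower semicontinuous for weak convergence by the Donsker–Varadhan formula $I(\mu\mid\nu)=\sup_\phi\{\int\phi\,d\mu-\log\int e^{\phi}\,d\nu\}$, a supremum of continuous functionals); hence $I$ is lower semicontinuous for the local topology and $I(\P^\infty)\le\liminf_m I(\P^{(m)})\le c$. Thus the level set is sequentially closed and, being relatively sequentially compact by the diagonal construction, sequentially compact.

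For topological compactness, observe that on the level set the local topology is generated by a \emph{countable} family of functionals: a countable set of bounded continuous local functions separates measures of uniformly bounded intensity, and the uniform integrability reduces the tame test functions to this countable set; hence the trace topology is metrizable, and sequential compactness coincides with compactness there. The step I expect to demand the most care is the one tying together the two non-routine ingredients above: the representation $I(\P)=\sup_L L^{-d}I(\P_{[-L/2,L/2]^d}\mid\Pi_{[-L/2,L/2]^d})$ (equivalently, superadditivity plus existence of the limit in Definition~\ref{D:specific.entropy}), and the proof that local convergence together with the uniform integrability extracted from the entropy bound genuinely forces convergence of integrals of unbounded tame functions — without this, neither the lower semicontinuity of $I$ nor the membership $\P^\infty\in\mathcal{P}_{\sta}(\Conf)$ would be available for the local topology.
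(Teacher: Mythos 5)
The statement you are proving is not proved in the paper at all: the authors simply quote it as Proposition~2.6 of \cite{georgii.zessin}, so the only meaningful check is whether your sketch would actually establish the result. It would not, because of one concrete gap: you treat the local topology as if it were generated by \emph{continuous} local test functions. In the paper (and in Georgii--Zessin) a local tame function is only required to be $\F_\Delta$-\emph{measurable} with $\abs{f}\le b(1+N_\Delta)$; in particular every bounded $\F_\Delta$-measurable function, e.g.\ the indicator of an arbitrary event $A\in\F_\Delta$, is an admissible test function. Your extraction of the limit uses Prokhorov's theorem and weak convergence of the window projections, and the uniform integrability of $N_\Lambda$ only upgrades this to \emph{continuous} tame test functions. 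Weak convergence of $\P^{(m)}_{Q_j}$ to $\mu_j$ gives no control whatsoever on $\int \mathbf{1}_A\,d\P^{(m)}_{Q_j}$ for a general Borel event $A$, so the claimed convergence $\P^{(m)}\to\P^\infty$ in the local topology is not established; the same conflation undermines the final metrizability argument ("a countable set of bounded continuous local functions separates measures $\dots$ hence the trace topology is metrizable"), since separating points with continuous functionals says nothing about the finer setwise-type topology in which compactness is asserted.

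The missing ingredient — and the heart of the Georgii--Zessin proof — is to use the entropy bound a second time, on the \emph{densities} rather than on $N_\Lambda$: from $I(\P_\Lambda\mid\Pi_\Lambda)\le c\,\leb(\Lambda)$ and de la Vall\'ee-Poussin with $\varphi(x)=x\log x$, the family $\{d\P_\Lambda/d\Pi_\Lambda : I(\P)\le c\}$ is uniformly $\Pi_\Lambda$-integrable, hence relatively compact in the weak $L^1(\Pi_\Lambda)$ topology by Dunford--Pettis. Weak-$L^1$ convergence of the densities is exactly convergence of $\int f\,d\P_\Lambda$ for \emph{all} bounded $\F_\Lambda$-measurable $f$, which is what the local topology demands; combined with your diagonal argument over windows, the consistency of the limits, the uniform integrability of $N_\Lambda$ (to pass from bounded to tame test functions), and the lower semicontinuity of $I$ (your Donsker--Varadhan argument is fine, and superadditivity giving $I(\P)=\sup_L L^{-d}I(\P_{\Lambda_{L^d}}\mid\Pi_{\Lambda_{L^d}})$ is also correct), this yields sequential compactness in the local topology; compactness and the metrizability of the trace topology on the level set then also have to be argued through this weak-$L^1$ picture, not through continuous test functions. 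Without the Dunford--Pettis step your argument proves compactness only for the weak (vague-type) topology, which is strictly coarser than the topology the paper uses and for which the proposition is actually needed.
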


\begin{proof}[Proof of Proposition~ \ref{P:accumulation.points}] 
The specific entropy criterion is adapted for stationary point processes, so we need to construct a stationarized version of $(\Pbn)_{n\geq 1}$. Let $(\P^{\beta,\per}_n)_{n\geq 1}$ be the point process consisting of independent copies of $\Pbn$ in the cubes $\Lambda_n + n^{1/d}u$ with $u\in\ZZd$. Formally we can define it as follow, if $A$ is a local event and $k$ an integer such that $A \in \F_{\Lambda_{(2k+1)^d n}}$ then
\[
	\P_n^{\beta,\per}(A) = \int 1_A\Big(\bigcup_{u\in[k]} \tau_u(\gamma_u) \Big) \prod_{u\in[k]} \Pbn(d\gamma_u).
\]
The probability measure $\P^{\beta,\per}_n$ is an infinite volume point process, but not a stationary one yet. The stationarized version of $\Pbn$ is defined by 
\[\mathbb{P}^{\beta, \sta}_n = \frac{1}{n} \int_{\Lambda_n} \mathbb{P}^{\beta,\per}_n \circ \tau_u du.
\]
To show that the sequence $(\P^{\beta,\sta}_n)_{n\geq 1}$ admits a convergent subsequence, we need an uniform bound of the specific entropy. According to the definition of the specific entropy and the properties of the relative entropy we have  
\[	
	I(\mathbb{P}^{\beta,\sta}_n) \leq \frac 1n I(\mathbb{P}^\beta_n \mid \Pi_{\Lambda_n}).
\]
We compute the relative entropy of $\Pbn$ with respect to $\pi_{\Lambda_n}$ as follows
\begin{align*}
	I_{\Lambda_n}(\Pbn \mid \pi) & = \int \log\left(\frac{d\Pbn}{d\Pi_{\Lambda_n}}\right) d\Pbn \\
	& = \int \log\left(\frac{d\Pbn}{d \Bin {\Lambda_n} n}\right) d\Pbn + \int \log\left(\frac{d \Bin {\Lambda_n} n}{d\Pi_{\Lambda_n}}\right) d\mathbb{P}^\beta_n \\
	& = - \log(Z^\beta_n) - \beta \int \Hn d\Pbn + \log\left(e^{n} \frac{n!}{n^n}\right).
\end{align*}
Using the stability property of $\Hn$ and the lower-bound on the partition function given by Lemma \ref{L:fonction.part} we obtain 
\[
	\frac 1n I(\Pbn \mid \Pi_{\Lambda_n}) \leq b_\beta - \beta A + 1. 
\]
According to Proposition~\ref{P:tension}, the sequence $(\mathbb{P}^{\beta, \sta}_n)_{n\geq 1}$ admits a convergent subsequence for the local topology, denoted by $(\mathbb{P}^{\beta, \sta}_{n_j})_{j\geq 1}$. Let $\Pb$ be the limit of this subsequence, it is a stationary probability measure on $\Conf$. Since all probability measures $\mathbb{P}^{\beta, \sta}_n$ have intensity equal to $1$, the converge for the local topology ensures that $\Pb$ has also an intensity equal to~$1$. To conclude the proof, it remains to show that $\Pb$ is the limit of $(\Pbnj)_{j\geq 1}$ as well.  Without loss of generality, we assume that $n_j = j$, or in other words that $\Pb$ is the limit of $(\P^{\beta,\sta}_n)_{n\geq 1}$. Let $f$ be a local tame function, and $\Delta$ a Borel bounded subset of $\RRd$ such that for all point configurations ~$\gamma$ we have $f(\gamma) = f(\gamma_\Delta)$. By definition of $\Pb$ 
\[
	\int f(\gamma) \mathbb{P}(d\gamma) =\lim_{n\rightarrow +\infty} \frac{1}{\leb(\Lambda_{n})} \int_{\Lambda_n} \int f(\tau_u(\gamma)) \mathbb{P}^{\beta, \per}_n(d\gamma) du.
\]
If we take $n$ large enough to have $\Delta \subset \Lambda_n$ and introduce $\Lambda_n^\star = \{u \in \Lambda_n : \tau_u(\Delta) \subset \Lambda_n\}$, then for all $u \in \Lambda_n^\star$ we have
\[
\int f(\tau_u(\gamma)) \mathbb{P}^{\beta, \per}_n(d\gamma) = \int f(\tau_u(\gamma)) \mathbb{P}^{\beta}_n(d\gamma).
\]
The key remark here is that $\Bin {\Lambda_n} n \circ \tau_{-u}^n$ is equal to $\Bin {\Lambda_n} n$, where $\tau_{u}^n$ is the translation of vector $u$ on $\Lambda_n$ seen as a torus. Since $\Hn$ is invariant by such torus translations,  for every $u \in \Lambda_n^\star$ 
\[
\int f(\tau_u(\gamma)) \mathbb{P}^{\beta}_n(d\gamma) 
=
\int f(\tau_u^n(\gamma)) \mathbb{P}^{\beta}_n(d\gamma)
=\int f(\gamma) \mathbb{P}^{\beta}_n(d\gamma).
\]
Finally, since $\leb(\Lambda_n^\star) / \leb(\Lambda_n)$ goes to $1$ as $n$ goes to infinity, we obtain  successively
\begin{align*}
\int f(\gamma) \mathbb{P}(d\gamma)  
& =\lim_{n\rightarrow +\infty} \frac{1}{\leb(\Lambda_n)} \int_{\Lambda^*_n} \int f(\tau_u(\gamma)) \mathbb{P}^{\beta, \per}_n(d\gamma) du\\
& =  \lim_{n\rightarrow +\infty} \frac{\leb(\Lambda_n^\star)}{\leb(\Lambda_n)} \int f(\gamma) \mathbb{P}^{\beta}_n(d\gamma)\\
& = \lim_{n\rightarrow +\infty} \int f(\gamma) \mathbb{P}^{\beta}_n(d\gamma).
\end{align*}
That proves that any accumulation point of the sequence $(\P_n^{\beta,\sta})_{n\geq 1}$ is also an accumulation point of the sequence $(\Pbn)_{n\geq 1}$.
\end{proof}

\subsection{Canonical DLR equations}

In this section we provide the proof of Theorem~\ref{T:DLR.equations} inspired by  part (C) of Theorem~1.1 in \cite{DLRsinebeta}. We start by proving the existence of the Move functions for point processes with bounded intensity. Then the theorem is proved using truncations of these Move functions, local convergence and canonical DLR equations in finite volume. 

\subsubsection{Move functions and their truncations in infinite volume}
We prove Proposition~\ref{P:def.Hn} and derive a control lemma which is needed for the proof the canonical DLR equations in infinite volume.
Let $\Delta$ be a bounded Borel set, $p\geq 1$ an integer and $\delta > 0$ a real. We introduce the event 
\[
A_{\Delta}^{(p)}(\delta) = \left\{\gamma \in \Conf : \sup_{\substack{\eta \in\Conf_\Delta\\ \vert \eta \vert = \vert \gamma_\Delta\vert} } \left\vert M_\Delta(\eta, \gamma)- M_\Delta^{(p)}(\eta, \gamma)\right\vert \leq \delta \right\}.
\]
\begin{lem}\label{L:Move.control}
Let $\kappa > 0$ be a fixed parameter, for every $\varepsilon > 0$ and every $\delta > 0$,
there exists $p >0$ such that for every point process $\mathbb{P}$ with intensity bounded by $\kappa$ we have
\[
\mathbb{P}\left(A_{\Delta}^{(p)}(\delta)\right) \geq 1 - \varepsilon.
\]
\end{lem}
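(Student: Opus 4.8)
The plan is to estimate, uniformly over point processes $\mathbb P$ of intensity bounded by $\kappa$, the probability that the tail $M_\Delta(\eta,\gamma) - M_\Delta^{(p)}(\eta,\gamma) = \sum_{x\in\eta_\Delta}\sum_{y\in\gamma_{\Delta^c\cap\Lambda_p^c}}[g(x-y)-g(y)]$ is large, and then to use Markov's inequality. First I would fix $\rho_\Delta = \sup_{x\in\Delta}\|x\|$ and, just as in the proof of Lemma~\ref{L:majoration.n}(ii), use the gradient bound $|g(x-y)-g(y)| \le s\,\rho_\Delta\int_0^1\|y-tx\|^{-(s+1)}\,dt$ together with $\|y-tx\| \ge \|y\|/2$ for $\|y\|\ge 2\rho_\Delta$ to get a pointwise estimate $|g(x-y)-g(y)| \le c_\Delta\,\|y\|^{-(s+1)}$ valid for all $x\in\Delta$ and all $y$ outside a fixed large ball. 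Crucially this bound is independent of $\eta$, so summing over $x\in\eta_\Delta$ gives
\[
	\sup_{\substack{\eta\in\Conf_\Delta\\ |\eta|=|\gamma_\Delta|}}\bigl|M_\Delta(\eta,\gamma)-M_\Delta^{(p)}(\eta,\gamma)\bigr|
	\le c_\Delta\, N_\Delta(\gamma) \sum_{y\in\gamma\cap\Lambda_p^c} \|y\|^{-(s+1)},
\]
for $p$ large enough that $\Lambda_p$ contains the ball of radius $2\rho_\Delta$. This already shows the limit defining $M_\Delta$ exists and is finite whenever $N_\Delta(\gamma)<\infty$ and $\sum_{y\in\gamma}\|y\|^{-(s+1)}\cdot 1_{\|y\|>1}<\infty$ (the tail remainder being what we must control), which is the content of Lemma~\ref{L:move.existence}.

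Next I would split into the two random factors. To handle $N_\Delta(\gamma)$, note $E_\P(N_\Delta) \le \kappa\,\leb(\Delta)$, so by Markov there is a constant $R$ (depending only on $\kappa,\Delta,\varepsilon$) with $\P(N_\Delta(\gamma)>R)\le\varepsilon/2$. To handle the tail sum, decompose $\Lambda_p^c$ into the annular shells $S_m = \Lambda_{(2(m+1))^d}\setminus\Lambda_{(2m)^d}$ for $m\ge m_p\sim p^{1/d}$; on $S_m$ one has $\|y\|^{-(s+1)} \le c\, m^{-(s+1)}$, and $E_\P(N_{S_m}) \le \kappa\,\leb(S_m) \le c\,\kappa\, m^{d-1}$. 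Hence
\[
	E_\P\Bigl(\sum_{y\in\gamma\cap\Lambda_p^c}\|y\|^{-(s+1)}\Bigr)
	\le c\,\kappa\sum_{m\ge m_p} m^{d-1}\,m^{-(s+1)}
	= c\,\kappa\sum_{m\ge m_p} m^{d-s-2},
\]
and since $s>d-1$ we have $d-s-2 < -1$, so this series converges and its tail $\to 0$ as $p\to\infty$, uniformly in $\P$. Choosing $p$ large enough that this expectation is at most $\delta\varepsilon/(2\,c_\Delta R)$, another application of Markov gives $\P\bigl(c_\Delta R\sum_{y\in\gamma\cap\Lambda_p^c}\|y\|^{-(s+1)} > \delta\bigr)\le\varepsilon/2$. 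Combining the two bad events yields $\P(A_\Delta^{(p)}(\delta)) \ge 1-\varepsilon$.

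The main obstacle is the uniformity in $\mathbb P$: every estimate must be phrased through the single moment bound $E_\P(N_B)\le\kappa\leb(B)$ and nothing finer (we have no control on correlations or higher moments), which is exactly why the argument proceeds entirely via first-moment/Markov inequalities rather than, say, variance estimates. The only structural input that makes it work is the summability $\sum_m m^{d-s-2}<\infty$, i.e. the hypothesis $s>d-1$; for $s\le d-1$ the shells would contribute a divergent amount on average and this crude approach would fail, which is consistent with the paper's discussion that the regime $s\le d-1$ genuinely behaves differently. A minor technical point to check carefully is that the supremum over $\eta$ with $|\eta|=N_\Delta(\gamma)$ is harmless precisely because the gradient bound is uniform over $x\in\Delta$, so the number of points in $\eta$ is the only $\eta$-dependence and it equals $N_\Delta(\gamma)$.
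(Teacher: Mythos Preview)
Your proposal is correct and follows essentially the same approach as the paper: the gradient bound $|g(x-y)-g(y)|\le c_\Delta\|y\|^{-(s+1)}$ uniformly over $x\in\Delta$, the factorisation into $N_\Delta(\gamma)$ times a tail sum, and two applications of Markov's inequality via the first-moment bound $E_\P(N_B)\le\kappa\,\leb(B)$. The only cosmetic difference is the shell decomposition---the paper uses the unit-volume shells $\Lambda_{k+1}\setminus\Lambda_k$ (so that $\|y\|\sim k^{1/d}$ and the tail sum is $\sum_k k^{-(s+1)/d}$) whereas you use linear-scale shells $S_m$ with $\|y\|\sim m$ and volume $\sim m^{d-1}$; both give the same convergence criterion $s>d-1$.
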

\begin{proof}[Proof of Proposition \ref{P:def.Hn} and Lemma \ref{L:Move.control}]  Let $\mathbb{P}$ be a point process of intensity bounded by $\kappa$. For $p\geq1$, $M_\Delta^{(p)}(\eta,\gamma) =  +\infty$ if and only if $0 \in \gamma_{\Delta^c}$, which is an event of zero probability under $\mathbb{P}$. Therefore, the sequence $(M^{(p)}_\Delta(\eta,\gamma))_{p\geq 1}$ contains $\mathbb{P}$-almost surely finite elements. For integers $q > p > 0$ we have 
\begin{align*}
\left\vert M^{(q)}_\Delta(\eta, \gamma)- M^{(p)}_\Delta(\eta, \gamma)\right\vert  \leq \sum_{x \in \eta_\Delta} \sum_{y \in \gamma_{\Lambda_q \setminus \Lambda_p}} \vert g(y-x) - g(y) \vert
\end{align*}
Using the gradient of the potential $g$
\[
\vert g(y-x) - g(y) \vert = \left\vert \int_0^1 \langle \nabla g(y - tx), x \rangle dt \right\vert
\leq s\Vert x \Vert \int_0^1 \Vert y - tx \Vert^{-(s+1)}dt.
\] 
We introduce the quantity $\rho_\Delta = \max_{x\in\Delta} \Vert x \Vert$. For $p$ large enough such that $\rho_\Delta \leq p^{1/d} /4$, for every $y \in \Lambda_p^c$ and $x \in \Delta$
\[
\Vert y - tx \Vert \geq \Vert y \Vert - \Vert x \Vert \geq \Vert y \Vert / 2.
\]
Thus we obtain successively 
\begin{align*}
\left\vert M^{(q)}_\Delta(\eta, \gamma)- M^{(p)}_\Delta(\eta, \gamma)\right\vert  & \leq s2^{s+1}\sum_{x \in \eta_\Delta} \sum_{y \in \gamma_{\Lambda_q \setminus \Lambda_p}} \Vert x \Vert \Vert y \Vert^{-(s+1)} \\
& \leq s 2^{s+1} \rho_\Delta N_\Delta(\eta) \sum_{k = p}^{q-1} \sum_{y \in \gamma_{\Lambda_{k+1} \setminus \Lambda_{k}}} \Vert y \Vert^{-(s+1)}\\
& \leq s 4^{s+1} \rho_\Delta N_\Delta(\eta) \sum_{k = p}^{+\infty} N_{\Lambda_{k+1} \setminus \Lambda_{k}}(\gamma) k^{-(s+1)/d}.
\end{align*}
Since $\leb(\Lambda_{k+1} \setminus \Lambda_k) = 1$ and $\P$ has the intensity bounded by $\kappa$, we have
\begin{equation}\label{E:expectation.cauchy}
E_{\P} \left[\sum_{k = p}^{+\infty}N_{\Lambda_{k+1} \setminus \Lambda_{k}}  k^{-(s+1)/d}\right] \leq \kappa \sum_{k=p}^{+\infty} k^{-(s+1)/d}.
\end{equation}
It implies that for $\P$-almost every $\gamma$, the sequence $(M^{(p)}_\Delta(\eta,\gamma))_{p\geq 1}$ is a Cauchy sequence which gives the existence of a finite limit
\[
M_{\Delta}(\eta,\gamma) = \lim_{p\rightarrow +\infty} M^{(p)}_\Delta(\eta, \gamma) . 
\]
Proposition~\ref{P:def.Hn} is proved, and it remains to prove Lemma \ref{L:Move.control}. The previous computation shows that 
\[
\sup_{\substack{\eta \in\Conf_\Delta\\ N_\Delta(\eta)  = N_\Delta(\gamma)} } \left\vert M_\Delta(\eta, \gamma)- M^{(p)}_\Delta(\eta, \gamma)\right\vert \leq  s 4^{s+1} \rho_\Delta N_\Delta(\gamma)\sum_{k = p}^{+\infty} N_{\Lambda_{k+1} \setminus \Lambda_{k}}(\gamma) k^{-(s+1)/d}.
\]
Since $\P$ has intensity bounded by $\kappa$, we can find a positive constant $K$ (depending only on $\kappa$ and $\Delta$) such that $\P(N_\Delta > K)\leq \varepsilon/2$. According to (\ref{E:expectation.cauchy}) and Markov's inequality, we can find $p > 0$ such that 
\[
	\P\left( s 4^{s+1} \rho_\Delta K \sum_{k = p}^{+\infty} N_{\Lambda_{k+1} \setminus \Lambda_{k}}  k^{-(s+1)/d} > \delta \right) \leq \varepsilon/2. 
\]  
For that choice of $p$, $\P(A^{(p)}_\Delta(\delta)) \geq 1-\varepsilon$, and $p$ depends only on $d,s,\delta,\Delta$ and $\kappa$.
\end{proof}

The sequence $(M_\Delta^{(p)}(\eta,\gamma))_{p \geq 1}$ allows to approximate $M_\Delta(\eta,\gamma)$, so we introduce the notation  
\[
	f^{(p)}_\Delta(\gamma) = \frac{1}{Z^{(p)}_\Delta(\gamma)} \int f(\eta \cup \gamma_{\Delta^c}) \exp\big(-\beta\big(H(\eta) + M^{(p)}_\Delta(\eta,\gamma)\big)\big) \Bin {\Delta} {N_\Delta(\gamma)}(d\eta),
\]
with the corresponding partition function 
\[
	Z^{(p)}_\Delta(\gamma)=  \int \exp\big(-\beta\big(H(\eta) + M^{(p)}_\Delta(\eta,\gamma)\big)\big) \Bin {\Delta} {N_\Delta(\gamma)}(d\eta).
\]
This partition function is positive since we integrate a $\Bin {\Delta} {N_\Delta(\gamma)}$-almost surely positive quantity, and is finite for $\Pb$-almost every $\gamma$ since we have the upper bound 
\[
	Z^{(p)}_\Delta(\gamma) \leq \exp\left(-n \beta \inf_{x\in\Delta} M_\Delta^{(p)}(\{x\}, \gamma)\right).
\]
The local function $f^{(p)}_\Delta$ gives an approximation of the non-local function $f_\Delta$, as described in the following result. 
\begin{prop}\label{P:DLRp.control}
For every $\varepsilon > 0$, there exists an integer $p > 0$ such that for any point process $\mathbb{P}$ with intensity bounded by $1$ and any local bounded function $f$ we have 
\[
\left\vert \E \P\big(f^{\vphantom{(p)}}_\Delta - f^{(p)}_\Delta\big) \right\vert \leq \varepsilon \normf.
\]
\end{prop}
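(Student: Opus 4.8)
The plan is to deduce Proposition~\ref{P:DLRp.control} from Lemma~\ref{L:Move.control} (applied with $\kappa = 1$) together with an elementary comparison of two Gibbs-type averages whose tilts differ by a uniformly small quantity. Fix $\varepsilon > 0$, a point process $\P$ of intensity bounded by $1$, and a local bounded $f$. I would start from the split
\[
	\bigabs{\E\P\big(f^{\vphantom{(p)}}_\Delta - f^{(p)}_\Delta\big)} \le \E\P\big(\bigabs{f^{\vphantom{(p)}}_\Delta - f^{(p)}_\Delta}\big) \le \sup_{\gamma \in A^{(p)}_\Delta(\delta)} \bigabs{f_\Delta(\gamma) - f^{(p)}_\Delta(\gamma)} + 2\normf\, \P\big((A^{(p)}_\Delta(\delta))^c\big),
\]
where on the bad event I use only the crude bound $\abs{f_\Delta(\gamma)}, \abs{f^{(p)}_\Delta(\gamma)} \le \normf$, both quantities being averages of $f$. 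Everything then reduces to a pointwise estimate on the good event $A^{(p)}_\Delta(\delta)$, after which one chooses $\delta$ small and then $p$ large.

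For the pointwise estimate I would, for fixed $\gamma$ with $n := N_\Delta(\gamma)$, view both $f_\Delta(\gamma)$ and $f^{(p)}_\Delta(\gamma)$ as normalised averages of $\eta \mapsto f(\eta \cup \gamma_{\Delta^c})$ against the \emph{common} reference measure $\mu_\gamma(d\eta) := e^{-\beta(H(\eta) + M^{(p)}_\Delta(\eta,\gamma))}\,\Bin{\Delta}{n}(d\eta)$, which for $\P$-almost every $\gamma$ is finite and nonzero (its total mass being $Z^{(p)}_\Delta(\gamma)$). Writing $\langle\cdot\rangle$ for the normalised $\mu_\gamma$-average and $r(\eta) := e^{-\beta(M_\Delta(\eta,\gamma) - M^{(p)}_\Delta(\eta,\gamma))}$, one has $f^{(p)}_\Delta(\gamma) = \langle f(\cdot\cup\gamma_{\Delta^c})\rangle$ and $f_\Delta(\gamma) = \langle f(\cdot\cup\gamma_{\Delta^c})\, r\rangle / \langle r\rangle$, so that
\[
	f_\Delta(\gamma) - f^{(p)}_\Delta(\gamma) = \frac{\big\langle f(\cdot\cup\gamma_{\Delta^c})\,(r - \langle r\rangle)\big\rangle}{\langle r\rangle}.
\]
On $A^{(p)}_\Delta(\delta)$ its defining property gives $\abs{M_\Delta(\eta,\gamma) - M^{(p)}_\Delta(\eta,\gamma)} \le \delta$ for all admissible $\eta$, hence $r(\eta) \in [e^{-\beta\delta}, e^{\beta\delta}]$, so $\langle r\rangle \ge e^{-\beta\delta}$ and $\abs{r - \langle r\rangle} \le e^{\beta\delta} - e^{-\beta\delta}$; this yields $\abs{f_\Delta(\gamma) - f^{(p)}_\Delta(\gamma)} \le (e^{2\beta\delta} - 1)\normf$. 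Inserting this in the split, choosing first $\delta > 0$ with $e^{2\beta\delta} - 1 \le \varepsilon/2$, and then — via Lemma~\ref{L:Move.control} with $\kappa = 1$, this $\delta$, and target $\varepsilon' = \varepsilon/4$ — an integer $p = p(d,s,\beta,\Delta,\varepsilon)$ with $\P((A^{(p)}_\Delta(\delta))^c) \le \varepsilon/4$ for every such $\P$, gives $\abs{\E\P(f^{\vphantom{(p)}}_\Delta - f^{(p)}_\Delta)} \le \varepsilon\normf$.

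I do not expect a real obstacle here: all the uniformity over point processes of bounded intensity is already contained in Lemma~\ref{L:Move.control}, and what remains is exactly the \emph{comparison of tilted averages} computation above. The only points that need care are measure-theoretic: that $M^{(p)}_\Delta(\eta,\gamma)$ is finite (true as soon as $0 \notin \gamma_{\Delta^c}$, a $\P$-almost sure event), that the limit $M_\Delta(\eta,\gamma)$ is finite and $\mu_\gamma$ a finite nonzero measure (total mass $Z^{(p)}_\Delta(\gamma) \in (0,+\infty)$ for $\P$-a.e. $\gamma$), and that $Z^\beta_\Delta(\gamma) \in (0,+\infty)$ so that $f_\Delta(\gamma)$ is meaningful; this last point follows from the estimate proved in Lemma~\ref{L:Move.control}, which gives $\gamma \in A^{(p)}_\Delta(\delta)$ for $p$ large enough and $\P$-almost every $\gamma$, whence $Z^\beta_\Delta(\gamma) \le e^{\beta\delta}\, Z^{(p)}_\Delta(\gamma) < +\infty$.
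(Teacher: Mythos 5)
Your proof is correct and follows essentially the same route as the paper: both rest on the good event $A^{(p)}_\Delta(\delta)$ from Lemma~\ref{L:Move.control}, a pointwise bound of order $(e^{c\beta\delta}-1)\normf$ on that event, the crude $2\normf$ bound on its complement, and the choice of $\delta$ then $p$. The only difference is cosmetic: you package the comparison as a tilted-average (covariance) identity with respect to the common measure $e^{-\beta(H+M^{(p)}_\Delta)}\Bin{\Delta}{N_\Delta(\gamma)}$, whereas the paper splits the difference into an exponential-difference term plus a partition-function-difference term, yielding the constant $2(e^{\beta\delta}-1)$ in place of your $e^{2\beta\delta}-1$.
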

\begin{proof}
We write the difference as 
\begin{align}\label{E:diff.functions}
& f_\Delta^{(p)}(\gamma) - f^{}_\Delta(\gamma) = \nonumber\\ 
&\quad \quad \quad \quad \quad 
 \frac{1}{Z^{(p)}_\Delta(\gamma)} \int f(\eta \cup \gamma_{\Delta^c}) \left(e^{-\beta(H(\eta)+ M^{(p)}_\Delta(\eta,\gamma))} - e^{-\beta (H(\eta) + M_\Delta(\eta,\gamma))}\right) \Bin {\Delta}{N_\Delta(\gamma)}(d\eta)  \nonumber\\ 
& \quad \quad \quad \quad \quad \quad
+ \left(\frac{1}{Z^{\vphantom{(p)}}_\Delta(\gamma)}  - \frac{1}{Z^{(p)}_\Delta(\gamma)} \right)\int f(\eta \cup \gamma_{\Delta^c}) e^{-\beta(H(\eta) + M_\Delta(\eta,\gamma))} \Bin {\Delta}{N_\Delta(\gamma)}(d\eta).
\end{align}
For $\delta > 0$, if $\gamma\in A^{(p)}_\Delta(\delta)$, then for every  $\eta \in \Conf_\Delta$,
\[
\left\vert e^{-\beta(H(\eta)+ M^{(p)}_\Delta(\eta,\gamma))} - e^{-\beta (H(\eta) + M_\Delta(\eta,\gamma))}\right\vert \leq (e^{\beta \delta} - 1) e^{-\beta(H(\eta)+ M^{(p)}_\Delta(\eta,\gamma))}.
\]
We can control the difference between the partition functions
\[
\left\vert Z^{(p)}_\Delta(\gamma) - Z_\Delta(\gamma) \right\vert \leq (e^{\beta\delta} - 1)Z^{(p)}_\Delta(\gamma).
\]
These two inequalities applied to (\ref{E:diff.functions}) give that for $
\gamma \in A_\Delta^{(p)}(\delta)$
\[
\left\vert f_\Delta^{(p)}(\gamma) - f_\Delta^{\vphantom{(p)}}(\gamma)\right\vert \leq 2(e^{\beta \delta} - 1) \normf.
\]
Taking expectations leads to 
\[
\left\vert E^{}_{\P}(f^{\vphantom{(p)}}_\Delta - f^{(p)}_\Delta) \right\vert \leq 2(e^{\beta \delta} - 1) \normf + 2(1 -  \P(A^{(p)}_\Delta(\delta)))  \normf. 
\]
We choose $\delta$ small enough in order to have 
$2(e^{\beta \delta} - 1) \leq \varepsilon / 2$. According to Lemma \ref{L:Move.control}, there exists $p>0$ such that if $\P$  has intensity bounded by 1 then $ \P(A^{(p)}_\Delta(\delta)) \geq 1 -  \varepsilon / 4$, which concludes the proof of the proposition.
\end{proof}

\subsubsection{DLR equations in finite volume}

In finite volume the canonical DLR  equations are simpler to obtained. We will state them using the local energy. For $\Delta \subset \Lambda_n$ a bounded Borel set and $\eta$, $\gamma$ two point configurations in $\Lambda_n$, the local energy of the points of $\eta$ in $\Delta$ into the field created by $\gamma$ is given by \[
	H_{n,\Delta}(\eta,\gamma) = \Hn(\eta_\Delta \cup \gamma_{\Delta^c}) - \Hn(\gamma_{\Delta^c}) = \Hn(\eta_\Delta) + \sum_{x\in\eta}\sum_{y\in\gamma_{\Delta^c}} g_n(x-y).
	\]
	It corresponds to the sum of the energy of $\eta_{\Delta}$ and the interaction between $\eta_{\Delta}$ and $\gamma_{\Delta^c}$. As before, for a measurable bounded function $f$ we introduce the notation 
	\[
		f_{n,\Delta}(\gamma) = \frac{1}{Z_{n,\Delta}(\gamma)}\int f(\eta \cup \gamma_{\Delta^c}) e^{-\beta H_{n,\Delta}(\eta, \gamma)} \Bin {N_\Delta(\gamma)} \Delta(d\eta), 
	\]
with the partition function 
\[
	Z_{n,\Delta}(\gamma) = \int e^{-\beta H_{n,\Delta}(\eta,\gamma)} \Bin \Delta {N_\Delta(\gamma)}(d\eta).
\]
We can now state the canonical DLR equations in the finite volume case. 
\begin{prop}\label{P:DLR.finite.vol}
Let $f$ be a bounded measurable function on $\Conf$ and $\Delta$ a bounded Borel subset of $\RRd$, if $n\geq 1$ is such that $\Delta \subset \Lambda_n$ then  
\[
	\E \Pbn(f) = \E \Pbn(f_{n,\Delta}). 
\] 
\end{prop}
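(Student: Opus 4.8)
The plan is to verify the identity $\E\Pbn(f) = \E\Pbn(f_{n,\Delta})$ by a direct computation, unwinding the definition of $\Pbn = \frac{1}{\Zbn} e^{-\beta\Hn}\Bin{\Lambda_n}{n}$ and using the standard decomposition of the binomial point process under partitioning of the box $\Lambda_n$ into $\Delta$ and $\Lambda_n\setminus\Delta$. First I would recall that a configuration of $n$ i.i.d.\ uniform points in $\Lambda_n$, when we record how many points $\ell$ fall in $\Delta$ and where they are, has the law: draw $\ell \sim \mathrm{Binomial}(n, \leb(\Delta)/n)$, then place $\ell$ points i.i.d.\ uniformly in $\Delta$ and $n-\ell$ points i.i.d.\ uniformly in $\Lambda_n\setminus\Delta$. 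Equivalently, for a bounded measurable $\Phi$ on $\Conf_{\Lambda_n}$,
\[
	\int \Phi(\gamma)\,\Bin{\Lambda_n}{n}(d\gamma) = \sum_{\ell=0}^{n} \binom{n}{\ell}\frac{\leb(\Delta)^\ell \leb(\Lambda_n\setminus\Delta)^{n-\ell}}{n^n} \iint \Phi(\eta\cup\zeta)\,\Bin{\Delta}{\ell}(d\eta)\,\Bin{\Lambda_n\setminus\Delta}{n-\ell}(d\zeta).
\]

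Next I would apply this with $\Phi(\gamma) = f(\gamma) e^{-\beta\Hn(\gamma)}$. The key algebraic point is the splitting of the energy: for $\gamma = \eta\cup\zeta$ with $\eta\in\Conf_\Delta$, $\zeta\in\Conf_{\Lambda_n\setminus\Delta}$ and $N_\Delta(\gamma) = \ell$, one has
\[
	\Hn(\eta\cup\zeta) = \Hn(\zeta) + H_{n,\Delta}(\eta,\eta\cup\zeta),
\]
which is exactly the defining identity of $H_{n,\Delta}$ (taking $\gamma_{\Delta^c}=\zeta$). Thus $e^{-\beta\Hn(\gamma)} = e^{-\beta\Hn(\zeta)} e^{-\beta H_{n,\Delta}(\eta,\gamma)}$. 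Substituting and regrouping, the inner integral over $\eta$ against $\Bin{\Delta}{\ell}$ produces precisely the factor $Z_{n,\Delta}(\gamma)$ once we also note that $H_{n,\Delta}$ depends on $\gamma$ only through $N_\Delta(\gamma)$ and $\gamma_{\Delta^c}=\zeta$. Concretely, writing $f_{n,\Delta}(\gamma)\, Z_{n,\Delta}(\gamma) = \int f(\eta\cup\gamma_{\Delta^c}) e^{-\beta H_{n,\Delta}(\eta,\gamma)}\,\Bin{\Delta}{N_\Delta(\gamma)}(d\eta)$, one checks that
\[
	\int f(\gamma) e^{-\beta\Hn(\gamma)}\,\Bin{\Lambda_n}{n}(d\gamma) = \int \Big(\int f(\eta\cup\zeta) e^{-\beta H_{n,\Delta}(\eta,\gamma_\eta)}\,\Bin{\Delta}{N_\Delta}(d\eta)\Big) e^{-\beta\Hn(\zeta)}\,\big[\cdots\big],
\]
and the same manipulation applied to $f_{n,\Delta}$ in place of $f$ gives the identical right-hand side, because inside the $\eta$-integral we may replace $f(\eta\cup\zeta)$ by $f_{n,\Delta}(\eta\cup\zeta) = f_{n,\Delta}(\zeta)$ (it is $\F_{\Delta^c}$-measurable, hence a function of $\zeta$ alone) times the full mass $Z_{n,\Delta}$, and $f_{n,\Delta}(\zeta)Z_{n,\Delta}(\zeta)$ is by definition $\int f(\eta'\cup\zeta)e^{-\beta H_{n,\Delta}(\eta',\zeta\cup\eta')}\Bin{\Delta}{\ell}(d\eta')$. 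Dividing both sides by $\Zbn$ yields the claimed equality.

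The main obstacle — really the only point demanding care — is bookkeeping the dependence on $N_\Delta$: the reference measure $\Bin{\Delta}{N_\Delta(\gamma)}$ and the energy $H_{n,\Delta}(\eta,\gamma)$ both involve the random count $\ell = N_\Delta(\gamma)$, so one must be scrupulous that, after the binomial splitting, the $\ell$ appearing in $\Bin{\Delta}{\ell}$ inside the definition of $f_{n,\Delta}$ matches the outer summation index and that the combinatorial weight $\binom{n}{\ell}\leb(\Delta)^\ell\leb(\Lambda_n\setminus\Delta)^{n-\ell}/n^n$ is carried through untouched (it is the same on both sides and cancels). Also one should note $H_{n,\Delta}(\eta,\eta\cup\zeta)$ only uses $\eta_\Delta$ and $\zeta_{\Delta^c}=\zeta$, so it is legitimate to write it as a function of $(\eta,\zeta)$; this is immediate from the displayed formula $H_{n,\Delta}(\eta,\gamma)=\Hn(\eta_\Delta)+\sum_{x\in\eta_\Delta}\sum_{y\in\gamma_{\Delta^c}}g_n(x-y)$. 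Modulo this careful tracking, the proof is a one-line Fubini computation, and no compactness, stability, or limiting argument is needed since $n$ is fixed.
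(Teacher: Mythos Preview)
Your approach is correct and is essentially the same as the paper's: both proofs rest on the binomial splitting of $\Bin{\Lambda_n}{n}$ along the partition $\Lambda_n = \Delta \cup (\Lambda_n\setminus\Delta)$ together with the energy decomposition $\Hn(\eta\cup\zeta) = \Hn(\zeta) + H_{n,\Delta}(\eta,\eta\cup\zeta)$, and then a Fubini/relabelling to recognise $f_{n,\Delta}$.

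One slip to correct: you write ``$f_{n,\Delta}(\eta\cup\zeta) = f_{n,\Delta}(\zeta)$ (it is $\F_{\Delta^c}$-measurable)''. This is not right as stated: $f_{n,\Delta}$ depends on $\gamma$ through $\gamma_{\Delta^c}$ \emph{and} $N_\Delta(\gamma)$, so it is only $\mathcal{E}_\Delta = \sigma(N_\Delta,\F_{\Delta^c})$-measurable (cf.\ Remark~\ref{r.esperance.cond}). In particular $f_{n,\Delta}(\zeta)$ with $\zeta\in\Conf_{\Lambda_n\setminus\Delta}$ would invoke $\Bin{\Delta}{0}$, which is not what you want. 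The correct statement, which you clearly intend (you use $\Bin{\Delta}{\ell}$ two lines later), is that once the summation index $\ell$ is fixed and $|\eta|=\ell$, the value $f_{n,\Delta}(\eta\cup\zeta)$ depends only on $(\zeta,\ell)$, not on the particular $\eta$. With that adjustment your argument goes through and matches the paper's, which phrases the same step via the symmetry $H_n(\eta\cup\xi)+H_{n,\Delta}(\zeta,\xi)=H_{n,\Delta}(\eta,\xi)+H_n(\zeta\cup\xi)$.
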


\begin{proof}
Introducing the number of points in $\Delta$, the expectation can be written as  
\[
E_{\Pbn}(f) = \frac{1}{Z_n^\beta} \sum_{k=0}^n \int 1_{N_\Delta=k}(\gamma)  f(\gamma) e^{-\beta H_n(\gamma)} \Bin {\Lambda_n} n (d\gamma).
\]
We do the change of variables $\gamma = \eta \cup \xi$ with $(\eta, \xi) \in \Conf_{\Lambda_n\setminus \Delta} \times \Conf_{\Delta}$ and use the notation $p_\Delta = \leb(\Delta) / \leb(\Lambda_n)$ in order to obtain
\begin{equation}\label{E:DLRn}
E_{\Pbn}(f) = \frac{1}{Z^\beta_n} \sum_{k=0}^n \binom{n}{k} p_\Delta^k(1-p_\Delta)^{n-k} \iint f(\eta\cup\xi) e^{-\beta H_n(\eta\cup\xi)}\Bin {\Delta} k(d\eta) \Bin {\Lambda_n\setminus\Delta}{n-k}(d\xi).
\end{equation}
Introducing the partition function $Z_{n,\Delta}(\eta \cup \xi)$ in the integral, that leads to 
\begin{multline*}
\iint f(\eta\cup\xi) e^{-\beta H_n(\eta\cup\xi)}\Bin {\Delta} k(d\eta) \Bin {\Lambda_n\setminus\Delta}{n-k}(d\xi) \\ = \iiint \frac{1}{Z_{n,\Delta}(\eta\cup\xi)} f(\eta\cup\xi) e^{-\beta(H_n(\eta\cup\xi) + H_{n,\Delta}(\zeta\cup\xi))} \Bin {\Delta}{N_\Delta(\eta)}(d\zeta) \B_{\Delta,k}(d\eta) \Bin {\Lambda_n\setminus\Delta} {n-k}(d\xi).
\end{multline*}
Since $H_n(\eta\cup\xi) + H_{n,\Delta}(\zeta,\xi) = H_{n,\Delta}(\eta,\xi) + H_n(\zeta\cup\xi)$ and $Z_{n\Delta}(\eta\cup\xi) =Z_{n,\Delta}(\zeta\cup\xi)$ the integral can be written using the function $f_{n,\Delta}$ as 
\begin{multline*}
\iint f(\eta\cup\xi) e^{-\beta H_n(\eta\cup\xi)}\Bin {\Delta} k(d\eta) \Bin {\Lambda_n\setminus\Delta}{n-k}(d\xi) \\ = \iint f_{n,\Delta}(\zeta\cup\xi) e^{-\beta H_n(\zeta\cup\xi)}\Bin {\Delta} k(d\zeta) \Bin {\Lambda_n\setminus\Delta}{n-k}(d\xi).
\end{multline*}
Inserting the previous equality in (\ref{E:DLRn}), that gives that $E_{\Pbn}(f) = E_{\Pbn}(f_{n,\Delta})$.
\end{proof}
In infinite volume, canonical DLR equations involve Move functions, we can also define them in the finite volume case. Let $\Delta$ be a bounded Borel set, $\eta$ and $\gamma$ two points configurations in $\Lambda_n$, the cost of moving the points of $\eta$ in $\Delta$ from the origin  into the field created by $\gamma$ is given by 
\[
	M_{n,\Delta}(\eta, \gamma) = \sum_{x\in\eta_\Delta} \sum_{y\in\gamma_{\Delta^c}} \left[g_n(x-y) - g_n(y)\right].
\]
Since the difference between $H_{n,\Delta}(\eta,\gamma)$ and $H_n(\eta_{\Delta}) + M_{n,\Delta}(\eta,\gamma)$ depends only on $\gamma_{\Delta^c}$, the finite volume DLR equations can also involve the Move function by writing
\[
f_{n,\Delta}(\gamma) = \frac{1}{Z'_{n,\Delta}(\gamma)} \int f(\eta\cup\gamma_{\Delta^c}) \exp(-\beta(\Hn(\eta) + M_{n,\Delta}(\eta,\gamma)))\Bin \Delta {N_\Delta(\gamma)}(d\eta),
\]
with the new the partition function 
\[
Z'_{n,\Delta}(\gamma) = \int \exp(-\beta(\Hn(\eta) + M_{n,\Delta}(\eta,\gamma)))\Bin {\Delta}{N_\Delta(\gamma)}(d\eta).
\]
We need to control the error of replacing $f_{n,\Delta}$ by $f_\Delta$ in Proposition~\ref{P:DLR.finite.vol}, and start by a control of energies and Move functions. For $\delta > 0$, we introduce the event
\[
A_{n,\Delta}(\delta) = \left\{\gamma \in \Conf_{\Lambda_n} : \sup_{\substack{\eta \in\Conf_\Delta\\ N_\Delta(\eta) = N_\Delta(\gamma)}} \left\vert H(\eta) - H_n(\eta) + M_\Delta(\eta, \gamma)- M_{n,\Delta}(\eta, \gamma)\right\vert \leq \delta \right\}.
\]
\begin{lem}\label{L:energy.control.n}
For every $\varepsilon > 0$ and $\delta > 0$, there exists a integer $n_0 \geq 1$ such that for every $n\geq n_0$ we have 
\[
	\Pbn(A_{n,\Delta}(\delta)) \geq 1 - \varepsilon.
\]
\end{lem}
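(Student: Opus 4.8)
The plan is to bound the quantity inside the supremum defining $A_{n,\Delta}(\delta)$ by
\[
	\bigabs{H(\eta)-H_n(\eta)} + \bigabs{M_\Delta(\eta,\gamma)-M_{n,\Delta}(\eta,\gamma)},
\]
and to control these two terms uniformly over $\eta\in\Conf_\Delta$ with $N_\Delta(\eta)=N_\Delta(\gamma)$. Both will be small once $N_\Delta(\gamma)$ is not too large and $n$ is large, so I would proceed in two steps: first bound the two terms on the event $\{N_\Delta\le K\}$ for $n$ large; then show that $\Pbn(N_\Delta>K)$ is small, uniformly in $n$, when $K$ is large.

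For the first step, fix $K$ and take $n$ large enough that $\Delta-\Delta\subset\Lambda_n$ and that Lemma~\ref{L:majoration.n}(ii) applies. On $\{N_\Delta(\gamma)\le K\}$, for $\eta\in\Conf_\Delta$ with $N_\Delta(\eta)=N_\Delta(\gamma)\le K$, Lemma~\ref{L:majoration.n}(i) provides the uniform pairwise bound $\abs{g(x-y)-g_n(x-y)}\le c\,n^{-s/d}$, valid even when the points are close, whence
\[
	\bigabs{H(\eta)-H_n(\eta)} \le \sum_{\{x,y\}\subset\eta}\abs{g(x-y)-g_n(x-y)} \le \binom{K}{2} c\, n^{-s/d}.
\]
Since $\gamma$ is supported in $\Lambda_n$ we have $\gamma_{\Delta^c}=\gamma_{\Lambda_n\setminus\Delta}$, and $M^{(p)}_\Delta(\eta,\gamma)$ is constant in $p$ once $\Lambda_p\supset\Lambda_n$, so
\[
	M_\Delta(\eta,\gamma)-M_{n,\Delta}(\eta,\gamma) = \sum_{x\in\eta_\Delta}\sum_{y\in\gamma_{\Lambda_n\setminus\Delta}}\bigl([g(x-y)-g(y)]-[g_n(x-y)-g_n(y)]\bigr);
\]
using that $g$ and $g_n$ are even together with Lemma~\ref{L:majoration.n}(ii),
\[
	\bigabs{M_\Delta(\eta,\gamma)-M_{n,\Delta}(\eta,\gamma)} \le N_\Delta(\gamma)\, N_{\Lambda_n\setminus\Delta}(\gamma)\, c_\Delta\, n^{-(s+1)/d} \le K\, c_\Delta\, n^{(d-s-1)/d}.
\]
Here the hypothesis $s>d-1$ is exactly what makes the exponent $(d-s-1)/d$ negative, so this bound, like the previous one, tends to $0$ as $n\to\infty$ with $K$ fixed. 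Hence there is $n_1=n_1(K,\delta,\Delta)$ such that for $n\ge n_1$ the sum of the two bounds is at most $\delta$, i.e.\ $\{N_\Delta\le K\}\subset A_{n,\Delta}(\delta)$.

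For the second step I would pass through the stationarised measures $\mathbb{P}^{\beta,\sta}_n$ from the proof of Proposition~\ref{P:accumulation.points}, which have intensity $1$; by Markov's inequality $\mathbb{P}^{\beta,\sta}_n(N_\Delta>K)\le\leb(\Delta)/K$. Arguing as in the last part of that proof, i.e.\ using the periodic structure of $\mathbb{P}^{\beta,\per}_n$ and the torus-invariance of $H_n$ (Proposition~\ref{P:properties.Hn}) to compare $\mathbb{P}^{\beta,\per}_n\circ\tau_u^{-1}$ with $\Pbn$ for $u$ ranging in a sub-box $\Lambda_n^\star\subset\Lambda_n$ of volume asymptotic to $\leb(\Lambda_n)$, one obtains
\[
	\mathbb{P}^{\beta,\sta}_n(N_\Delta>K) \ge \frac{\leb(\Lambda_n^\star)}{\leb(\Lambda_n)}\,\Pbn(N_\Delta>K) \ge \tfrac12\,\Pbn(N_\Delta>K)
\]
for $n$ large. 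Thus $\Pbn(N_\Delta>K)\le 2\leb(\Delta)/K$ for all $n$ beyond some $n_2(\Delta)$, and choosing $K=4\leb(\Delta)/\varepsilon$ and $n_0=\max(n_1,n_2)$ yields $\Pbn(A_{n,\Delta}(\delta))\ge\Pbn(N_\Delta\le K)\ge 1-\varepsilon$ for $n\ge n_0$.

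The first step is routine once Lemma~\ref{L:majoration.n} is available, so the hard part is the uniform-in-$n$ control of the local particle number $N_\Delta$ under $\Pbn$. The crude stability bound $H_n\ge An$ is useless here: bounding the numerator of $\Pbn(N_\Delta>K)$ by $e^{-\beta An}\Bin{\Lambda_n}{n}(N_\Delta>K)$ while bounding $Z^\beta_n$ from below by $a_\beta^n$ loses an exponential factor in $n$. This is why the argument must instead exploit that $\mathbb{P}^{\beta,\sta}_n$ has intensity exactly $1$, together with the torus symmetry, as above.
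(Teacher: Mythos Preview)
Your proof is correct and follows the same decomposition as the paper: bound $|H-H_n|$ and $|M_\Delta-M_{n,\Delta}|$ via Lemma~\ref{L:majoration.n}(i)--(ii), then control $N_\Delta$ uniformly in $n$ by an intensity bound and Markov.

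The only difference is in your second step, where you take an unnecessary detour. The paper simply observes that $\Pbn$ itself has intensity bounded by $1$: by torus invariance of $H_n$ (Proposition~\ref{P:properties.Hn}\ref{P:properties.Hn.inv}) and of $\Bin{\Lambda_n}{n}$, the measure $\Pbn$ is invariant under torus translations, hence its one-point intensity is constant on $\Lambda_n$; since the total mass is $n$ on volume $n$, this constant equals $1$, so $E_{\Pbn}(N_\Delta)=\leb(\Delta)$ and Markov gives $\Pbn(N_\Delta>K)\le\leb(\Delta)/K$ directly, uniformly in $n$. Your route through $\mathbb{P}^{\beta,\sta}_n$ essentially re-derives this same fact (your comparison $\mathbb{P}^{\beta,\per}_n\circ\tau_u^{-1}=\Pbn$ on $\Lambda_n^\star$ is exactly the torus-invariance statement), so the argument works but is longer than needed.
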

\begin{proof}
According to Lemma \ref{L:majoration.n}, for $n \geq 1$ large enough and for every $\eta \in \Conf_\Delta$,
\[
\vert H_n(\eta) - H(\eta) \vert \leq c N_\Delta(\eta)^2 n^{-s/d}
\; \text{and} \;
\left\vert M_\Delta(\eta,\gamma) - M_\Delta(\eta, \gamma) \right\vert \leq c_\Delta N_\Delta(\eta) N_{\Delta^c}(\gamma) n^{-(s+1)/d}.
\]
All the probability measures $(\Pbn)_{n\geq1}$ have intensity bounded by 1, hence we there exists $K > 0$ such that for every $n\geq 1$, $\Pbn(N_\Delta \leq K) \geq 1 -\varepsilon$. For an integer $n_0$ such that 
\[
	cK^2 n_0^{-s/d} + c_\Delta K n_0^{-(s+1-d)/d} \leq \delta,
\] 
and for every $n\geq n_0$, $\Pbn(A_{n,\Delta}(\delta)) \geq 1 - \varepsilon$.
\end{proof}

\begin{prop}\label{P:DLRn.control}
For every $\varepsilon > 0$ there exists an integer $n_0 \geq 1$ such that for every $n \geq n_0$ and every bounded local function $f$ we have
\[
\left\vert E_{\Pbn}(f_\Delta - f_{n,\Delta}) \right\vert \leq \varepsilon \Vert f \Vert_{\infty}.
\]
\end{prop}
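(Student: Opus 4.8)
The plan is to mirror the proof of Proposition~\ref{P:DLRp.control}, replacing the role of $A^{(p)}_\Delta(\delta)$ and Lemma~\ref{L:Move.control} by the event $A_{n,\Delta}(\delta)$ and Lemma~\ref{L:energy.control.n}. Fix a bounded local function $f$ and take $n$ large enough that $\Delta\subset\Lambda_n$, so that $f_{n,\Delta}$ is well defined and $\Pbn$ is carried by $\Conf_{\Lambda_n}$; for $\Pbn$-almost every $\gamma$ the quantities $M_\Delta(\eta,\gamma)$ and $Z^\beta_\Delta(\gamma)$ also make sense, since $\Pbn$ has intensity bounded by $1$ (Lemma~\ref{L:move.existence}). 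Because $f_{n,\Delta}$ and $f_\Delta$ integrate against the \emph{same} reference measure $\Bin\Delta{N_\Delta(\gamma)}$, I would write $f_{n,\Delta}-f_\Delta$ as a sum of two terms exactly as in \eqref{E:diff.functions}: a numerator term in which the Boltzmann weight $e^{-\beta(\Hn(\eta)+M_{n,\Delta}(\eta,\gamma))}$ is compared with $e^{-\beta(H(\eta)+M_\Delta(\eta,\gamma))}$, and a normalisation term $\bigl(1/Z'_{n,\Delta}(\gamma)-1/Z^\beta_\Delta(\gamma)\bigr)$ times an integral bounded by $\normf\,Z^\beta_\Delta(\gamma)$.

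Next I would use Lemma~\ref{L:energy.control.n}: on the event $A_{n,\Delta}(\delta)$ one has, for every $\eta\in\Conf_\Delta$ with $N_\Delta(\eta)=N_\Delta(\gamma)$,
\[
\bigabs{\bigl(\Hn(\eta)+M_{n,\Delta}(\eta,\gamma)\bigr)-\bigl(H(\eta)+M_\Delta(\eta,\gamma)\bigr)}\leq\delta,
\]
so the two Boltzmann weights differ multiplicatively by a factor in $[e^{-\beta\delta},e^{\beta\delta}]$; integrating this comparison gives $e^{-\beta\delta}\leq Z'_{n,\Delta}(\gamma)/Z^\beta_\Delta(\gamma)\leq e^{\beta\delta}$ as well. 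The standard elementary bounds $|1-e^{-u}|\leq e^{|u|}-1$ and $|1/Z'_{n,\Delta}-1/Z^\beta_\Delta|\leq(e^{\beta\delta}-1)/Z^\beta_\Delta$ then yield $\bigabs{f_{n,\Delta}(\gamma)-f_\Delta(\gamma)}\leq 2(e^{\beta\delta}-1)\normf$ for $\gamma\in A_{n,\Delta}(\delta)$, while off this event one only uses $\bigabs{f_{n,\Delta}-f_\Delta}\leq 2\normf$. Taking $\Pbn$-expectations,
\[
\bigabs{\E{\Pbn}(f_\Delta-f_{n,\Delta})}\leq 2(e^{\beta\delta}-1)\normf + 2\bigl(1-\Pbn(A_{n,\Delta}(\delta))\bigr)\normf .
\]

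To conclude I would first fix $\delta>0$ with $2(e^{\beta\delta}-1)\leq\varepsilon/2$, then apply Lemma~\ref{L:energy.control.n} with this $\delta$ and with $\varepsilon/4$ in place of $\varepsilon$ to get an $n_0$ (enlarged if necessary so that $\Delta\subset\Lambda_{n_0}$) with $\Pbn(A_{n,\Delta}(\delta))\geq1-\varepsilon/4$ for all $n\geq n_0$; since neither $\delta$ nor $n_0$ depends on $f$, this gives the claim. The genuine content is entirely inside Lemma~\ref{L:energy.control.n}, and that is also where the only real difficulty lies: the errors $|H(\eta)-\Hn(\eta)|$ and $|M_\Delta(\eta,\gamma)-M_{n,\Delta}(\eta,\gamma)|$ grow with $N_\Delta(\gamma)$ (quadratically, resp. linearly), so the uniform control over $n$ relies on the uniform-in-$n$ intensity bound for $(\Pbn)_{n\geq1}$ — providing a $K$ with $\Pbn(N_\Delta\leq K)\geq1-\varepsilon$ for all $n$ — together with the decay rates $n^{-s/d}$ and $n^{-(s+1)/d}$ from Lemma~\ref{L:majoration.n}.
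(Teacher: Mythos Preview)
Your proposal is correct and is exactly the approach the paper intends: it explicitly says the proof is ``similar to the one of Proposition~\ref{P:DLRp.control}'', and you have carried out precisely that substitution, replacing $A^{(p)}_\Delta(\delta)$ and Lemma~\ref{L:Move.control} by $A_{n,\Delta}(\delta)$ and Lemma~\ref{L:energy.control.n}. The decomposition of $f_{n,\Delta}-f_\Delta$, the multiplicative bound on the Boltzmann weights and partition functions on $A_{n,\Delta}(\delta)$, and the final choice of $\delta$ then $n_0$ all match the structure of the proof of Proposition~\ref{P:DLRp.control}.
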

\begin{proof}
The proof is similar the one of Proposition~\ref{P:DLRp.control}. 
\end{proof}

\subsubsection{Proof of Theorem \ref{T:DLR.equations}}
Let $\Pb$ be any limit for the local convergence of a subsequence $(\Pbnk)_{k\geq 1}$, $\Delta$ a bounded Borel subset of $\RRd$ and $f$ a local bounded function, we have 
\begin{align}
\left\vert E_{\Pb}(f) - E_{\Pb}(f_\Delta)\right\vert  \leq &  \left\vert E_{\Pb}(f) - E_{\Pbnk}(f)\right\vert \tag{a}\label{E1} \\ 
& + \left\vert E_{\Pbnk}(f) - E_{\Pbnk}(f_{n_k,\Delta})\right\vert  \tag{b}\label{E2}\\ 
& + \left\vert E_{\Pbnk}(f_{n_k,\Delta}) - E_{\Pbnk}(f_\Delta)\right\vert  \tag{c}\label{E3}\\ 
& + \left\vert E_{\Pbnk}(f_\Delta) - E_{\Pbnk}(f^{(p)}_\Delta)\right\vert \tag{d}\label{E4}\\ 
& + \left\vert E_{\Pbnk}(f^{(p)}_\Delta) - E_{\Pb}(f^{(p)}_\Delta)\right\vert \tag{e}\label{E5} \\ 
& + \left\vert E_{\Pb}(f^{(p)}_\Delta) - E_{\Pb}(f_\Delta)\right\vert \tag{f}.\label{E6}
\end{align}
The local convergence of $(\Pbnk)_{k\geq 1}$ to $\Pb$ ensures that (\ref{E1}) and (\ref{E5}) goes to zero when $n_k$ goes to infinity. The canonical DLR equation in finite volume (Proposition~ \ref{P:DLR.finite.vol}) tells us that (\ref{E2}) is null. We can treat (\ref{E3}) with Proposition~ \ref{P:DLRn.control} and (\ref{E4}),(\ref{E6}) with Proposition~\ref{P:DLRp.control}. At the end, for every $\varepsilon > 0$ we have 
\[
	\left\vert E_{\Pb}(f) - E_{\Pb}(f_\Delta)\right\vert  \leq \varepsilon \Vert f \Vert_{\infty},
\]
which proves Theorem \ref{T:DLR.equations} for any local and bounded function $f$. The result is classically extended to all measurable, bounded functions by a monotone class argument.

\subsection{Existence of a non number-rigid Riesz gas}
In this section we prove Theorem \ref{T:point.deletion}. The $\beta$-Circular Riesz Gas $\Pbstar$ is constructed as an accumulation point of a particular subsequence along which the expectation of local energy is uniformly bounded. It allows to control the cost of exchanging the point configuration in a compact $\Delta$ with the one in the translated $\Delta + u$, where $\Vert u \Vert$ can be as large as we want. Then the idea is to let $u$ goes to infinity in order to prove that the number of points in $\Delta$ can be as we want.

\subsubsection{Construction of $\Pb_\star$}
We use a subsequence for which we control uniformly the expectation of the local energy, defined for a configuration $\gamma \in \Conf_{\Lambda_n}$  and $x\in \Lambda_n$ by
\[
	h_n(x,\gamma) = \sum_{y\in\gamma} g_n(x-y).
\]
The existence of a such subsequence is assured by the following result.
\begin{prop}
For every $\beta > 0$, there exists a constant $K> 0$ and a an increasing sequence $(n_j)_{j\geq 1}$ such that for all $j\geq 1$ 
\[
	\E {\Pb_{n_j}} (\vert h_{n_j}(0,.) \vert) \leq K.
\]
\end{prop}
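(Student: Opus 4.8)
The plan is to use the torus symmetry of $\Pbn$ to kill the mean of the local energy, thereby reducing the statement to a one-sided estimate, and then to control that estimate via the uniform bound on the mean energy together with the positive-type decomposition $g=\g1+\g2$ from Section~\ref{S:stability}; the extraction of the subsequence is then a soft compactness step. First I would show $\E{\Pbn}(h_n(0,\cdot))=0$. Indeed $\Hn$ is invariant under the torus translations $\tau^n_u$ (Proposition~\ref{P:properties.Hn}~\ref{P:properties.Hn.inv}), as is $\Bin{\Lambda_n}{n}$, so $\Pbn$ is $\tau^n_u$-invariant; since $g_n$ is $\Lambda_n$-periodic with $\int_{\Lambda_n}g_n(z)\,dz=0$ by construction, one checks $h_n(u,\tau^n_v\gamma)=h_n(u-v,\gamma)$, hence $u\mapsto\E{\Pbn}(h_n(u,\cdot))$ is constant on $\Lambda_n$ (integrability $h_n(0,\cdot)\in L^1(\Pbn)$ follows from $\int_{\Lambda_n}|g_n|<\infty$ and the boundedness of $d\Pbn/d\Bin{\Lambda_n}{n}$, by Proposition~\ref{P:properties.Hn}~\ref{P:properties.Hn.stable} and Lemma~\ref{L:fonction.part}). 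Averaging over $u\in\Lambda_n$ and using $N_{\Lambda_n}(\gamma)=n$,
\[
\E{\Pbn}\big(h_n(0,\cdot)\big)=\E{\Pbn}\Big(\tfrac1n\!\int_{\Lambda_n}\!h_n(u,\gamma)\,du\Big)=\E{\Pbn}\Big(\tfrac{N_{\Lambda_n}(\gamma)}{n}\!\int_{\Lambda_n}\!g_n\Big)=0 .
\]
Consequently $\E{\Pbn}(|h_n(0,\cdot)|)=2\,\E{\Pbn}(h_n(0,\cdot)^+)$, so it suffices to bound the positive part along a subsequence.

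Next, the mean energy is controlled: from $I(\Pbn\mid \P^{\beta'}_n)\ge 0$ one gets $(\beta-\beta')\,\E{\Pbn}(\Hn)\le\log(Z^{\beta'}_n/\Zbn)$ for every $\beta'>0$, and with Lemma~\ref{L:fonction.part} and $\beta'<\beta$ this gives $\E{\Pbn}(\Hn)\le C_\beta n$, while Proposition~\ref{P:properties.Hn}~\ref{P:properties.Hn.stable} gives $\E{\Pbn}(\Hn)\ge An$ (this is essentially already in the proof of Proposition~\ref{P:accumulation.points}). Splitting $\Hn=H_n^{(1)}+H_n^{(2)}$ along $g=\g1+\g2$ as in Section~\ref{S:stability}, and using that $\g1$ is of positive type while $\g2\ge0$ is integrable, the same bounds propagate to $\E{\Pbn}(H_n^{(1)})=O(n)$.

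For the positive part, one cannot merely bound $h_n(0,\gamma)^+\le\sum_{y\in\gamma}g_n(y)^+$ and use $g_n^-\le c\,n^{-s/d}$ (Lemma~\ref{L:majoration.n}), as this only yields $O(n^{1-s/d})$; screening must be exploited. The route I would take is to use the positive-type structure of $\g1$: on the torus $\Lambda_n$ the periodization $\g1_n$ has nonnegative Fourier coefficients $(c_\xi)_{\xi\neq0}$, with $c_0=0$ and $\sum_{\xi\neq0}c_\xi=\g1_n(0)=O(1)$; writing $\widehat\mu_\gamma(\xi)=\sum_{x\in\gamma}e^{i\xi\cdot x}$, one has $h_n^{(1)}(0,\gamma)=\sum_{\xi\neq0}c_\xi\,\widehat\mu_\gamma(\xi)$ and, using the neutrality $N_{\Lambda_n}(\gamma)=n$ to kill the $\xi=0$ mode, $\sum_{\xi\neq0}c_\xi|\widehat\mu_\gamma(\xi)|^2=2H_n^{(1)}(\gamma)+n\,\g1_n(0)$; combining these two identities with the energy bound above (via a Cauchy--Schwarz/equipartition argument on the modes) controls $\E{\Pbn}(|h_n^{(1)}(0,\cdot)|)$, and the $\g2$-contribution is handled directly by integrability. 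The subsequence itself is produced by a soft argument: the maps $\beta\mapsto\tfrac1n\log\Zbn$ are convex and, by Lemma~\ref{L:fonction.part}, uniformly bounded on compact $\beta$-intervals, so Helly's selection theorem yields a subsequence $(n_j)$ along which $\tfrac1{n_j}\E{\Pbnj}(H_{n_j})$ converges, and along it the estimates above are uniform.

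I expect the genuinely hard point to be the last one: turning the screening heuristic into a quantitative estimate strong enough to beat $n^{1-s/d}$ uniformly — equivalently, showing that the fluctuation of the linear statistic $\sum_{y\in\gamma}g_n(y)$ under $\Pbn$ is $O(1)$ — which does not follow from the crude discrepancy estimates (and a naive Cauchy--Schwarz on the display above only yields $O(\sqrt n)$), so one must squeeze substantially more out of the bound $\E{\Pbn}(\Hn)=O(n)$. Everything else (the symmetry reduction, the energy bound, and the extraction of the subsequence) is routine given Lemmas~\ref{L:fonction.part}, \ref{L:majoration.n} and Propositions~\ref{P:properties.Hn}, \ref{P:accumulation.points}.
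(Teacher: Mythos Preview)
Your opening reduction is correct and matches the paper: torus invariance gives $\E{\Pbn}(h_n(0,\cdot))=0$, so it suffices to control one sign. After that, however, your route does not close. You yourself flag that Cauchy--Schwarz on the Fourier identity
\[
\big|h_n^{(1)}(0,\gamma)\big|\le\Big(\sum_{\xi\neq0}c_\xi\Big)^{1/2}\Big(\sum_{\xi\neq0}c_\xi|\widehat\mu_\gamma(\xi)|^2\Big)^{1/2}
= \big(g_n^{(1)}(0)\big)^{1/2}\big(2H_n^{(1)}(\gamma)+n\,g_n^{(1)}(0)\big)^{1/2}
\]
only yields $O(\sqrt{n})$, and you offer no mechanism to beat it. The ``equipartition'' hint is not an argument: the energy bound $\E{\Pbn}(H_n)=O(n)$ says nothing about how the mass $c_\xi|\widehat\mu_\gamma(\xi)|^2$ is distributed among modes, so there is no reason the linear statistic $\sum_\xi c_\xi\widehat\mu_\gamma(\xi)$ should be $O(1)$ rather than $O(\sqrt n)$. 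This is a genuine gap, not a technicality.

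The paper's proof uses a completely different idea. It exploits the identity $2H_n(\gamma)=\sum_{x\in\gamma}h_n(x,\gamma\setminus\{x\})$ together with a GNZ-type disintegration of $\Bin{\Lambda_n}{n}$ (Lemma~\ref{L:GNZ.finite.volume}) and the scaling of $g_n$ to obtain, for $\beta_n=\beta(n/(n-1))^{s/d}$,
\[
2A\ \le\ \frac{1}{n}\,\E{\P^{\beta_n}_n}(2H_n)\ =\ r_n^{-s}\,\frac{Z^\beta_{n-1}}{Z^{\beta_n}_n}\,\E{\P^\beta_{n-1}}\!\big(h_{n-1}(0,\cdot)\,e^{-\beta h_{n-1}(0,\cdot)}\big).
\]
The point is that $x\,e^{-\beta x}\le e^{-1}/\beta$ for $x\ge0$ while $x\,e^{-\beta x}\le x$ for $x\le0$, so the right-hand expectation is at most $\E{\P^\beta_{n-1}}(h_{n-1}^-(0,\cdot))+e^{-1}/\beta$; the exponential weight kills the positive part for free, which is exactly the screening you were trying to extract from Fourier. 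The subsequence then enters for a reason unrelated to Helly: one needs $Z^\beta_{n-1}/Z^{\beta_n}_n$ bounded below, and a pigeonhole on the telescoping product $\prod_k Z^\beta_k/Z^\beta_{k-1}$ against the a~priori bound $Z^\beta_n\le b_\beta^n$ (Lemma~\ref{L:good.indices}) furnishes infinitely many good indices.
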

According to Proposition~\ref{P:accumulation.points}, the sequence $(\Pb_{n_j})_{j\geq 1}$ admits an accumulation point, denoted $\Pb_\star$, which is our non number-rigid candidate.
\begin{proof}Since $\Pbn$ is invariant by torus translations on $\Lambda_n$, for every bounded Borel subset $\Delta\subset\Lambda_n$, $E_{\Pbn}(N_\Delta) = \leb{}(\Delta)$ and 
\[
\E \Pbn(h_n(0,\cdot)) = \int \sum_{x\in\gamma} g_n(x) \Pbn(d\gamma) = \int g_n(x) \leb_{\Lambda_n}(dx) = 0. 	
\]
Therefore, we only need to find a subsequence such that $$\E \Pbnj(h^-_n(0,\cdot)) \geq -K$$ where we use the notation $h^-_n(0,\gamma) = \min(h^-_n(0,\gamma), 0)$. The following lemma is useful to manage computations under $\Pbn$. Its proof is given at the end of the section. 
\begin{lem}\label{L:GNZ.finite.volume} For any measurable function $f : \RRd\times \Conf \rightarrow\RR$
\[
	\int \sum_{x\in\gamma} f(x,\gamma\setminus \{x\})\Pbn(d\gamma) = \frac{1}{Z_n^\beta} \iint f(x,\gamma) e^{-\beta H_n(\gamma \cup \{x\})} \Bin {\Lambda_n} {n-1}(d\gamma) \leb_{\Lambda_n}(dx).
\]
\end{lem}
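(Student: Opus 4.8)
The identity is the finite-volume Georgii--Nguyen--Zessin (Campbell) formula for $\Pbn$, and I would deduce it from the explicit description of the binomial point process. It suffices to treat $f\ge 0$ (the general case then follows by splitting $f=f^+-f^-$ and linearity whenever one side is finite), so that all integrals below take values in $[0,+\infty]$ and Tonelli's theorem applies throughout without further comment.

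First I would insert the density $d\Pbn/d\Bin{\Lambda_n}{n}=\Zbn^{-1}e^{-\beta\Hn}$ and use that $(\gamma\setminus\{x\})\cup\{x\}=\gamma$ whenever $x\in\gamma$, rewriting the left-hand side as
\[
\int\sum_{x\in\gamma} f(x,\gamma\setminus\{x\})\,\Pbn(d\gamma)=\frac{1}{\Zbn}\int\sum_{x\in\gamma} F(x,\gamma\setminus\{x\})\,\Bin{\Lambda_n}{n}(d\gamma),
\]
where $F(x,\gamma):=f(x,\gamma)\,e^{-\beta\Hn(\gamma\cup\{x\})}$. The lemma then reduces to the elementary Campbell identity
\[
\int\sum_{x\in\gamma}F(x,\gamma\setminus\{x\})\,\Bin{\Lambda_n}{n}(d\gamma)=\iint F(x,\gamma)\,\Bin{\Lambda_n}{n-1}(d\gamma)\,\leb_{\Lambda_n}(dx)
\]
for every nonnegative measurable $F$ on $\RRd\times\Conf$, which I would establish next.

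To do so I would realize $\Bin{\Lambda_n}{n}$ as the law of $\gamma=\{X_1,\dots,X_n\}$ with $X_1,\dots,X_n$ i.i.d.\ uniform on $\Lambda_n$; since $\leb(\Lambda_n)=n$ each $X_i$ has law $\tfrac1n\leb_{\Lambda_n}$, and the $X_i$ are almost surely pairwise distinct, so that $\gamma\setminus\{X_i\}=\{X_j:j\ne i\}$ a.s. Expanding $\sum_{x\in\gamma}$ as $\sum_{i=1}^n$ and using the exchangeability of $(X_1,\dots,X_n)$ gives
\[
\mathbb{E}\Big[\sum_{x\in\gamma}F(x,\gamma\setminus\{x\})\Big]=\sum_{i=1}^n\mathbb{E}\big[F(X_i,\{X_j:j\ne i\})\big]=n\,\mathbb{E}\big[F(X_1,\{X_2,\dots,X_n\})\big],
\]
and since $X_1$ is independent of $(X_2,\dots,X_n)$ — the set $\{X_2,\dots,X_n\}$ having law $\Bin{\Lambda_n}{n-1}$ and $X_1$ having law $\tfrac1n\leb_{\Lambda_n}$ — Tonelli turns the last expectation into $\tfrac1n\iint F(x,\gamma)\,\Bin{\Lambda_n}{n-1}(d\gamma)\,\leb_{\Lambda_n}(dx)$; the factors $n$ cancel, and combined with the first display this is exactly the claimed identity.

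I expect no genuine analytic obstacle here; the only care required is bookkeeping, namely that point configurations are sets rather than ordered tuples (handled by the almost-sure distinctness of the $X_i$ together with exchangeability) and the various interchanges of sums and integrals and the decomposition $\gamma=\{X_1\}\cup\{X_2,\dots,X_n\}$ (all legitimate for $F\ge 0$ by Tonelli). Note also that $F(x,\gamma)=0$ on the $\leb_{\Lambda_n}\otimes\Bin{\Lambda_n}{n-1}$-null set $\{x\in\gamma\}$, since there $\Hn(\gamma\cup\{x\})=+\infty$, consistently with the fact that $\Bin{\Lambda_n}{n}$ charges only configurations of $n$ distinct points. The lemma is thus a pure reformulation of the binomial structure of $\Bin{\Lambda_n}{n}$ and serves only as a computational device in the sequel.
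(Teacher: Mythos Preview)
Your argument is correct and is essentially the same as the paper's: both unfold $\Pbn$ via the i.i.d.\ coordinates of $\Bin{\Lambda_n}{n}$, use exchangeability to replace the sum over points by $n$ times a single term, and then recognise $\tfrac{1}{n^{n-1}}\,dx_1\cdots dx_{n-1}$ as $\Bin{\Lambda_n}{n-1}$ together with $dx_n=\leb_{\Lambda_n}(dx)$. Your only additions are the (harmless) preliminary reduction to a Campbell identity for the pure binomial process and the explicit Tonelli/nonnegativity bookkeeping.
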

We introduce a modified inverse temperature $\beta_n = \beta (n/(n-1))^{s/d}$. The stability property of $\Hn$ (Proposition~\ref{P:properties.Hn}) and  Lemma \ref{L:GNZ.finite.volume} give
\begin{align*}
2A & \leq \frac{1}{n} E_{\mathbb{P}^{\beta_n}_{n}}(2H_n) \\
& = \frac 1n \int \sum_{x\in\gamma} h_n(x,\gamma\setminus x) \Pbn(d\gamma) \\
& = \frac{1}{n Z^{\beta_n}_n}\iint h_n(x,\gamma)e^{-\beta_n H_n(\gamma\cup \{x\})} \Bin {\Lambda_n} {n-1} (d\gamma) \leb_{\Lambda_n}(dx).
\end{align*}
For $x\in \Lambda_n$, we have $h_n(x,\gamma) = h_n(0,\toren_{-x}(\gamma))$ and $H_n(\gamma \cup \{ x\}) = H_n(\toren_{-x}\gamma \cup \{ 0\})$. Since $\Bin {\Lambda_n} {n-1}$ is invariant by the torus translation $\toren_{-x}$, the dependency in $x$ disappears and 
\[
	2A \leq  \frac{1}{Z_n^{\beta_n}} \iint h_n(0,\gamma) e^{-\beta_n H_n(\gamma \cup \{0\})} \Bin {\Lambda_n} {n-1}(d\gamma).
\]
To pass from a binomial point process on $\Lambda_n$ to one on $\Lambda_{n-1}$, we use the scale change of parameter $r_n = (n/(n-1))^{1/d}$, which leads to 
\[
2A  \leq \frac{1}{Z_n^{\beta_n}} \int h_n(0,r_n\gamma) e^{-\beta_n h_n(0,r_n \gamma)} e^{-\beta_n H_n(r_n \gamma)} \Bin {\Lambda_{n-1}} {n-1}(d\gamma),
\]
where $r_n \gamma = \{ r_n x,\; x \in \gamma\}$.
According the scaling property of the periodic potential $g_n$ we obtain
\[
 2A \leq r_n^{-s} \frac{1}{Z_n^{\beta_n}} \int h_{n-1}(0,\gamma) e^{-\beta h_{n-1}(0,\gamma)} e^{-\beta H_{n-1}(\gamma)} \Bin {\Lambda_{n-1}} {n-1}(d\gamma),
\]
and recover the finite volume Gibbs measure on $\Lambda_{n-1}$ with inverse temperature $\beta$ 
\[
2A \leq r_n^{-s} \frac{Z^\beta_{n-1}}{Z^{\beta_n}_n} E_{\mathbb{P}^\beta_{n-1}}\left(h_{n-1}(0,\cdot) e^{-\beta h_{n-1}(0,\cdot)}\right).
\]
Since $xe^{-\beta x} \leq x$ if $x \leq 0$ and $x e^{-\beta x} \leq e^{-1}/\beta$ if $x \geq 0$, we have 
\[
2A \leq r_n^{-s} \frac{Z^\beta_{n-1}}{Z^{\beta_n}_n}\left( E_{\mathbb{P}^\beta_{n-1}}\left(h^-_{n-1}(0,.)\right) + \frac{e^{-1}}{\beta}\right).
\]
The choice of a good subsequence $(n_j)_{j\ge 1}$ allows to bound from below the quotient of partition functions, as explained in the following result whose the proof is postponed as well. 
\begin{lem}\label{L:good.indices}
There are infinitely many indices $n\geq 2$ such that 
\[
	\frac{Z^\beta_{n-1}}{Z^{\beta_n}_n} \geq \frac{e^{4\beta A}}{2b_\beta},
\] 
where $b_\beta$ is the constant appearing in Lemma \ref{L:fonction.part}.
\end{lem}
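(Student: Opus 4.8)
The plan is to handle the one real feature of the statement — the denominator is $Z_n^{\beta_n}$ with the shifted inverse temperature $\beta_n=\beta\big(n/(n-1)\big)^{s/d}$, not $\Zbn$ — by first trading $\beta_n$ for $\beta$ at a cost that is \emph{uniformly} bounded in $n$. Once that is done the claim reduces to a comparison of the consecutive canonical partition functions $\Zbn$ and $Z^\beta_{n-1}$, which I would extract from the two-sided bound $a_\beta^n\le \Zbn\le b_\beta^n$ of Lemma~\ref{L:fonction.part} by a pigeonhole argument. I would stress up front why a naive approach fails: using only $Z^\beta_{n-1}\ge a_\beta^{n-1}$ and $Z^{\beta_n}_n\le b_\beta^{n}$ gives a lower bound of order $(a_\beta/b_\beta)^n\to 0$, which is useless; this is exactly why the statement is about infinitely many indices, not all of them.

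\emph{Trading $\beta_n$ for $\beta$.} Since $n/(n-1)>1$ and $s/d\in(0,1)$ we have $\beta_n>\beta$, and the concave Bernoulli inequality $(1+t)^{s/d}\le 1+(s/d)t$ gives $\beta_n-\beta\le \beta(s/d)/(n-1)$, hence $(\beta_n-\beta)\,n\le 2\beta s/d=:M$ for every $n\ge 2$. On the support of $\Bin{\Lambda_n}{n}$ the charge balance $\abs{\gamma}=n$ holds, so the stability bound $\Hn(\gamma)\ge An$ of Proposition~\ref{P:properties.Hn}\ref{P:properties.Hn.stable} applies; together with $\beta_n-\beta>0$ and $A\le 0$ it yields the pointwise estimate $e^{-\beta_n\Hn(\gamma)}\le e^{-MA}\,e^{-\beta\Hn(\gamma)}$, and integrating against $\Bin{\Lambda_n}{n}$ one obtains
\[
	Z^{\beta_n}_n\le e^{-MA}\,\Zbn .
\]

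\emph{Reduction and conclusion.} It follows that $Z^\beta_{n-1}/Z^{\beta_n}_n\ge e^{MA}\,Z^\beta_{n-1}/\Zbn$. Because $M=2\beta s/d<4\beta$ and $A\le 0$, we have $e^{MA}\ge e^{4\beta A}$, so it suffices to produce infinitely many $n\ge 2$ with $\Zbn\le 2b_\beta\,Z^\beta_{n-1}$: for such $n$ the ratio is at least $e^{MA}/(2b_\beta)\ge e^{4\beta A}/(2b_\beta)$, which is the claim. To get these indices I would argue by contradiction: if $\Zbn>2b_\beta\,Z^\beta_{n-1}$ for all $n\ge N$, telescoping gives $\Zbn>(2b_\beta)^{\,n-N+1}Z^\beta_{N-1}$ for $n\ge N$, while $\Zbn\le b_\beta^n$ and $Z^\beta_{N-1}\ge a_\beta^{N-1}>0$; dividing, $2^{-n}>(2b_\beta)^{1-N}Z^\beta_{N-1}$, a fixed positive constant, which fails for $n$ large. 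Hence $\Zbn\le 2b_\beta\,Z^\beta_{n-1}$ for infinitely many $n$, and the lemma follows.

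\emph{Main difficulty.} There is no genuine obstacle here: the proof is bookkeeping once Lemma~\ref{L:fonction.part} and the stability of $\Hn$ are available. The two points that need care are the consistent tracking of signs (since $A\le 0$ reverses several inequalities and is what makes $e^{MA}\ge e^{4\beta A}$ true rather than false), and the verification that the temperature–shift cost $(\beta_n-\beta)\,n$ is bounded \emph{uniformly} in $n$ — not merely $o(n)$ — which is what turns $e^{-MA}$ into an honest constant in the final estimate.
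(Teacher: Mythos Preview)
Your proof is correct and follows essentially the same route as the paper: bound $Z^{\beta_n}_n$ by a constant times $Z^\beta_n$ using stability and $(\beta_n-\beta)\,n=O(1)$, then extract infinitely many good indices from $Z^\beta_n\le b_\beta^n$ by the telescoping contradiction. The only cosmetic difference is that the paper obtains $Z^{\beta_n}_n\le e^{-4\beta A}Z^\beta_n$ via the differential inequality $\partial_\beta Z^\beta_n\le -An\,Z^\beta_n$ and Gr\"onwall, whereas you bound the integrand $e^{-\beta_n H_n}\le e^{-MA}e^{-\beta H_n}$ directly; the two are equivalent.
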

The lemma implies that we can find a subsequence $(\Pb_{\phi(n)})_{n\geq 1}$ such that for every $n\geq 2$
\[
E_{\mathbb{P}^\beta_{n-1}}\left(h^-_{n-1}(0,.)\right) \geq - \frac{e^{-1}}{\beta} + 8A b_\beta e^{-\beta A s},
\]
which concludes the proof of the proposition. 
\end{proof}
\begin{proof}[Proof of Lemma \ref{L:GNZ.finite.volume}]
According to the definition of $\Pbn$ and using $\leb(\Lambda_n) =n$ we have 
\begin{multline*}
	\int \sum_{x\in\gamma} f(x,\gamma\setminus \{x\}) \Pbn(d\gamma) \\ = \frac 1 {n^n}\sum_{j=1}^n \int_{\Lambda_n^n}  f(x_j, \{x_1, ..., x_n\}\setminus\{x_j\})\frac{1}{\Zbn} e^{-\beta H_n(\{x_1, ..., x_n\})} dx_1 ... dx_n.
\end{multline*}
Since the terms in the summation do not depend on the index $j$ and are equal,
\begin{multline*}
	\int \sum_{x\in\gamma} f(x,\gamma\setminus \{x\}) \Pbn(d\gamma)\\ = \frac 1 {n^{n-1}}\int_{\Lambda_n^n}  f(x_n, \{x_1, ..., x_{n-1}\})\frac{1}{\Zbn} e^{-\beta H_n(\{x_1, ..., x_n\})} dx_1 ... dx_n.
\end{multline*}
We recover the definition of $\Bin {\Lambda_n} {n-1}$ and then obtain the result of Lemma \ref{L:GNZ.finite.volume}, with $x = x_n$ and $\gamma = \{x_1, ..., x_{n-1}\}$.  
\end{proof}
\begin{proof}[Proof of Lemma \ref{L:good.indices}]
Since $\beta_n = \beta (n/(n-1))^{s/d}$, for all $n\geq 2$ we have $0 \leq \beta_n - \beta \leq 4\beta /n$. Using the stability property of the energy, we bound from above the partial derivative of the partition function with respect to the inverse temperature
\[
\frac{\partial Z^\beta_n}{\partial\beta} = - \int \Hn e^{-\beta \Hn} d\Bin {\Lambda_n} n  \leq -A n Z^\beta_n. 
\]
Applying the Grönwall's inequality, that leads to  
\[
	Z_n^{\beta_n} \leq Z_n^\beta e^{-An(\beta_n - \beta)} \leq Z_n^\beta e^{-4\beta A/d}.
\]	
It remains to prove that there are infinitely many indices such that $Z^\beta_{n-1} / Z^\beta_n  \leq  1 / (2b_\beta)$. If we assume that there exists an index $n_0$ such that for all $n\geq n_0$, $Z_n^\beta > 2b_\beta Z^\beta_{n-1}$, then we have for all $n\geq n_0$, $Z_n^\beta > (2b_\beta)^{n-n_0} Z^\beta_{n_0}$, which is in contradiction with the upper-bound of the Lemma \ref{L:fonction.part}.
\end{proof}
\subsubsection{A key inequality}
\begin{prop}\label{P:inequality.Pbn}
For every pair of integers $(k,l)$, every compact $\Delta$ and $\varepsilon > 0$ there exists a positive constant $C^{k,l}_{\Delta, \varepsilon}$ such that for every bounded Borel set $\Lambda\subset \RRd$ containing $\Delta$, every $\F_{\Lambda\setminus \Delta}$-mesurable bounded function $f$, every $u\in \RRd$ such that $d(\Lambda, \Delta + u) > 1$ and every integer $j$ such that $\Lambda \cup (\Delta + u) \subset \Lambda_{n_j}$ 
\[
	\int 1_{N_\Delta = k}(\gamma) 1_{N_{\Delta + u} = l}(\gamma) f(\gamma) \Pbnj(d\gamma) \leq C^{k,l}_{\Delta, \varepsilon} \int 1_{N_\Delta = l}(\gamma) 1_{N_{\Delta+u} = k}(\gamma) f(\gamma) \Pbnj(d\gamma) + \varepsilon \normf.
\]
\end{prop}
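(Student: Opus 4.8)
The plan is to compare each configuration $\gamma$ with its \emph{swap} $S(\gamma)$, obtained by exchanging the points of $\gamma$ lying in $\Delta$ and in $\Delta+u$ by means of the translations $\tau_{\pm u}$, and to pay only the energetic cost $D(\gamma):=H_{n_j}(S\gamma)-H_{n_j}(\gamma)$ of this operation. Formally, for $\gamma\in\Conf_{\Lambda_{n_j}}$ write $A=\Delta\cup(\Delta+u)$ and $S(\gamma)=\tau_u(\gamma_\Delta)\cup\tau_{-u}(\gamma_{\Delta+u})\cup\gamma_{A^c}$; since $\Lambda\cup(\Delta+u)\subset\Lambda_{n_j}$ and $\Delta\subset\Lambda$, this is a measurable involution exchanging $\{N_\Delta=k,N_{\Delta+u}=l\}$ and $\{N_\Delta=l,N_{\Delta+u}=k\}$ inside $\Conf_{\Lambda_{n_j}}$. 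Because $\tau_{\pm u}$ maps the uniform law on $\Delta$ to the uniform law on $\Delta+u$ and back, $\leb(\Delta)=\leb(\Delta+u)$, and the multinomial weight $\binom{n_j}{k,l,n_j-k-l}$ is symmetric in $k\leftrightarrow l$, the binomial process $\Bin{\Lambda_{n_j}}{n_j}$ is invariant under $S$ on these events; together with $\Pbnj=(Z^\beta_{n_j})^{-1}e^{-\beta H_{n_j}}\Bin{\Lambda_{n_j}}{n_j}$ this gives, for every $G\ge 0$,
\[
\int 1_{\{N_\Delta=k,\,N_{\Delta+u}=l\}}(\gamma)\,G(\gamma)\,\Pbnj(d\gamma)=\int 1_{\{N_\Delta=l,\,N_{\Delta+u}=k\}}(\gamma)\,G(S\gamma)\,e^{-\beta D(\gamma)}\,\Pbnj(d\gamma).
\]
Since $\Delta+u$ is disjoint from $\Lambda$ and $S$ leaves $\gamma_{\Lambda\setminus\Delta}$ unchanged, every $\F_{\Lambda\setminus\Delta}$-measurable $f$ satisfies $f\circ S=f$, so taking $G=f$ yields $\int 1_{\{N_\Delta=k,N_{\Delta+u}=l\}}f\,d\Pbnj=\int 1_{\{N_\Delta=l,N_{\Delta+u}=k\}}f\,e^{-\beta D}\,d\Pbnj$.

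Next I would fix a threshold $t>0$ and split this integral over $\{D\ge-t\}$ and $\{D<-t\}$. On $\{D\ge-t\}$ one has $0<e^{-\beta D}\le e^{\beta t}$, so (using $f\ge 0$) this part is at most $e^{\beta t}\int 1_{\{N_\Delta=l,N_{\Delta+u}=k\}}f\,d\Pbnj$, which is the announced main term with $C^{k,l}_{\Delta,\varepsilon}=e^{\beta t}$. For the remaining part, $S^2=\mathrm{id}$ forces $D\circ S=-D$, so the displayed identity with $G=1_{\{D>t\}}$ gives
\[
\int 1_{\{N_\Delta=l,\,N_{\Delta+u}=k\}}1_{\{D<-t\}}e^{-\beta D}\,d\Pbnj=\Pbnj\big(N_\Delta=k,\,N_{\Delta+u}=l,\,D>t\big)\le\tfrac1t\,E_{\Pbnj}\big[1_{\{N_\Delta=k,\,N_{\Delta+u}=l\}}\,D^+\big].
\]
Thus everything reduces to bounding $E_{\Pbnj}[1_{\{N_\Delta=k,N_{\Delta+u}=l\}}D^+]$ by a constant $\mathcal C=\mathcal C(k,l,\Delta,\beta)$ not depending on $j$ or $u$; choosing $t=\mathcal C/\varepsilon$ then controls the tail part by $\varepsilon\normf$ and finishes the proof.

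For this last estimate I would first use that $\tau_{\pm u}$ preserves internal distances, so the self-energies of $\gamma_\Delta$ and $\gamma_{\Delta+u}$ cancel in $D$, leaving a ``cross'' term — all of whose $g_{n_j}$-factors join a point of $\Delta$ to a point of $\Delta+u$, hence evaluated at distance $>1$ and, by Lemma~\ref{L:majoration.n}(i), uniformly bounded — plus $\sum_{x\in\gamma_\Delta}(\Phi(x+u)-\Phi(x))+\sum_{y\in\gamma_{\Delta+u}}(\Phi(y-u)-\Phi(y))$ with $\Phi(z)=\sum_{w\in\gamma_{A^c}}g_{n_j}(z-w)$. Writing $\Phi(z)=h_{n_j}(z,\gamma\setminus\{z\})-\sum_{w\in\gamma_A\setminus\{z\}}g_{n_j}(z-w)$ expresses $D^+$, up to a constant depending only on $k,l,\Delta$, as a sum of $O(k+l)$ terms, each either a local energy $|h_{n_j}(\,\cdot\,,\gamma\setminus\{\cdot\})|$ at one of the $k+l$ points of $\gamma_A$ (or at one of its $\tau_{\pm u}$-translates), or a short-range sum $\sum_{w\in\gamma_\Delta}|g_{n_j}(z-w)|$ (resp.\ in $\Delta+u$) with $z$ one of those points. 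I would then take the expectation of $1_{\{N_\Delta=k,N_{\Delta+u}=l\}}$ times these quantities via the GNZ identity of Lemma~\ref{L:GNZ.finite.volume}, removing the points of $\gamma_A$ one by one; the key fact is that Lemma~\ref{L:GNZ.finite.volume} with $f\equiv 1$ together with the torus-invariance of $H_{n_j}$ (Proposition~\ref{P:properties.Hn}(ii)) forces $\int e^{-\beta H_{n_j}(\gamma\cup\{x\})}\Bin{\Lambda_{n_j}}{n_j-1}(d\gamma)=Z^\beta_{n_j}$ for every $x$, so the added point behaves like a uniform point \emph{with normalisation $1$} and no exponentially small factor $a_\beta^{n_j}$ intervenes. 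After this reduction each short-range sum turns into an integral $\int_\Delta|g_{n_j}(z-w)|\,dw$, finite and bounded uniformly in $j$ because $s<d$, while each local energy at a fixed point is controlled, by torus-invariance, through the bound $E_{\Pbnj}(|h_{n_j}(0,\cdot)|)\le K$ of the subsequence $(n_j)$; summing the $O(k+l)$ contributions yields $\mathcal C(k,l,\Delta,\beta)$.

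The hard part will be exactly this uniform control of $E_{\Pbnj}[1_{\{\cdots\}}D^+]$: the swap cost $D$ carries local and short-range energies attached to \emph{random} points of the configuration, which are unbounded and only tamed by the Gibbsian suppression of close pairs, and one must keep the bound uniform in the box size $n_j$ and — most delicately — in the shift $u$, whose norm is allowed to grow with $n_j$. This is what forces the use of the normalisation identity $\int e^{-\beta H_{n_j}(\gamma\cup\{x\})}\Bin{\Lambda_{n_j}}{n_j-1}(d\gamma)=Z^\beta_{n_j}$ in place of the (too lossy) stability lower bound on the partition function, and the careful bookkeeping of the Palm-type reweightings it produces.
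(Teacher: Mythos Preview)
Your swap map $S$, the invariance of $\Bin{\Lambda_{n_j}}{n_j}$ under it, and the resulting change-of-variables identity are all correct, and the trick $D\circ S=-D$ to turn the tail $\{D<-t\}$ back into $\Pbnj(N_\Delta=k,N_{\Delta+u}=l,D>t)$ is clean. This framework differs from the paper's: there one first restricts to a high-probability event $\mathcal A_{n,u,\varepsilon}$ (paying $\varepsilon\normf$ for its complement), applies the finite-volume DLR equation on $V=\Delta\cup(\Delta+u)$, and then swaps the resampled configurations $\eta_1\leftrightarrow\eta_2$; the energy cost of this swap is bounded \emph{pointwise} on $\mathcal A_{n,u,\varepsilon}$ by a constant, using a lemma controlling $\sup_{x\in\Delta}\lvert h_n(x,\gamma_{\Delta^c})\rvert$ and $\sup_{x\in\Delta+u}\lvert h_n(x,\gamma_{(\Delta+u)^c})\rvert$ in probability. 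Your route replaces this pointwise control by an $L^1$ bound on $D^+$ plus Markov; both are legitimate in principle.

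The gap is in your last paragraph. After one application of Lemma~\ref{L:GNZ.finite.volume} the expectation is taken under the Palm-type measure $P_x(d\gamma)=(Z^\beta_{n_j})^{-1}e^{-\beta H_{n_j}(\gamma\cup\{x\})}\Bin{\Lambda_{n_j}}{n_j-1}(d\gamma)$, which is indeed a probability measure by your normalisation identity, but it is \emph{not} $\Pbnj$: it has $n_j-1$ points and carries the extra weight $e^{-\beta h_{n_j}(x,\gamma)}$. Hence the subsequence bound $E_{\Pbnj}(|h_{n_j}(0,\cdot)|)\le K$ does not transfer to $E_{P_0}(|h_{n_j}(0,\cdot)|)$, and torus-invariance only moves the fixed point around without removing this reweighting. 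The difficulty is compounded for the terms $|h_{n_j}(x+u,\gamma)|$ with $x\in\gamma_\Delta$: here $x+u\notin\gamma$, so GNZ can only peel off $x$, after which $P_0$ is not translation-invariant and $E_{P_0}(|h_{n_j}(u,\cdot)|)$ still depends on $u$. Iterating GNZ to handle the short-range sums introduces further ratios of the type $(Z^\beta_{n_j})^{-1}\int e^{-\beta H_{n_j}(\gamma\cup\{x,w\})}\Bin{\Lambda_{n_j}}{n_j-2}(d\gamma)$, for which no uniform bound is available.

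What actually makes the argument close is a uniform-in-$x$ control of $|h_{n_j}(x,\gamma_{\Delta^c})|$ (and the same on $\Delta+u$) on an event of probability $\ge 1-\varepsilon$: one fixes a reference point $x_0\in\Delta$, uses Markov and the bound $E_{\Pbnj}(|h_{n_j}(x_0,\cdot)|)\le K$ to control $|h_{n_j}(x_0,\gamma_{\Delta^c})|$, and then bounds the increment $|h_{n_j}(x,\gamma_{\Delta^c})-h_{n_j}(x_0,\gamma_{\Delta^c})|$ via $|g(y-x)-g(y-x_0)|\lesssim\|y\|^{-(s+1)}$ (this is where $s>d-1$ enters). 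With this in hand, $|D|$ is bounded by a constant on that event, which either feeds directly into the paper's DLR argument or, in your framework, gives the tail bound $\Pbnj(N_\Delta=k,N_{\Delta+u}=l,|D|>t)\le\varepsilon$ for $t$ large enough, without needing the $L^1$ estimate you sketch.
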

It is crucial to note that the constant in front of the integral does not depend of $u$, and this is the case as soon as $\Vert u \Vert$ is large enough. In order to simplify the notations, during the proof, we assume  without loss of generality that $n_j = j$, or in other words, that there exists a constant $K\geq 0$ such that for all $n\geq 1$, we have $\E \Pbn(|h_n(0,\cdot)|) \leq K$.  
\begin{proof}
Let $\mathcal{A}_{n,u,\varepsilon}$ be an event such that $\Pbn(\mathcal{A}_{n,u,\varepsilon}) \geq 1 -\varepsilon$, its definition will be precised later, then  
\begin{multline*}
	\int 1_{N_\Delta = k}(\gamma) 1_{N_{\Delta + u} = l}(\gamma) f(\gamma) \Pbn(d\gamma) \\ \leq \int 1_{\mathcal{A}_{n,u,\varepsilon}}(\gamma)1_{N_\Delta = k}(\gamma) 1_{N_{\Delta + u} = l}(\gamma) f(\gamma) \Pbn(d\gamma) + \varepsilon \normf.
\end{multline*}

Applying the canonical DLR equation in finite volume (Proposition~\ref{P:DLR.finite.vol}) with the bounded set $V = \Delta \cup (\Delta + u)$ and the bounded function $1_{\mathcal{A}_{n,u,\varepsilon}}1_{N_\Delta = k} 1_{N_{\Delta +u} = l} f$ leads to
\begin{align*}
& \int 1_{\mathcal{A}_{n,u,\varepsilon}}(\gamma) 1_{N_\Delta = k}(\gamma) 1_{N_{\Delta + u} = l}(\gamma) f(\gamma) \Pbn(d\gamma) \\ 
 & = \iint  1_{\mathcal{A}_{n,u,\varepsilon}}(\eta\cup\gamma_{V^c}) 1_{N_\Delta = k}(\eta) 1_{N_{\Delta + u} = l}(\eta)1_{N_V = k+l}(\gamma) f(\gamma)e^{-\beta H_{n,V}(\eta, \gamma)} \Bin V{k+l}(d\eta) \Pbn(d\gamma).
\end{align*}
We focus on the integral with respect to $\eta$ and use the change of variables $\eta = \eta_1 \cup (\eta_2 + u)$ with $\eta_1,\eta_2 \in \Conf_\Delta$ 
\begin{multline*}
	\int  1_{\mathcal{A}_{n,u,\varepsilon}}(\eta\cup\gamma_{V^c}) 1_{N_\Delta = k}(\eta) 1_{N_{\Delta + u} = l}(\eta) e^{-\beta H_{n,V}(\eta, \gamma)} \Bin V{k+l}(d\eta) \\ = \binom{k+l}{k}\left(\frac 12\right)^{k+l} \iint 1_{\mathcal{A}_{n,u,\varepsilon}}(\gamma_{V^c} \cup \eta_1 \cup (\eta_2+u)) e^{-\beta H_{n,V}(\eta_1\cup(\eta_2+u), \gamma)} \Bin \Delta k(d\eta_1) \Bin {\Delta} l(d\eta_2).
\end{multline*}
At this point the idea is to switch the configurations between $\Delta$ and $\Delta + u$. We only need to control the corresponding change in the energy. It is achieved by the next lemma, whose the proof is postponed.

\begin{lem}\label{L:switch}
There exist a constant $C^{k,l}_{\Delta,\varepsilon}$ and for every $n$ large enough, an event $\mathcal{A}_{n,u,\varepsilon}$ with $\Pbn(\mathcal{A}_{n,u,\varepsilon}) \geq 1 - \varepsilon$, such that the following property holds: For every configurations $\eta_1,\eta_2 \in \Conf_\Delta$ and $\gamma \in \Conf_{\Lambda_n}$ such that $N_\Delta(\eta_1) = k$ and $N_\Delta(\eta_2) = l$, if $\gamma_{V^c}\cup\eta_1\cup(\eta_2+u) \in \mathcal{A}_{n,u,\varepsilon}$  then we have
\[
	 e^{-\beta H_{n,V}(\eta_1\cup(\eta_2+u), \gamma)} \leq C^{k,l}_{\Delta, \varepsilon}e^{-\beta H_{n,V}((\eta_1+u)\cup \eta_2, \gamma)}.
\]
\end{lem}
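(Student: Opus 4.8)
The plan is to compute the energy difference
\[
D \;:=\; H_{n,V}\bigl(\eta_1\cup(\eta_2+u),\gamma\bigr) - H_{n,V}\bigl((\eta_1+u)\cup\eta_2,\gamma\bigr),\qquad V:=\Delta\cup(\Delta+u),
\]
split it into an inter-cluster and a cluster--exterior contribution, and bound each on an event of $\Pbn$-probability at least $1-\varepsilon$; since $e^{-\beta H_{n,V}(\eta_1\cup(\eta_2+u),\gamma)}=e^{-\beta D}\,e^{-\beta H_{n,V}((\eta_1+u)\cup\eta_2,\gamma)}$, an upper bound on $e^{-\beta D}$ is exactly the assertion. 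Using $H_{n,V}(\xi,\gamma)=\Hn(\xi)+\sum_{x\in\xi}h_n(x,\gamma_{V^c})$ for $\xi\subset V$ (with $h_n(x,\mu)=\sum_{y\in\mu}g_n(x-y)$), the invariance of $\Hn$ and $g_n$ under torus translations (Proposition~\ref{P:properties.Hn}(ii)) and the inclusion $\Delta+u\subset\Lambda_n$, the intra-cluster self-energies $\Hn(\eta_1),\Hn(\eta_2)$ cancel in $D$, leaving $D=D_{\mathrm{int}}+D_{\mathrm{ext}}$ with
\[
D_{\mathrm{int}}=\sum_{x\in\eta_1}\sum_{z\in\eta_2}\bigl(g_n(x-z-u)-g_n(x+u-z)\bigr)
\]
and
\[
D_{\mathrm{ext}}=\sum_{x\in\eta_1}\bigl(h_n(x,\gamma_{V^c})-h_n(x+u,\gamma_{V^c})\bigr)-\sum_{z\in\eta_2}\bigl(h_n(z,\gamma_{V^c})-h_n(z+u,\gamma_{V^c})\bigr).
\]

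For $D_{\mathrm{int}}$: because $d(\Delta,\Delta+u)>1$ and $\Delta\cup(\Delta+u)\subset\Lambda_n$, for $x,z\in\Delta$ and $n$ large the torus representatives of $x-z-u$ and of $x+u-z$ have Euclidean norm at least $1/4$, so Lemma~\ref{L:majoration.n}(i) gives $\abs{g_n(\,\cdot\,)}\le C_\Delta$ there; hence $\abs{D_{\mathrm{int}}}\le 2kl\,C_\Delta$, uniformly in $u$ and in large $n$. For $D_{\mathrm{ext}}$ one has $\abs{D_{\mathrm{ext}}}\le (k+l)\,\Theta_n(u,\gamma)$ with $\Theta_n(u,\gamma):=\sup_{a\in\Delta}\abs{h_n(a,\gamma_{V^c})-h_n(a+u,\gamma_{V^c})}$, a quantity depending on $\gamma$ only through $\gamma_{V^c}$. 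I therefore set $\mathcal{A}_{n,u,\varepsilon}:=\{\gamma\in\Conf_{\Lambda_n}:\Theta_n(u,\gamma)\le C(\varepsilon)\}$ for a constant $C(\varepsilon)$ chosen below; as $(\gamma_{V^c}\cup\eta_1\cup(\eta_2+u))_{V^c}=\gamma_{V^c}$, membership of this configuration in $\mathcal{A}_{n,u,\varepsilon}$ forces $\abs{D}\le 2klC_\Delta+(k+l)C(\varepsilon)$, so the lemma holds with $C^{k,l}_{\Delta,\varepsilon}:=\exp\bigl(\beta(2klC_\Delta+(k+l)C(\varepsilon))\bigr)$.

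It remains to choose $C(\varepsilon)$ — independent of $u$ and of $n$ along the subsequence for which $\E{\Pbn}(\abs{h_n(0,\cdot)})\le K$ — so that $\Pbn(\Theta_n(u,\cdot)>C(\varepsilon))\le\varepsilon$. Fixing a reference point $a_0\in\Delta$, I bound $\Theta_n(u,\gamma)$ by the oscillation of $h_n(\cdot,\gamma_{V^c})$ over $\Delta$, plus the single term $\abs{h_n(a_0,\gamma_{V^c})-h_n(a_0+u,\gamma_{V^c})}$, plus the oscillation over $\Delta+u$. On the event that no point of $\gamma$ lies within distance $\epsilon$ of $\Delta$ or of $\Delta+u$ without belonging to $V$, and that $\gamma$ has at most $C'$ points within distance $1$ of $V$ — an event of $\Pbn$-probability at least $1-\varepsilon/2$ for $\epsilon,C'$ depending only on $\Delta$, because $\Pbn$ has intensity $1$ and the two tubes around $\Delta$ and $\Delta+u$ are disjoint and of volume depending only on $\Delta$ — the two oscillations are bounded: the contribution of the (at most $C'$) near points is controlled through the gradient of $g_n$ at distance $\ge\epsilon$, and that of the remaining points by Lemma~\ref{L:majoration.n}(ii) together with the standing hypothesis $s>d-1$, which makes $\norm{w}^{-(s+1)}$ integrable at infinity; all the resulting constants are uniform in $u$ and $n$. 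Finally $h_n(a_0,\gamma_{V^c})$ equals $h_n(a_0,\gamma)$ minus the contribution of $\gamma_\Delta\cup\gamma_{\Delta+u}$, the first piece being integrable in expectation (since $s<d$) and the second bounded (distance $>1$), so $\E{\Pbn}(\abs{h_n(a_0,\gamma_{V^c})})\le K'$ with $K'$ uniform; likewise $\E{\Pbn}(\abs{h_n(a_0+u,\gamma_{V^c})})\le K'$ by torus invariance of $\Pbn$. Markov's inequality applied to these bounds, intersected with the event above, yields $C(\varepsilon)$ and $\Pbn(\mathcal{A}_{n,u,\varepsilon})\ge 1-\varepsilon$.

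The main difficulty throughout is uniformity in $u$: since Proposition~\ref{P:inequality.Pbn} is used afterwards with $\norm{u}\to\infty$, no constant and no probability estimate may deteriorate as $\Delta+u$ recedes to infinity. This rules out crude bounds such as integrating $\abs{g_n}$ over a domain growing with $u$, and forces the systematic use of the separation $d(\Delta,\Delta+u)>1$ (which bounds every interaction between the two clusters by $C_\Delta$), of the renormalisation built into $g_n$, and of the a priori bound $\E{\Pbn}(\abs{h_n(0,\cdot)})\le K$ furnished by the choice of subsequence. The genuinely delicate point is passing from a pointwise control of $h_n(a_0,\cdot)$ to the supremum over $a\in\Delta$ in $\Theta_n$: this equicontinuity in $a$ is precisely where the hypothesis $s>d-1$ is needed, through the integrability of $\norm{w}^{-(s+1)}$ near infinity.
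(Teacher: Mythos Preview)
Your approach is essentially that of the paper: it sets $\mathcal{A}_{n,u,\varepsilon}=A_{n,0}(C)\cap A_{n,u}(C)$ with $A_{n,v}(C)=\{\sup_{x\in\Delta+v}|h_n(x,\gamma_{(\Delta+v)^c})|\le C(1+N_{\Delta+v}(\gamma))\}$, and then bounds $|H_{n,V}(\,\cdot\,,\gamma)-H_n(\eta_1)-H_n(\eta_2)|$ on both configurations, which is equivalent to your direct bound on $D=D_{\mathrm{int}}+D_{\mathrm{ext}}$; the probabilistic control of these events (the paper's Lemma~\ref{L:control.proba.Pbn}) uses exactly the ingredients you list --- the a~priori bound $E_{\Pbn}(|h_n(0,\cdot)|)\le K$, the oscillation estimate relying on $s>d-1$, and Markov's inequality.

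One slip to fix in your last paragraph: the justifications are misattributed. The bound $E_{\Pbn}(|h_n(a_0,\gamma)|)\le K$ is precisely the a~priori subsequence bound, not a consequence of $s<d$. Conversely, the ``contribution of $\gamma_\Delta\cup\gamma_{\Delta+u}$'' is \emph{not} pointwise bounded: only the $\gamma_{\Delta+u}$ part lies at distance $>1$ from $a_0\in\Delta$; the $\gamma_\Delta$ part has no distance floor and must be handled in expectation via $E_{\Pbn}\bigl[\sum_{y\in\gamma_\Delta}|g_n(a_0-y)|\bigr]\le\int_\Delta g(a_0-y)\,dy+c\,\lambda^d(\Delta)<\infty$, which is where $s<d$ actually enters. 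With this correction your argument goes through.
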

If we chose the events $\mathcal{A}_{n,u,\varepsilon}$ according to Lemma \ref{L:switch}, we obtain that 
\begin{multline*}
	\int  1_{\mathcal{A}_{n,\varepsilon}}(\gamma_{V^c}\cup \eta) 1_{N_\Delta = k}(\eta) 1_{N_{\Delta + u} = l}(\eta) e^{-\beta H_{n,V}(\eta, \gamma)} \Bin V{k+l}(d\eta) \\ \leq C^{k,l}_{\Delta, \varepsilon} \binom{k+l}{k}\left(\frac 12\right)^{k+l} \int  e^{-\beta H_{n,V}((\eta_1+u)\cup \eta_2), \gamma)} \Bin \Delta k(d\eta_1) \Bin {\Delta} l(d\eta_2).
\end{multline*}
It remains to do the previous steps backward, the change of variables $\eta = \eta_1 \cup \eta_2 \in \Conf_V$ leads to 
\begin{multline*}
\int  1_{\mathcal{A}_{n,u,\varepsilon}}(\gamma_{V^c}\cup \eta) 1_{N_\Delta = k}(\eta) 1_{N_{\Delta + u} = l}(\eta) e^{-\beta H_{n,V}(\eta, \gamma)} \Bin V{N_V(\gamma)}(d\eta) \\ \leq  C^{k,l}_{\Delta,\varepsilon} \int  1_{N_\Delta = l}(\eta) 1_{N_{\Delta + u} = k}(\eta) e^{-\beta H_{n,V}(\eta, \gamma)} \Bin V{N_V(\gamma)}(d\eta).
\end{multline*}
Using again Proposition \ref{P:DLR.finite.vol}, that gives at the end
\[
\int 1_{\mathcal{A}_{n,u,\varepsilon}}(\gamma) 1_{N_\Delta = k}(\gamma) 1_{N_{\Delta + u} = l}(\gamma) f(\gamma) \Pbn(d\gamma) \leq  C^{k,l}_{\Delta,\varepsilon} \int  1_{N_\Delta = l}(\gamma) 1_{N_{\Delta + u} = k}(\gamma) f(\gamma) \Pbn(d\gamma).
\]
\end{proof}
\begin{proof}[Proof of Lemma \ref{L:switch}]
For $u \in \RRd$ and $C > 0$, we introduce the event

\[
	A_{n,u}(C) = \left\{\gamma \in \Conf_n : \sup_{x\in\Delta+u} \left|h_n(x,\gamma_{(\Delta+u)^c})\right| \leq C(1 + N_{\Delta+u}(\gamma))\right\}
\]
\begin{lem}\label{L:control.proba.Pbn}
For every $\varepsilon > 0$, we can find a constant $C>0$ such that for every $u$ and every $n$ such that $\Delta + u \subset \Lambda_n$ we have $\Pbn(A_{n,u}(C)) \geq 1 - \varepsilon$. 
\end{lem}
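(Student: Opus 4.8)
The plan is to fix $\varepsilon>0$ and produce a constant $C=C(\varepsilon,\Delta,\beta)>0$ together with, for each admissible pair $(n,u)$ (i.e. $\Delta+u\subset\Lambda_n$), an event $G_{n,u}$ with $\Pbn(G_{n,u})\ge 1-\varepsilon$ on which the \emph{deterministic} bound $\sup_{x\in\Delta+u}\vert h_n(x,\gamma_{(\Delta+u)^c})\vert\le C$ holds; since $C\le C(1+N_{\Delta+u}(\gamma))$ this yields $G_{n,u}\subset A_{n,u}(C)$ and the lemma. All estimates below will be uniform in $u$ because $\Pbn$, the maps $h_n$, the counting functions and the reference volumes are covariant under torus translations, and uniform in $n$ for reasons indicated at each step. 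The point to keep in mind is that the crude bound of $h_n(x,\gamma_{(\Delta+u)^c})$ by the number of points of $\gamma$ at distance $\ge 1$ from $x$ times the sup of $g_n$ over such distances is of order $n$; one must instead exploit that $h_n(x,\cdot)$ is a discrepancy functional (recall $\int_{\Lambda_n}g_n=0$) for which the chosen subsequence provides $E_{\Pbn}(\vert h_n(0,\cdot)\vert)\le K$.

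First I would discard bad configurations. Fix a bounded neighbourhood $W=\Delta\oplus B(0,R)$ of $\Delta$ with $R\ge\operatorname{diam}(\Delta)+1$, and remove the union of the following four events, each of probability $\le\varepsilon/4$ for suitable parameters, uniformly in $(n,u)$: (a) the event that $\gamma$ has a point in $((\Delta+u)\oplus B(0,\delta))\setminus(\Delta+u)$, of probability at most $\leb((\Delta\oplus B(0,\delta))\setminus\Delta)$, which tends to $0$ as $\delta\to 0$ (intensity bound and translation invariance); (b) $\{N_{W+u}(\gamma)>C_1\}$, of probability $\le\leb(W)/C_1$; (c) $\{\int(1\vee d(y,\Delta+u))^{-(s+1)}\gamma(dy)>C_2\}$, whose $\Pbn$-expectation is $\le\int_{\RRd}(1\vee d(y,\Delta))^{-(s+1)}\,dy$, which is \emph{finite precisely because $s>d-1$}; (d) $\{\tfrac{1}{\leb(\Delta)}\int_{\Delta+u}\vert h_n(x,\gamma)\vert\,dx>C_3\}$, whose expectation is $\le K$ by Fubini and torus invariance. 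Let $G_{n,u}$ be the complement of this union.

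On $G_{n,u}$ I would then establish the bound as follows. For $x\in\Delta+u$ split $h_n(x,\gamma_{(\Delta+u)^c})=S_{\mathrm{sh}}(x)+S_{\mathrm{far}}(x)$ according to whether the summation point $y$ lies in the shell $(W+u)\setminus(\Delta+u)$ or outside $W+u$. The shell carries $\le C_1$ points, each at distance $\ge\delta$ from $x$, where $\vert g_n\vert\le\vert\cdot\vert^{-s}+cn^{-s/d}\le\delta^{-s}+c$ by Lemma~\ref{L:majoration.n}, so $\vert S_{\mathrm{sh}}(x)\vert\le C_1(\delta^{-s}+c)$. For $S_{\mathrm{far}}$ I would bound its oscillation and its average over $\Delta+u$ separately. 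For $x,x'\in\Delta+u$, writing $g_n(x-y)-g_n(x'-y)$ as $[g(x-y)-g(x'-y)]$ plus a $g_n-g$ difference controlled via Lemma~\ref{L:majoration.n} by $Cn^{-(s+1)/d}$, and using $\vert\nabla g\vert(z)=s\vert z\vert^{-s-1}$ together with $d(y,\Delta+u)\ge R$ for far $y$, one gets
\[
\vert S_{\mathrm{far}}(x)-S_{\mathrm{far}}(x')\vert\le\operatorname{diam}(\Delta)\Big(s\sum_{y\in\gamma_{(W+u)^c}}d(y,\Delta+u)^{-s-1}+Cn^{(d-s-1)/d}N_{(W+u)^c}(\gamma)\Big),
\]
which on $G_{n,u}$ is $\le\operatorname{diam}(\Delta)(sC_2+C)$, since $N_{(W+u)^c}(\gamma)\le n$ and $n^{(d-s-1)/d}\le 1$. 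For the average, from $S_{\mathrm{far}}(x)=h_n(x,\gamma)-\sum_{y\in\gamma_{W+u}}g_n(x-y)$ one gets
\[
\frac{1}{\leb(\Delta)}\int_{\Delta+u}S_{\mathrm{far}}(x)\,dx=\frac{1}{\leb(\Delta)}\int_{\Delta+u}h_n(x,\gamma)\,dx-\sum_{y\in\gamma_{W+u}}\bar g_n(y),\quad\bar g_n(y):=\frac{1}{\leb(\Delta)}\int_{\Delta+u}g_n(x-y)\,dx,
\]
where $\vert\bar g_n(y)\vert\le\tfrac{1}{\leb(\Delta)}\int_{\Delta-W}(\vert z\vert^{-s}+c)\,dz$ is bounded uniformly (the integral being finite because $s<d$), so on $G_{n,u}$ the right-hand side is $\le C_3$ plus $C_1$ bounded terms. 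Combining these, $\sup_{x\in\Delta+u}\vert S_{\mathrm{far}}(x)\vert$, hence $\sup_{x\in\Delta+u}\vert h_n(x,\gamma_{(\Delta+u)^c})\vert$, is bounded by a constant depending only on $\varepsilon,\Delta,\beta$.

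The hard part will be the $S_{\mathrm{far}}$ estimate: it is a sum of order $n$ contributions that must come out $O(1)$, which forces one to route it through the discrepancy functional $h_n(x,\gamma)$ — whose $L^1(\Pbn)$-boundedness is exactly the property secured by the special subsequence — and to pay for the $x$-dependence with an oscillation estimate whose convergence (summability of $\sum_y d(y,\Delta+u)^{-s-1}$ against the intensity, and convergence of the periodization correction) hinges on $s>d-1$, while the local integrability of $g$ used in the averaging step needs $s<d$. The rest is bookkeeping to ensure every error term is uniform in the shift $u$ (torus covariance) and in $n$ (the elementary $n^{(d-s-1)/d}\le 1$).
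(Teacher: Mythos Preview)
Your argument is correct and follows essentially the same strategy as the paper: exploit the uniform $L^1$ bound $E_{\Pbn}(|h_n(0,\cdot)|)\le K$ along the chosen subsequence, control the oscillation of $x\mapsto h_n(x,\gamma_{(\Delta+u)^c})$ via $|\nabla g|$ and Lemma~\ref{L:majoration.n}(ii), and discard configurations with points too close to $\Delta+u$. The only differences are cosmetic: the paper first reduces to $u=0$ by torus invariance and anchors at a fixed reference point $x_0\in\Delta$ rather than averaging over $\Delta+u$ as you do (this is why the paper picks up the harmless factor $(1+N_\Delta)$ when subtracting the $\gamma_\Delta$-contribution at $x_0$, whereas your averaging absorbs that singularity via $s<d$ and yields a flat bound). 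One typo to fix: in your displayed oscillation estimate the periodization term should read $Cn^{-(s+1)/d}N_{(W+u)^c}(\gamma)$, which after $N_{(W+u)^c}(\gamma)\le n$ becomes $Cn^{(d-s-1)/d}\le C$, matching the justification you give in the next line.
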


Before proving the lemma, we show how the event $\mathcal{A}_{n,u,\varepsilon} = A_{n,0}(C)\cap A_{n,u}(C)$ satisfies the requests of Lemma \ref{L:switch}. According to Lemma \ref{L:control.proba.Pbn}, we can chose $C$ such that $\Pbn(A_{n,0}(C) \cap A_{n,u}(C)) \geq 1 - \varepsilon$ and according to Lemma \ref{L:majoration.n} we can add that for every $x_1,x_2 \in \Delta$ we have $\vert g_n(x_1-x_2 +u) \vert \leq C$ as soon as $d(\Lambda, \Delta + u) > 1$ and $n$ is large enough. It is important to notice that the constant $C$ depends only of $\Delta$ and $\varepsilon$. The local energy can be decomposed as
\[
	H_{n,V}(\eta_1 \cup (\eta_2+u), \gamma) = H_n(\eta_1 \cup (\eta_2+u)) + \sum_{x_1 \in\eta_1} h_n(x_1, \gamma^{}_{V^c}) +
	\sum_{x_2 \in(\eta_2+u)}  h_n(x_2, \gamma^{}_{V^c}).
\]
Since for $x \in \eta_1$ 
\[
	| h_n(x,\gamma_{V^c}) | \leq | h_n(x, \gamma_{V^c} \cup (\eta_2+u)) | + |h_n(x,(\eta_2+u)) |,
\]
if $\gamma \cup \eta_1 \cup (\eta_2+u) \in A_{n,0}(C)$, our choice of $C$ implies that
\[
	\sup_{x\in\Delta}|h_n(x,\gamma_{V^c})| \leq C(1 + k +l).
\]
Similarly if  $\gamma \cup \eta_1 \cup (\eta_2+u) \in A_{n,\Delta+u}(C)$, we have
\[
	\sup_{x\in \Delta +u} \left|h_n(x,\gamma_{V^c})\right| \leq C(1+k+l).
\]
It remains to control the energy of the configuration $\eta_1 \cup (\eta_2 + u)$, which can be written as 
\[
	H_n(\eta_1 \cup(\eta_2 + u)) = H_n(\eta_1) + H_n(\eta_2+u) + \sum_{x_1\in\eta_1}\sum_{x_2\in\eta_2} g_n(x_1 - x_2 - u).
\]
Since $\eta_2 + u \subset \Lambda_n$ we have $\eta_2 + u =\tau_u^n(\eta_2)$ and then $H_n(\eta_2+u) = H_n(\eta_2)$. The choice of the constant $C$ also implies that
\[
	\left|\sum_{x_1\in\eta_1}\sum_{x_2\in\eta_2} g_n(x_1 - x_2 - u)\right|\leq  Ckl.
\] 
Finally we obtain that if $\gamma_{V^c} \cup \eta_1 \cup \eta_2 \in \mathcal{A}_{n,u,\varepsilon}$, then 
\[
	\left|H_{n,V}(\eta_1 \cup (\eta_2+u), \gamma) - H_n(\eta_1) + H_n(\eta_2)\right|\leq 2C(k+1)(l+1),
\]
and similarly   
\[
	\left|H_{n,V}((\eta_1+u) \cup \eta_2), \gamma) - H_n(\eta_1) + H_n(\eta_2)\right|\leq 2C(k+1)(l+1).
\]
\end{proof}

\begin{proof}[Proof of Lemma \ref{L:control.proba.Pbn}]
From the assumption $\Delta + u \subset \Lambda_n$ we have $\Delta + u = \tau_u^n(\Delta)$ and then $\gamma \in A_{n,\Delta+u}(C)$ implies that $\tau^n_u(\gamma) \in A_{n,\Delta}(C)$. Since $\Pbn$ is invariant under the torus translations, $\Pbn(A_{n,\Delta + u}(C)) = \Pbn(A_{n,\Delta}(C))$. Therefore, we only have to show the result for $u = 0$ and the constant $C$ depends only on $\varepsilon$ and $\Delta$. We fix a reference point $x_0 \in \Delta$ and for $x\in\Delta$ we write  
\[
	\vert h_n(x, \gamma_{\Delta^c}) \vert \leq \vert h_n(x_0, \gamma_{\Delta^c}) \vert + \vert h_n(x, \gamma_{\Delta^c}) - h_n(x_0, \gamma_{\Delta^c}) \vert 
\]

and control separately both terms.

\textit{(1)} Under the probability measure $\Pbn$, the expectation of $\vert h_n(0, \cdot) \vert$ is bounded by a constant independent of $n$. So, applying Markov's inequality, for $C > 0$ large enough $\Pbn( \vert h_n(x_0, \cdot)\vert \leq C) \geq 1 - \varepsilon$ for every $n$. The local energy according to the exterior of $\Delta$ can be upper-bounded
\[
	\vert h_n(x_0, \gamma_{\Delta^c}) \vert \leq \vert h_n(x_0, \gamma) \vert + \sum_{y \in \gamma_\Delta} \vert g_n(y-x_0) \vert.
\] 	
According to Lemma \ref{L:majoration.n}, if $n$ is large enough, $\vert g_n(y-x_0) \vert \leq C/2 + g(y-x_0)$. Then we choose a radius $r>0$ small enough to have $\Pbn(N_{B(x_0, r)} = 0) \geq 1 - \varepsilon$ for every $n$ (such a choice is possible since for every $n$, the point process $\Pbn$ has intensity equal to $1$). For that choice $\vert g(y-x_0) \vert \leq r^{-s}$. In conclusion, for $C>0$ large enough and for every $n$ large enough as well
\[
	\Pbn\left(\big\{\gamma\in\Conf_{\Lambda_n} : \left| h_n(x_0,\gamma_{\Delta^c})\right| \leq C(1 + N_\Delta(\gamma)) \big\}\right) \geq 1 - \varepsilon.
\]
\textit{(2)} According to Lemma \ref{L:majoration.n}, there exists a constant $C_1$ (depending on $\Delta$ and $x_0$) such that for every $n$ large enough 
\[
	\left| g_n(y-x) - g_n(y-x_0) \right| \leq \left| g(y-x) - g(y-x_0) \right| + C_1 n^{-(s+1)/d}.
\]
It implies that for $\gamma \in \Conf_{\Lambda_n}$ such that $\vert \gamma \vert = n$
\begin{equation}\label{E:control.proba.Pbn.1}
	\left| h_n(x,\gamma_{\Delta^c}) - h_n(x_0,\gamma_{\Delta^c})\right| \leq C_1 +  \sum_{y \in \gamma_{\Delta^c}} \left|g(y-x) - g(y-x_0) \right|.
\end{equation}
To avoid points of $\gamma_{\Delta^c}$ close to $\Delta$, we introduce for $r > 0$, $\Delta_r = \left\{ x : d(x,\Delta) \leq r\right\}$. Since $\Pbn$ has intensity equal to $1$ for all $n\ge 1$, by Markov's inequality, $r$ can be chosen small enough in order to have for all $n\ge 1$, $\Pbn(N_{\Delta_r \setminus \Delta} = 0) \geq 1 - \varepsilon$. If we assume that $N_{\Delta_r \setminus \Delta}(\gamma) =0$
 and if $p_0$ is such that $\Delta_r \subset \Lambda_{p_0}$, then 
\begin{equation}\label{E:control.proba.Pbn.2}
	\sum_{y\in\gamma_{\Delta^c}} \vert g(y-x) - g(y-x_0)\vert \leq \sum_{p = p_0}^{+\infty} \sum_{y \in \gamma_{\Lambda_{p+1}\setminus \Lambda_p}} \vert g(y-x) - g(y-x_0)\vert + 2 N_{\Lambda_{p_0}\setminus \Delta}(\gamma) r^{-s}.
\end{equation}
There exists a constant $C_2>0$ (depending on $\Delta$) such that for $p_0$ large enough, if $y \in \Lambda_p^c$ with $p\geq p_0$, then $\vert g(y-x) - g(y-x_0) \vert \leq C_2 p^{-(s+1)}$. Combined with (\ref{E:control.proba.Pbn.1}) and (\ref{E:control.proba.Pbn.2}) it gives for $\gamma$ satisfying $N_{\Delta_r \setminus \Delta}(\gamma) =0$
\[
	\sup_{x\in\Delta} \left|h_n(x,\gamma_{\Delta^c}) - h_n(x_0,\gamma_{\Delta^c}) \right| \leq C_1 + C_2 \sum_{p = p_0}^{+\infty} N_{\Lambda_{p+1} \setminus \Lambda_p}(\gamma) p^{-(s+1)} + N_{\Lambda_{p_0}\setminus \Delta}(\gamma) r^{-s}.
\]
The right hand side has a finite expectation under $\Pbn$ with a value independent of $n$. By Markov's inequality, there exists a constant $C > 0$ such that for every $n$ 
\[
	\Pbn\left(C_1 + C_2 \sum_{p = p_0}^{+\infty} N_{\Lambda_{p+1} \setminus \Lambda_p} p^{-(s+1)} + N_{\Lambda_{p_0}\setminus \Delta} r^{-s} \leq C\right) \leq 1 -\varepsilon.
\]  
 According to our choice of $r$ and $p_0$, for such a constant $C$, we have for every $n$ large enough 
\[
	\Pbn\left(\left\{\gamma\in\Conf_{\Lambda_n} : \sup_{x\in\Delta}\left| h_n(x,\gamma_{\Delta^c}) - h_n(x_0,\gamma_{\Delta^c})\right| \leq C \right\}\right) \geq 1 - 2\varepsilon.
\]
\end{proof}

\subsubsection{Ergodic decomposition of $\Pbstar$ and consequences}
Let $\I$ be the $\sigma$-algebra of events invariant by translation. Since $\Pbstar$ is stationary, for every event $A$ and $\Pb$-almost every $\gamma$ we have
\begin{equation}\label{E:theorem.ergo}
	\lim_{M\rightarrow +\infty} \frac{1}{M} \sum_{m=1}^M 1_{A}(\gamma + mu) = \Pbstar(A \mid \I)(\gamma).
\end{equation}
There exists a version of the conditional expectation $A \mapsto E_{\Pbstar}(1_A \mid \I)$ such that for $\Pb$-almost every $\gamma$, $A \mapsto \Pb_{\star,\gamma}(A) := E_{\Pbstar}(1_A \mid \I)(\gamma)$ is a probability measure on $\Conf$.

\begin{lem}\label{L:ergo.properties} For $\Pbstar$-almost every $\gamma$, the probability measure $\Pb_{\star,\gamma}$ is stationary, ergodic and satisfies the canonical DLR equations.
\end{lem}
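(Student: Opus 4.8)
The plan is to treat the three assertions separately, exploiting the ergodic decomposition \eqref{E:theorem.ergo}. Stationarity and ergodicity of $\Pb_{\star,\gamma}$ are the classical content of the ergodic decomposition theorem for stationary measures on $(\Conf,\F)$ under the action of the translation group (or the subgroup generated by $u$): the disintegration of $\Pbstar$ along the invariant $\sigma$-algebra $\I$ produces, for $\Pbstar$-a.e.\ $\gamma$, a stationary measure $\Pb_{\star,\gamma}$, and each component is ergodic because on the invariant $\sigma$-algebra the conditional measure $\Pb_{\star,\gamma}(\cdot\mid\I)$ equals $1_{\{\gamma\in\cdot\}}$, i.e.\ is trivial. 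I would cite the relevant statement from \cite{georgii.book} (the ergodic decomposition for $\mathcal P_{\sta}(\Conf)$) rather than reprove it; the only point to check carefully is that the chosen regular version of $A\mapsto E_{\Pbstar}(1_A\mid\I)$ is simultaneously a probability kernel and stationary/ergodic on a single $\Pbstar$-full set, which is handled by working with a countable generating algebra of $\F$ and extending.

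The substantive part is that $\Pb_{\star,\gamma}$ inherits the canonical DLR equations. Here the key structural fact is Remark~\ref{r.esperance.cond}: the DLR equations for $\Pbstar$ say exactly that $f_\Delta = E_{\Pbstar}(f\mid\mathcal E_\Delta)$ for every bounded measurable $f$ and every bounded Borel $\Delta$, where $f_\Delta$ is the explicit local integral operator. Since $\I\subset\mathcal E_\Delta$ is not automatic — but in fact $\I$ \emph{is} contained in the tail, and more to the point $f_\Delta$ is built from $\F_{\Delta^c}$-data plus $N_\Delta$ — I would instead argue as follows. Fix $\Delta$ and a bounded measurable $f$. The DLR identity for $\Pbstar$ reads $E_{\Pbstar}(f)=E_{\Pbstar}(f_\Delta)$, and applied with $f$ replaced by $f\cdot h$ for bounded $\I$-measurable $h$ (using $(f h)_\Delta = f_\Delta\, h$ since $h$ is translation-invariant hence in particular $\mathcal E_\Delta$-measurable) one gets $E_{\Pbstar}(f h)=E_{\Pbstar}(f_\Delta h)$ for all such $h$; this says $E_{\Pbstar}(f\mid\I)=E_{\Pbstar}(f_\Delta\mid\I)$, i.e.\ $\int f\,d\Pb_{\star,\gamma}=\int f_\Delta\,d\Pb_{\star,\gamma}$ for $\Pbstar$-a.e.\ $\gamma$. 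Taking a countable family of pairs $(f,\Delta)$ that is rich enough to characterize the DLR equations (countably many bounded local $f$ forming a convergence-determining class, and $\Delta$ ranging over a countable basis of bounded Borel sets), one obtains a single $\Pbstar$-full set on which $\Pb_{\star,\gamma}$ satisfies $\int f\,d\Pb_{\star,\gamma}=\int f_\Delta\,d\Pb_{\star,\gamma}$ for every bounded measurable $f$ and every bounded Borel $\Delta$, which is precisely the assertion that $\Pb_{\star,\gamma}$ satisfies the canonical DLR equations of Theorem~\ref{T:DLR.equations}.

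The main obstacle is the measurability bookkeeping in the last step: one must ensure that the map $\gamma\mapsto f_\Delta(\gamma)$ is the \emph{same} kernel whether computed under $\Pbstar$ or under $\Pb_{\star,\gamma}$, and that the a.e.-identity can be upgraded from a countable collection of test functions to all bounded measurable $f$ uniformly over $\gamma$ in a full set. This is resolved by a monotone class argument performed \emph{inside} the fixed full set — first establish the identity for $f$ in a countable algebra generating $\F$, then note both sides are, for fixed $\gamma$, finite measures in $f$ (more precisely, $f\mapsto\int f_\Delta d\Pb_{\star,\gamma}$ and $f\mapsto\int f\,d\Pb_{\star,\gamma}$ extend to all bounded measurable $f$ by dominated convergence), so they agree everywhere. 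One also needs that $f_\Delta$ is well-defined $\Pb_{\star,\gamma}$-a.s., i.e.\ that $M_\Delta(\eta,\cdot)$ exists $\Pb_{\star,\gamma}$-a.s.; but $\Pb_{\star,\gamma}$ has intensity $1$ (inherited from $\Pbstar$ via the decomposition, since $E_{\Pb_{\star,\gamma}}(N_\Lambda)=E_{\Pbstar}(N_\Lambda\mid\I)(\gamma)=\leb(\Lambda)$ for a.e.\ $\gamma$ by stationarity of $\Pb_{\star,\gamma}$), so Lemma~\ref{L:move.existence} applies to $\Pb_{\star,\gamma}$ and this point is free.
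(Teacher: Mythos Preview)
Your proposal is correct and follows essentially the same route as the paper: both deduce the DLR property for the ergodic components from Remark~\ref{r.esperance.cond} together with the fact that $\I\subset\mathcal E_\Delta$ up to $\Pbstar$-null sets. The paper streamlines your argument by introducing the tail $\sigma$-algebra $\mathcal E_\infty=\bigcap_m\mathcal E_{\Lambda_m}$ and using \eqref{E:theorem.ergo} to observe that $E_{\Pbstar}(f\mid\I)$ is $\mathcal E_\infty$-measurable, hence $\Pb_{\star,\gamma}(\cdot)=E_{\Pbstar}(1_{\cdot}\mid\I\cap\mathcal E_\infty)(\gamma)$ --- this is precisely the justification your assertion ``translation-invariant hence $\mathcal E_\Delta$-measurable'' needs, since that inclusion holds only modulo null sets.
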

\begin{proof}
The stationarity comes from \eqref{E:theorem.ergo}, and since for $\Pbstar$-almost every $\gamma$ and every $A\in\I$
\[
\Pb_{\star,\gamma}(A) =E_{\Pbstar}(1_A)(\gamma) = 1_A(\gamma) \in \{0,1\},
\]
the ergodicity is also proved. To obtain the canonical DLR equations, we recall the notation $\mathcal{E}_\Delta = \sigma(N_\Delta, \F_{\Delta^c})$ and introduce 
\[
	\mathcal{E}_\infty = \bigcap_{m\geq 1} \mathcal{E}_{\Lambda_m}.
\]
Let $f$ be a bounded local function and $\Delta$ a bounded Borel subset of $\RRd$, Remark~\ref{r.esperance.cond} implies that 
\begin{equation}\label{E:DLR.asympt}
	E_{\Pbstar}(f\mid \mathcal{E}_\infty) =E_{\Pbstar}(E_{\Pbstar}(f \mid \mathcal{E}_\Delta) \mid \mathcal{E}_\infty) = E_{\Pbstar}(f_\Delta \mid \mathcal{E}_\infty).
\end{equation}
According to \eqref{E:theorem.ergo}, $E_{\Pbstar}(f \mid \I)$ is $\mathcal{E}_\infty$-mesurable and is equal to $E_{\Pbstar}(f \mid \I \cap \mathcal{E}_\infty)$, then for $\Pbstar$-almost every $\gamma$, $\Pb_{\star,\gamma}(\cdot) = E_{\Pbstar}(1_{\cdot} \mid \I \cap \mathcal{E}_\infty)(\gamma)$. Combined with \eqref{E:DLR.asympt}, it gives successively 
\[
	E_{\Pb_{\star,\gamma}}(f) =E_{\Pbstar}(f \mid \I \cap \mathcal{E}_\infty) = E_{\Pbstar}(E_{\Pbstar}(f \mid \mathcal{E}_\infty) \mid \I \cap \mathcal{E}_\infty)= E_{\Pbstar}(f_\Delta \mid \I \cap \mathcal{E}_\infty) = E_{\Pb_{\star,\gamma}}(f_\Delta)
\]
which are the canonical DLR equations for $\Pb_{\star,\gamma}$.
\end{proof}

\begin{prop}\label{C:ergodic.proba}
Let $\Delta$ be a bounded Borel subset with $\leb(\Delta)>0$ and $k \geq 0$ an integer. For $\Pbstar$-almost every $\gamma$ we have 
\[
	\Pbstar(N_\Delta = k \mid \I)(\gamma) > 0.
\] 
\end{prop}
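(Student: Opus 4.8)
The plan is to combine the key inequality of Proposition~\ref{P:inequality.Pbn} with the ergodic averaging identity~\eqref{E:theorem.ergo}, passing through the stationarized finite-volume measures to the limit $\Pbstar$. First I would reduce to showing that for a fixed compact $\Delta$ with $\leb(\Delta)>0$ and a fixed integer $k\geq 0$, the event $\{N_\Delta=k\}$ has positive probability under $\Pbstar$, and more precisely that the set where $\Pbstar(N_\Delta=k\mid\I)=0$ is $\Pbstar$-null. Suppose, for contradiction, that $\Pbstar(\{\Pbstar(N_\Delta=k\mid\I)=0\})>0$; equivalently, on a set $B\in\I$ of positive $\Pbstar$-measure the conditional probability vanishes. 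By~\eqref{E:theorem.ergo}, on $B$ the ergodic averages $\frac1M\sum_{m=1}^M 1_{N_\Delta=k}(\gamma+mu)$ converge to $0$, which one wants to contradict by producing mass on $\{N_\Delta=k\}$ from the translated copies $\Delta+mu$ using the switching inequality.

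The heart of the argument: fix a direction $u$ and, for a large integer $m$, apply Proposition~\ref{P:inequality.Pbn} with the translate $\Delta+mu$. Taking $f\equiv 1$, $l=0$ (say), and choosing $\Lambda$ a large box containing $\Delta$ with $d(\Lambda,\Delta+mu)>1$, the inequality gives, for $n_j$ large,
\[
	\Pbnj(N_\Delta = 0,\ N_{\Delta+mu}=k) \leq C^{0,k}_{\Delta,\varepsilon}\,\Pbnj(N_\Delta=k,\ N_{\Delta+mu}=0) + \varepsilon,
\]
with $C^{0,k}_{\Delta,\varepsilon}$ independent of $m$. Summing over $m$ and using that under the torus-translation-invariant $\Pbnj$ the marginals are translation invariant, one controls the average number of translates $\Delta+mu$ carrying exactly $k$ points by a constant multiple of $\Pbnj(N_\Delta=k)$ (plus an $\varepsilon$ term). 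Since $\leb(\Delta)>0$ and $\Pbnj$ has intensity $1$, $\Pbnj(N_\Delta=k)$ is bounded below by a positive constant uniformly in $j$ (the number of points in $\Delta$ has expectation $\leb(\Delta)$ and is not concentrated away from any fixed value — this needs a short uniform lower bound, e.g. via the finite-volume DLR equations or a direct binomial estimate). Passing to the limit $j\to\infty$ along the subsequence defining $\Pbstar$ — using that the events involved are local and the local topology — transfers the inequality to $\Pbstar$. Then applying~\eqref{E:theorem.ergo} and the dominated/bounded convergence theorem along the $m$-average produces, for $\Pbstar$-a.e.\ $\gamma$,
\[
	\Pbstar(N_\Delta=k\mid\I)(\gamma) \geq \text{const}\cdot\Pbstar(N_\Delta=k\mid\I)(\gamma)\cdot(\text{something})\ \text{or a strictly positive lower bound},
\]
and more usefully a bound showing that $\Pbstar(N_\Delta=k\mid\I)$ cannot vanish on a positive-measure set without forcing $\Pbstar(N_\Delta=k)=0$, which contradicts the uniform lower bound on $\Pbnj(N_\Delta=k)$ transferred to the limit.

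The cleaner route, which I would actually follow, is: (i) establish that $c_k:=\inf_j \Pbnj(N_\Delta=k)>0$ by a direct estimate on $\Pbn$ (the binomial reference measure puts mass $\binom{n}{k}p_\Delta^k(1-p_\Delta)^{n-k}$ on configurations with $k$ points in $\Delta$, and the Gibbs weight $e^{-\beta H_n}$ together with stability and the partition-function bound of Lemma~\ref{L:fonction.part} keeps this bounded below); (ii) by local convergence, $\Pbstar(N_\Delta=k)\geq c_k>0$; (iii) decompose $\Pbstar(N_\Delta=k)=\int \Pbstar(N_\Delta=k\mid\I)\,d\Pbstar$, so the conditional probability is positive on a set of positive measure; (iv) to upgrade "positive on a positive-measure set" to "positive a.e.", use that by Lemma~\ref{L:ergo.properties} the measures $\Pb_{\star,\gamma}$ are ergodic and satisfy the canonical DLR equations, and run the switching argument (Proposition~\ref{P:inequality.Pbn} passes to $\Pb_{\star,\gamma}$ since it holds for any accumulation point satisfying the finite-volume DLR equations — or one re-derives the analogous inequality directly at the level of $\Pb_{\star,\gamma}$ from the canonical DLR equations) to show that under any such ergodic DLR measure the event $\{N_\Delta=k\}$ has positive probability, hence $\Pb_{\star,\gamma}(N_\Delta=k)=\Pbstar(N_\Delta=k\mid\I)(\gamma)>0$ for $\Pbstar$-a.e.\ $\gamma$.

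The main obstacle I anticipate is step (iv): carefully transferring the key inequality of Proposition~\ref{P:inequality.Pbn} — originally stated for the pre-limit measures $\Pbnj$ — to the ergodic components $\Pb_{\star,\gamma}$, and then exploiting ergodicity to turn the "move points to $\Delta+mu$ and let $m\to\infty$" heuristic into a rigorous positivity statement without the $u$-dependence spoiling the constant. The fact that $C^{k,l}_{\Delta,\varepsilon}$ in Proposition~\ref{P:inequality.Pbn} is \emph{independent of $u$} is precisely what makes the Cesàro average over $m$ survive; the delicate point is interchanging the limit in $m$ (ergodic theorem) with the limit in $j$ (construction of $\Pbstar$) and with the $\varepsilon\to0$ limit, which I would handle by fixing $\varepsilon$, passing $j\to\infty$ first on the local events, then $m\to\infty$, then $\varepsilon\to0$.
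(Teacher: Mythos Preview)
Your approach is substantially more involved than the paper's and contains a real gap. The paper's proof is essentially one line: by Lemma~\ref{L:ergo.properties} the ergodic component $\Pb_{\star,\gamma}$ is a stationary process satisfying the canonical DLR equations, and then an adaptation of Corollary~3.3 of \cite{DLRsinebeta} shows directly that any such process assigns positive probability to $\{N_\Delta = k\}$ for every $k\ge 0$. The latter is elementary: choose a box $\Lambda\supset\Delta$ large enough that $\Pb_{\star,\gamma}(N_\Lambda\ge k)>0$ (possible since the process is stationary with infinitely many points a.s.); on the event $\{N_\Lambda=m\}$ with $m\ge k$, the canonical DLR equation on $\Lambda$ says that the conditional law in $\Lambda$ has a strictly positive density with respect to $\Bin{\Lambda}{m}$, under which $\{N_\Delta=k\}$ has positive probability. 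No switching inequality, no uniform lower bound on $\Pbnj(N_\Delta=k)$, and no passage through the pre-limit measures is needed.

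Your ``cleaner route'' eventually gestures at this in step (iv), but wraps it in Proposition~\ref{P:inequality.Pbn}, which is both unnecessary and problematic here. The switching inequality is stated and proved only for the specific finite-volume measures $\Pbnj$, and its proof (Lemma~\ref{L:switch}) hinges on the uniform bound $E_{\Pbnj}(|h_{n_j}(0,\cdot)|)\le K$ that \emph{defines} the subsequence $(n_j)$. There is no mechanism by which this bound, or the resulting inequality, transfers to the ergodic components $\Pb_{\star,\gamma}$: these are not themselves accumulation points of $(\Pbnj)$, and the local-energy control is not expressed in terms of the canonical DLR equations alone. So ``Proposition~\ref{P:inequality.Pbn} passes to $\Pb_{\star,\gamma}$ since it holds for any accumulation point satisfying the finite-volume DLR equations'' is not justified. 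Steps (i)--(iii) are also superfluous: the result is about each ergodic component separately, and knowing $\Pbstar(N_\Delta=k)>0$ only gives positivity of the conditional probability on \emph{some} $\gamma$'s, which you yourself recognize is not enough. The missing idea is simply that canonical DLR equations by themselves force $P(N_\Delta=k)>0$, without any recourse to the energy estimates behind Proposition~\ref{P:inequality.Pbn}.
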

\begin{proof}
This is an adaptation of Corollary 3.3 of \cite{DLRsinebeta} which shows that since $\Pb_{\star,\gamma}$ satisfies the canonical DLR equations then for every integer $k$, $\Pb_{\star,\gamma}(N_\Delta  = k)> 0$. 
\end{proof}

\subsubsection{Proof of Theorem \ref{T:point.deletion}}
The next proposition gives a generalisation of Proposition \ref{P:inequality.Pbn} for the infinite volume measure $\Pbstar$.
\begin{prop}\label{P:inequality.Pb}
For every compact set $\Delta$ with $\leb(\Delta)>0$, every pair of integers $(k,l)$, every $\varepsilon > 0$ and every $\F_{\Delta^c}$-measurable bounded function $f$ we have 
\begin{multline*}
\int \Pbstar(N_\Delta = k \mid \F_{\Delta^c})(\gamma) \Pbstar(N_\Delta = l \mid \mathcal{I})(\gamma) f(\gamma) \Pbstar(d\gamma) \\ \leq C^{k,l}_{\Delta, \varepsilon} \int \Pbstar(N_\Delta = l \mid \F_{\Delta^c})(\gamma) \Pbstar(N_\Delta = k \mid \mathcal{I})(\gamma) f(\gamma) \Pbstar(d\gamma)  + \varepsilon \normf.
\end{multline*}
\end{prop}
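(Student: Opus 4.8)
The plan is to start from the finite-volume inequality of Proposition~\ref{P:inequality.Pbn}, push it to the limiting measure $\Pbstar$ by local convergence, and then average along a fixed direction $u$ and invoke the pointwise ergodic theorem~\eqref{E:theorem.ergo}. Throughout I would assume $f\ge 0$ (the only case needed below) and first treat $f$ local, say $\F_{\Lambda'\setminus\Delta}$-measurable for some bounded Borel set $\Lambda'\supset\Delta$; the general case then follows by a routine monotone class / martingale-approximation argument, since for $f\ge0$ both sides of the asserted inequality are nonnegative and linear in $f$ with error term $\varepsilon\normf$.

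So fix such an $f$, a pair $(k,l)$ and $\varepsilon>0$, and let $\Lambda=\Lambda'$ and $C=C^{k,l}_{\Delta,\varepsilon}$ be the constant and box given by Proposition~\ref{P:inequality.Pbn}. I would choose $u\in\RRd$ with $\|u\|$ so large that $d(\Lambda,\Delta+mu)>1$ and $(\Delta+mu)\cap\Delta=\emptyset$ for every integer $m\ge1$; the point emphasised right after Proposition~\ref{P:inequality.Pbn} is that for such $u$ the \emph{same} constant $C$ works for all the translates $mu$ simultaneously. For each $m\ge1$ the integrands $1_{N_\Delta=k}\,1_{N_{\Delta+mu}=l}\,f$ and $1_{N_\Delta=l}\,1_{N_{\Delta+mu}=k}\,f$ are local and tame (products of local bounded functions), so letting $n_j\to+\infty$ in Proposition~\ref{P:inequality.Pbn} and using the local convergence of a subsequence of $(\Pbnj)_{j\ge1}$ to $\Pbstar$ yields, for every $m\ge1$,
\[
\int 1_{N_\Delta=k}\,1_{N_{\Delta+mu}=l}\,f\,d\Pbstar\;\le\;C\int 1_{N_\Delta=l}\,1_{N_{\Delta+mu}=k}\,f\,d\Pbstar+\varepsilon\normf .
\]
Averaging over $m=1,\dots,M$ and setting $S_M^{(i)}=\frac{1}{M}\sum_{m=1}^M 1_{N_{\Delta+mu}=i}$ gives
\[
\int 1_{N_\Delta=k}\,S_M^{(l)}\,f\,d\Pbstar\;\le\;C\int 1_{N_\Delta=l}\,S_M^{(k)}\,f\,d\Pbstar+\varepsilon\normf .
\]

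Next I would let $M\to+\infty$. Since $1_{N_{\Delta+mu}=i}(\gamma)=1_{N_\Delta=i}(\gamma-mu)$, the ergodic theorem~\eqref{E:theorem.ergo}, applied to the event $\{N_\Delta=i\}$ with the vector $-u$, gives $S_M^{(i)}\to\Pbstar(N_\Delta=i\mid\I)$ $\Pbstar$-almost surely; as $0\le S_M^{(i)}\le1$ and $f$ is bounded, dominated convergence promotes the last display to
\[
\int 1_{N_\Delta=k}\,\Pbstar(N_\Delta=l\mid\I)\,f\,d\Pbstar\;\le\;C\int 1_{N_\Delta=l}\,\Pbstar(N_\Delta=k\mid\I)\,f\,d\Pbstar+\varepsilon\normf .
\]
The key point is that $S_M^{(i)}$ depends only on $\gamma$ restricted to $\bigcup_{m\ge1}(\Delta+mu)$, which is disjoint from $\Delta$ by the choice of $u$; hence $S_M^{(i)}$ is $\F_{\Delta^c}$-measurable, and so is its $\Pbstar$-almost sure limit $\Pbstar(N_\Delta=i\mid\I)$, up to a $\Pbstar$-null set. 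Therefore $\Pbstar(N_\Delta=l\mid\I)f$ and $\Pbstar(N_\Delta=k\mid\I)f$ are $\F_{\Delta^c}$-measurable, and conditioning on $\F_{\Delta^c}$ replaces $1_{N_\Delta=k}$ by $\Pbstar(N_\Delta=k\mid\F_{\Delta^c})$ in the left integral and $1_{N_\Delta=l}$ by $\Pbstar(N_\Delta=l\mid\F_{\Delta^c})$ in the right one. This is exactly the announced inequality for local $f$, which the monotone class argument then extends to all bounded measurable $\F_{\Delta^c}$-measurable $f\ge0$.

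The hard part will be the bookkeeping of the three nested limits combined with the measurability claim: one must pass $n_j\to+\infty$ first (which is why $f$ is reduced to a local function, so the integrands are local and tame), then average over $m\le M$ with a constant that does \emph{not} grow with $m$ — this is precisely the $u$-uniformity of Proposition~\ref{P:inequality.Pbn} and is indispensable here — and only then send $M\to+\infty$; and one must check that the invariant conditional probabilities $\Pbstar(N_\Delta=i\mid\I)$ are $\F_{\Delta^c}$-measurable, which legitimises the last conditioning step and rests on the disjointness $(\Delta+mu)\cap\Delta=\emptyset$ built into the choice of $u$.
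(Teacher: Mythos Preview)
Your proposal is correct and follows essentially the same route as the paper: pass the finite-volume inequality of Proposition~\ref{P:inequality.Pbn} to $\Pbstar$ by local convergence, average over translates $mu$ with the $u$-independent constant, apply the ergodic theorem~\eqref{E:theorem.ergo} and dominated convergence, and extend from local $f$ to general $\F_{\Delta^c}$-measurable $f$ by martingale approximation. The only difference is the order of two steps: the paper conditions on $\F_{\Delta^c}$ \emph{before} the ergodic average (using that $1_{N_{\Delta+mu}=l}$ is manifestly $\F_{\Delta^c}$-measurable), whereas you average first and then condition, which forces you to justify that $\Pbstar(N_\Delta=i\mid\I)$ admits an $\F_{\Delta^c}$-measurable version; your argument for this via the $\F_{\Delta^c}$-measurability of the partial averages $S_M^{(i)}$ is fine, so the swap is harmless.
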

Before proving this proposition, we show how to derive Theorem \ref{T:point.deletion} from it. Given a compact set $\Delta$ with $\leb(\Delta)>0$ and two integers $k,l \geq 0$, we introduce the event 
\[
	A_{k,l} = \left\{\gamma \in \Conf : \Pbstar(N_\Delta = k \mid \F_{\Delta^c})(\gamma) > 0,\; \Pbstar(N_\Delta = l \mid \F_{\Delta^c})(\gamma) = 0 \right\}.
\]
Roughly speaking, it is the set of configurations for which, given the outside of $\Delta$, it is possible to have $k$ points in $\Delta$, but impossible to have $l$ points. Applying Proposition~\ref{P:inequality.Pb} to the $\F_{\Delta^c}$-measurable function $1_{A_{k,l}}$, gives for every $\varepsilon > 0$ the inequality  
\[
\int \Pbstar(N_\Delta = k \mid \F_{\Delta^c})(\gamma) \Pbstar(N_\Delta = l \mid \mathcal{I})(\gamma) 1_{A_{k,l}}(\gamma) \Pbstar(d\gamma) \leq \varepsilon .
\]
It implies that for $\Pbstar$-almost every $\gamma$,
\[
	\Pbstar(N_\Delta = k \mid \F_{\Delta^c})(\gamma) \Pbstar(N_\Delta = l \mid \mathcal{I})(\gamma) 1_{A_{k,l}}(\gamma) = 0.
\]	
By Proposition \ref{C:ergodic.proba} and by definition of the event $A_{k,l}$ we found that $\Pbstar(A_{k,l}) = 0$ which shows that for $\Pbstar$-almost every $\gamma$,
\[
	\Pbstar(N_\Delta = k \mid \F_{\Delta^c})(\gamma) > 0 \implies \Pbstar(N_\Delta = l\mid \F_{\Delta^c})(\gamma) > 0.
\]
Since for $\Pbstar$-almost every $\gamma$ there exists $k_0$ such that $\Pbstar(N_\Delta = k_0 \mid \F_{\Delta^c})(\gamma) > 0$, the Theorem \ref{T:point.deletion} is proved. 
\begin{proof}[Proof of Proposition \ref{P:inequality.Pb}] According to Proposition \ref{P:inequality.Pbn}, the subsequence $(\Pbnj)_{j\geq 1}$, converging to $\Pbstar$, satisfies for $j$  large enough 
\[
	\int 1_{N_\Delta = k}(\gamma) 1_{N_{\Delta + u} = l}(\gamma) f(\gamma) \Pbnj(d\gamma) \leq C^{k,l}_{\Delta, \varepsilon} \int 1_{N_\Delta = l}(\gamma) 1_{N_{\Delta+u} = k}(\gamma) f(\gamma) \Pbnj(d\gamma) + \varepsilon \normf.
\]
where $f$ is a $\F_{\Lambda\setminus \Delta}$-measurable bounded function and $u$ satisfying $d(\Lambda, \Lambda + u) > 1$. Using the local convergence to $\Pb_\star$, that leads to
\[
	\int 1_{N_\Delta = k}(\gamma) 1_{N_{\Delta + u} = l}(\gamma) f(\gamma) \Pbstar(d\gamma) \leq C^{k,l}_{\Delta, \varepsilon} \int 1_{N_\Delta = l}(\gamma) 1_{N_{\Delta+u} = k}(\gamma) f(\gamma) \Pbstar(d\gamma) + \varepsilon \normf.
\]
Since $f$ is a $\F_{\Lambda\setminus\Delta}$-measurable function and $1_{N_{\Delta + u}=k}$ is $\F_{\Delta^c}$-measurable, by definition of the conditional expectation we have
\begin{multline*}
	\int \Pbstar(N_\Delta = k \mid \F_{\Delta^c})(\gamma) 1_{N_{\Delta + u} = l}(\gamma) f(\gamma) \Pbstar(d\gamma) \\ \leq C^{k,l}_{\Delta, \varepsilon} \int \Pbstar(N_\Delta = l \mid \F_{\Delta^c})(\gamma) 1_{N_{\Delta+u} = k}(\gamma) f(\gamma) \Pbstar(d\gamma) + \varepsilon \normf.
\end{multline*}
We choose $u$ such that for all $m \geq 1$, $d(\Lambda, \Delta + mu) > 1$. Since the constant $C^{k,l}_{\Delta,\varepsilon}$ does not depend on $u$, the previous inequality is also true for $mu$. By summing over $m = 1, ..., M$ and dividing by $M$ we obtain 
\begin{multline*}
	\int \Pbstar(N_\Delta = k \mid \F_{\Delta^c})(\gamma) \left(\frac 1M\sum_{m=1}^M 1_{N_{\Delta + mu} = l}(\gamma)\right) f(\gamma) \Pbstar(d\gamma) \\ \leq C^{k,l}_{\Delta, \varepsilon} \int \Pbstar(N_\Delta = l \mid \F_{\Delta^c})(\gamma) \left( \frac 1M \sum_{m=1}^M 1_{N_{\Delta+mu} = k}(\gamma) \right) f(\gamma) \Pbstar(d\gamma) + \varepsilon \normf.
\end{multline*}
From the ergodic Theorem and the dominated convergence Theorem, letting $M$ goes to infinity, we deduce that 
\begin{multline*}
	\int \Pbstar(N_\Delta = k \mid \F_{\Delta^c})(\gamma) \Pbstar(N_\Delta = l\mid \mathcal{I})(\gamma) f(\gamma) \Pbstar(d\gamma) \\ \leq C^{k,l}_{\Delta, \varepsilon} \int \Pbstar(N_\Delta = l \mid \F_{\Delta^c})(\gamma)\Pbstar(N_\Delta = k\mid \mathcal{I})(\gamma) f(\gamma) \Pbstar(d\gamma) + \varepsilon \normf.
\end{multline*}
The previous inequality is true for every bounded $\F_{\Lambda\setminus\Delta}$-measurable function. If $f$ is a $\F_{\Delta^c}$-measurable function, we define $f_p = E_{\Pb}(f \mid \F_{\Lambda_p\setminus \Delta})$. Since $\Vert f_p \Vert_{\infty}^{} \leq \normf$, we have for $p$ large enough 
\begin{multline*}
	\int \Pbstar(N_\Delta = k \mid \F_{\Delta^c})(\gamma) \Pbstar(N_\Delta = l\mid \mathcal{I})(\gamma) f_p(\gamma) \Pbstar(d\gamma) \\ \leq C^{k,l}_{\Delta, \varepsilon} \int \Pbstar(N_\Delta = l \mid \F_{\Delta^c})(\gamma)\Pbstar(N_\Delta = k\mid \mathcal{I})(\gamma) f_p(\gamma) \Pbstar(d\gamma) + \varepsilon \normf.
\end{multline*}
The martingale $(f_p)_{p\geq 1}$ converges $\Pb$-almost surely, when $p$ goes to infinity, to \(\E \Pb (f \mid \F_{\Delta^c})\), which is equal to $f$. From the dominated convergence theorem and passing to the limit in the inequalities, we conclude the proof. 
\end{proof}

\subsection{Grand canonical DLR equations}
The proofs of Proposition \ref{P:local.energy} and Theorem \ref{T:grand.canonique} rely on properties of the \emph{Campbell measures} of the point process $\Pbstar$. 
\begin{defin}\label{D:campbell}
Let $\P$ be a point process and $n$ a positive integer, the  Campbell measure of order $n$ of $\P$ is the measure $\Camp^{(n)}_\P$ on $(\RRd)^n\times \Omega$ defined for any non-negative test function $f : (\RRd)^n \times \Conf \to \RR$ as follows
\[
	\Camp^{(n)}_{\P}(f) =\int \sum_{\substack{x_1, ..., x_n \in\gamma \\ \neq}} f(x_1, ..., x_n ,\gamma \setminus \{x_1, ..., x_n\}) \P(d\gamma),
\]
where the summation is performed  over all $n$-tuples of distinct points in $\gamma$.
\end{defin}
Using Theorem \ref{T:point.deletion}, we show that the Campbell measure of order one has a density with respect to $\leb \otimes \Pbstar$. Then we obtain a description of Campbell measures for other order and finally we construct the compensator mentioned in Proposition \ref{P:local.energy}.  
\subsubsection{Structure of the one point Campbell measure}

For $x\in\RRd$ and $\gamma$ a point configuration, the cost of moving a point from the origin to the position $x$ in the field created by $\gamma$ is defined by
\[
	V(x,\gamma) = \lim_{p\rightarrow +\infty} \sum_{y\in \gamma_{\Lambda_p}} \left[g(y-x) - g(y)\right].
\]
According to Lemma \ref{L:move.existence}, if $\P$ is a point process with bounded intensity, then for $\P$-almost every $\gamma$, the previous limit exists and is finite. This cost function plays a crucial role in the structure of the Campbell measures for canonical Gibbs point processes. 
We start by the properties of the Campbell measure of order one. 
\begin{prop}
There exists a measure $\Qbstar$ on $\Conf$ such that $\Camp^{(1)}_{\Pbstar}$ is absolutely continuous with respect to $\lambda^d \otimes \Qbstar$ and has density
\[
	\frac{d\Camp_{\Pbstar}}{d\lambda^d \otimes \Qbstar}(x,\gamma) = e^{-\beta V(x,\gamma)}.
\]
\end{prop}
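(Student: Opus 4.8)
The plan is to obtain $\Qbstar$ as a limit of finite-volume reduced Palm measures, reading off the density from the finite-volume GNZ identity of Lemma~\ref{L:GNZ.finite.volume}. First I would fix a bounded Borel set $\Delta$ and a test function $f(x,\gamma)$ whose first-variable support lies in $\Delta\subset\Lambda_n$. That lemma gives
\[
	\Camp^{(1)}_{\Pbn}(f) = \frac{1}{\Zbn}\iint f(x,\gamma)\, e^{-\beta H_n(\gamma\cup\{x\})}\,\Bin{\Lambda_n}{n-1}(d\gamma)\,\leb_{\Lambda_n}(dx).
\]
Since $H_n(\gamma\cup\{x\}) = H_n(\gamma) + h_n(x,\gamma)$ with $h_n(x,\gamma)=\sum_{y\in\gamma}g_n(x-y)$, and since $h_n(x,\gamma) = h_n(0,\gamma) + V_n(x,\gamma)$ with $V_n(x,\gamma) := \sum_{y\in\gamma}\big(g_n(x-y)-g_n(y)\big)$ ($g_n$ being even), one factors the exponential to conclude that $\Camp^{(1)}_{\Pbn}$ has density $e^{-\beta V_n(x,\gamma)}$ with respect to $\leb_{\Lambda_n}\otimes\Qb_n$, where
\[
	\Qb_n(d\gamma) := \frac{1}{\Zbn}\, e^{-\beta( H_n(\gamma)+h_n(0,\gamma))}\,\Bin{\Lambda_n}{n-1}(d\gamma).
\]
Testing against $f=1_{B(0,\rho)}\otimes g$ and letting $\rho\downarrow0$ (note $V_n(0,\cdot)\equiv0$) identifies $\Qb_n$ with the reduced Palm measure of $\Pbn$ at the origin, so each $\Qb_n$ is a probability measure charging only configurations with no point at the origin. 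The statement thus holds for every $\Pbn$, and it remains to pass to the limit along the subsequence $(\Pbnj)_j$ defining $\Pbstar$.

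To pass to the limit I would argue as follows. For a bounded local function $g$ the functional $\gamma\mapsto\sum_{x\in\gamma_\Delta} g(\gamma\setminus\{x\})$ is local and tame, so by definition of the local topology $\Camp^{(1)}_{\Pbnj}(1_\Delta\otimes g)\to\Camp^{(1)}_{\Pbstar}(1_\Delta\otimes g)$; and $\Camp^{(1)}_{\Pbnj}(1_\Delta\otimes g)=\int_\Delta\int g\, e^{-\beta V_{n_j}(x,\gamma)}\,\Qb_{n_j}(d\gamma)\,\leb(dx)$, so both the cost functions and the reference measures must be controlled. For the cost functions, from $|g(y-x)-g(y)|\le s\|x\|\int_0^1\|y-tx\|^{-(s+1)}dt$ and $s+1>d$, the series $\sum_{y\in\gamma}\big(g(y-x)-g(y)\big)$ converges absolutely $\Pbstar$-a.e.\ and equals $V(x,\gamma)$ (cf.\ Lemma~\ref{L:move.existence}), while $|V_n(x,\gamma)-V(x,\gamma)|$ is bounded by a periodization term controlled via Lemma~\ref{L:majoration.n}(ii) by $c_\Delta N_{\Lambda_n}(\gamma)\,n^{-(s+1)/d}=O(n^{(d-1-s)/d})\to0$ — this is exactly where $s>d-1$ is used — plus a vanishing tail, so $V_{n_j}(x,\cdot)\to V(x,\cdot)$ uniformly for $x\in\Delta$. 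For the reference measures, I would use the dilation-and-reweighting identity already exploited in the construction of $\Pbstar$, namely that $\Qb_n$ is the image under $\gamma\mapsto r_n\gamma$, $r_n=(n/(n-1))^{1/d}\to1$, of a constant multiple of $e^{-\beta'_n h_{n-1}(0,\gamma)}\,\P^{\beta'_n}_{n-1}(d\gamma)$ with $\beta'_n\to\beta$; together with the local convergence $\Pbnj\to\Pbstar$ and the uniform intensity bound this lets one extract a (sub)sequential limit $\Qbstar$ of $(\Qb_{n_j})$, again a probability measure giving no mass to $\{0\in\gamma\}$. Granting the uniform integrability discussed next, this gives $\int g\,e^{-\beta V(x,\gamma)}\Qbstar(d\gamma)=\lim_j\int g\,e^{-\beta V_{n_j}(x,\gamma)}\Qb_{n_j}(d\gamma)$ for $\leb$-a.e.\ $x$; integrating over $x\in\Delta$ and matching with $\Camp^{(1)}_{\Pbstar}(1_\Delta\otimes g)$ yields the asserted density on $\Delta\times\Conf$, whence the full statement follows by letting $\Delta\uparrow\RRd$ and a monotone class argument.

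The main difficulty will be commuting $\lim_j$ with the unbounded weight $e^{-\beta V_{n_j}}$, that is, proving uniform integrability of $(e^{-\beta V_{n_j}(x,\cdot)})_j$ with respect to $(\Qb_{n_j})_j$ for $\leb$-a.e.\ $x$ (and the tightness needed to produce $\Qbstar$ in the first place). The only cost-free estimate available is $\int_\Delta\int e^{-\beta V_n(x,\gamma)}\Qb_n(d\gamma)\,\leb(dx)=E_{\Pbn}(N_\Delta)=\leb(\Delta)$ for all $n$, which is merely an $L^1$ bound after averaging in $x$; upgrading it to genuine uniform integrability is precisely where the control $E_{\Pbnj}(|h_{n_j}(0,\cdot)|)\le K$ from the construction of $\Pbstar$ enters, transported onto $\Qb_{n_j}$ through the scaling identity above (using $r_n\to1$ and $\beta'_n\to\beta$). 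Once this is secured Vitali's theorem applies, and the remaining bookkeeping — measurability of the density, $\Qbstar$-a.e.\ finiteness of $V(\cdot,\gamma)$, and the extension to all bounded measurable $f$ — is routine.
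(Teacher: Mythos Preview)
Your approach is genuinely different from the paper's, and considerably harder. The paper does not revisit finite volume at all: it derives the proposition directly from the \emph{infinite-volume} canonical DLR equations (Theorem~\ref{T:DLR.equations}), following the argument of Theorem~3.7 in \cite{DLRsinebeta}. Concretely, one applies the DLR description on a bounded set $\Delta$ to $\int\sum_{x\in\gamma_\Delta}f(x,\gamma\setminus\{x\})\,\Pbstar(d\gamma)$, uses the binomial GNZ identity (Lemma~\ref{L:GNZ.finite.volume} with $\beta=0$) to extract one point from the inner $\Bin{\Delta}{N_\Delta(\gamma)}$-integral, and observes that the energy $H(\eta)+M_\Delta(\eta,\gamma)$ decomposes so that the $x$-dependence is exactly $V(x,\gamma)$; what remains defines $\Qbstar$. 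No limit in $n$ is taken, no tightness or uniform-integrability issue arises, and the statement holds for \emph{every} point process satisfying the canonical DLR equations---not just for the special accumulation point $\Pbstar$.

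Your route, by contrast, rebuilds from finite volume and then passes to the limit, which forces you to control simultaneously the convergence of the measures $\Qb_{n_j}$ and of the unbounded densities $e^{-\beta V_{n_j}}$. The gaps you flag are real: you have not established tightness of $(\Qb_{n_j})_j$ (the scaling identity relates $\Qb_n$ to $e^{-\beta h_{n-1}(0,\cdot)}\P^{\beta_n}_{n-1}$, but the weight $e^{-\beta h_{n-1}(0,\cdot)}$ is unbounded and an $L^1$ bound on $|h_{n-1}(0,\cdot)|$ does not by itself yield entropy or tightness control), and the uniform-integrability step is only asserted, not proved---the bound $E_{\Pbnj}(|h_{n_j}(0,\cdot)|)\le K$ gives an $L^1$ control of the \emph{exponent}, which is far from an $L^{1+\varepsilon}$ control of the \emph{exponential}. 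Moreover your convergence $V_{n_j}(x,\cdot)\to V(x,\cdot)$ is argued under $\Pbstar$ while you integrate against $\Qb_{n_j}$, so you would additionally need to transfer these a.e.\ statements across the changing sequence of measures. None of this is needed once the DLR equations are in hand; I would recommend redoing the argument from Theorem~\ref{T:DLR.equations} directly.
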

\begin{proof}
It is a consequence of the canonical DLR equations, and the proof is the same as the one of Theorem 3.7 in \cite{DLRsinebeta}. 
\end{proof}
The previous result is valid has soon as the point process satisfies the canonical DLR equations, and in particular, it is true for every $\beta$-Circular Riesz gas. The next result is specific to the point process $\Pbstar$, and rely on the property highlighted in Theorem \ref{T:point.deletion}. 

\begin{thm}\label{T:campbell.equiv}
The measures $\Pbstar$ and $\Qbstar$ are equivalent.\end{thm}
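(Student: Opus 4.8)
The plan is to establish the two absolute continuities $\Qbstar \ll \Pbstar$ and $\Pbstar \ll \Qbstar$ separately, both from a single identity obtained by plugging the test function $f(x,\gamma) = 1_\Delta(x)\,1_B(\gamma)$ into the description of the one point Campbell measure, where $\Delta$ is a fixed compact set with $\leb(\Delta) > 0$ and $B \in \F$:
\[
	\int \sum_{x\in\gamma_\Delta} 1_B(\gamma\setminus\{x\})\,\Pbstar(d\gamma) \;=\; \int 1_B(\gamma)\Big(\int_\Delta e^{-\beta V(x,\gamma)}\,dx\Big)\Qbstar(d\gamma).
\]
Beforehand I would note that since $V(0,\cdot) \equiv 0$ the measure $\Qbstar$ is in fact the reduced Palm measure of $\Pbstar$ at the origin, and that $V(x,\gamma)$ is finite for $\lambda^d\otimes\Qbstar$-almost every $(x,\gamma)$ (this is part of the construction of $\Qbstar$, via Lemma~\ref{L:move.existence}), so that the inner integral on the right-hand side is $\Qbstar$-almost surely finite and strictly positive.

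For $\Qbstar \ll \Pbstar$, I would assume $\Pbstar(B) = 0$ and show that the left-hand side above vanishes; by the positivity just noted this forces $\Qbstar(B) = 0$. Conditioning on $\mathcal{E}_\Delta = \sigma(N_\Delta,\F_{\Delta^c})$ and invoking the canonical DLR equations (Theorem~\ref{T:DLR.equations} and Remark~\ref{r.esperance.cond}), the left-hand side becomes $\E{\Pbstar}$ of an integral against $\Bin{\Delta}{N_\Delta}$ of the map $\eta \mapsto \sum_{x\in\eta} 1_B((\eta\setminus\{x\})\cup\gamma_{\Delta^c})$, weighted by the DLR density $e^{-\beta(H(\eta)+M_\Delta(\eta,\gamma))}/Z^\beta_\Delta(\gamma)$. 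Since removing one of the $k$ i.i.d.\ uniform points of $\Bin{\Delta}{k}$ leaves $\Bin{\Delta}{k-1}$, the $\Bin{\Delta}{k}$-integral of that map equals $k\int 1_B(\xi\cup\gamma_{\Delta^c})\,\Bin{\Delta}{k-1}(d\xi)$; and $\Pbstar(B) = 0$, together with the positivity of $\Pbstar(N_\Delta = k-1\mid\F_{\Delta^c})$ from Theorem~\ref{T:point.deletion} and the fact that the DLR density is positive and finite (so that the conditional law of $\gamma_\Delta$ at cardinality $k-1$ given $\F_{\Delta^c}$ is equivalent to $\Bin{\Delta}{k-1}$), forces this last quantity to vanish $\Pbstar$-almost surely. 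As the DLR weight is $\Bin{\Delta}{k}$-almost surely finite, the whole integrand vanishes, and summing over $k$ concludes.

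For $\Pbstar \ll \Qbstar$, I would assume $\Qbstar(B) = 0$, so that the right-hand side of the identity vanishes and hence so does the left-hand side, i.e.\ the event $N(\Delta,B) := \{\gamma : \exists\,x\in\gamma_\Delta,\ \gamma\setminus\{x\}\in B\}$ is $\Pbstar$-negligible. Then I would argue by contradiction: if $\Pbstar(B) > 0$, pick $k$ with $\Pbstar(B\cap\{N_\Delta = k\}) > 0$, so that $\Pbstar(B\cap\{N_\Delta=k\}\mid\F_{\Delta^c})$ is positive on a set $W$ of positive $\Pbstar$-measure; for $\omega\in W$ the DLR equivalence with $\Bin{\Delta}{k}$ gives $\Bin{\Delta}{k}(B_\omega) > 0$, where $B_\omega = \{\eta\in\Conf_\Delta : |\eta| = k,\ \eta\cup\omega\in B\}$. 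Throwing in one extra uniform point shows that the configurations which, after deleting a suitable point, fall into $B_\omega$ carry positive $\Bin{\Delta}{k+1}$-mass; by the DLR equivalence with $\Bin{\Delta}{k+1}$ and, crucially, the positivity of $\Pbstar(N_\Delta = k+1\mid\F_{\Delta^c})$ from Theorem~\ref{T:point.deletion}, this produces a positive-measure set of configurations $\gamma$ with $N_\Delta(\gamma) = k+1$ having some $y\in\gamma_\Delta$ with $\gamma\setminus\{y\}\in B$, i.e.\ a positive-measure subset of $N(\Delta,B)$ --- a contradiction. Hence $\Pbstar(B) = 0$.

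The two directions are dual: the first deletes a point, the second inserts one, and the main obstacle is precisely this insertion step, which is where non-rigidity enters. One must add a point inside $\Delta$ and still land in a set of positive probability, which is possible only by combining Theorem~\ref{T:point.deletion} (positivity of the conditional laws of $N_\Delta$) with the canonical DLR equations (equivalence, at fixed cardinality, of the conditional law on $\Delta$ with a binomial process) and the elementary behaviour of binomial point processes under adding or removing a uniform point. The remaining ingredients --- finiteness of $V$ under $\Qbstar$, and the measurability of the regular conditional probabilities and of $\omega\mapsto B_\omega$ on the Polish space $\Conf$ --- are routine.
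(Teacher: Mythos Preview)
Your argument is correct and rests on the same three ingredients as the paper's proof --- the canonical DLR equations, the elementary Campbell/removal formula for binomial point processes, and the positivity of $\Pbstar(N_\Delta = j\mid\F_{\Delta^c})$ from Theorem~\ref{T:point.deletion} --- but you organise them differently. The paper carries out a single forward computation: starting from $\Camp^{(1)}_{\Pbstar}(\Delta\times E)$, it applies DLR, the binomial removal formula (Lemma~\ref{L:GNZ.finite.volume} with $\beta=0$), Theorem~\ref{T:point.deletion} to replace the indicator $1_{N_\Delta=k}$ by $\frac{\Pbstar(N_\Delta=k\mid\F_{\Delta^c})}{\Pbstar(N_\Delta=k-1\mid\F_{\Delta^c})}\,1_{N_\Delta=k-1}$, and then DLR in reverse, arriving at an identity of the form
\[
\Camp^{(1)}_{\Pbstar}(\Delta\times E)=\int F(\gamma)\,1_E(\gamma)\,\Pbstar(d\gamma)
\]
with an explicit, everywhere positive $F$. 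Both absolute continuities then fall out at once, and as a bonus one reads off a concrete expression for the Radon--Nikodym ratio. Your route treats the two directions separately (delete a point for $\Qbstar\ll\Pbstar$, insert one for $\Pbstar\ll\Qbstar$) and argues more qualitatively; this is perfectly sound and arguably more transparent about \emph{why} non-rigidity is needed on each side, but it does not produce the explicit density that the paper's computation yields.
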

\begin{proof}
Let $\Delta\subset\RRd$ be a compact set with $\leb(\Delta)>0$ and $E \in \F$ an event. Introducing the number of points in $\Delta$ we have
\[
\Camp_{\Pbstar}(\Delta \times E) = \int \sum_{x\in\gamma} 1_{\Delta}(x) 1_{E}(\gamma\setminus x) \Pbstar(d\gamma) = \sum_{k=1}^{+\infty}\int \sum_{x\in\gamma} 1_{\Delta}(x) 1_{E}(\gamma\setminus x) 1_{N_\Delta = k}(\gamma) \Pbstar(d\gamma).
\]
The canonical DLR equations lead to 
\[
\Camp_{\Pbstar}(\Delta \times E) = \sum_{k=1}^{+\infty}\iint \sum_{x\in\eta} 1_{E}(\gamma_{\Delta^c}\cup\eta\setminus x) 1_{N_\Delta = k}(\gamma)G_\Delta(\eta,\gamma_{\Delta^c},k) \Bin {\Delta} k(d\eta) \Pbstar(d\gamma),
\]
where 
\[
	G_\Delta(\eta, \gamma,k) = \frac{1}{Z_\Delta(\gamma,k)}\exp(-\beta(H(\eta) + M_\Delta(\eta,\gamma)))1_{N_\Delta(\eta) = k},
\]
with the partition function
\[
	Z_\Delta(\gamma,k) = \int \exp(-\beta(H(\eta) + M_\Delta(\eta,\gamma))) \Bin {\Delta} k(d\eta).
\]
These expressions depend only on $\gamma_{\Delta^c}$ (i.e. they are $\F_\Delta$-measurable with respect to $\gamma$). Using Lemma~\ref{L:GNZ.finite.volume} for the binomial process ($\beta = 0$), 
\[
\Camp_{\Pbstar}(\Delta \times E)  = \sum_{k=1}^{+\infty} \iiint 1_{E}(\gamma_{\Delta^c}\cup\eta) 1_{N_\Delta = k}(\gamma)G_\Delta(\eta\cup x,\gamma_{\Delta^c},k) \lambda^d_\Delta(dx)\Bin {\Delta} {k-1}(d\eta) \Pbstar(d\gamma).
\]
According to the properties of the conditional expectation and using Theorem \ref{T:point.deletion}, we obtain
\begin{multline*}
\Camp_{\Pbstar}(\Delta \times E)  = \sum_{k=1}^{+\infty} \iiint 1_{E}(\gamma_{\Delta^c}\cup\eta) \frac{\Pbstar(N_\Delta = k \mid \F_{\Delta^c})(\gamma)}{\Pbstar(N_\Delta = k-1 \mid \F_{\Delta^c})(\gamma)}G_\Delta(\eta\cup x,\gamma_{\Delta^c},k) \\ 1_{N_\Delta = k-1}(\gamma)\lambda^d_\Delta(dx)\Bin {\Delta}{k-1}(d\eta) \Pbstar(d\gamma).
\end{multline*}
Since for $\Bin {\Delta} {k-1}$-almost every $\eta$ and $\Pbstar$-almost every $\gamma$, $G_\Delta(\eta,\gamma,k-1) > 0$ and so it can be introduced in the integral
\begin{multline*}
\Camp_{\Pbstar}(\Delta \times E)  = \sum_{k=1}^{+\infty} \iiint 1_{E}(\gamma_{\Delta^c}\cup\eta) \frac{\Pbstar(N_\Delta = k \mid \F_{\Delta^c})(\gamma)}{\Pbstar(N_\Delta = k-1 \mid \F_{\Delta^c})(\gamma)}\frac{G_\Delta(\eta\cup x,\gamma,k)}{G_\Delta(\eta,\gamma,k-1)} \\ 1_{N_\Delta = k-1}(\gamma) G_\Delta(\eta,\gamma,k-1)\lambda^d_\Delta(dx)\Bin {\Delta} {k-1}(d\eta) \Pbstar(d\gamma).
\end{multline*}
Using again the canonical DLR equations,
\begin{multline*}
\Camp_{\Pbstar}(\Delta \times E)  = \sum_{k=1}^{+\infty} \int \frac{\Pbstar(N_\Delta = k \mid \F_{\Delta^c})(\gamma)}{\Pbstar(N_\Delta = k-1 \mid \F_{\Delta^c})(\gamma)}\left(\int_\Delta \frac{G_\Delta(\gamma\cup x,\gamma,k)}{G_\Delta(\gamma,\gamma,k-1)}dx\right)  \\ 1_{E}(\gamma) 1_{N_\Delta = k-1}(\gamma) \Pbstar(d\gamma).
\end{multline*}
We deduce from the previous equation that if $\Pbstar(E) = 0$ then $\Camp_{\Pbstar}(\Delta\times E) = 0$, since it holds for every compact set $\Delta$, we have $\Qbstar(E) = 0$. Observing that for $\Pbstar$-almost every~$\gamma$ the function 
\[
\gamma \mapsto \sum_{k=1}^{+\infty}  \frac{\Pbstar(N_\Delta = k \mid \F_{\Delta^c})(\gamma)}{\Pbstar(N_\Delta = k-1 \mid \F_{\Delta^c})(\gamma)}\left(\int_\Delta \frac{G_\Delta(\gamma\cup x,\gamma,k)}{G_\Delta(\gamma,\gamma,k-1)}dx\right)  1_{N_\Delta = k-1}(\gamma)
\]
is strictly positive, we also deduce that $\Qbstar(E) = 0$ implies $\Pbstar(E) = 0$. At the end, that proves that $\Qbstar$ and $\Pbstar$ are equivalent.
\end{proof}

In particular, the previous result ensures the existence of a function $\Psibstar : \Conf \rightarrow \RR$ such that 
\begin{equation}\label{E:Psibstar}
\frac{d\Qbstar}{d\Pbstar}(\gamma) = \exp\big(-\beta \Psibstar(\gamma) \big).
\end{equation}

\subsubsection{Structure of the $n$-point Campbell measure}
The fonction $\Psibstar$ introduced in~\eqref{E:Psibstar} can be used to describe the Campbell measure of order $n$ for every $n\geq 1$. 

\begin{prop}\label{P:n.point.Campbell}
For every integer $n\geq 1$ and every positive measurable function $f$
\begin{multline*}
\Camp^{(n)}_{\Pb_\star}(f) = \int f(x_1, ..., x_n, \gamma) \exp\left\{ - \beta H(\{x_1, ..., x_n\}) - \beta\sum_{j=1}^n V(x_j,\gamma) \right.\\\left.- \beta\sum_{j=0}^{n-1} \left[\Psibstar(\gamma\cup\{x_1, ..., x_j \}) - \sum_{i=1}^j g(x_i)\right]\right\}dx_1 ... dx_n\, \Pbstar(d\gamma).   
\end{multline*}
\end{prop}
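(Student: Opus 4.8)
The plan is to argue by induction on $n$, the engine being the one-point Mecke (Georgii--Nguyen--Zessin) equation for $\Pbstar$, which the preceding results already provide. Indeed, the one-point Campbell proposition gives $\Camp^{(1)}_{\Pbstar}(h)=\int\!\!\int h(x,\gamma)\,e^{-\beta V(x,\gamma)}\,dx\,\Qbstar(d\gamma)$ for every nonnegative measurable $h$, while Theorem~\ref{T:campbell.equiv} together with \eqref{E:Psibstar} gives $\Qbstar(d\gamma)=e^{-\beta\Psibstar(\gamma)}\Pbstar(d\gamma)$. Setting $\kappa(x,\gamma):=\exp\big(-\beta(V(x,\gamma)+\Psibstar(\gamma))\big)$ and recalling that by definition $\Camp^{(1)}_{\Pbstar}(h)=\int\sum_{x\in\gamma}h(x,\gamma\setminus x)\,\Pbstar(d\gamma)$, this is exactly the Mecke identity
\[
\int\sum_{x\in\gamma}h(x,\gamma\setminus x)\,\Pbstar(d\gamma)=\int\!\!\int h(x,\gamma)\,\kappa(x,\gamma)\,dx\,\Pbstar(d\gamma),\qquad h\ge 0\ \text{measurable}.
\]

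I would then iterate this identity to prove, by induction on $n$, that
\[
\Camp^{(n)}_{\Pbstar}(f)=\int\!\!\int f(x_1,\dots,x_n,\gamma)\prod_{j=1}^n\kappa\big(x_j,\gamma\cup\{x_1,\dots,x_{j-1}\}\big)\,dx_1\cdots dx_n\,\Pbstar(d\gamma).
\]
For the induction step, decompose the sum over ordered $n$-tuples of distinct points of $\gamma$ defining $\Camp^{(n)}_{\Pbstar}(f)$ as an outer sum over $x_n\in\gamma$ of the sum over ordered $(n-1)$-tuples of distinct points of $\gamma\setminus\{x_n\}$. Peeling off $x_n$ with the Mecke identity turns $\Camp^{(n)}_{\Pbstar}(f)$ into $\int\!\!\int h(x_n,\gamma)\,\kappa(x_n,\gamma)\,dx_n\,\Pbstar(d\gamma)$, where $h(x_n,\gamma)$ is the sum over ordered $(n-1)$-tuples of distinct points of $\gamma$ of $f(x_1,\dots,x_n,\gamma\setminus\{x_1,\dots,x_{n-1}\})$. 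For fixed $x_n$ the inner integral is $\Camp^{(n-1)}_{\Pbstar}$ evaluated at the function $(x_1,\dots,x_{n-1},\gamma'')\mapsto f(x_1,\dots,x_n,\gamma'')\,\kappa\big(x_n,\gamma''\cup\{x_1,\dots,x_{n-1}\}\big)$, so the induction hypothesis applies and the newly created factor lands as the $j=n$ term of the product. Since $f\ge 0$ and $\kappa>0$, Tonelli's theorem legitimises every rearrangement. The only point needing a word of care is that $V$ and $\Psibstar$ are evaluated on the enlarged configurations $\gamma\cup\{x_1,\dots,x_{j-1}\}$: this is harmless since $V(x,\gamma\cup\{a\})=V(x,\gamma)+g(a-x)-g(a)$ straight from the definition of $V$, so all these quantities are well defined for $\Pbstar$-a.e.\ $\gamma$ and Lebesgue-a.e.\ $(x_1,\dots,x_n)$.

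It then remains to expand the product. From $V(x_j,\gamma\cup\{x_1,\dots,x_{j-1}\})=V(x_j,\gamma)+\sum_{i=1}^{j-1}\big(g(x_i-x_j)-g(x_i)\big)$ and summing over $j$,
\[
\sum_{j=1}^n V\big(x_j,\gamma\cup\{x_1,\dots,x_{j-1}\}\big)=\sum_{j=1}^n V(x_j,\gamma)+H(\{x_1,\dots,x_n\})-\sum_{k=0}^{n-1}\sum_{i=1}^k g(x_i),
\]
using that $g$ is even to identify $\sum_{1\le i<j\le n}g(x_i-x_j)=H(\{x_1,\dots,x_n\})$, and reindexing $k=j-1$ in the last double sum; likewise $\sum_{j=1}^n\Psibstar(\gamma\cup\{x_1,\dots,x_{j-1}\})=\sum_{k=0}^{n-1}\Psibstar(\gamma\cup\{x_1,\dots,x_k\})$. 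Plugging these into $\prod_{j=1}^n\kappa(x_j,\gamma\cup\{x_1,\dots,x_{j-1}\})=\exp\{-\beta\sum_{j=1}^n(V(x_j,\gamma\cup\{x_1,\dots,x_{j-1}\})+\Psibstar(\gamma\cup\{x_1,\dots,x_{j-1}\}))\}$ yields precisely the exponent in the statement, which finishes the proof. The bulk of the argument is the bookkeeping in the iteration; the substantive inputs — the one-point Campbell formula and the equivalence $\Qbstar\sim\Pbstar$ — are already in hand, so I do not expect a genuine obstacle beyond keeping the indices straight.
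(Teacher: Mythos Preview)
Your proof is correct and follows essentially the same approach as the paper's: induction on $n$, peeling off one point via the order-one Campbell/Mecke identity $\Camp^{(1)}_{\Pbstar}(h)=\iint h(x,\gamma)e^{-\beta(V(x,\gamma)+\Psibstar(\gamma))}\,dx\,\Pbstar(d\gamma)$ and then applying the induction hypothesis to the remaining $(n-1)$-tuple. The only cosmetic difference is that you first isolate the product form $\prod_{j=1}^n\kappa(x_j,\gamma\cup\{x_1,\dots,x_{j-1}\})$ and expand it at the end, whereas the paper carries the expanded exponent through the induction and uses the identity $H(\{x_1,\dots,x_n\})+V(x_{n+1},\gamma\cup\{x_1,\dots,x_n\})=H(\{x_1,\dots,x_{n+1}\})+V(x_{n+1},\gamma)-\sum_{j=1}^n g(x_j)$ at each step.
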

\begin{proof} We prove the result by induction on $n$. For $n =1$ the previous expression becomes 
\[
	\Camp_{\Pbstar}^{(1)}(f) = \iint f(x,\gamma) \exp\left(-\beta (V(x,\gamma) + \Psibstar(\gamma))\right) \Pbstar(d\gamma) dx,
\]
which is true from the definition of $\Psibstar$ introduced in \eqref{E:Psibstar}. Let $n\geq 1$ be given, and assume that the result of Proposition~\ref{P:n.point.Campbell} is true for $n$. The Campbell measure of order $(n+1)$ can be written as  
\[
\Camp^{(n+1)}_{\Pbstar}(f) = \int \sum_{x_{n+1} \in \gamma} \,\sum_{\substack{x_1, ..., x_n \in \gamma\setminus x_{n+1}\\ \neq }} f(x_1, ..., x_{n+1}, \gamma \setminus \{x_1, ..., x_{n+1}\})\Pbstar(d\gamma).
\]
The result for the Campbell measure of order one gives
\[
\Camp^{(n+1)}_{\Pbstar}(f) = \iint \sum_{\substack{\{x_1, ..., x_n\} \subset \gamma\\ \neq}} f(x_1, ..., x_{n+1}, \gamma \setminus \{x_1, ..., x_n\})e^{-\beta(V(x_{n+1}, \gamma) + \Psibstar(\gamma))} \Pbstar(d\gamma)dx_{n+1}.
\]
Applying the result for the Campbell measure of order $n$ and use the identity 
\[
	H(\{x_1, ..., x_n\})+ V(x_{n+1}, \gamma \cup\{x_1, ..., x_n\}) = H(\{x_1, ..., x_{n+1}\}) + V(x_{n+1},\gamma) -\sum_{j=1}^n g(x_j),
\]
that leads to the expected expression for the Campbell measure of order $(n+1)$.
\end{proof}

It is interesting to point out the three terms appearing in the exponential in Proposition~\ref{P:n.point.Campbell}. The term $H(\{x_1, \dots, x_n\})$ is the energy of the interaction between the points $x_1, \dots, x_n$ and $\sum_{j=1}^n V(x_j,\gamma)$ is the cost for moving the points from the origin to their positions in the field created by $\gamma$. The third one is a little bit more complicated and partially analysed in the next corollary. 
\begin{cor}\label{C:eq.Psi}
For every integer $n$, for $\Pbstar$-almost every $\gamma$ and $(\lambda^d)^{\otimes k}$-almost every $x_1, \dots, x_k$ and $y_1, \dots, y_k$ 
\[
	\Psibstar(\gamma\cup\{x_1, \dots, x_n \}) - \sum_{j=1}^n g(x_j) = 
	\Psibstar(\gamma\cup\{y_1, \dots, y_n \}) - \sum_{j=1}^n g(y_j).
\]

\end{cor}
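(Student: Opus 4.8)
The plan is to deduce everything from the permutation symmetry of the $(n+1)$-point Campbell measure $\Camp^{(n+1)}_{\Pbstar}$ combined with the explicit formula of Proposition~\ref{P:n.point.Campbell}. For $j\ge 0$ set, as a local notation,
\[
	\phi_j(x_1,\dots,x_j;\gamma):=\Psibstar(\gamma\cup\{x_1,\dots,x_j\})-\sum_{i=1}^{j}g(x_i),
\]
so that the assertion to prove is exactly that $\phi_n(x_1,\dots,x_n;\gamma)$ does not depend on $(x_1,\dots,x_n)$, up to a $(\lambda^d)^{\otimes n}\otimes\Pbstar$-null set. Note first that $\phi_j$ is a symmetric function of its spatial arguments: $\Psibstar$ is a functional of the configuration $\gamma\cup\{x_1,\dots,x_j\}$, which (off the diagonal, a null set) does not depend on the labelling of the points, and $\sum_i g(x_i)$ is manifestly symmetric.

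First I would record that $\Camp^{(n+1)}_{\Pbstar}$ is invariant under every permutation $\sigma$ of its first $n+1$ coordinates: this is immediate from Definition~\ref{D:campbell}, since the defining sum ranges over $(n+1)$-tuples of \emph{distinct} points of $\gamma$ and a relabelling changes neither the value of the integrand nor the (set-valued) last argument. By Proposition~\ref{P:n.point.Campbell} applied with $n+1$ in place of $n$, the measure $\Camp^{(n+1)}_{\Pbstar}$ is absolutely continuous with respect to the $\sigma$-finite, permutation-invariant measure $(\lambda^d)^{\otimes(n+1)}\otimes\Pbstar$, with density
\[
	F(x_1,\dots,x_{n+1},\gamma)=\exp\Big\{-\beta H(\{x_1,\dots,x_{n+1}\})-\beta\sum_{j=1}^{n+1}V(x_j,\gamma)-\beta\sum_{j=0}^{n}\phi_j(x_1,\dots,x_j;\gamma)\Big\}.
\]
Uniqueness of the Radon--Nikodym derivative forces $F$ to be invariant, almost everywhere, under each such $\sigma$. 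Since $H(\{x_1,\dots,x_{n+1}\})$ and $\sum_{j=1}^{n+1}V(x_j,\gamma)$ are already symmetric, and since all the terms in the exponent are finite on a co-null set — this finiteness, which is what allows one to pass to logarithms, being part of the content of Proposition~\ref{P:n.point.Campbell} and discussed in Section~\ref{S:compensator.story} — we conclude that $\sum_{j=0}^{n}\phi_j(x_1,\dots,x_j;\gamma)$ is itself almost everywhere invariant under permutations of $(x_1,\dots,x_{n+1})$.

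Finally I would specialize to the transposition swapping the coordinates of index $n$ and $n+1$. For $j\le n-1$ the term $\phi_j(x_1,\dots,x_j;\gamma)$ involves neither $x_n$ nor $x_{n+1}$, hence is unaffected, so the invariance collapses to
\[
	\phi_n(x_1,\dots,x_{n-1},x_n;\gamma)=\phi_n(x_1,\dots,x_{n-1},x_{n+1};\gamma)
\]
for $(\lambda^d)^{\otimes(n+1)}\otimes\Pbstar$-a.e.\ $(x_1,\dots,x_{n+1},\gamma)$. By Fubini this means that, for a.e.\ $(x_1,\dots,x_{n-1},\gamma)$, the map $z\mapsto\phi_n(x_1,\dots,x_{n-1},z;\gamma)$ is a.e.\ constant; combined with the symmetry of $\phi_n$ in its $n$ spatial arguments, this shows that for each index $\phi_n$ is a.e.\ independent of that argument, and then a chain of $n$ substitutions (changing one argument at a time, each step valid off a null set) gives $\phi_n(x_1,\dots,x_n;\gamma)=\phi_n(y_1,\dots,y_n;\gamma)$ for $(\lambda^d)^{\otimes 2n}\otimes\Pbstar$-a.e.\ $(x_1,\dots,x_n,y_1,\dots,y_n,\gamma)$; a last use of Fubini to fix $\gamma$ first yields the statement. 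The cases $n=0$ (trivial) and $n=1$ (where $\phi_1$ has a single spatial argument, so no Fubini step is needed) are immediate. I expect the only real difficulty to be not this symmetry argument but the justification, alluded to above, that $\Psibstar(\gamma\cup\{x_1,\dots,x_j\})$, $V(x_j,\gamma)$ and $H(\{x_1,\dots,x_{n+1}\})$ are simultaneously finite on a co-null set, so that $-\tfrac{1}{\beta}\log F$ is a genuine real-valued function there; this is precisely the point that makes the definition of $H^\beta_{\star,\Delta}$ meaningful.
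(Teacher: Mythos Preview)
Your argument is correct and follows essentially the same route as the paper: both exploit the permutation invariance of $\Camp^{(n+1)}_{\Pbstar}$ together with the density formula of Proposition~\ref{P:n.point.Campbell}, specialize to the transposition of the last two spatial coordinates so that all terms except $\phi_n$ cancel, and then iterate to replace the coordinates one by one. Your version is more explicit about the Fubini and null-set bookkeeping, but the underlying idea is identical.
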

\begin{proof}
It is obvious for $n=0$. For $n\geq 1$, let $f : \RR^{n+1}\times \Conf \rightarrow \RR$ be a test function, and introduce the function $g$ defined by $g(x_1, ..., x_{n+1}, \gamma) = f(x_1, ..., x_{n+1}, x_{n}, \gamma)$. According to the definition of the Campbell measure, $\Camp^{(n)}_{\Pbstar}(f) = \Camp^{(n)}_{\Pbstar}(g)$ and comparing the expressions obtained with Proposition~\ref{P:n.point.Campbell}, we deduce that for $\Pbstar$-almost every $\gamma$ and $\lambda^d$-almost every $x_1, \dots, x_{n}, y_{n} \in \RRd$ we have
\[
	\Psibstar(\gamma\cup\{x_1, ..., x_{n-1}, x_{n} \}) - \sum_{j=1}^{n} g(x_j) = 
	\Psibstar(\gamma\cup\{x_1, ..., x_{n-1}, y_{n} \}) - \sum_{j=1}^{n-1} g(x_j) - g(y_n).
\]
Applying that property $(n-1)$ times, that gives the result. 
  
\end{proof}
The quantities $\Psibstar(\gamma\cup\{x_1, ..., x_j \}) - \sum_{i=1}^j g(x_i)$ appearing in Proposition~\ref{P:n.point.Campbell} do not depend on the points $x_1, \dots, x_n$ and can be interpreted as the cost of adding a point at the origin in  the configuration $\gamma$ when $j$ points has been previously added to $\gamma$. Then the sum $\sum_{j=1}^{n}\left[\Psibstar(\gamma\cup\{x_1, ..., x_j \}) - \sum_{i=1}^j g(x_i)\right]$ is the cost of adding $n$ points at the origin. This sum is the equivalent, in our case, of the function $\textsf{create}_n$ introduced in Theorem 3.15 of \cite{DLRsinebeta}. 

In particular, for $n=1$, $\Psibstar(\gamma)$ corresponds to the cost of adding one point at the origin. We must be careful, nothing ensures that the cost of adding two points is twice the cost of adding one point, and we cannot say that the previous sum is equal to $n \Psibstar(\gamma)$. It would be true if we had for instance $\Psibstar(\gamma\cup\{x\}) = \Psibstar(\gamma) + g(x)$.
\subsubsection{Construction of the compensator}\label{S:compensator.story}
We start by describing the effect on $\Psibstar$ of simultaneously moving $k$~points in a point configuration. 
\begin{prop}\label{P:eq.Psi}
For all integers $m,n$ and $k$, for $\Pbstar$-almost every $\gamma$, for $\lambda^d$-almost every $y_1, ..., y_{n+k}$ and every $\{x_1, ..., x_{m+k}\} \subset \gamma$ we have 
\begin{multline*}
	\Psibstar(\gamma \setminus \{x_1, ..., x_{m+k}\} \cup \{y_1, ..., y_{n+k}\})- \sum_{j=1}^k g(y_{n+j})  		\\
	= \Psibstar(\gamma\setminus\{x_1, \dots, x_m\} \cup\{y_1, \dots, y_n\}) 	
	-\sum_{j=1}^k  g(x_{m+j}).
\end{multline*}
\end{prop}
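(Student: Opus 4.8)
The plan is to reduce the statement to Corollary~\ref{C:eq.Psi} via the $(m+k)$-point Campbell measure of $\Pbstar$, whose structure is given by Proposition~\ref{P:n.point.Campbell}. Write $N = m+k$ and, for a configuration $\gamma$, an ordered $N$-tuple $\mathbf{x} = (x_1,\dots,x_N)$ of distinct points of $\gamma$, and $\mathbf{y} = (y_1,\dots,y_{n+k}) \in (\RRd)^{n+k}$, let $\mathcal{P}(\mathbf{x},\mathbf{y},\gamma)$ denote the displayed identity; it depends on $\gamma$ only through $\gamma\setminus\{x_1,\dots,x_N\}$, and is measurable since $\Psibstar$ (a Radon--Nikodym density) and $g$ are. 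The target is: for $\Pbstar$-almost every $\gamma$, for $\lebn{(n+k)}$-almost every $\mathbf{y}$ and for every ordered $N$-tuple $\mathbf{x}$ of distinct points of $\gamma$, $\mathcal{P}(\mathbf{x},\mathbf{y},\gamma)$ holds. Since the function $\sum_{\mathbf{x}} \mathbf{1}\{\neg \mathcal{P}(\mathbf{x},\mathbf{y},\gamma)\}$ (sum over ordered $N$-tuples of distinct points of $\gamma$) takes values in $\NN\cup\{+\infty\}$, this target is equivalent to
\[
	\int \int \sum_{\mathbf{x}} \mathbf{1}\{\neg \mathcal{P}(\mathbf{x},\mathbf{y},\gamma)\}\, d\mathbf{y}\, \Pbstar(d\gamma) = 0 ,
\]
and by Tonelli's theorem and the definition of the Campbell measure this rewrites as $\int \Camp^{(N)}_{\Pbstar}\!\big(\mathbf{1}\{\neg\mathcal{P}(\cdot,\mathbf{y},\cdot)\}\big)\, d\mathbf{y} = 0$.

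Next I would invoke Proposition~\ref{P:n.point.Campbell}: it shows $\Camp^{(N)}_{\Pbstar}$ is absolutely continuous with respect to $\lebn{N}\otimes\Pbstar$, and — this is the key structural input — in that representation the residual configuration plays the role of a $\Pbstar$-distributed variable $\zeta$, while $\gamma$ is recovered as $\zeta\cup\{x_1,\dots,x_N\}$. Hence it suffices to prove that $\mathcal{P}(\mathbf{x},\mathbf{y},\zeta\cup\{x_1,\dots,x_N\})$ holds for $\lebn{N}\otimes\Pbstar\otimes\lebn{(n+k)}$-almost every $(\mathbf{x},\zeta,\mathbf{y})$ (for such almost every $(\mathbf{x},\zeta)$ the points $x_i$ automatically avoid $\zeta$).

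To get this I would fix $\zeta$ in the $\Pbstar$-full event on which Corollary~\ref{C:eq.Psi} holds for the integer $n+k$; that corollary says the map $(z_1,\dots,z_{n+k})\mapsto \Psibstar(\zeta\cup\{z_1,\dots,z_{n+k}\}) - \sum_{j=1}^{n+k} g(z_j)$ is $\lebn{(n+k)}$-almost everywhere equal to a constant $I(\zeta)$. For $\lebn{N}\otimes\lebn{(n+k)}$-almost every $(\mathbf{x},\mathbf{y})$, both $(y_1,\dots,y_{n+k})$ and $(y_1,\dots,y_n,x_{m+1},\dots,x_{m+k})$ lie in the $\lebn{(n+k)}$-full set where that constancy holds — the second tuple because projecting $(\mathbf{x},\mathbf{y})$ onto any $n+k$ of its coordinates pushes $\lebn{N}\otimes\lebn{(n+k)}$ forward to $\lebn{(n+k)}$. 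Evaluating the constancy at both tuples and subtracting gives
\[
	\Psibstar(\zeta\cup\{y_1,\dots,y_{n+k}\}) - \sum_{j=1}^{k} g(y_{n+j}) = \Psibstar(\zeta\cup\{y_1,\dots,y_n,x_{m+1},\dots,x_{m+k}\}) - \sum_{j=1}^{k} g(x_{m+j}) ,
\]
and since $\zeta = \gamma\setminus\{x_1,\dots,x_{m+k}\}$ and $\zeta\cup\{x_{m+1},\dots,x_{m+k}\} = \gamma\setminus\{x_1,\dots,x_m\}$, this is exactly $\mathcal{P}(\mathbf{x},\mathbf{y},\gamma)$.

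The main obstacle I expect is purely bookkeeping of the quantifiers: correctly passing, via the Campbell measure and its absolute continuity, from the ``$\Pbstar$-a.e.\ $\gamma$ / every $N$-subset of $\gamma$'' form of the statement to an almost-everywhere statement against $\lebn{N}\otimes\Pbstar$; and making sure Corollary~\ref{C:eq.Psi} may be applied to the tuple $(y_1,\dots,y_n,x_{m+1},\dots,x_{m+k})$ even though it reuses the coordinates $y_1,\dots,y_n$ — this is legitimate because that tuple is still generic for $\lebn{(n+k)}$, so one must not chain the corollary naively along two tuples sharing coordinates but rather use the ``a.e.\ constant'' reformulation. Everything else is elementary algebra.
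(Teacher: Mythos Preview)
Your proposal is correct and follows essentially the same route as the paper: both arguments pass through the $(m+k)$-point Campbell measure, use Proposition~\ref{P:n.point.Campbell} to replace the residual configuration by a $\Pbstar$-distributed variable $\zeta$, and then apply Corollary~\ref{C:eq.Psi} at the two tuples $(y_1,\dots,y_{n+k})$ and $(y_1,\dots,y_n,x_{m+1},\dots,x_{m+k})$. The only cosmetic difference is that the paper packages the argument via a pair of test functions $h^{(1)}_{Y_{n+k}}$, $h^{(2)}_{Y_{n+k}}$ whose Campbell integrals coincide, whereas you phrase it through absolute continuity and an indicator of failure; the substance is identical.
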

\begin{proof}
We denote the vectors $(x_1, \dots, x_{m+k})$ and $(y_1, \dots, y_{n+k})$ respectively by $X_{m+k}$ and $Y_{n+k}$. Let $h : \mathbb{R}^{m+k} \times \mathbb{R}^{n+k} \times \Conf \to [0,+\infty)$ be a test function and, for $Y_{n+k} \in \mathbb{R}^{n+k}$, we introduce the partial function 
\[
h^{(1)}_{Y_{n+k}} : (X_{m+k},\gamma) \mapsto h(X_{m+k},Y_{n+k}, \gamma) e^{-\Psi(\gamma\cup\{y_1, ..., y_n, x_{m+1}, \dots, x_{m+k}\})}.
\]
By definition of the Campbell measure of order $(m+k)$ 
\begin{equation}\label{E:campbell.1}
	\Camp^{(m+k)}_{\Pbstar}(h^{(1)}_{Y_{n+k}}) = \int \sum_{\substack{\{x_1, \dots, x_{m+k}\}\subset\gamma\\ \neq}} h(X_{m+k},Y_{n+k}, \gamma) 	e^{-\Psi(\gamma\setminus\{x_1,\dots,x_m\}\cup\{y_1, \dots y_n\})}
	 \Pbstar(d\gamma) dY_{n+k}. 
\end{equation}
But according to Corollary~\ref{C:eq.Psi} and Proposition~\ref{P:n.point.Campbell}, for $\lebn{(n+k)}$-almost every $Y_{n+k}$ 
\[
\Camp^{(m+k)}_{\Pbstar}(h^{(1)}_{Y_{n+k}}) = \Camp^{(m+k)}_{\Pbstar}(h^{(2)}_{Y_{n+k}})
\]
where we have introduced a second partial function
\[
h^{(2)}_{Y_{n+k}} : (X_{m+k},\gamma) \mapsto h(X_{m+k},Y_{n+k}, \gamma) e^{-\beta\Psibstar(\gamma\cup\{y_1, ..., y_{n+k}\})+\beta \sum_{j=1}^{k} g(y_{n+j})-g(x_{m+j})}.	
\]
Again, if we use the definition of the Campbell measure 
\begin{multline}\label{E:campbell.2}
	\Camp^{(m+k)}_{\Pbstar}(h^{(2)}_{Y_{n+k}}) = \int \sum_{\substack{\{x_1, \dots, x_{m+k}\}\subset\gamma \\ \neq}} h(X_{m+k},Y_{n+k}, \gamma) 	e^{-\Psi(\gamma\setminus\{x_1,\dots,x_{m+k}\}\cup\{y_1, \dots y_{n+k}\})}\\
	e^{\beta \sum_{j=1}^{k} g(y_{n+j})-g(x_{m+j})}
	 \Pbstar(d\gamma) dY_{n+k}. 
\end{multline}
Since expressions \eqref{E:campbell.1} and \eqref{E:campbell.2} are equal for every test function $h$, the result is proved. 
\end{proof}

Let $\mathcal{N}_{m,n,k}$ be a negligible subset of $(\RRd)^n\times\Conf$ for $\lebn{n}\otimes\Pbstar$ such that the result of Proposition \ref{P:eq.Psi} is true for every $(y_1,\dots,y_n,\gamma) \in (\RRd)^n\times\Conf \setminus \mathcal{N}_{m,n,k}$ and $\leb$-almost every $y_{n+1}, \dots, y_{n+k}$. We introduce a set of \emph{admissible} point configurations
\begin{multline}\label{E:def.event.E}
E = \bigcup_{n,m\geq 0} \big\{\gamma\setminus\{x_1, \dots, x_m\} \cup\{y_1, \dots, y_n\}, (y_1, \dots, y_n, \gamma) \in \mathbb{R}^n \times \Conf\setminus \mathcal{N}_{m,n}, \\
\{x_1, \dots, x_m\} \subset \gamma \big\},
\end{multline}
where $\mathcal{N}_{m,n} = \bigcap_{k\geq 0}\mathcal{N}_{m,n,k}$. According to Proposition~\ref{P:eq.Psi}, if $\gamma \in E$ and $\Lambda$ is a bounded Borel set, then for almost every $y_1, \dots, y_{N_\Lambda(\gamma)} \in \RRd$ the quantity
\[
\Psibstar(\gamma_{\Lambda^c}\cup \set{y}{N_\Lambda(\gamma)}) - \sum_{j=1}^{N_{\Lambda}(\gamma)} g(y_j)
\]
only depends on $\gamma$ and $k$, and defines a function $\Cbstar$ on $\NN\times E$. We extend the function $\Cbstar$ to $\NN\times\Conf$ by taking it equals to $0$ on $\NN \times (\Conf\setminus E)$.  Moreover, if we use Proposition~\ref{P:eq.Psi} with $\gamma \in E$, $m=n=0$, $k=N_\Lambda(\gamma)$ and $\set{y}{k} = \gamma_\Lambda$ we have
\[
	\Psibstar(\gamma) = \sum_{x\in \gamma_\Lambda} g(x) + C^\beta_{\star}(N_\Lambda(\gamma), \gamma_{\Lambda}). 
\] 
In particular the following limit exists 
\[
	\Psibstar(\gamma) = \lim_{p\rightarrow +\infty} \sum_{x\in \gamma_{\Lambda_p}} g(x) + \Cbstar(N_{\Lambda_p}(\gamma), \gamma_{\Lambda_p}).
\]
If the local energy of a point is defined as $\hbstar(x,\gamma) = V(x,\gamma) + \Psibstar(\gamma)$ then for every $\gamma\in E$ we have 
\[
	\hbstar(x,\gamma) = \lim_{p\to +\infty} \sum_{y\in\gamma_{\Lambda_p}} g(x-y) + \Cbstar (N_{\Lambda_p}(\gamma), \gamma_{\Lambda_p}),
\]
which proves Proposition~\ref{P:local.energy}. We can remark that the compensator constructed here is in fact a  constant sequence of functions.  

Given a bounded Borel set $\Delta$ with $\leb(\Delta)>0$, the local energy of a configuration $\eta =\{x_1, \dots, x_n\}$ is defined as 
\[
	H^\beta_{\star,\Delta}(\eta, \gamma) = \hbstar(x_1, \gamma_{\Delta^c}) + \hbstar(x_2, \gamma_{\Delta^c}\cup\{x_1\}) + \dots + \hbstar(x_n,\gamma_{\Delta^c}\cup\{x_1, \dots x_{n-1}\}).
\]
Since for almost every $x_1, \dots, x_j \in \RRd$, $\gamma_{\Delta^c}\cup\set{x}{j} \in E$ (according to~\ref{E:def.event.E}) the local energy $\hbstar(x_j, \gamma_{\Delta^c}\cup\set{x}{j})$ is well defined. We also need to check that $H^\beta_{\star,\Delta}(\eta, \gamma)$ does not depend on the implicit order of the points of $\eta$ when we write $\eta =\{x_1, \dots, x_n\}$. This is the case since, according to Proposition~\ref{P:eq.Psi},  for $\gamma \in E$ and almost every $x,y \in \RR$ we have
\begin{align*}
h(x,\gamma) + h(y,\gamma\cup\{x\})& = V(x,\gamma) + \Psi(\gamma) + V(y, \gamma\cup\{x\}) + \Psi(\gamma \cup \{x\}) \\
& =  g(x-y) + V(x,\gamma) + \Psi(\gamma) + V(y, \gamma\cup) + \Psi(\gamma \cup \{x\}) -g(x) \\
 & = g(x-y) + V(x,\gamma) + \Psi(\gamma) + V(y, \gamma\cup) + \Psi(\gamma \cup \{y\}) -g(y) \\
& = V(y,\gamma) + \Psi(\gamma) + V(x, \gamma\cup\{y\}) + \Psi(\gamma \cup \{y\})\\
& = h(y,\gamma) + h(x,\gamma\cup\{y\}).
\end{align*}
To conclude, it remains to prove the grand canonical DLR equations. 
\begin{proof}[Proof of Theorem \ref{T:grand.canonique}]
According to Proposition~\ref{P:n.point.Campbell} and our previous definition of the local energy of a point,  we have 
\begin{equation}\label{E:GNZ}
	C^{(1)}_{\Pbstar} (f) = \iint f(x,\gamma) e^{-\beta \hbstar(x,\gamma)}  \Pbstar(d\gamma)dx.
\end{equation}
Equation \eqref{E:GNZ}, is called a Georgii-Nguyen-Zessin (GNZ) equation. Usually, DLR equations and GNZ equations are equivalent. To obtain that equivalence in our case, we follow the steps of the proof of Theorem 2 of \cite{dereudre2019introduction}. Let $\Delta$ be a bounded Borel set with $\leb(\Delta)>0$, we denote by $\gamma \mapsto \Pbstar(\cdot \mid \gamma_{\Delta^c})$ a regular version of the conditional expectation (then for $\Pbstar$-almost every $\gamma$ the application $A\in\F \mapsto \Pbstar(A\mid\gamma_{\Delta^c})$ is a probability measure). Applying \eqref{E:GNZ}  to the function 
$(x,\gamma) \mapsto g(\gamma_{\Delta^c})1_\Delta(x) f(x,\gamma_{\Delta})$ where $g$ is a positive measurable function and $f : \RRd \times \Conf \to [0,+\infty)$ a test function, leads to
\begin{multline*}
\int g(\gamma_{\Delta^c}) \int \sum_{x\in\eta_\Delta} f(x,\eta_{\Delta}\setminus \{x\}) \Pbstar(d\eta \mid \gamma_{\Delta^c}) \Pbstar(d\gamma)  \\
= \int g(\gamma_{\Delta^c}) \iint \sum_{x\in\eta_\Delta} f(x,\eta_{\Delta})e^{-\beta\hbstar(x,\eta_\Delta \cup \gamma_{\Delta^c})} \Pbstar(d\eta \mid \gamma_{\Delta^c})\leb_\Delta(dx) \Pbstar(d\gamma)
\end{multline*}
It implies that for $\Pbstar$-almost every $\gamma$, the probability measure $\Pbstar(\cdot \mid \gamma_{\Delta^c})$ satisfies the GNZ equation
\begin{multline*}
\int \sum_{x\in\eta_\Delta} f(x,\eta_{\Delta}\setminus \{x\}) \Pbstar(d\eta \mid \gamma_{\Delta^c}) \Pbstar(d\gamma)  
 \\
 = \iint \sum_{x\in\eta_\Delta} f(x,\eta_{\Delta})e^{-\beta\hbstar(x,\eta_\Delta \cup \gamma_{\Delta^c})} \Pbstar(d\eta \mid \gamma_{\Delta^c})\leb_\Delta(dx).
\end{multline*}
Now if we consider the measure
\[
	\Pbtstar(d\eta \mid \gamma_{\Delta^c}) = e^{\beta \HDbstar(\eta_\Delta, \gamma_{\Delta^c})} \Pbstar(d\eta \mid \gamma_{\Delta^c}),
\]
a similar computation to the one done in the proof of Proposition 5 in \cite{dereudre2019introduction} gives 
\[
	\int \sum_{x\in\eta_\Delta} f(x,\eta_\Delta \setminus \{x\}) \Pbtstar(d\eta \mid \gamma_{\Delta^c}) = \iint f(x,\gamma) \leb_\Delta(dx) \Pbtstar(d\eta \mid \gamma_{\Delta^c}).
\]
The previous equation shows that $\widetilde{\P}^\beta_{\star \mid \Delta}(\cdot \mid \gamma_{\Lambda^c})$ satisfies the so-called Slivnyak-Mecke formula, which implies that  $\widetilde{\P}^\beta_{\star \mid \Delta}(d\eta \mid \gamma_{\Lambda^c}) = c(\gamma_{\Lambda^c})\Pi_{\Delta}(d\eta)$. The restriction of $\Pbstar(\cdot \mid \gamma_{\Delta^c})$ is proportional to the Poisson point process in $\Delta$ (with a factor depending on $\gamma_{\Delta^c}$). Since we have $\widetilde{\P}^\beta_{\star \mid \Delta}(\emptyset \mid \gamma_{\Lambda^c}) = \Pbstar(N_\Delta = 0 \mid \F_{\Delta^c})(\gamma)$,
we deduce from Theorem~\ref{T:point.deletion} that $c(\gamma_{\Delta^c}) \in (0,+\infty)$ and
\[
	\P^\beta_{\star \mid \Delta}(d\eta \mid \gamma_{\Delta^c}) = c(\gamma_{\Lambda^c}) e^{-\beta \HDbstar(\eta, \gamma_{\Delta^c})}\Pi_{\Delta}(d\eta).
\]
It proves that the normalisation constant introduced in Theorem~\ref{T:grand.canonique} is finite and non-zero since $Z^\beta_{\star,\Delta}(\gamma) = 1/c(\gamma_{\Lambda^c})$. Finally, if $f$ is a measurable function,  from 
\[
	\int f(\gamma) \Pbstar(d\gamma) = \iint f(\eta\cup\gamma) \P^\beta_{\star \mid \Delta}(d\eta \mid \gamma_{\Delta^c}) \Pbstar(d\gamma), 
\]
we obtain the grand canonical DLR equations.
\end{proof}

\section*{Acknowledgement}

This work was supported in part by the Labex CEMPI (ANR-11-LABX-0007-01), the ANR projects PPPP (ANR-16-CE40-0016) and RANDOM (ANR-19-CE24-0014) and by the CNRS GdR 3477 GeoSto.

\bibliographystyle{plain}
\bibliography{biblio}

\begin{thebibliography}{10}

\bibitem{armstrong2019local}
S.~Armstrong and S.~Serfaty.
\newblock Local laws and rigidity for {C}oulomb gases at any temperature.
\newblock {\em Ann. Probab.}, 49(1):46--121, 2021.

\bibitem{bufetov2018number}
A.I. Bufetov, P.~P. Nikitin, and Y.~Qiu.
\newblock On number rigidity for {P}faffian point processes.
\newblock {\em Mosc. Math. J.}, (2):217--274, 2019.

\bibitem{dereudre2019introduction}
D.~Dereudre.
\newblock Introduction to the theory of {G}ibbs point processes.
\newblock In {\em Stochastic Geometry}, pages 181--229. Springer, 2019.

\bibitem{DLRsinebeta}
D.~Dereudre, A.~Hardy, T.~Lebl\'e, and M.~Ma{\"\i}da.
\newblock {DLR} equations and {R}igidity for the {S}ine-{B}eta {P}rocess.
\newblock {\em Comm. Pure Appl. Math.}, 74(1):172--222, 2021.

\bibitem{dumitriu2002matrix}
I.~Dumitriu and A.~Edelman.
\newblock Matrix models for {B}eta {E}nsembles.
\newblock {\em J. Math. Phys.}, 43(11):5830--5847, 2002.

\bibitem{erbar2018one}
M.~Erbar, M.~Huesmann, and T.~Lebl{\'e}.
\newblock The one-dimensional log-gas free energy has a unique minimiser.
\newblock {\em Comm. Pure Appl. Math.}
\newblock To appear.

\bibitem{forrester2010log}
P.~J. Forrester.
\newblock {\em Log-gases and random matrices (LMS-34)}.
\newblock Princeton University Press, 2010.

\bibitem{georgii2006canonical}
H.-O. Georgii.
\newblock {\em Canonical Gibbs measures: some extensions of de Finetti's
  representation theorem for interacting particle systems}.
\newblock Springer, 2006.

\bibitem{georgii.book}
H.-O. Georgii.
\newblock {\em Gibbs measures and phase transitions}.
\newblock Walter de Gruyter, 2011.

\bibitem{georgii.zessin}
H.-O. Georgii and H.~Zessin.
\newblock Large deviations and the maximum entropy principle for marked point
  random fields.
\newblock {\em Probab. Theory Related Fields}, 96(2):177--204, 1993.

\bibitem{ghosh_rigidity_2017}
S.~Ghosh and Y.~Peres.
\newblock Rigidity and tolerance in point processes: {Gaussian} zeros and
  {Ginibre} eigenvalues.
\newblock {\em Duke Math. J.}, 166(10):1789--1858, 2017.

\bibitem{gruber1979equilibrium}
C.~Gruber, C.~Lugrin, and P.A. Martin.
\newblock Equilibrium equations for classical systems with long range forces
  and application to the one dimensional {C}oulomb gas.
\newblock {\em Helv. Phys. Acta}, 51(5-6):829--866, 1978.

\bibitem{hardin2014periodic}
D.P. Hardin, E.B. Saff, and B.~Simanek.
\newblock Periodic discrete energy for long-range potentials.
\newblock {\em J. Math. Phys.}, 55(12):123509, 2014.

\bibitem{holroyd2013}
A.~Holroyd and T.~Soo.
\newblock Insertion and deletion tolerance of point processes.
\newblock {\em Electron. J. Probab.}, 18:1--24, 2013.

\bibitem{killip2009eigenvalue}
R.~Killip and M.~Stoiciu.
\newblock Eigenvalue statistics for cmv matrices: from poisson to clock via
  random matrix ensembles.
\newblock {\em Duke Math. J.}, 146(3):361--399, 2009.

\bibitem{stableMatchingLast}
M.~A. Klatt, G.~Last, and D.~Yogeshwaran.
\newblock Hyperuniform and rigid stable matchings.
\newblock {\em Random Structures Algorithms}, 57(2):439--473, 2020.

\bibitem{leble2018clt}
T.~Lebl{\'e}.
\newblock {CLT} for fluctuations of linear statistics in the {S}ine-beta
  process.
\newblock {\em Int. Math. Res. Not. IMRN}, 2018.

\bibitem{leble2017large}
T.~Lebl{\'e} and S.~Serfaty.
\newblock Large deviation principle for empirical fields of log and riesz
  gases.
\newblock {\em Invent. Math.}, 210(3):645--757, 2017.

\bibitem{lewin2019floating}
M.~Lewin, E.H. Lieb, and R.~Seiringer.
\newblock Floating wigner crystal with no boundary charge fluctuations.
\newblock {\em Phys. Rev. B}, 100(3):035127, 2019.

\bibitem{lieb1975thermodynamic}
E.H. Lieb and H.~Narnhofer.
\newblock The thermodynamic limit for jellium.
\newblock {\em J. Stat. Phys.}, 12(4):291--310, 1975.

\bibitem{peres2014rigidity}
Y.~Peres and A.~Sly.
\newblock Rigidity and tolerance for perturbed lattices.
\newblock {\em arXiv:1409.4490}, 2014.

\bibitem{petrache2020crystallization}
M.~Petrache and S.~Serfaty.
\newblock Crystallization for {C}oulomb and {R}iesz interactions as a
  {C}onsequence of the {C}ohn-{K}umar {C}onjecture.
\newblock {\em Proc. Amer. Math. Soc.}, 148(7):3047--3057, 2020.

\bibitem{reda2018rigidity}
C.~Reda and J.~Najnudel.
\newblock Rigidity of the $\operatorname{Sine}_\beta$ process.
\newblock {\em Electron. Commun. Probab.}, 23, 2018.

\bibitem{Ruelle70}
D.~Ruelle.
\newblock Superstable interactions in classical statistical mechanics.
\newblock {\em Comm. Math. Phys.}, 18(2):127--159, 1970.

\bibitem{valko2009continuum}
B.~Valk{\'o} and B.~Vir{\'a}g.
\newblock Continuum limits of random matrices and the {B}rownian carousel.
\newblock {\em Invent. Math.}, 177(3):463--508, 2009.

\end{thebibliography}
\end{document}